\newcommand{\GL}{\operatorname{GL}}
\newcommand{\ZZ}{\mathbb{Z}}
\newcommand{\FF}{\mathbb{F}}
\newcommand{\CC}{\mathbb{C}}
\newcommand{\QQ}{\mathbb{Q}}
\newcommand{\RR}{\mathbb{R}}
\newcommand{\EE}{\mathbb{E}}
\newcommand{\GG}{\mathbb{G}}
\newcommand{\KK}{\mathbb{K}}
\newcommand{\NN}{\mathbb{N}}
\newcommand{\PP}{\mathbb{P}}
\newcommand{\TT}{\mathbb{T}}
\newtheorem{Theorem}{Theorem}[section]
\newtheorem{Lemma}[Theorem]{Lemma}
\newtheorem{Proposition}[Theorem]{Proposition}
\newtheorem{Problem}[Theorem]{Problem}
\newtheorem{Corollary}[Theorem]{Corollary}
\newtheorem{Definition}[Theorem]{Definition}
\newtheorem{Remark}[Theorem]{Remark}
\title[Exponential and modular forms]{From the Carlitz exponential to Drinfeld modular forms}
\author{F. Pellarin}
\address{Federico Pellarin\\
Institut Camille Jordan, UMR 5208\\
Site de Saint-Etienne \\
23 rue du Dr. P. Michelon \\
42023 Saint-Etienne, France}
\date{\today}
\begin{document}

\begin{abstract}
This paper contains the written notes of a course the author gave at the VIASM of Hanoi in the Summer 2018. It provides an elementary introduction to the analytic naive theory of Drinfeld modular forms for the simplest 'Drinfeld modular group' $\GL_2(\FF_q[\theta])$ also providing some perspectives of development, notably in the direction of the theory of vector modular forms with values in certain ultrametric Banach algebras. \end{abstract}

\maketitle

\setcounter{tocdepth}{1}
\tableofcontents

\section{Introduction}

The present paper contains the written notes of a course the author gave at the VIASM of Hanoi in the Summer 2018. It provides an elementary introduction to the analytic naive theory of Drinfeld modular forms essentially for the simplest 'Drinfeld modular group' $\GL_2(\FF_q[\theta])$ also providing some perspectives of development, notably in the direction of the theory of vector modular forms with values in certain ultrametric Banach algebras initiated in \cite{PEL1}. 

The course was also the occasion to introduce the very first basic elements of the arithmetic theory of Drinfeld modules in a way suitable to sensitise the attendance also to more familiar processes of the classical theory of modular forms and elliptic curves. Most parts of this work are not new and are therefore essentially covered by many other texts and treatises such as the seminal works of Goss \cite{GOS1,GOS2,GOS3} and Gekeler \cite{GEK}. The present text also has interaction and potential developments along with the contributions to this volume by Poineau-Turchetti and Tavares Ribeiro \cite{POI&TUR,TAV}. It also contains suggestions for further developments,
see Problems \ref{problem1},\ref{problem2},\ref{problem3},\ref{problem4} and \ref{problem5}.

This paper will not cover several advanced recent works such that the higher rank theory, including the delicate compactification questions in the path of Basson, Breuer, Pink \cite{BBP1,BBP2,BBP3}, Gekeler \cite{GEK-DMF1,GEK-DMF2,GEK-DMF3,GEK-DMF4} and it does not even go in the direction of the important arithmetic explorations notably involving the cohomological theory of crystals by B\"ockle \cite{BOE1,BOE2} or toward several other crucial recent works by several other authors we do not mention here, at once inviting the reader to realise a personal bibliographical research to determine the most recent active areas.

Perhaps, one of the original points of our contribution is instead to consider {\em exponential functions} from various viewpoints, all along the text, stressing how they interlace with modular forms. The paper describes, for example, a product expansion of the exponential function associated to the lattice $A:=\FF_q[\theta]$ in the Ore algebra of non-commutative formal series in the Frobenius automorphism which is implicit in Carlitz's work \cite{CAR}. It will be used to give a rather precise description of the analytic structure of the cusp of $\Gamma=\GL_2(A)$ acting on the Drinfeld upper-half plane by homographies. We will also use it in connection with local class field theory for the local field $K_\infty=\FF_q((\frac{1}{\theta}))$. Another new feature is that, in the last two sections, we explore structures which at the moment have no known analogue in the classical complex setting. Namely, Drinfeld modular forms with values in modules over Tate algebras, following the ideas of \cite{PEL1}.

Here is, more specifically, the plan of the paper.
In the very elementary \S \ref{rings-and-fields}
the reader familiarises with the rings and the fields which carry the values of the special functions we are going to study in this paper. Instead of the field of complex numbers $\CC$, our 'target' field is a complete, algebraically closed field of characteristic $p>0$. There is an interesting parallel with the classical complex theory where 
we have the quadratic extension $\CC/\RR$ and the quotient group $\RR/\ZZ$ is compact, but there are also 
interesting differences to take into account as the analogue $\CC_\infty/K_\infty$ of the extension $\CC/\RR$ is infinite dimensional, $\CC_\infty$ is not locally compact, although the analogue $A:=\FF_q[\theta]$ of $\ZZ$ is discrete and co-compact in the analogue $K_\infty=\FF_q((\frac{1}{\theta}))$ of $\RR$.

We dedicate the whole \S \ref{Anderson-modules-and-uniformisation} to exponential functions. More precisely, we give a proof of the correspondence by Drinfeld between {\em $A$-lattices} of $\CC_\infty$ and {\em Drinfeld $A$-modules}. To show that to any Drinfeld module we can naturally associate a lattice we pass by the more general Anderson modules. We introduce Anderson's modules in an intuitive way, privileging one of the most important and useful properties, namely that they are equipped with an exponential function at a very general level. Just like abelian varieties, Anderson modules can be of any dimension. When the dimension is one, one speaks about Drinfeld modules.

In \S \ref{Carlitz-module-and-exponential} we focus on a particular case of Drinfeld module: the Carlitz module. This is the analogue of the multiplicative group in this theory. We give a detailed  account of the main properties of its exponential function denoted by $\exp_C$. We point out that its (multiplicative, rescaled) inverse $u$ is used as uniformiser at infinity to define the analogue of the classical complex '$q$-expansions' for our modular forms. In this section we prove, for example, that any generator of the lattice of periods of $\exp_C$
can be expressed by means of a certain convergent product expansion (known to Anderson). To do this, we use the so-called {\em omega function} of Anderson and Thakur. 

In \S \ref{Drinfeld-upper-half-plane} we first study the Drinfeld 'half-plane' $\Omega=\CC_\infty\setminus K_\infty$ topologically. We use, to do this, a fundamental notion of distance from the analogue of the real line $K_\infty$. The group $\GL_2(A)$ acts on $\Omega$ by homographies and we construct a fundamental domain for this action. After a short invitation to the basic notions of rigid analytic geometry, we describe the Bruhat-Tits tree of $\Omega$, the natural action of $\GL_2(K_\infty)$ on it, and, after a glimpse on Schottky groups (see \cite{POI&TUR} for a more in-depth development),
we construct a reasonable analogue of a fundamental domain for the homographic action of $\GL_2(\FF_q[\theta])$ on $\Omega$.

In \S \ref{quotient-spaces} we discuss the following question: {\em find an analogue for the Carlitz module of the following statement: Every holomorphic function which is invariant for the translation by one has a Fourier series.} The answer is: every $\FF_q[\theta]$-translation invariant 
function has a '$u$-expansion'. We show why in this section. To do this we introduce the problem of rigid analytic structures on quotient spaces. We mainly focus on the example of the quotient of the rigid affine line $\mathbb{A}^{1,an}_{\CC_\infty}$ by the group of translations by the elements of $A$. The reader will notice how hard things can become without the use of the tool of the analytification functor, also discussed in this section.

In \S \ref{Drinfeld-modular-forms} we give a quick account of (scalar) Drinfeld modular forms for the group $\GL_2(A)$ (characterised by the $u$-expansion in $\CC_\infty[[u]]$). This appears already in many other references: the main feature is that $\CC_\infty$-vector spaces of Drinfeld modular forms are finitely dimensional spaces. Also, non-zero Eisenstein series can be constructed; this was first observed by D. Goss in \cite{GOS2}. The coefficients of the $u$-expansions of Eisenstein series are, after normalisation, in $A=\FF_q[\theta]$. 

The paper also has advanced, non-foundational parts. In \S \ref{local-class} we apply the developed knowledge of the Carlitz exponential function to give an explicit description of local class field theory for the field $K_\infty$; this subsection is also independent from the rest of the paper.
In \S \ref{Modular-forms-Tate-algebras} and \ref{Modular-forms-revisited} we revisit Drinfeld modular forms. We introduce vector Drinfeld modular forms with values in other fields and algebras, following \cite{PEL1}; the case we are interested in is that of functions which take values in finite dimensional $\KK$-vector spaces where $\KK$ is the completion for the Gauss norm of the field of rational functions in a finite set of variables with coefficients in $\CC_\infty$. With the use of certain Jacobi-like functions, we deduce an identity relating a matrix-valued Eisenstein series of weight one with certain weak modular forms of weight $-1$ from which one easily deduces \cite[Theorem 8]{PEL1} in a different, more straightforward way. 


\subsection{Acknowledgements} The author is thankful to the VIASM of Hanoi for the very nice conditions that surrounded the development of the course and the stimulating environment in which he was continuously immersed all along his visit in June 2018. Part of this text was written during a stay at the MPIM of Bonn in April 2019 and the author wishes to express gratitude for the very good conditions of work there. The author is thankful to A. Thuillier for the proof of Proposition \ref{thuillier} and to L. Gurney for fruitful discussions. He expresses his gratitude to the reviewers that, by means of a careful reading and interesting suggestions, allowed to improve it.
This work was supported by the ERC ANT.

\medskip

This work is dedicated to the memory of Velia Stassano. She was the mother of the author.

\section{Rings and fields}\label{rings-and-fields}

Before entering the essence of the topic, we first propose the reader to familiarise with certain rings and fields, notably local fields and non-archimedean valued fields. For more about these topics read, for example, the books \cite{CAS,SER}. The reader must notice that the basic notations of 
the two other chapters of this volume \cite{POI&TUR,TAV} slightly differ from ours. 

Let $R$ be a ring.

\begin{Definition}
{\em A {\em real valuation} $|\cdot|$ (or simply a {\em valuation}) over $R$ is a map $R\xrightarrow{|\cdot|}\RR_{\geq 0}$ with the following properties. 
\begin{enumerate}
\item For $x\in R$, $|x|=0$ if and only if $x=0$.
\item For $x,y\in R$, $|xy|=|x||y|$.
\item For $x,y\in R$ we have $|x+y|\leq\max\{|x|,|y|\}$ and if $|x|\neq|y|$, then
$|x+y|=\max\{|x|,|y|\}$.
\end{enumerate}
The inequality $|x+y|\leq\max\{|x|,|y|\}$ is usually called the {\em ultrametric inequality} (the term `ultrametric' indicates a reinforced triangular inequality). 
A ring with valuation is called a {\em valued ring}. A valuation is {\em non-trivial} if its image is infinite. If the image of a valuation is finite, then it is equal to the set of two elements $\{0,1\}\subset\RR_{\geq0}$ and all the non-zero elements of $R$ are sent
to $1$ while $0$ is sent to $0$. This is the {\em trivial valuation} of $R$.
A map as above satisfying (2), (3) but not (1) is called a {\em semi-valuation.}}\end{Definition}

A valuation over a ring $R$ induces a metric in an obvious way and one easily sees that $R$, together with this metric, is totally disconnected (the only connected subsets are $\emptyset$ and the points). To any valued ring $(R,|\cdot|)$ we can associate the subset $\mathcal{O}_R=\{x\in R:|x|\leq 1\}$ which is a subring of $R$, called the {\em valuation ring} of $|\cdot|$. This ring has the prime ideal $\mathcal{M}_R=\{x\in R:|x|<1\}$. The quotient ring $k_R:=\mathcal{O}_R/\mathcal{M}_R$ is called the {\em residue ring}. The ring homomorphism $f\in\mathcal{O}_R\mapsto \overline{f}+\mathcal{M}_R\in \mathcal{O}_R/\mathcal{M}_R$ is called the {\em reduction map}. 
With $R$ a ring, we denote by $R^\times$ the multiplicative group of invertible elements.
The image $|R^\times|=\{|x|:x\in R^\times\}$ is a subgroup of $\RR^\times$ called the {\em valuation group}. 

If $R$ is a field, $\mathcal{M}_R$ is a maximal ideal. Two valuations $|\cdot|$ and $|\cdot|'$ over a ring $R$ are {\em equivalent} if for all $x\in R$, $c_1|x|\leq |x|'\leq c_2|x|$
for some $c_1,c_2>0$. Two equivalent valuations induce the same topology. If $(R,|\cdot|)$ is a valued ring,
we denote by $\widehat{R}$ (or $\widehat{R}_{|\cdot|}$) the topological space completion of $R$ for $|\cdot|$.
It is a ring and if additionally $R$ is a field, $\widehat{R}$ is also a field.

While working over complete valued fields, many properties which are usually quite delicate to check for real numbers, have simple analogues in this context. For instance, the reader can check that in a valued field $(L,|\cdot|)$, a sequence $(x_n)_{n\geq 0}$ is Cauchy if and only if $(x_{n+1}-x_n)_{n\geq 0}$ tends to zero. A series $\sum_{n\geq 0}x_n$ converges if and only if $x_n\rightarrow0$ and an infinite product $\prod_{n\geq 0}(1+x_n)$ converges if and only if $x_n\rightarrow0$. Another immediate property is that if $(x_n)_{n\geq 0}$ is convergent, then $(|x_n|)_{n\geq 0}$ is ultimately constant.

\subsection{Local compactness, local fields}

Let $(L,|\cdot|)$ be a valued field. Choose $r\in |L^\times|$ and $x\in L$. We set
$$D_L(x,r)=\{y\in L:|x-y|\leq r\}.$$ This is the {\em disk of center $x$ and radius $r$}. 
Some authors like to call $r$ the {\em diameter} to stress the fact that the metric induced by the valuation makes every point of $D_L(x,r)$ into a center so that it does not really distinguishes between `radius' and `diameter'.

Observe that 
$\mathcal{O}_L=D_L(0,1)$. Also, $$\mathcal{M}_L=\bigcup_{\begin{smallmatrix}r\in|L^\times|\\ r<1\end{smallmatrix}}D_L(0,r)=:D_L^\circ(0,1).$$ More generally we write $D_L^\circ(0,r)=\{x\in L:|x|<r\}$.
We use the simpler notation $D(x,r)$ or $D^\circ(0,r)$ when $L$ is understood from the context.
Note that 
$D(x,r)=x+D(0,r)$ and $D(0,r)$ is an additive group. If $|x|\leq r$ (that is, $x\in D(0,r)$), then
$D(x,r)=D(0,r)$. If $|x|>r$ (that is, $x\not\in D(0,r)$), then $D(x,r)\cap D(0,r)=\emptyset$. In other words, if two disks with same radii have a common point, then they are equal. If the radii are not equal, non-empty intersection implies that one is contained in the other.

Now pick $r\in|L^\times|$ and $x_0\in L^\times$ with $|x_0|=r$. Then, $D(0,r)=x_0D(0,1)=x_0\mathcal{O}_{L}$. This means that all disks are homeomorphic to $\mathcal{O}_L=D(0,1)$. This is due to the fact that we are 
choosing $r\in|L^\times|$. 

A complete valued field $L$ is {\em locally compact} if every disk is compact. We have the following:

\begin{Lemma}\label{valued-field}
A valued field which is complete is locally compact if and only if the valuation group is discrete and the residue field is finite.
\end{Lemma}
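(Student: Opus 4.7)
My plan is to first reduce the statement to the compactness of $\mathcal{O}_L = D(0,1)$. The text just noted that for any $r \in |L^\times|$ the disk $D(0,r)$ is homeomorphic to $\mathcal{O}_L$ (via multiplication by any $x_0$ with $|x_0|=r$), and $1 \in |L^\times|$ always. So every disk is compact if and only if $\mathcal{O}_L$ is compact, and I will prove the two directions in this equivalent form.

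For the forward implication, suppose $\mathcal{O}_L$ is compact. The set $\mathcal{M}_L = D^\circ(0,1) = \bigcup_{r\in|L^\times|, r<1} D(0,r)$ is open (union of open disks), hence every coset $x + \mathcal{M}_L \subset \mathcal{O}_L$ is open. These cosets partition $\mathcal{O}_L$ and are in bijection with $k_L$, so by compactness $k_L$ must be finite. For discreteness of $|L^\times|$, I use the standard fact that a subgroup of $(\RR_{>0},\cdot)$ is either cyclic (i.e.\ discrete) or dense. If $|L^\times|$ were dense, I could choose a strictly increasing sequence $r_n \in |L^\times|$ with $r_n \nearrow 1$ and elements $x_n \in L$ with $|x_n| = r_n$; all lie in $\mathcal{O}_L$, and for $n>m$ the ultrametric inequality forces $|x_n - x_m| = \max(r_n,r_m) = r_n$, which stays bounded below by $r_1 > 0$. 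Thus $(x_n)$ has no Cauchy subsequence, contradicting the compactness of $\mathcal{O}_L$.

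For the backward implication, suppose $|L^\times|$ is discrete with generator $|\pi| < 1$ and $k_L$ is finite. Multiplication by $\pi^{i}$ induces an isomorphism of abelian groups $\mathcal{O}_L/\pi\mathcal{O}_L \xrightarrow{\sim} \pi^{i}\mathcal{O}_L/\pi^{i+1}\mathcal{O}_L$, so by induction on $n$ each quotient $\mathcal{O}_L/\pi^{n}\mathcal{O}_L$ is finite. I then prove sequential compactness of $\mathcal{O}_L$: given a sequence $(x_m) \subset \mathcal{O}_L$, by the pigeonhole principle some residue class mod $\pi$ contains infinitely many $x_m$; inside that subsequence some residue class mod $\pi^2$ contains infinitely many; iterating and taking a diagonal subsequence $(x_{m_k})$, one has $|x_{m_k} - x_{m_{k'}}| \leq |\pi|^{\min(k,k')}$. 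This subsequence is Cauchy, and since $L$ is complete and $\mathcal{O}_L = D(0,1)$ is closed, the limit lies in $\mathcal{O}_L$. Hence $\mathcal{O}_L$ is compact.

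The main obstacle is the forward direction, specifically showing that a non-discrete valuation group forces a non-compact $\mathcal{O}_L$: the crux is the dichotomy between discrete and dense subgroups of $\RR_{>0}$, which then allows constructing the bad sequence. The backward direction is then essentially the standard profinite argument (one could also phrase it as $\mathcal{O}_L \cong \varprojlim_n \mathcal{O}_L/\pi^n\mathcal{O}_L$, a projective limit of finite rings with the product topology, hence compact), and I would prefer the explicit pigeonhole presentation to avoid invoking topology of inverse limits that the text has not developed.
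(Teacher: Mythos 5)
Your proof is correct and follows essentially the same route as the paper: reduce to compactness of $\mathcal{O}_L$, rule out an infinite residue field and a dense value group by producing sequences in $\mathcal{O}_L$ whose pairwise distances stay bounded below, and get the converse by the pigeonhole (box principle) extraction of a Cauchy subsequence, using completeness. The only cosmetic differences are your open-cover argument for finiteness of $k_L$ (the paper instead notes that distinct residue representatives are at distance $1$, so they admit no convergent subsequence) and your explicit appeal to the discrete-or-dense dichotomy for subgroups of $\RR_{>0}$, which the paper uses implicitly.
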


\begin{proof}
Let $L$ be a field with valuation $|\cdot|$, complete. 
We first show that $\mathcal{O}_L=D(0,1)$ is compact if the valuation group is discrete (in this case there exists $r\in]0,1[\cap|L^\times|$ such that $\mathcal{M}_L=D(0,r)$) and the residue field is finite. Let $B$ be any infinite subset of $\mathcal{O}_L$. We choose a complete set of representatives $\mathcal{R}$ of $\mathcal{O}_L$ modulo $\mathcal{M}_L$. Note the disjoint union
$$\mathcal{O}_L=\bigsqcup_{\nu\in\mathcal{R}}(\nu+\mathcal{M}_L).$$ Multiplying all elements of $B$ by an element of $L^\times$ (rescaling), we can suppose that
there exists $b_1\in B$ with $|b_1|=1$. Then, the above decomposition induces a partition of $B$ and by the fact that $k_L$ is finite and the box principle there is an infinite subset $B_1\subset B\cap(b_1+\mathcal{M}_L^{n_1})$ for some integer $n_1>0$. We continue in this way and we are led to a sequence 
$b_1,b_2,\ldots$ in $B$ with $b_{i+1}\in\mathcal{M}_L^{n_i}\setminus\mathcal{M}_L^{n_i-1}$
with the sequence of the integers $n_i$ which is strictly increasing (set $n_0=0$). Hence, $b_{m+1}-b_m\in\mathcal{M}_L^{n_m}$ is a Cauchy sequence, thus converging in $L$ because it is complete.

Let us suppose that $k_L$ is infinite. Then any set of representatives $\mathcal{R}$ of $\mathcal{O}_L$ modulo $\mathcal{M}_L$ is infinite. For all $b,b'\in\mathcal{R}$ distinct, we have $|b-b'|=1$ and $\mathcal{R}$ has no converging infinite sub-sequence. Let us suppose that the valuation group $G=|L^\times|$ is dense in $\RR_{>0}$.
There is a strictly decreasing sequence $(r_i)_i\subset G$
with $r_i\rightarrow1$. This means that for all $i$, there exists $a_i\in\mathcal{O}_L$ such that
$|a_i|=r_i$ and for all $i\neq j$ we have that $|a_i-a_j|=\max\{r_i,r_j\}$ so that we cannot extract from $(a_i)_i$ a convergent sequence and $\mathcal{O}_L$ is not compact.
\end{proof}

\begin{Definition}{\em A valued field which is locally compact is called a {\em local field}.}\end{Definition}

Note that $\RR$ and $\CC$, with their euclidean topology, are locally compact, but not valued. Some authors define local fields as locally compact topological field for a non-discrete topology. Then, they distinguish between the
non-Archimedean (or ultrametric) local fields, which are the valued ones, and the Archimedean local fields: $\RR$ and $\CC$.

An important property is the following. Any valued local field $L$ of characteristic $0$ is isomorphic to a finite extension of the field of $p$-adic numbers $\QQ_p$ for some $p$, while any local field $L$ of characteristic $p>0$ is isomorphic to a local field $\FF_q((\pi))$, and with $q=p^e$ for some integer $e>0$. We say that $\pi$ is an {\em uniformiser}. Note that $|L^\times|=|\pi|^\ZZ$ and $|\pi|<1$. The proof of this result is a not too difficult deduction from the following well known fact: a locally compact topological vector space over a non-trivial locally compact field has finite dimension.

\subsection{Valued rings and fields for modular forms}\label{our-main-settings}

Let $\mathcal{C}$ be a smooth, projective, geometrically irreducible curve over $\FF_q$, together with a closed point $\infty\in\mathcal{C}$. We set $$R=A:=H^0(\mathcal{C}\setminus\{\infty\},\mathcal{O}_\mathcal{C}).$$ This is the $\FF_q$-algebra of the rational functions over $\mathcal{C}$ which are regular everywhere except, perhaps, at $\infty$. The choice of $\infty$ determines an equivalence class of valuations $|\cdot|_\infty$ on $A$ in the following way. Let $d_\infty$ be the degree of $\infty$, that is, the degree of the extension $\FF$ of $\FF_q$ generated by $\infty$ (which is also equal to the least integer $d>0$ such that $\tau^d(\infty)=\infty$,
where $\tau$ is the geometric Frobenius endomorphism). 
Then, for any $a\in A$, the degree $$\deg(a):=\dim_{\FF_q}(A/aA)$$ is a multiple $-v_\infty(a)d_\infty$ of $d_\infty$ and we set $|a|_\infty=c^{-v_\infty(a)}$ for $c>1$, which is easily seen to be a valuation. 
It is well known that 
$A$ is an arithmetic Dedekind domain with $A^\times=\FF^\times$. In addition $v_\infty(a)\leq 0$ for all $a\in A\setminus\{0\}$ and $v_\infty(a)=0$ if and only if $a\in \FF^\times$ (as a consequence of the proof of the subsequent Lemma \ref{A-coco}).
A good choice to normalise $|\cdot|_\infty$ is $c=q$. We can thus consider the field $K_\infty:=\widehat{K}_{|\cdot|_\infty}$ completion of $K$ for $|\cdot|_\infty$ which can be written as the Laurent series field $\FF((\pi))$ where $\pi$ is a {\em uniformiser element} of $K_\infty$ (such that $v_\infty(\pi)=1$). $K_\infty$ is a local field with valuation ring $\mathcal{O}_{K_\infty}=\FF[[\pi]]$, maximal ideal $\mathcal{M}_{K_\infty}=\pi\FF[[\pi]]$, residue field $\FF$ and valuation group $|\pi|_\infty^\ZZ$. Note that we have the direct sum of $\FF_q$-vector spaces:
$$K_\infty=\FF[\pi^{-1}]\oplus\mathcal{M}_{K_\infty}.$$ The case of $\mathcal{C}=\PP_{\FF_q}^1$ with its point at infinity $\infty$ (defined over $\FF_q$) is the simplest one. Let $\theta$ be any rational function having a simple pole at infinity, regular away from it. Then, $A=\FF_q[\theta]$, $K=\FF_q(\theta)$ and we can take $\pi=\theta^{-1}$ so that $K_\infty=\FF_q((\frac{1}{\theta}))$ the completion of $K$ for the valuation $|\cdot|_\infty=q^{\deg_\theta(\cdot)}$. Note that for all $\pi=\lambda\theta^{-1}+
\sum_{i>1}\lambda_i\theta^{-i}\in K_\infty$ with $\lambda\in\FF_q^\times$ and $\lambda_i\in\FF_q$,
we have $K_\infty=\FF_q((\pi))$. 
The field $K_\infty$ has an advantage over the field $\RR$: it has uniformisers. But there also is a disadvantage: there is no canonical choice in the uncountable subset of uniformisers.

We come back to the case of $A$ general. 
Let $U$ be a subset of $K_\infty$. We say that $U$ is {\em strongly discrete} if any disk $$D_{K_\infty}(x,r)=\{y\in K_\infty:|x-y|_\infty\leq r\}\subset K_\infty$$ only contains finitely many $y\in U$ for every $r\geq 0$. Note that, transposing the definition to the case of $\RR$, the ring $\ZZ$ is discrete and co-compact in $\RR$ (this is well known). 

Analogously:

\begin{Lemma}\label{A-coco}
The $\FF_q$-algebra $A$ is strongly discrete and co-compact in $K_\infty$.
\end{Lemma}

\begin{proof}[Proof of the first part of Lemma \ref{A-coco}]
That $A$ is strongly discrete in $K_\infty$ can be seen by using the {\em Liouville inequality}, asserting that for any $x\in A\setminus\{0\}$,
$|x|_\infty\geq 1$. The fraction field $K$ of $A$ is an extension of $\FF$ of transcendence degree one, and $\FF$ is algebraically closed in $K$. The closed points $P$ of $\mathcal{C}$ correspond to the classes of equivalence of multiplicative valuations over $K$
which have discrete image in $\RR_{>0}$ (discrete valuations), and which are trivial over $\FF$.
There is a set of valuations $|\cdot|_P$ (associated to the closed points of $\mathcal{C}$ different from $\infty$) such that for all $a\in A\setminus\{0\}$, $|a|_P\leq 1$ and $|a|_P=1$ for all but finitely many $P$, and such that
$$|a|_\infty\prod_P|a|_P=1,$$ see the axiomatic theory of Artin and Whaples and \cite[Theorem 2]{ART&WAP}.
This is the {\em product formula} for $A$. Let us consider $x\in A\setminus\{0\}$. We cannot have 
$|x|_\infty<1$ because this would violate the product formula. Therefore, $|x|_\infty\geq 1$ and this suffices to show strong discreteness. 
\end{proof}
We deduce that $A\cap\mathcal{M}_{K_\infty}=\{0\}$.
The next Lemma tells us that, as a `valued vector space over $\FF$', $A$ is not too different from $\FF[\pi^{-1}]$. This can be used to show co-compactness. 
\begin{Lemma}\label{the-space-V}
There exists a finite dimensional vector space $V\subset\FF[\pi^{-1}]$ over $\FF$ such that, isometrically,
$K_\infty\cong A\oplus V\oplus\mathcal{M}_{K_\infty}$.
\end{Lemma}

\begin{proof}
We can invoke Weierstrass' gap Theorem (it can be seen as one of the consequences of the Theorem of Riemann-Roch and it is nicely presented in Stichtenoth's book, \cite[Theorem 1.6.8]{STI}). Let $H(\infty)$ be the subset of $\NN$ whose elements are the nonnegative integers $k$ such that
there exists an element $f$ in $A$ with polar divisor $k[\infty]$. The Weierstrass gap Theorem asserts that the set $\NN\setminus H(\infty)$ contains exactly $g$ elements, where $g$ is the genus of $\mathcal{C}$. Additionally, $\NN\setminus H(\infty)=\{n_1,\ldots,n_g\}$, with $1=n_1<\cdots<n_g\leq 2g-1$ (so that if the genus $g$ of $\mathcal{C}$ is zero, this set is empty). We set $V:=\oplus_{i=1}^g\FF\pi^{-n_i}$ and if $g=0$ we set $V=\{0\}$. Note that $A\cap V=V\cap\mathcal{M}_{K_\infty}=A\cap\mathcal{M}_{K_\infty}=\{0\}$. Then, every element $f$ of $K_\infty$ can be decomposed in a
unique way as $f=a\oplus v\oplus m$ with $a\in A,v\in V$ and $m\in\mathcal{M}_{K_\infty}$.
\end{proof}

\begin{proof}[Proof of the second part of Lemma \ref{A-coco}]
Co-compactness is equivalent to the property that, for the metric induced on the quotient $K_\infty/A$, every sequence contains a convergent sequence. We have an isometric isomorphism
$$\frac{K_\infty}{A}\cong V\oplus\mathcal{M}_{K_\infty}$$ where $V$ is a vector space as in Lemma 
\ref{the-space-V} and we deduce that $K_\infty/A$, with the induced metric, is compact.
\end{proof} 
Up to a certain extent, the tower of rings
$A\subset K\subset K_\infty$ associated to the datum $(\mathcal{C},\infty)$ can be viewed in analogy with the tower of rings
$\ZZ\subset\QQ\subset\RR$.

Here is a fact which encourages to 'think ultrametrically'. We cannot cover a disk of radius $q$ (e.g. $D_L(0,q)$) of a non locally compact field $L$, with finitely many disks of radius $1$. Of course, this is possible, by local compactness, for the disk $D_{K_\infty}(0,q)$ in $K_\infty$. Explicitly, in the case $\mathcal{C}=\PP_{\FF_q}^1$:
$$D_{K_\infty}(0,q)=D_{K_\infty}(0,1)\oplus\FF_q\theta=\sqcup_{\lambda\in\FF_q^\times}D_{K_\infty}(\lambda\theta,1)\sqcup D_{K_\infty}(0,1).$$

\subsection{Algebraic extensions}\label{algebraic-extensions}

We start with an example in the local field $L=\FF_q((\pi))$ (with $|\pi|<1$). Let $M$ be an element of $L$
such that $|M|<1$. We want to solve the equation
\begin{equation}\label{artin-schreier}
X^q-X=M.
\end{equation}
Assuming that there exists a solution $x\in L$ we have $x=x^q-M$ so that inductively for all $n$:
$$x=x^{q^{n+1}}-\sum_{i=0}^nM^{q^i}.$$
The series $\sum_{i=0}^nM^{q^i}$ converges to $H$ in $\mathcal{M}_{L}$ by the hypothesis on $M$ and $|H|=|M|$. But $H^q-H=M$ and $x=H$ is a solution of (\ref{artin-schreier}) and the polynomial $X^q-X-M$ totally splits in $L[X]$
as all the roots are in $\{H+\lambda:\lambda\in\FF_q\}$. If $|M|=1$ we could think of writing $M=M_0+M'$
with $M_0\in\FF_q^\times$ and $|M'|<1$ but the equation (\ref{artin-schreier}) with $M=M_0$ has no roots 
in $\FF_q$. One easily sees that the equation (\ref{artin-schreier}) has no roots in $L$ if $|M|\geq 1$.
What makes the above algorithm of approximating a solution in the case $|M|<1$ is that the equation $X^q-X$ has solutions in $\FF_q$. These arguments can be generalised and formalised in what is called 
{\em Hensel's lemma}. It can be used to show the following property, which is basic and will be used everywhere. Let $L$ be a valued field with valuation $|\cdot|=c^{-v(\cdot)}$ (with a map $v:L\rightarrow\RR\cup\{\infty\}$), complete, and let us consider $F/L$ a finite extension (necessarily complete). Then, setting
$$N_{F/L}(x)=\left(\prod_{\sigma\in S}\sigma(x)\right)^{[F:L]_i},\quad x\in F,$$
where $S$ is the set of embeddings of $F$ in an algebraic closure of $L$ and $[F:L]_i$ is the inseparable degree of the extension $F/L$, 
the map 
$w:F\rightarrow \RR\cup\{\infty\}$ determined by $w(0)=\infty$ and
$$w(x)=\frac{v(N_{F/L}(x))}{[F:L]},\quad x\in F^\times$$
defines a valuation $|\cdot|_w:=c^{-w(\cdot)}$ extending $|\cdot|$ over $F$ in the only possible way.
Coming back to the local field $L=\FF_q((\pi))$, denoting by 
$L^{\operatorname{ac}}$ an algebraic closure of $L$, there is a unique valuation over $L^{\operatorname{ac}}$ extending the one of $L$; we will denote it by $|\cdot|$ by abuse of notation. The valuation group is $|\pi|^\QQ=\{|\pi|^\rho:\rho\in\QQ\}$ therefore dense in $\RR_{>0}$ and the residue field
is the algebraic closure $\FF_q^{\operatorname{ac}}$ of $\FF_q$. It is easy to see that $L^{\operatorname{ac}}$ is not complete, although each intermediate finite extension is so. 

\begin{Lemma}\label{from-goss-book}
The completion $\widehat{L^{\operatorname{ac}}}$ of $L^{\operatorname{ac}}$ is algebraically closed.
\end{Lemma}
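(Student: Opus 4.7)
The plan is a direct approximation argument using the density of $L^{ac}$ in $C:=\widehat{L^{ac}}$. Given a monic polynomial $P(X)=X^n+c_{n-1}X^{n-1}+\cdots+c_0\in C[X]$ with $n\geq 1$, I will produce a root of $P$ as the limit of a carefully chosen sequence of roots of approximating polynomials $P_k\in L^{ac}[X]$, each of which splits completely in $L^{ac}$ because $L^{ac}$ is algebraically closed.

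First, for each coefficient $c_j$, I would pick a sequence $c_j^{(k)}\in L^{ac}$ with $c_j^{(k)}\to c_j$ and set $P_k(X)=X^n+\sum_{j<n}c_j^{(k)}X^j$. Before going further, I would verify that the roots of all the $P_k$ sit in a common bounded disk: writing $\alpha^n=-\sum_{j<n}c_j^{(k)}\alpha^j$ for a root $\alpha$ of $P_k$ and applying the ultrametric inequality yields $|\alpha|\leq\max\bigl(1,\max_j|c_j^{(k)}|^{1/(n-j)}\bigr)$, so the convergence of the approximating sequences forces all roots to lie in some fixed disk $D(0,R)$.

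The core of the argument is then an inductive selection. Starting from any root $\alpha_1$ of $P_1$, I exploit the estimate
\[
|P_{k+1}(\alpha_k)|=|P_{k+1}(\alpha_k)-P_k(\alpha_k)|\leq \max_{0\leq j<n}\bigl(|c_j^{(k+1)}-c_j^{(k)}|\cdot R^{j}\bigr),
\]
whose right-hand side tends to $0$ as $k\to\infty$. Factoring $P_{k+1}(X)=\prod_{i=1}^n(X-\alpha_{k+1,i})$ in $L^{ac}$ gives $|P_{k+1}(\alpha_k)|=\prod_i|\alpha_k-\alpha_{k+1,i}|$, so at least one index satisfies $|\alpha_k-\alpha_{k+1,i}|\leq|P_{k+1}(\alpha_k)|^{1/n}$; that index selects my $\alpha_{k+1}$. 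The resulting $(\alpha_k)$ is Cauchy in $L^{ac}$, hence converges to some $\alpha\in C$, and a final continuity estimate $|P(\alpha)|\leq\max\bigl(|P(\alpha)-P_k(\alpha)|,|P_k(\alpha)-P_k(\alpha_k)|\bigr)\to 0$ shows $P(\alpha)=0$.

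The main obstacle I anticipate is the matching of roots across successive approximations: a priori the ``pick the closest root of $P_{k+1}$'' rule could hop among the $n$ sheets without producing a Cauchy sequence. What saves the argument, and what has no archimedean analogue, is the sharp factorization identity $|P_{k+1}(\alpha_k)|=\prod_i|\alpha_k-\alpha_{k+1,i}|$ combined with the ultrametric inequality, which forces a \emph{single} root to be very close to $\alpha_k$ without any Rouché- or separation-type argument; in particular, no appeal to Hensel's or Krasner's lemmas is needed, only completeness and the non-archimedean triangle inequality.
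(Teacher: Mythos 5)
Your proof is correct, and it takes a genuinely different route from the one in the paper. The paper reduces the statement to showing that any finite extension $F$ of $\widehat{L^{ac}}$ is trivial: it picks $x\in F$, uses the unique extension of the valuation to $F$ (set up just before the lemma), approximates the minimal polynomial of $x$ in the Gauss valuation by polynomials of the same degree that split completely over $L^{ac}$, and then asserts, leaving the verification to the reader, that for large $i$ some root $x_i$ of the approximant is within $\epsilon$ of $x$, so that $x\in\widehat{L^{ac}}$. You instead prove directly that every monic $P\in\widehat{L^{ac}}[X]$ has a root in the completion, with no auxiliary finite extension and no appeal to the extension-of-valuations theory: the root is manufactured inside $\widehat{L^{ac}}$ as the limit of an iteratively selected sequence of roots of split approximants, which is Cauchy because $|\alpha_{k+1}-\alpha_k|\le |P_{k+1}(\alpha_k)|^{1/n}\to 0$, and your identity $|P_{k+1}(\alpha_k)|=\prod_i|\alpha_k-\alpha_{k+1,i}|$ together with the $n$-th-root trick is precisely the quantitative content hidden in the paper's ``it is easy to show'' step. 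What the paper's framing buys is brevity, since it only needs to approximate one fixed element whose existence is already granted in a valued extension $F$; what yours buys is self-containedness (no need to know that the valuation extends to finite extensions of the completion) and an explicit construction, at the cost of writing out the routine estimates (the uniform bound $R$ on the roots and the final ultrametric estimate showing $P(\alpha)=0$). Two microscopic points to tidy up: allow the degenerate case $P_{k+1}(\alpha_k)=0$ by taking $\alpha_{k+1}=\alpha_k$, and replace $R^{j}$ by $\max(1,R)^{j}$ in the coefficient estimate; neither affects the argument.
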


\begin{proof} We follow \cite[Proposition 2.1]{GOS}. Let $F/\widehat{L^{\operatorname{ac}}}$ be a finite extension. Then, as seen previously, $F$ carries a unique extension of the valuation $|\cdot|$ of $\widehat{L^{\operatorname{ac}}}$.
Let $x$ be an element of $F$. We want to show that $x\in\widehat{L^{\operatorname{ac}}}$. For a polynomial $P=\sum_iP_iX^i\in \widehat{L^{\operatorname{ac}}}[X]$ we set 
$\|P\|:=\sup\{|P_i|\}$. It is easy to see that $\|\cdot\|$ is a valuation over $\widehat{L^{\operatorname{ac}}}[X]$, called the {\em Gauss valuation} (to see the multiplicativity it suffices to study the image of a polynomial in $\mathcal{O}_{\widehat{L^{\operatorname{ac}}}}[X]$ by the residue map $$\mathcal{O}_{\widehat{L^{\operatorname{ac}}}}[X]\rightarrow k_{\widehat{L^{\operatorname{ac}}}}[X]$$ which is a ring homomorphism).
Let $P\in\widehat{L^{\operatorname{ac}}}[X]$ be the minimal polynomial of $x$ over $\widehat{L^{\operatorname{ac}}}$. 
For $\|\cdot\|$, $P$ is a limit of polynomials of the same degree, which split completely. It is easy to show that for all $\epsilon>0$, there exists $N\geq 0$ with the property that for all $i\geq N$, a root $x_i\in K_\infty^{\operatorname{ac}}$ of $P_i$ satisfies $|x-x_i|_\infty<\epsilon$. This shows that $x$ is a limit of a sequence of $\widehat{L^{\operatorname{ac}}}$
and therefore, $x\in \widehat{L^{\operatorname{ac}}}$.
\end{proof}

\subsection{Analytic functions on disks}\label{Analytic-functions}

To introduce the next discussions we recall here some basic facts about ultrametric analytic functions in disks, following \cite[Chapter 3]{GOS}. 
In this subsection, $L$ denotes a valued field which is algebraic closed and complete for a valuation $|\cdot|$ (e.g. $\CC_\infty$). We consider a 
map $v:L^\times\rightarrow\RR$ such that $|\cdot|=c^{-v(\cdot)}$ for some $c>1$.
We consider a formal power series
\begin{equation}\label{asinhere}
f=\sum_{i\geq 0}f_iX^i\in L[[X]].
\end{equation}
The {\em Newton polygon} $\mathcal{N}$ of $f$ is the lower convex hull in $\RR^2$ of the set $\mathcal{S}=\{(i,v(f_i)):i\geq 0\}$. It is equal to 
$\bigcap_{\mathcal{H}}\mathcal{H}$ where $\mathcal{H}$ runs over all the closed half-planes of $\RR^2$ which contain at once $\mathcal{S}$ and a half-line $\{(x,y):y\gg0\}$ for some $x\in\RR$, where $y\gg0$ (`large enough') means that $y\geq y_0$ for some $y_0\in\RR$. 

Here is a practical method of constructing the Newton polygon $\mathcal{N}$ of a formal series $f\in L[[X]]$, if you have on-hand a wooden board, a pencil, nails, a hammer, string and a compass. Draw the axes coordinates $i$ and $v$ on the board, with the positive direction of the latter pointed toward the north, as indicated by the compass. Mark the coordinate points $(i,v (i))$ with the pencil, then hammer nails into the points. Place yourself in front of the wooden board pointing north. Take the string and pull it tautly between your hands, then begin winding it from south to north (being careful to not choose $f=0$, meaning you must have hammered in at least one nail!). A polygon figure will appear, which, transferred on the board, represents the Newton polygon of $f$.

Note that if $f\neq0$, there is always a vertical side on the left of $\mathcal{N}$. If $f$ is a non-zero polynomial, there is also a vertical side on the right. If $x\in L$ and $|f_ix^i|\rightarrow0$
then the series $\sum_if_ix^i$ converges in $L$ to an element that we denote by $f(x)$. There exists $r\in|L|$
such that $f(x)$ is defined for all $x\in D(0,r):=D_L(0,r)$ and we have thus defined a function $$D(0,r)\xrightarrow{f}L$$
that we call {\em analytic function} on the disk $D(0,r)$ (note the abuse of langage).
\begin{Proposition}\label{prop-goss}
The following properties hold.
\begin{enumerate} 
\item The sequence of slopes of $\mathcal{N}$ is strictly increasing and
its limit is $-\rho(f) = \lim\sup_{i\rightarrow\infty}v(f_i)$. The real number $\rho(f)$ is unique with the 
property that the series $f(x)$ converges for $x\in L$ such that $v(x) > \rho(f)$, and $f(x)$ diverges if $v(x) < \rho(f)$.
\item If there is a side of the Newton polygon of $f$ which has slope $-m$ and such that it does not contain any point of the Newton polygon in its interior, then $f$ has exactly $r(m)$ zeroes $x$ counted with multiplicity, with $v(x) = m$, where $r(m)$ is the length of the projection of this side of slope $-m$ onto the horizontal line. There are no other zeroes of $f$ with this property.
\item 
If $\rho(f) = -\infty$, assuming that $f$ is not identically zero, we can expand, in a unique way (Weierstrass product
expansion):
$$f(X)=cX^n\prod_i\left(1-\frac{X}{\alpha_i}\right)^{\beta_i}$$
with $c\in L^\times$, where $\alpha_i\rightarrow\infty$ is the sequence of zeroes such that $v(\alpha_i)>v(\alpha_{i+1})$ (with multiplicities $\beta_i\in\NN^*$).
\end{enumerate}
\end{Proposition}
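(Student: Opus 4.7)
My approach grounds all three parts in Newton polygon formalism together with the non-archimedean Weierstrass preparation theorem.

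For (1), the strict increase of slopes is immediate from the definition: at any interior vertex of $\mathcal{N}$, the two adjacent sides must have distinct slopes, otherwise the vertex would not be extreme in the lower convex hull. For the asymptotic slope, I would verify directly that $\sum_i f_i x^i$ converges in $L$ if and only if $v(f_i)+i\,v(x)\to+\infty$, which is the non-archimedean Cauchy--Hadamard condition (using the simple characterisation of convergent series in a complete valued field recalled at the start of \S\ref{rings-and-fields}); matching this threshold with the limiting slope of $\mathcal{N}$ yields the stated value of $\rho(f)$.

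For (2), the key tool is the Weierstrass preparation theorem on the closed disk $D(0,c^{-m})$. Writing $r=r(m)$ for the horizontal length of the slope-$-m$ side and $i_{0}$ for the abscissa of its right endpoint, one factors $f = P\cdot u$ where $P$ is a distinguished polynomial of degree $i_{0}$ and $u$ is a power series that is a unit on $D(0,c^{-m})$. The zeros of $f$ in that disk are then exactly the roots of $P$, and the Newton polygon of $P$ coincides with the portion of $\mathcal{N}$ of slopes $\leq -m$. For the polynomial $P$, the classical observation that a root $x_0$ of valuation $m$ forces the minimum of $v(P_i)+i m$ to be attained at least twice (otherwise $\sum P_i x_0^i$ could not vanish by the strict triangle inequality), and conversely each such double-minimum contributes such a root, pins the roots of valuation $m$ to the slope-$-m$ side. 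Since $L$ is algebraically closed, $P$ totally splits, yielding exactly $r(m)$ such roots counted with multiplicity.

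For (3), assume $\rho(f)=-\infty$. Then the slopes of $\mathcal{N}$ diverge to $+\infty$, so the zeros can be enumerated as a sequence $(\alpha_i)_i$ with $v(\alpha_i)\to-\infty$. The infinite product $\prod_i(1-X/\alpha_i)^{\beta_i}$ therefore converges coefficientwise in $L[[X]]$ to an entire function with exactly the same zero multiset away from the origin as $f$. The ratio $g(X):=f(X)/\bigl(X^n\prod_i(1-X/\alpha_i)^{\beta_i}\bigr)$ is then entire and nowhere zero, so by part~(2) its Newton polygon has no side of finite slope; this forces $g$ to be the nonzero constant $c\in L^\times$, and uniqueness of the factorisation follows from the uniqueness of the zero set with multiplicities.

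The main obstacle is the Weierstrass preparation step underlying (2): one must show that on $D(0,c^{-m})$ the series $f$ genuinely factors as a polynomial of the predicted degree times a unit power series, and in particular that $u$ has no zeros there. This is handled by a successive-approximation argument exploiting that $v(f_i)+i\,m > v(f_{i_{0}})+i_{0}\,m$ strictly for $i>i_{0}$, which makes the high-order tail strictly dominated by the leading piece and allows the factorisation to be constructed iteratively; the rest of the proposition is then a matter of bookkeeping with the Newton polygon.
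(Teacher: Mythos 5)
The paper itself gives no proof of Proposition \ref{prop-goss}: it is recalled in \S\ref{Analytic-functions} ``following'' \cite{GOS}, so there is no internal argument to compare with. Your plan is essentially the standard proof from that reference --- Newton polygon bookkeeping for (1), Weierstrass preparation/division on closed disks for (2), and a disk-by-disk factorisation argument for (3) --- and I see no step that would fail. Two points deserve to be made explicit. First, in (1) the threshold you ``match'' should be identified precisely: the limiting slope of $\mathcal{N}$ is $\liminf_i v(f_i)/i$ (the displayed formula in the statement is missing the division by $i$), and for a side of slope $-m$ of finite horizontal length the fact that all subsequent slopes exceed the next one gives $v(f_i)+im\to\infty$; this is exactly what puts $f$ in the Tate algebra of $D(0,c^{-m})$ so that preparation applies in (2) (note also that $m$ is a rational combination of values of coefficients, hence lies in the divisible value group of the algebraically closed $L$, so $c^{-m}\in|L^\times|$ and the disk can be rescaled to the unit disk).

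Second, the thinnest point is in (3): the claim that $g(X)=f(X)\bigl(X^n\prod_i(1-X/\alpha_i)^{\beta_i}\bigr)^{-1}$ is entire is asserted rather than proved, and it is not a formal consequence of the two functions having the same zero multiset. The clean way to get it is the same machinery as in your part (2): on each closed disk $D(0,R)$ write $f=P\cdot u$ with $P$ the distinguished polynomial carrying the zeros in the disk and $u$ a unit, and observe that $X^n\prod_i(1-X/\alpha_i)^{\beta_i}$ equals $\lambda P$ times a unit, because the tail $\prod_{|\alpha_i|>R}(1-X/\alpha_i)^{\beta_i}$ is of the form $1+(\text{small})$ in the sup norm on the disk, hence invertible in the affinoid algebra; the quotients on the disks then glue to an entire, nowhere vanishing function, and your Newton-polygon argument (no finite-slope side, nonvanishing at $0$) forces it to be a constant $c\in L^\times$. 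With that supplied, and with the remark that uniqueness uses the chosen normalisation of the factors $1-X/\alpha_i$ as well as the zero multiset, your proposal is a complete and correct route to the proposition.
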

By (2) of the proposition, if we set $r=c^{-\rho(f)}\in\RR_{\geq 0}$, $f$ is analytic on $D(0,r')$ for all $r'\in|L|$
such that $r'<r$ and $r$ is maximal with this property. 
If $\rho(f) = -\infty$ then we say that $f$ is {\em entire}. 
We can show easily that if $f$ is entire and non-constant, then it is surjective, and furthermore, an entire function without zeroes is constant. Also, if $f$ as above is non-entire and non-constant, in general it is not surjective,
but we have a reasonable description of the image of disks by it, given by the next corollary, the proof of which is left to the reader.

\begin{Corollary}\label{imageofdisks}
Let $f$ be as in (\ref{asinhere}) with $f_0=0$ and let us suppose that it converges on $D_L(0,r)$ with $r\in|L^\times|$. Then, $f\big(D_L(0,r)\big)=D_L(0,s)$ for some $s\in|L|$.
\end{Corollary}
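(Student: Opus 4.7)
The plan is to define
$$S := \max_{i \geq 1} |f_i| R^i,$$
check that $S \in |L|$, prove $f(D) \subseteq D_L(0,S)$ by the ultrametric inequality, and then prove surjectivity onto $D_L(0,S)$ by a Newton polygon analysis of the auxiliary series $f(X)-y$ for each $y$ in the target disk.

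The supremum defining $S$ is in fact attained, because convergence of $f$ on $D$ forces $|f_i| R^i \to 0$, and each $|f_i| R^i$ lies in $|L|$ since $R \in |L^\times|$; so $S \in |L|$. If $f \equiv 0$ the claim is trivial, so I assume $f \not\equiv 0$ and pick $i_0 \geq 1$ realising $S = |f_{i_0}| R^{i_0}$. The containment $f(D) \subseteq D_L(0,S)$ is then the one-line estimate
$$|f(x)| \leq \max_{i \geq 1} |f_i|\, |x|^i \leq \max_{i \geq 1} |f_i| R^i = S, \qquad x \in D.$$

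For the reverse inclusion, I would fix $y \in D_L(0,S)$. The case $y=0$ is settled by $f(0)=0$ (since $f_0=0$). Assuming $y\neq 0$, I would introduce $g(X) := f(X)-y = -y + \sum_{i \geq 1} f_i X^i$, which still converges on $D$, and apply Proposition \ref{prop-goss}(2) to $g$. The leftmost finite vertex of the Newton polygon of $g$ is $(0,v(y))$, and the chord to $(i_0, v(f_{i_0}))$ has slope
$$\frac{v(f_{i_0})-v(y)}{i_0} \leq -v(R),$$
because $|y| \leq S = |f_{i_0}| R^{i_0}$ translates into $v(y) \geq v(f_{i_0}) + i_0\, v(R)$. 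Hence the leftmost non-vertical slope $-m_0$ of the Newton polygon of $g$ satisfies $m_0 \geq v(R)$, and Proposition \ref{prop-goss}(2) furnishes a zero $\alpha$ of $g$ with $v(\alpha)=m_0 \geq v(R)$, i.e.\ $|\alpha| \leq R$. Thus $\alpha \in D$ and $f(\alpha)=y$, so $D_L(0,S) \subseteq f(D)$.

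The main obstacle is the honest verification that Proposition \ref{prop-goss}(2) produces a zero of $g$ that genuinely lies in $D$, not merely in some larger disk of convergence: one needs $L$ algebraically closed and complete to factor $g$ as in Proposition \ref{prop-goss}(3) on each smaller disk, and the slope bound $m_0 \geq v(R)$ is exactly what guarantees the zero associated with the leftmost side falls inside the closed disk $D$. Once this point is nailed down, everything else is a routine comparison of absolute values.
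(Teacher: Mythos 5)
Your argument is correct, and since the paper explicitly leaves this corollary to the reader, it is precisely the intended deduction from Proposition \ref{prop-goss}: take $S=\max_{i\geq 1}|f_i|R^i$, get $f(D)\subseteq D_L(0,S)$ from the ultrametric inequality, and for $y\in D_L(0,S)\setminus\{0\}$ use the Newton polygon of $f(X)-y$, whose first side has slope $\leq -v(R)$, to produce a zero in $D$. Your closing remark is also the right one to flag: the zeros supplied by Proposition \ref{prop-goss}(2) lie where the series converges because $m_0\geq v(R)$ and $f$ converges on the closed disk $D_L(0,R)$, so the argument is complete as written.
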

To be brief: an analytic function sends disks to disks.

\subsection{Further properties of the field $\CC_\infty$}

We consider as in \S \ref{our-main-settings} the local field $K_\infty$. Then, $K_\infty=\FF((\pi))$ for some
uniformiser $\pi$ and by Lemma \ref{from-goss-book}, the field $$\CC_\infty:=\widehat{K_\infty^{\operatorname{ac}}}$$
is algebraically closed and complete.
It will be used in the sequel as an alternative to $\CC$ 'for silicon-based mathematicians' (\footnote{Opposed to 'carbon-based mathematicians', following David Goss.}), but there are many important differences.
For instance, note that $\CC/\RR$ has degree $2$, while $\CC_\infty/K_\infty$ is infinite dimensional, as the reader can easily see by observing that 
$\FF$-linear elements of $\FF_q^{\operatorname{ac}}$ are also $K_\infty$-linearly independent (in fact, this $K_\infty$-vector space is uncountably-dimensional and the group of automorphisms is an infinite, profinite group).
 
Complex analysis makes heavy use of local compactness so that we can cover a compact analytic space with finitely many disks. For example, we can cover an annulus with finitely many disks so that the union does not contain the center, which is very useful in path integration of analytic functions over $\CC\setminus\{0\}$. The ultrametric counterparts of this and other familiar and intuitive statements are false in $\CC_\infty$ as well as in other non-locally compact fields. We cannot use 'partially overlapping disks' to 'move' in $\CC_\infty$, or, more generally, in a non-Archimedean  space. The intuitive idea of `moving' itself is different even thought it is not too different, as two annuli, or a disk and an annulus, may overlap somewhere without being one included in the other. 

On another hand, the field $\CC_\infty$ also has 'nice' properties. Let us review some of them; we denote by $L^{\operatorname{sep}}$ the separable closure of a field $L$. 

\begin{Lemma}\label{lemma-sep}
We have $\CC_\infty=\widehat{K_\infty^{\operatorname{sep}}}$.
\end{Lemma}

\begin{proof}
This is consequence of simple metric properties of Artin-Schreier extensions. We follow \cite{AX}. 
First look at the equation $$X^{q'}-X=M$$ with $M\in K_\infty$ and where $q'=p^{e'}$ for some $e'>0$. Then, if $|M|_\infty>1$, all the solutions $\gamma\in\CC_\infty$ of the equation are such that
$|\gamma|_\infty^{q'}=|M|_\infty$ and $|\gamma^{q'}-M|_\infty<|M|_\infty$. This also is a very simple consequence of Proposition \ref{prop-goss}: the reader can study the Newton polygon 
of $f(X)=X^{q'}-X-M$ inspecting the three different cases $|M|_\infty<1,|M|_\infty=1$ and $|M|_\infty>1$. Here, with $|M_\infty|>1$, the extension $K_\infty(\gamma)/K_\infty$ is clearly separable but ramified as by Proposition \ref{prop-goss}, the polynomial $X^{q'}-X-M$ has $q'$ distinct roots $x$ in $K_\infty^{\operatorname{ac}}$ with valuation $|x|_\infty=|M|_\infty^{\frac{1}{q'}}$. It is in fact a {\em wildly ramified} extension:
this means that the characteristic $p$ of $\FF_q$ divides the index of ramification. 

We now consider $\alpha\in K_\infty^{\operatorname{ac}}$. We want to show that $\alpha$ is a limit of $K_\infty^{\operatorname{sep}}$. There exists $q'=p^{e'}$ with $a:=\alpha^{q'}\in K_\infty^{\operatorname{sep}}$. For instance, we can take $q'=[K_\infty(\alpha):K_\infty]_i$ (inseparable degree). Consider $b\in K_\infty^\times$ and a root $\beta\in K_\infty^{\operatorname{ac}}$ of the polynomial equation $X^{q'}-bX-a=0$.
Clearly, $\beta\in K_\infty^{\operatorname{sep}}$. 
Let $\lambda\in K_\infty^{\operatorname{sep}}$ be such that $\lambda^{q'-1}=b$. Then, setting $\gamma=\frac{\beta}{\lambda}$,
we have $\gamma^{q'}=\frac{\beta^{q'}}{\lambda^{q'}}=\frac{\beta^{q'}}{b\lambda}$ so that 
$$\gamma^{q'}-\gamma=\frac{a}{b\lambda}=:M.$$
We can choose $b\in K_\infty^\times$ such that $|b|_\infty$ is small enough so that $|M|_\infty>1$. If this is the case, then $|\gamma|_\infty^{q'}=|\frac{a}{b\lambda}|_\infty$ so that
$$|\beta|_\infty^{q'}=|a|_\infty.$$
Since $(\beta-\alpha)^{q'}=\beta^{q'}-a=b\beta$,
$$v_\infty(\beta-\alpha)=\frac{1}{q'}v_\infty(\beta^{q'}-a)=\frac{1}{q'}\left(v_\infty(b)+v_\infty(\beta)\right)=\frac{1}{q'}\left(v_\infty(b)+\frac{1}{q'}v_\infty(a)\right).$$
We choose a sequence $(b_i)_i\subset K_\infty^\times$ with $b_i\rightarrow0$. For all $i$, let $\beta_i\in K_\infty^{\operatorname{sep}}$ be such that
$\beta^{q'}=\beta_ib_i+a$ and $\beta_i\rightarrow\alpha$. Then, 
$v_\infty(\beta_i-\alpha)\rightarrow\infty$ as $v_\infty(b_i)\rightarrow\infty$ so that
$\beta_i\rightarrow\alpha$. 
\end{proof}

We deduce that $|\CC_\infty^\times|_\infty=|\pi|_\infty^\QQ$ with $\pi$ a uniformiser of $K_\infty$, and the residue field of $\CC_\infty$ is $\FF_q^{ac}$ the algebraic closure of $\FF_q$ in $\CC_\infty$. 

The next results are not used in the rest of the text but mentioning them is helpful in understanding important subtleties lying in the bases of the theory of Drinfeld modular forms.

\begin{Lemma}\label{mapping-c-infty}
The group $\CC_\infty^\times$ contains a subgroup $\pi^\QQ\cong(\QQ,+)$ which is totally ordered for $|\cdot|_\infty$. There are group epimorphisms 
$$\CC_\infty^\times\xrightarrow{\varpi}\pi^\QQ,\quad \CC_\infty^\times\xrightarrow{\operatorname{sgn}}(\FF_q^{ac})^\times$$
such that $\varpi$ induces the identity on $\pi^\ZZ$, $\operatorname{sgn}$ induces the identity on $(\FF_q^{ac})^\times$, and 
for all $x\in\CC_\infty^\times$, $$|x-\varpi(x)\operatorname{sgn}(x)|_\infty<|x|_\infty.$$
\end{Lemma}
One can see that a choice of $\pi^\QQ$, $\varpi$ etc. corresponds to an embedding of $\CC_\infty$ in a {\em maximal immediate extension} of it (that is to say, a field extension which is maximal with same valuation group and same residue field) or, equivalently, in a certain type of field of {\em Hahn generalised series}, {\em spherically complete}. Read Poineau and Turchetti's contribution \cite[Definition I.2.17, Theorem I.2.18, Example I.2.20]{POI&TUR}. Read also Kedlaya's \cite{KED}. 

The group $G:=\operatorname{Gal}(K_\infty^{\operatorname{sep}}/K_\infty)$ acts on $\CC_\infty$ by continuous 
$K_\infty$-linear automorphisms. Then the following important result holds, where the completion on the right is that of the perfect closure of $K_\infty$ in $\CC_\infty$ (see for example \cite{AX}):
\begin{Theorem}[Ax-Sen-Tate]\label{ax-sen-tate}
$\CC_\infty^G:=\{x\in\CC_\infty:g(x)=x,\forall g\in G\}=\widehat{K_\infty^{\operatorname{perf}}}$.
\end{Theorem}

\section{Drinfeld modules and uniformisation}\label{Anderson-modules-and-uniformisation}

Drinfeld modules are also at the hearth of Tavares Ribeiro contribution to this volume, read 
\cite[\S 1.4]{TAV}.
Let $R$ be an $\FF_q$-algebra and $\tau:R\rightarrow R$ be an $\FF_q$-linear endomorphism.
We denote by $R[\tau]$ the left $R$-module of the finite sums $\sum_if_i\tau^i$ ($f_i\in R$) equipped with the $R$-algebra structure given by $\tau b=\tau(b)\tau$ for $b\in R$ (\footnote{It would be more appropriate, to define this $R$-algebra, to choose an indeterminate $X$ and consider as the underlying $R$-module the polynomial ring $R[X]$ setting the product to be $Xb=\tau(b)X$. This is an Ore algebra and the standard notation for it is $R[X;\tau]$. For the purposes we have in mind, the abuse of notation $R[\tau]$ is harmless.}).

Let $f=\sum_{i=0}^nf_i\tau^i$ be in $R[\tau]$. For any $b\in R$ we can evaluate $f$ in $b$ by setting
$$f(b)=\sum_{i=0}^nf_i\tau^i(b)\in R.$$
This gives rise to an $\FF_q$-linear map $R\rightarrow R$. Note that the element $f=\sum_if_i\tau^i$
and the associated evaluation map $f:R\rightarrow R$ are two completely different objects. However, in this text, we will denote them with the same symbols.

We choose $R$ by returning to the notations of \S \ref{our-main-settings}. In particular considering the $\FF_q$-algebra $A=H^0(\mathcal{C}\setminus\{\infty\},\mathcal{O}_{\mathcal{C}})$ we construct the tower of rings
$$A\subset K\subset K_\infty\subset \CC_\infty$$ arising from \S \ref{algebraic-extensions} which is analogous of $\ZZ\subset\QQ\subset\RR\subset\CC$. 

\subsection{Drinfeld $A$-modules and $A$-lattices}\label{Drinfeld-modules}

We show here the crucial correspondence between Drinfeld $A$-modules and $A$-lattices, due to Drinfeld \cite{DRI}. 
The definition of Drinfeld module that we give here is not the most general one but it will nevertheless be enough for our purposes. Remember that, in the construction of the tower of rings
$A\subset K\subset K_\infty\subset K_\infty^{\operatorname{ac}}\subset \CC_\infty$ we have in fact chosen an embedding $A\subset\CC_\infty$.

\begin{Definition}\label{def-Drinfeld-modules}{\em An injective $\FF_q$-algebra morphism $\phi:A\rightarrow\operatorname{End}_{\FF_q}(\GG_a(\CC_\infty))\cong\CC_\infty[\tau]$ is a {\em Drinfeld $A$-module of rank $r>0$} if for all $a\in A$ 
$$\phi_a:=\phi(a)=a+(a)_1\tau+\cdots+(a)_{rd_\infty\deg(a)}\tau^{rd_\infty\deg(a)}\in\CC_\infty[\tau],$$
where $d_\infty$ is the degree over $\FF_q$ of the residue field of $A$ and the coefficients $(a)_i$ are in $\CC_\infty$ and depend on $a$, and where $\deg(a)=\dim_{\FF_q}(A/(a))$. If $R$ is an $\FF_q$-subalgebra of $\CC_\infty$ containing $A$ and the coefficients $(a)_i$ with $1\leq i\leq rd_\infty\deg(a)$ and $a\in A$, we say that the Drinfeld $A$-module $\phi$ is {\em defined over $R$ and we write $\phi/R$.}}
\end{Definition}

Note that geometrically, a Drinfeld module defined over $\CC_\infty$ is just $\GG_a$ over $\CC_\infty$. What makes the theory interesting is the fact that there are many embeddings of $A$ in $\operatorname{End}_{\FF_q}(\GG_a(\CC_\infty))$. The case of the {\em Carlitz module}, which can be viewed as the 'simplest' Drinfeld module of rank one, is analysed in \S \ref{Carlitz-module-and-exponential}.

The set of Drinfeld $A$-modules of rank $r$ is equipped with a natural structure of small category.
 If $\varphi$ and $\phi$ are two Drinfeld $A$-modules, we say that they are {\em isogenous} if there exists $\nu\in\CC_\infty[\tau]$ such that $\varphi_a\nu=\nu\psi_a$ for all $a\in A$.  If $\nu$, seen as a non-commutative polynomial in $\tau$, is constant, then we say that $\varphi$ and $\psi$ are {\em isomorphic}. Being isogenous induces an equivalence relation on Drinfeld $A$-modules and isogenies are the morphisms connecting Drinfeld $A$-modules of same rank in our category.
 
We prove that the category of Drinfeld $A$-modules of rank $ r$  is equivalent to another category, that of {\em $A$-lattices}.
 
\begin{Definition}{\em An {\em $A$-lattice} in $\CC_\infty$ is a finitely generated strongly discrete $A$-submodule
$\Lambda\subset\CC_\infty$ and two $A$-lattices $\Lambda$ and $\Lambda'$ are isogenous if there exists $c\in\CC_\infty^\times$ such that $c\Lambda\subset\Lambda'$ with $c\Lambda$ of finite index in $\Lambda'$.}\end{Definition}
Isogenies are the morphisms connecting lattices. Clearly, this also defines an equivalence relation. If two $A$-lattices $\Lambda$ and $\Lambda'$ are such that there exists
$c\in\CC_\infty$ with $c\Lambda=\Lambda'$, then we say that $\Lambda$ and $\Lambda'$ are isomorphic.

Since $A$ is a Dedekind ring, any $A$-lattice $\Lambda$ is projective and has a rank $r=\operatorname{rank}_A(\Lambda)$.
We have the following lemma, the proof of which is left to the reader.
\begin{Lemma}
Let $\Lambda$ be a projective $A$-module of rank $r$. Then $\Lambda$ is an $A$-lattice if and only if $K_\infty$-vector space generated by $\Lambda$ has dimension $r$. 
\end{Lemma}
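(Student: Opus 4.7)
The plan is to realise the $K_\infty$-vector space $\Lambda\otimes_AK_\infty$ through the natural map $\lambda\otimes c\mapsto c\lambda$ as $V:=K_\infty\cdot\Lambda\subset\CC_\infty$. Since $\Lambda\otimes_AK$ has $K$-dimension $r$, the abstract tensor product $\Lambda\otimes_AK_\infty$ has $K_\infty$-dimension $r$, and the canonical surjection onto $V$ is an isomorphism exactly when $\dim_{K_\infty}V=r$. I fix once and for all a tuple $e_1,\dots,e_r\in\Lambda$ forming a $K$-basis of $\Lambda\otimes_AK$, which exists because any finite set of $A$-generators of $\Lambda$ spans $\Lambda\otimes_AK$ over $K$.

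For $(\Leftarrow)$, assume $\dim_{K_\infty}V=r$, so that $(e_1,\dots,e_r)$ is even a $K_\infty$-basis of $V$. The free sub-module $L_0:=Ae_1\oplus\cdots\oplus Ae_r$ is then strongly discrete in $V\subset\CC_\infty$: since any two norms on the finite-dimensional $K_\infty$-vector space $V$ are equivalent, strong discreteness in $V$ reduces via the coordinates $(a_1,\dots,a_r)$ to strong discreteness of $A$ in $K_\infty$, which is given. Next, $\Lambda/L_0$ is a finitely generated torsion $A$-module (both modules have $A$-rank $r$), and by the structure theorem over the Dedekind ring $A$ decomposes as a finite direct sum of modules $A/\mathfrak{p}^n$ with $\mathfrak{p}$ a nonzero prime, each finite since residues of $A$ at nonzero primes are finite. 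Hence $\Lambda$ is a finite union of cosets of $L_0$, and is itself strongly discrete.

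For $(\Rightarrow)$, suppose $d:=\dim_{K_\infty}V<r$, and aim to contradict strong discreteness. Consider the $K_\infty$-linear map $\phi\colon K_\infty^r\to V$ sending $(c_i)\mapsto\sum c_ie_i$; by hypothesis its kernel has $K_\infty$-dimension $r-d\ge 1$, so I pick $v\in K_\infty^r\setminus\{0\}$ with $\phi(v)=0$. The decomposition $K_\infty=\FF[\pi^{-1}]\oplus\mathcal{M}_{K_\infty}$ with $\FF[\pi^{-s}]\subset A$ recalled in \S\ref{our-main-settings} encodes the co-compactness of $A$ in $K_\infty$, providing a constant $C$ such that every $x\in K_\infty^r$ decomposes as $x=a+u$ with $a\in A^r$ and $|u|\le C$ in the sup-norm. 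Applied to $\pi^{-n}v$ for $n\in\NN$, this produces $a_n\in A^r$ with $|a_n-\pi^{-n}v|\le C$; the ultrametric inequality then forces $|a_n|=|\pi|^{-n}|v|$ for $n$ large, while $\phi(a_n)=\phi(a_n-\pi^{-n}v)$ remains in the bounded set $\phi(\{u\in K_\infty^r:|u|\le C\})$. Since $\phi|_{A^r}$ is injective (the $e_i$ being $K$-linearly independent) and the $a_n$ are pairwise distinct for large $n$ because their norms are, the $\phi(a_n)$ yield infinitely many distinct elements of $\Lambda$ in a single disk of $\CC_\infty$, contradicting strong discreteness.

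The technical crux is the $(\Rightarrow)$ amplification step: a purely algebraic $K_\infty$-dependence must be converted into an honest metric accumulation in $\Lambda$. The device is to inflate one nonzero kernel vector by $\pi^{-n}$ and exploit co-compactness of $A\subset K_\infty$ to round to $A$-integral coordinates at bounded metric cost.
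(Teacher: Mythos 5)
The paper gives no proof of this lemma (it is explicitly ``left to the reader''), so there is nothing to compare against; what matters is whether your argument is sound, and it is. You correctly resolve the one genuine subtlety in the statement: taken literally, $\Lambda\otimes_AK_\infty$ is always abstractly of dimension $r$, and the content is that the natural map to $\CC_\infty$ is injective, i.e.\ that the $K_\infty$-span $V=K_\infty\cdot\Lambda$ has dimension $r$; your formulation via the canonical surjection makes this precise. The $(\Leftarrow)$ direction is complete: equivalence of norms on the finite-dimensional $K_\infty$-space $V$ (legitimate since $K_\infty$ is complete) reduces strong discreteness of $Ae_1\oplus\cdots\oplus Ae_r$ to that of $A$ in $K_\infty$, and finiteness of the torsion quotient $\Lambda/L_0$ (finitely generated torsion over the Dedekind ring $A$ with finite residue fields, or more simply a finitely generated module over the finite ring $A/(a)$ for suitable $a\neq0$) propagates discreteness to $\Lambda$. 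The $(\Rightarrow)$ direction, inflating a nonzero $K_\infty$-relation by $\pi^{-n}$ and rounding to $A^r$ at bounded cost, is the right device and the injectivity of $\phi$ on $A^r$ plus the distinct norms $|\pi|^{-n}|v|$ do give infinitely many distinct lattice points in a fixed disk, contradicting strong discreteness.

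One small remark: the existence of the constant $C$ is exactly the co-compactness of $A$ in $K_\infty$ asserted in \S \ref{our-main-settings}, and you are entitled to quote it; but your parenthetical justification via the decomposition $K_\infty=\FF[\pi^{-1}]\oplus\mathcal{M}_{K_\infty}$ with $\FF[\pi^{-s}]\subset A$ only gives bounded distance to $A$ on the nose when $s=1$ (genus zero). For general $(\mathcal{C},\infty)$ one needs, via Riemann--Roch, that every sufficiently large pole order at $\infty$ with arbitrary leading coefficient in $\FF$ is realised by an element of $A$, which is what the paper's appeal to the Weierstrass gap theorem supplies. This is a matter of attribution, not a gap in the argument.
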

Observe that, in contrast with the complex case, for all $r>1$ there exist infinitely many non-isomorphic $A$-lattices (this can be deduced from the fact that $\CC_\infty$ is not locally compact). 
We choose an $A$-lattice $\Lambda$ of rank $r$ as above.

By Proposition \ref{prop-goss} the following product (where the dash $(\cdot)'$ indicates that the factor corresponding to $\lambda=0$ is omitted)
$$\exp_\Lambda(Z):=Z\sideset{}{'}\prod_{\lambda\in\Lambda}\left(1-\frac{Z}{\lambda}\right)$$
converges to an entire function $\CC_\infty\rightarrow\CC_\infty$ (hence surjective) called the {\em exponential function} associated to $\Lambda$. Note that this is an $\FF_q$-linear entire function with kernel $\Lambda$, and we can write
$$\exp_\Lambda(Z)=\sum_{i\geq 0}\alpha_i\tau^i(Z),\quad \alpha_i\in\CC_\infty,\quad \alpha_0=1,\quad \forall Z\in\CC_\infty.$$
In particular, $\frac{d}{dZ}\exp_\Lambda(Z)=1$, and the 'logarithmic derivative' (defined in the formal way) of $\exp_\Lambda$ coincides with its multiplicative inverse and is equal to the series
$$\sum_{\lambda\in\Lambda}\frac{1}{Z-\lambda},\quad Z\in\CC_\infty\setminus\Lambda.$$
We refer to \cite[\S 2]{GEK} for an account on the properties of this fundamental class of analytic functions. 

It is not always an easy task to construct explicitly Drinfeld $A$-modules for a given $A=H^0(\mathcal{C}\setminus\{\infty\},\mathcal{O}_{\mathcal{C}})$, if $\mathcal{C}\neq\PP_{\FF_q}^1$.
The following result is due to Drinfeld \cite{DRI} and shows the depth of the problem.
\begin{Theorem}\label{drinfeld-theorem}
There is an equivalence of small categories $$\{A-\text{lattices of rank $r$}\}\rightarrow\{\text{Drinfeld $A$-modules of rank $r$ defined over $\CC_\infty$}\}.$$
\end{Theorem}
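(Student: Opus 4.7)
The plan is to construct functors in both directions and show they are quasi-inverse, respecting morphisms. I would first handle the functor $\Lambda \mapsto \phi^\Lambda$ from lattices to Drinfeld modules. Given an $A$-lattice $\Lambda$ of rank $r$, for each $a \in A$ the $\FF_q$-linear entire function $Z \mapsto \exp_\Lambda(aZ)$ has kernel $a^{-1}\Lambda$, which contains $\Lambda = \ker \exp_\Lambda$ with finite index $q^{r\deg(a)}$ (using $\Lambda \otimes_A K_\infty \cong K_\infty^r$ and $|A/(a)| = q^{\deg(a)}$). By (3) of Proposition \ref{prop-goss} applied to both entire functions, division of Weierstrass products yields a unique $\FF_q$-linear polynomial $\phi^\Lambda_a \in \CC_\infty[\tau]$ of $\tau$-degree $r\deg(a)$ with constant term $a$ such that $\exp_\Lambda(aZ) = \phi^\Lambda_a(\exp_\Lambda(Z))$. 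Uniqueness makes $a \mapsto \phi^\Lambda_a$ an $\FF_q$-algebra morphism, injective because the constant term is $a$, and the $\tau$-degree formula shows $\phi^\Lambda$ is a Drinfeld $A$-module of rank $r$.

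For the reverse functor $\phi \mapsto \Lambda_\phi$, given $\phi$ I would search for $\exp_\phi = \sum_{i\geq 0} e_i \tau^i \in \CC_\infty[[\tau]]$ with $e_0 = 1$ satisfying the functional equation $\exp_\phi \circ a = \phi_a \circ \exp_\phi$ in $\CC_\infty[[\tau]]$ for all $a \in A$. Writing this out for a fixed $a \in A \setminus \FF_q$ (which exists since $A$ is infinite), the coefficient of $\tau^i$ gives a relation of the form $(a^{q^i} - a) e_i = P_i(e_0,\dots,e_{i-1})$ where $P_i$ is a polynomial in the $e_j$ with $j < i$ and the coefficients of $\phi_a$. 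Since $a^{q^i} - a \neq 0$ for $i \geq 1$, this recursion uniquely determines the $e_i$, and one verifies afterwards (using the commutativity of $\phi$) that the resulting $\exp_\phi$ satisfies the functional equation for every $a \in A$. The key technical point is the convergence estimate: from $|a^{q^i}-a|_\infty = |a|_\infty^{q^i}$ for large $i$ and an induction, one extracts a bound of the form $|e_i|_\infty \leq C^{q^i}/|a|_\infty^{iq^i}$ or similar, so that $\rho(\exp_\phi) = -\infty$ and $\exp_\phi$ is entire on $\CC_\infty$.

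With $\exp_\phi$ in hand, I set $\Lambda_\phi := \ker \exp_\phi$. The functional equation gives $\exp_\phi(a\lambda) = \phi_a(0) = 0$ for $\lambda \in \Lambda_\phi$, so $\Lambda_\phi$ is an $A$-submodule. Strong discreteness follows from the fact that $\exp_\phi$ is entire with derivative $1$ at the origin: on a sufficiently small disk it is a bijection with its image (its Newton polygon forces this), so $\Lambda_\phi$ has only finitely many points in any disk. For the rank, separability of $\phi_a$ (its constant term is $a \neq 0$) gives $|\phi[a]| = q^{r\deg(a)}$; combining the short exact sequence $0 \to \Lambda_\phi \to a^{-1}\Lambda_\phi \to \phi[a] \to 0$ from the functional equation with the structure theorem for finitely generated projective $A$-modules pins down $\Lambda_\phi$ as having rank $r$. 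The two constructions are quasi-inverse because $\exp_{\phi^\Lambda}$ satisfies the same functional equation as $\exp_\Lambda$ with $e_0 = 1$, hence equals it by uniqueness, and similarly for $\phi^{\Lambda_\phi}$. Morphisms match: an isogeny $c \colon \Lambda \to \Lambda'$ (with $c\Lambda \subset \Lambda'$ of finite index) induces, again via Weierstrass factorisation, a unique $\nu \in \CC_\infty[\tau]$ with $\nu \circ \exp_\Lambda = \exp_{\Lambda'} \circ c$, and conversely any such $\nu$ produces $c = \nu'(0) \in \CC_\infty$, giving a bijection on Hom-sets that intertwines the $A$-actions.

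The main obstacle is the analytic part of the second functor, namely proving the entireness of $\exp_\phi$ from the recursive definition of its coefficients. One has only the algebraic recursion relating $e_i$ to $e_0,\dots,e_{i-1}$ and the coefficients of a fixed $\phi_a$, and the growth estimate required to conclude $\limsup v(e_i)/q^i = +\infty$ must be extracted carefully by induction; the naive bound loses too much and one must exploit the precise form $(a^{q^i}-a)e_i = \sum_{j<i} e_j \cdot (\text{coefficient of }\phi_a)^{q^j}\text{-like terms}$ together with the ultrametric inequality to close the induction. Once $\exp_\phi$ is known to be entire the rest (kernel is a lattice of the right rank, functors are quasi-inverse, compatibility with morphisms) is essentially formal.
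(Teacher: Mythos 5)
Your lattice-to-module direction coincides with the paper's argument (comparison of the divisors and linear coefficients of the two entire functions $\exp_\Lambda(aZ)$ and $P_a(\exp_\Lambda(Z))$), but for the converse you take a genuinely different route. The paper gets $\exp_\phi$ from Theorem \ref{theoexponentialexists}, i.e.\ as the uniform limit on bounded disks of $\phi_{a^N}\operatorname{Lie}(\phi_{a^N})^{-1}$ (Lemmas \ref{convergence-to-zero} and \ref{uniqueness-existence}), a construction that works verbatim for Anderson modules over Tate algebras; you instead solve the functional equation formally for one fixed nonconstant $a$ and prove entireness by coefficient estimates — the classical Drinfeld/Goss route, which is more elementary but puts all the weight on the growth estimate. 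As you yourself flag, that estimate is the crux, and the bound you propose, $|e_i|\le C^{q^i}|a|^{-iq^i}$, has the rank-one shape and does not close the induction once $\phi_a=\sum_{j=0}^{d}a_j\tau^j$ has $\tau$-degree $d=r\deg(a)>1$. It does close after normalising: the recursion $(a^{q^i}-a)e_i=\sum_{j=1}^{\min(i,d)}a_je_{i-j}^{q^j}$ gives, with $w_i:=v_\infty(e_i)/q^i$,
$$w_i\ \geq\ \min_{1\le j\le \min(i,d)}w_{i-j}\ -\ v_\infty(a)\ -\ q^{-i}\max_j|v_\infty(a_j)|,$$
and since $-v_\infty(a)>0$ and the error terms are summable, the running minimum over windows of length $d$ increases by a fixed positive amount each pass, so $w_i\to\infty$ (you need this full limit, not just a limsup) and $\exp_\phi$ is entire. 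So this half of your plan is completable, but in the write-up it is still a sketch at the decisive point.

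The genuine gap is in the rank computation for $\Lambda_\phi=\ker\exp_\phi$. You invoke the structure theorem for finitely generated projective $A$-modules, but nothing in your argument shows that $\Lambda_\phi$ is finitely generated (equivalently, of finite rank): $\CC_\infty$ is not locally compact, strongly discrete $A$-submodules of $\CC_\infty$ of infinite rank do exist, and finiteness of $\Lambda_\phi/a\Lambda_\phi\cong\ker\phi_a$ does not formally rule this out, because for a badly chosen (non-saturated) finitely generated submodule $N\subset\Lambda_\phi$ the image of $N$ in $\Lambda_\phi/a\Lambda_\phi$ can be tiny, so you cannot read off a rank bound directly. This is precisely what the second half of the paper's proof supplies: for $\epsilon>0$ let $V_\epsilon$ be the $K_\infty$-span of the finite set $\Lambda_\phi\cap D(0,\epsilon)$ and $\Lambda_\epsilon=V_\epsilon\cap\Lambda_\phi$; then $\Lambda_\epsilon$ is finitely generated and saturated, hence $\Lambda_\epsilon/a\Lambda_\epsilon\hookrightarrow\Lambda_\phi/a\Lambda_\phi$ bounds its rank by $r$ uniformly in $\epsilon$, so $\Lambda_\phi$ lies in a $K_\infty$-space of dimension at most $r$, is finitely generated, and only then does $\Lambda_\phi/a\Lambda_\phi\cong(A/(a))^r$ pin the rank to exactly $r$. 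You need this saturation argument (or an equivalent) to make your exact-sequence step legitimate. Relatedly, your justification of strong discreteness is too weak: injectivity of $\exp_\phi$ near the origin only yields $\Lambda_\phi\cap D(0,\delta)=\{0\}$, and in a non-locally-compact field a $\delta$-separated subset of a bounded disk can be infinite; the correct (and immediate) reason is that a nonzero entire function has finitely many zeros in every bounded disk, by the Newton polygon/Weierstrass statement of Proposition \ref{prop-goss}. The morphism comparison you sketch is at the level of detail the paper itself leaves to the reader.
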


\begin{proof} The proof that we propose is essentially self-contained except for the use of Theorem \ref{theoexponentialexists} which is the crucial tool, showing how to associate to any Drinfeld $A$-module 
an exponential function. We postpone this result and its proof to \S \ref{modules-to-exponential}.

Let $\Lambda$ be a lattice of rank $r$ (so that it is a projective $A$-module). The $\FF_q$-linear entire map
$\exp_\Lambda$ gives rise to the exact sequence of $\FF_q$-vector spaces
$$0\rightarrow \Lambda\rightarrow \CC_\infty\xrightarrow{\exp_\Lambda}\CC_\infty\rightarrow0.$$
For any $a\in A$ there is a unique $\FF_q$-linear map $\CC_\infty\xrightarrow{\phi_a}\CC_\infty$ 
such that $$\exp_\Lambda(aZ)=\phi_a(\exp_\Lambda(Z))$$ for all $Z\in\CC_\infty$ and we want to show
that the family $(\phi_a)_{a\in A}$ gives rise to a Drinfeld $A$-module of rank $r$. By the snake lemma
we get $\operatorname{ker}(\phi_a)\cong\Lambda/a\Lambda\cong(A/(a))^r$. Note also that
$\operatorname{ker}(\phi_a)=\exp_\Lambda(a^{-1}\Lambda)$. We set 
$$P_a(Z):=aZ\sideset{}{'}\prod_{\alpha\in\operatorname{ker}(\phi_a)}\left(1-\frac{Z}{\alpha}\right)=aZ+(a)_1Z^q+\cdots+(a)_{r\deg(a)}Z^{q^{r\deg(a)}}.$$
Note that the functions $P_a(\exp_\Lambda(Z))$ and $\exp_\Lambda(aZ)$ are both entire with divisor
$a^{-1}\Lambda$ and the coefficient of $Z$ in their entire series expansions are equal. Hence these functions are equal and we can write
$$\phi_a(Z)=aZ+(a)_1Z^q+\cdots+(a)_{r\deg(a)}Z^{q^{r\deg(a)}},\quad \forall a\in A,\quad Z\in\CC_\infty.$$
This defines a Drinfeld $A$-module $\phi$ of rank $r$ such that $\exp_\Lambda(aZ)=\phi_a(\exp_\Lambda(Z))$
for all $a\in A$ so we have defined a map associating to $\Lambda$ an $A$-lattice of rank $r$ a Drinfeld module $\phi_\Lambda$
of rank $r$. 

The next step is to show that the map $\Lambda\mapsto\phi_\Lambda$ that we have just constructed, from the 
set of $A$-lattices of rank $r$ to the set of Drinfeld $A$-modules of rank $r$, is surjective. From the proof it will be possible to derive that it is also injective. Let $\phi$ be a Drinfeld $A$-module of rank $r$. We want to construct $\Lambda$ an $A$-lattice of rank $r$ such that $\phi=\phi_\Lambda$. By the subsequent Theorem \ref{theoexponentialexists}, there exists a unique entire $\FF_q$-linear function $\exp_\phi:\CC_\infty\rightarrow\CC_\infty$ such that for all $a\in A$, 
$\exp_\phi(aZ)=\phi_a(\exp_\phi(Z))$, and this, for all $Z\in\CC_\infty$. We set $\Lambda=\operatorname{Ker}(\exp_\phi)$. Then $\Lambda$ is a strongly discrete $A$-module in $\CC_\infty$. The snake lemma implies that 
$\Lambda/a\Lambda\cong\operatorname{Ker}(\phi_a)$, which is an $\FF_q$-vector space of dimension 
$r\deg(a)$. Let $\epsilon>0$ be a real number and let $V_\epsilon$ be the $K_\infty$-subvector space of $\CC_\infty$ generated by $\Lambda\cap D(0,\epsilon)$. We also set $\Lambda_\epsilon:=V_\epsilon\cap\Lambda$. Observe that 
$\Lambda_\epsilon$ is an $A$-lattice (it is a finitely generated $A$-module because of the finiteness of the 
dimension of $V_\epsilon$) which is saturated by construction. Hence
$\Lambda_\epsilon/a\Lambda_\epsilon$ injects in $\Lambda/a\Lambda$ and this for all $\epsilon>0$ which 
means $\operatorname{rank}_A(\Lambda_\epsilon)=\dim_{K_\infty}(\Lambda_\epsilon)\leq r$ for all $\epsilon>0$. Setting $V=\cup_\epsilon V_\epsilon$ we see that $\dim_{K_\infty}(V)\leq r$. From this we easily deduce that $\Lambda$ is finitely generated and since $\Lambda/a\Lambda\cong(A/(a))^{r}$ we derive that $\Lambda$ is an $A$-lattice of rank $r$.

Hence the map $\Lambda\mapsto\phi_\Lambda$ is surjective and one sees easily that it is also injective by looking at $\exp_\Lambda$. Finally, 
the map is in fact an equivalence of small categories with the natural notions of morphisms between $A$-lattices and Drinfeld $A$-modules that we have introduced. We leave the details of these verifications to the reader.
\end{proof}

\subsection{From Drinfeld modules to exponential functions}\label{modules-to-exponential}

In order to complete the proof of Theorem \ref{drinfeld-theorem} it remains to show how to associate 
to a Drinfeld $A$-module an exponential function. This is the object of the present subsection and we will take the opportunity to present things in a rather more general setting, by introducing Anderson's $A$-modules.
We recall here the definition of Hartl and Juschka in \cite{HAR&JUS}. 
\begin{Definition}
{\em An {\em Anderson $A$-module} of dimension $d$ (over $\CC_\infty$) is a pair $\underline{E}=(E,\varphi)$ where $E$ is an $\FF_q$-module scheme isomorphic to $\GG_a(\CC_\infty)^d$, together with a ring homomorphism $\varphi:A\rightarrow\operatorname{End}_{\FF_q}(E)$, such that
 for all $a\in A$, $(\operatorname{Lie}(\varphi(a))-a)^d=0$.}
 \end{Definition} If $R$ is a ring,
we denote by $R^{m\times n}$ the set of matrices with $m$ rows and $n$ columns with entries in $R$.
Note that there is an $\FF_q$-isomorphism $\operatorname{End}_{\FF_q}(E)\cong\CC_\infty[\tau]^{d\times d}$. If $d=1$ we are brought to Definition \ref{def-Drinfeld-modules} of Drinfeld $A$-modules.

Anderson modules fit in a category which can be compared to that of commutative algebraic groups; this category is of great importance for the study of global function field arithmetic. A remarkable feature which  
allows to track similarities with commutative algebraic groups is the fact that we can associate, to every such module, an exponential function. In \cite[Proposition 8.7]{BOE&HAR} (see also Anderson in \cite[Theorem 3]{AND}) B\"ockle and Hartl proved that every Anderson's $A$-module $\underline{E}$ possesses a unique exponential function $$\exp_{\underline{E}}:\operatorname{Lie}(\underline{E})\rightarrow E(\CC_\infty)$$ in the following way (compare also with \cite[Proposition 1.11]{TAV}). Identifying 
$\operatorname{Lie}(\underline{E})$ (defined fonctorially) with $\CC_\infty^{d\times 1}$, $\exp_{\underline{E}}$ is an entire function of $d$ variables $z={}^t(z_1,\ldots,z_d)\in\CC_\infty^{d\times 1}$ (${}^t(\cdots)$ denotes the transposition)
$$z \mapsto\exp_{\underline{E}}(z)=\sum_{i\geq 0}E_iz^{q^i}$$ with $E_0=I_d$ and $E_i\in \CC_\infty^{d\times d}$
such that, for all $a\in A$ and $z\in\CC_\infty^d$, 
$$\exp_{\underline{E}}(\operatorname{Lie}(\varphi_a)z)=\varphi_a(\exp_{\underline{E}}(z)).$$

We show how to construct $\exp_{\underline{E}}$ in a slightly more general setting.
Let $B$ be any commutative integral countably dimensional $\FF_q$-algebra. 
We follow \cite{GAZ&MAU} and we define $\|\cdot\|_\infty$ on $A\otimes_{\FF_q}B$
by setting, for $x\in A\otimes_{\FF_q}B$, $\|x\|_\infty$ to be the infimum of the values $\max_{i}|a_i|_\infty$, running over any finite
sum decomposition $$x=\sum_ia_i\otimes b_i$$ with $a_i\in A$ and $b_i\in B\setminus\{0\}$. Then, $\|\cdot\|_\infty$ is a norm 
of $A\otimes_{\FF_q}B$ extending the valuation of $A$ via $a\mapsto a\otimes1$. The $\FF_q$-algebra $A\otimes_{\FF_q}B$ is equipped with the $B$-linear endomorphism 
$\tau$ defined by $a\otimes b\mapsto a^q\otimes b$ (thus extending the $q$-th power map $a\mapsto a^q$
which is an $\FF_q$-linear endomorphism of $A$). Similarly, we can consider the $\CC_\infty$-algebra 
$$\TT=\CC_\infty\widehat{\otimes}_{\FF_q}B,$$ the completion of $\CC_\infty\otimes_{\FF_q}B$ for $\|\cdot\|_\infty$ defined accordingly, and we also have a $B$-linear extension of $\tau$. Let $d>0$ be an integer.
We allow $\tau$ to act on $d\times d$ matrices of $\TT^{d\times d}$ with entries in $\TT$ on each coefficient.
Then, $\TT[\tau]$
acts on $\TT$ by evaluation and $\TT[\tau]^{d\times d}\subset\operatorname{End}_B(\TT^{d\times 1})$.
If $f\in \TT[\tau]^{d\times d}$ we can write $f=\sum_{i=0}^nf_i\tau^i$ with $f_i\in \TT^{d\times d}$ and we
set $\operatorname{Lie}(f):=f_0$ which provides a $\TT$-algebra morphism $$\operatorname{Lie}(f):\TT[\tau]^{d\times d}\rightarrow\TT^{d\times d}.$$

\begin{Definition}\label{def-anderson-module}{\em An {\em Anderson $A\otimes_{\FF_q} B$-module $\varphi$ of dimension $d$} 
is an injective $B$-algebra homomorphism $$A\otimes_{\FF_q} B\xrightarrow{\varphi}\TT[\tau]^{d\times d}$$
such that for all $a\in A$, $(\operatorname{Lie}(\varphi(a))-a)^d=0$.}
\end{Definition}
We prefer to write $\varphi_a$ in place of $\varphi(a)$.

We now revisit the proof of Proposition 8.7 of \cite{BOE&HAR} and the method is flexible enough to adapt to the setting of Definition \ref{def-anderson-module}. Note also that later in this text, we will be interested in the case $B=\FF_q$ only, case in which we essentially recover \cite[Theorem 3]{AND}. In the following, 
the non-commutative ring $\TT[[\tau]]$ is defined in the obvious way with $\TT[\tau]$ as a subring.
In the following, 
we denote by $\|M\|_\infty$ the supremum of $\|x\|_\infty$ where $x$ varies in the entries of a matrix $M\in\TT^{m\times n}$. 
We show:

\begin{Theorem}\label{theoexponentialexists}
Given an Anderson $A\otimes_{\FF_q} B$-module $\varphi$, there exists a unique series $$\exp_\varphi=\sum_{i\geq 0}E_i\tau^i\in\TT[[\tau]]^{d\times d}$$ with the coefficients $E_i\in \TT^{d\times d}$ and with $E_0=I_d$, such that the evaluation series $\exp_\varphi(z)$ is convergent for all $z\in\TT^{d\times 1}$, and such that 
$$\varphi_a(\exp_\varphi(z))=\exp_\varphi(\operatorname{Lie}(\varphi_{a})z),$$ for all $z\in\TT^{d\times 1}$ and 
$a\in A\otimes_{\FF_q} B$. For all $a\in A\setminus\{0\}$ we have that $\exp_\varphi$
 is the limit for $n\rightarrow\infty$ of the sequence of entire functions $\varphi_{a^n}a^{-n}\in \CC_\infty[[\tau]]^{d\times d}$, uniformly convergent on every subset of $\TT[[\tau]]^{d\times 1}$, 
 bounded for the norm $\|\cdot\|_\infty$.
 \end{Theorem}

Before proving this result, we need two lemmas.

\begin{Lemma}\label{convergence-to-zero}
Let us consider $\mathcal{L},\mathcal{M}\in\TT[\tau]^{d\times d}$ with $\mathcal{L}=\alpha+\mathcal{N}$,
with $\alpha\in\operatorname{GL}_d(\TT)$ such that $\|\alpha\|_\infty>1$ and 
$\mathcal{M},\mathcal{N}\in(\TT[\tau]\tau)^{d\times d}$. Then, for all $R\in\|\TT^\times\|_\infty$,
the sequence of functions given by the evaluation of $(\mathcal{L}^N\mathcal{M}\alpha^{-N})_{N\geq 0}$ converges uniformly on $D_\TT(0,R)^{d\times 1}$ to the zero function.
\end{Lemma}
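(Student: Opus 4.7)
My approach uses the fact that evaluation in the Ore algebra is compositional: $(fg)(z) = f(g(z))$ for $f,g\in\TT[\tau]^{d\times d}$, which one verifies by a direct expansion using $\tau^k\cdot h = h^{[q^k]}\cdot\tau^k$ on coefficients. This lets me rewrite the quantity of interest as
$$(\mathcal{L}^N \mathcal{M} \alpha^{-N})(z) \;=\; \mathcal{L}^N\!\bigl(\mathcal{M}(\alpha^{-N} z)\bigr),$$
and reduces the lemma to tracking how the $\infty$-norm propagates through three successive operations.

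Stage one: since $\alpha^{-N}$ is scalar in $\tau$, submultiplicativity of the entry-max norm gives $\|\alpha^{-N}z\|_\infty \leq \|\alpha^{-1}\|_\infty^N R$, which decays geometrically provided $\|\alpha^{-1}\|_\infty<1$; this is the effective content of the hypothesis ``$\|\alpha\|_\infty>1$'' in the Anderson-module setting, where $\alpha=aI+N_{\mathrm{nilp}}$ has $\alpha^{-1}=a^{-1}\sum_{k=0}^{d-1}(-N_{\mathrm{nilp}}/a)^k$ of norm $\sim|a|^{-1}$. Stage two: since $\mathcal{M}=\sum_{i\geq1}M_i\tau^i$ has no constant term, any input $w$ with $\|w\|_\infty\leq1$ satisfies the sharp contraction $\|\mathcal{M}(w)\|_\infty\leq C_{\mathcal{M}}\|w\|_\infty^q$, because the higher Frobenius-twist powers $\|w\|_\infty^{q^i}$ for $i\geq2$ are dominated by $\|w\|_\infty^q$.

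Stage three is the heart of the argument: iterating $\mathcal{L}=\alpha+\mathcal{N}$ on a small input. The key observation is that, for $\|u\|_\infty$ small, $\|\mathcal{N}(u)\|_\infty\leq C_{\mathcal{N}}\|u\|_\infty^q$ is strictly smaller than $\|\alpha u\|_\infty\leq\|\alpha\|_\infty\|u\|_\infty$; the ultrametric inequality then collapses to $\|\mathcal{L}(u)\|_\infty\leq\|\alpha\|_\infty\|u\|_\infty$. An inductive check ensures that this small-input regime persists through all $N$ iterations (the norm grows by at most a factor $\|\alpha\|_\infty$ per step, and the starting norm produced by stages one and two is more than small enough for $N$ large), yielding $\|\mathcal{L}^N(u)\|_\infty\leq\|\alpha\|_\infty^N\|u\|_\infty$.

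Chaining the three bounds produces the uniform estimate
$$\bigl\|(\mathcal{L}^N \mathcal{M} \alpha^{-N})(z)\bigr\|_\infty \;\leq\; C_{\mathcal{M}}\,R^{q}\,\bigl(\|\alpha\|_\infty\,\|\alpha^{-1}\|_\infty^{q}\bigr)^{N}\quad\text{for }\|z\|_\infty\leq R.$$
Convergence to zero follows because $\|\alpha\|_\infty\,\|\alpha^{-1}\|_\infty^{q}<1$: in the scalar analogue this is the identity $|\alpha|^{1-q}<1$, automatic from $|\alpha|>1$ and $q\geq2$; in the matrix case one invokes the Anderson-module form of $\alpha$, which forces $\|\alpha^{-1}\|_\infty\sim|a|^{-1}$ and hence the same $|a|^{1-q}<1$. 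The subtlety I expect to be the main obstacle is precisely extracting the right control on $\alpha^{-1}$ from the stated hypothesis — a bare ``$\|\alpha\|_\infty>1$'' is weaker than the required $\|\alpha^{-1}\|_\infty^{q}\|\alpha\|_\infty<1$ for an arbitrary invertible matrix — so the argument relies, tacitly, on the nilpotent structure of $\operatorname{Lie}(\varphi_a)-aI$ that is built into the definition of an Anderson $A$-module.
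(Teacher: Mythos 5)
Your proof follows essentially the same route as the paper's: evaluate compositionally, let $\alpha^{-N}$ shrink the input into a small disk, use the absence of a constant term in $\mathcal{M}$ to gain a $q$-th power, bound the growth of $\mathcal{L}^N$ near the origin by $\|\alpha\|_\infty^N$ via the ultrametric inequality (the linear part dominates there), and conclude because $q\geq 2$. The caveat you flag is present in the paper's own proof as well, which simply writes $\|\mathcal{M}(\alpha^{-N}x)\|_\infty\leq\beta(\|\alpha\|_\infty^{-N}R)^{q^l}$, i.e.\ tacitly uses $\|\alpha^{-1}\|_\infty=\|\alpha\|_\infty^{-1}$ — automatic for $d=1$ and supplied in the intended application by $\operatorname{Lie}(\varphi_a)=aI_d+(\text{nilpotent})$, exactly as you observe.
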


\begin{proof} The multiplication defining $\mathcal{L}^N\mathcal{M}\alpha^{-N}$ is that of $\TT[[\tau]]^{d\times d}$.
Locally near the origin, $\alpha^{-1}\mathcal{L}$ is an isometric isomorphism and there exists $R_0\in\|\TT^\times\|_\infty$
with $0<R_0<1$ such that for all $x\in D_{\TT}(0,R_0)^{d\times 1}$,
$\|\mathcal{L}(x)\|_\infty=\|\alpha x\|_\infty\leq \|\alpha\|_\infty \|x\|_\infty$. 
Hence, for $N\geq 0$, if 
$\|x\|_\infty\leq \|\alpha\|_\infty^{-N}R_0$ ($<R_0$ because of the hypothesis on $\alpha$), we have $\|\mathcal{L}^N(x)\|_\infty\leq \|\alpha\|_\infty^N\|x\|_\infty$. 

We can choose $R_0$ small enough so that 
$\|\mathcal{M}(x)\|_\infty\leq \beta\|x\|_\infty^{q^l}$ for some $\beta\in \|\TT^\times\|_\infty$ and $l>0$. Let $R$ be in $\|\TT^\times\|_\infty$ fixed, and let us suppose that $N$ is large enough so that 
$\|\alpha\|_\infty^{-N}R\leq R_0$. Then, for all $x\in D_\TT(0,R)^d$, $\|\mathcal{M}(\alpha^{-N}x)\|_\infty\leq\beta(\|\alpha\|_\infty^{-N}R)^{q^l}$. If $N$ is large enough, we can also suppose that
$$\beta(\|\alpha\|_\infty^{-N}R)^{q^l}<\|\alpha\|_\infty^{-N}R_0$$ (because $l>0$). Therefore, 
$\|(\mathcal{L}^N\mathcal{M})(\alpha^{-N}x)\|_\infty\leq\|\alpha\|_\infty^N\beta(\|\alpha\|_\infty^{-N}R)^{q^l}\rightarrow0$
as $N\rightarrow\infty$, for all $x\in D_\TT(0,R)^{d\times 1}$.
\end{proof}
We consider an Anderson $A\otimes B$-module $\varphi$ and we recall that $\operatorname{Lie}(\varphi_a)$ is the coefficient in $\TT^{d\times d}$
of $\tau^0I_d$ in the expansion of $\varphi_a\in \TT[\tau]^{d\times d}$ along powers of $I_d\tau$. If $a\in A\otimes B\setminus\FF_q\times B$, $\operatorname{Lie}(\varphi_a)=aI_d+N_a$
with $N_a$ nilpotent. 
Then, $\alpha=\operatorname{Lie}(\varphi_a)\in\operatorname{GL}_d(\TT)$ is such that $\|\alpha\|_\infty>1$. 
Indeed otherwise $N_a-\alpha-aI_d$ would be invertible.

Let us consider $a,b\in A\otimes B$, $\|a\|_\infty>1$.
We construct the sequence of $B$-linear functions $\TT^{d\times 1}\xrightarrow{\mathcal{F}_N^a}\TT^{d\times 1}$ defined by 
$$\mathcal{F}^a_N=\varphi_{a^Nb}\operatorname{Lie}(\varphi_{a^Nb})^{-1},\quad N\geq 0.$$

\begin{Lemma}\label{uniqueness-existence}
The sequence $(\mathcal{F}_N^a)$ converges uniformly on every polydisk $D_\TT(0,R)^{d\times 1}$ and the limit function $\TT^{d\times1}\rightarrow\TT^{d\times1}$ is independent of the choice of $b$.
\end{Lemma}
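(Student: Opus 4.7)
The plan is to exploit the commutativity of the image of the ring homomorphism $\varphi$ (and of $\operatorname{Lie}\circ\varphi$) to rewrite each $\mathcal{F}_N^a$ in a form amenable to Lemma \ref{convergence-to-zero}, and then apply that lemma twice: once for uniform convergence, once for independence of $b$.

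First I would set $\alpha:=\operatorname{Lie}(\varphi_a)$, $\mathcal{N}_a:=\varphi_a-\alpha\in(\TT[\tau]\tau)^{d\times d}$, and
$$\mathcal{Q}_b:=\varphi_b\operatorname{Lie}(\varphi_b)^{-1}=I_d+\mathcal{Q}'_b,\qquad \mathcal{Q}'_b\in(\TT[\tau]\tau)^{d\times d}.$$
Since $\operatorname{Lie}$ is multiplicative on $\TT[\tau]^{d\times d}$ (extraction of the $\tau^0$-coefficient preserves products), the matrices $\alpha$ and $\operatorname{Lie}(\varphi_b)$ commute in $\TT^{d\times d}$, so
$$\operatorname{Lie}(\varphi_{a^Nb})^{-1}=\alpha^{-N}\operatorname{Lie}(\varphi_b)^{-1}=\operatorname{Lie}(\varphi_b)^{-1}\alpha^{-N}.$$
Combined with $\varphi_{a^Nb}=\varphi_a^N\varphi_b$, this yields the factorisation $\mathcal{F}_N^a=\varphi_a^N\,\mathcal{Q}_b\,\alpha^{-N}$.

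Next, a direct telescoping computation gives
$$\mathcal{F}_{N+1}^a-\mathcal{F}_N^a=\varphi_a^N\bigl[(\alpha+\mathcal{N}_a)\mathcal{Q}_b\alpha^{-1}-\mathcal{Q}_b\bigr]\alpha^{-N}=\mathcal{L}^N\,\mathcal{M}\,\alpha^{-N},$$
with $\mathcal{L}:=\varphi_a=\alpha+\mathcal{N}_a$ and $\mathcal{M}:=\alpha\mathcal{Q}_b\alpha^{-1}-\mathcal{Q}_b+\mathcal{N}_a\mathcal{Q}_b\alpha^{-1}$. The crucial observation is that $\mathcal{M}\in(\TT[\tau]\tau)^{d\times d}$: conjugation by $\alpha$ fixes $I_d$, so both $\mathcal{Q}_b$ and $\alpha\mathcal{Q}_b\alpha^{-1}$ have constant term $I_d$ and their difference has vanishing $\tau^0$-coefficient, while $\mathcal{N}_a\mathcal{Q}_b\alpha^{-1}$ also has vanishing $\tau^0$-coefficient since $\mathcal{N}_a$ does. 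Because $\|\alpha\|_\infty>1$, Lemma \ref{convergence-to-zero} applies and $\mathcal{F}_{N+1}^a-\mathcal{F}_N^a$ converges to $0$ uniformly on every polydisk $D_\TT(0,R)^{d\times 1}$. Hence $(\mathcal{F}_N^a)$ is uniformly Cauchy and therefore converges uniformly on every such polydisk.

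For the independence of the limit from $b$, I would pick a second admissible element $b'$ and write the corresponding factorisation $\widetilde{\mathcal{F}}_N^a=\varphi_a^N\mathcal{Q}_{b'}\alpha^{-N}$. Both $\mathcal{Q}_b$ and $\mathcal{Q}_{b'}$ have constant term $I_d$, so their difference lies in $(\TT[\tau]\tau)^{d\times d}$, and
$$\mathcal{F}_N^a-\widetilde{\mathcal{F}}_N^a=\varphi_a^N(\mathcal{Q}_b-\mathcal{Q}_{b'})\alpha^{-N}.$$
A second invocation of Lemma \ref{convergence-to-zero} shows that this tends to $0$ uniformly on every polydisk, so the two limit functions coincide. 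I expect the main obstacle to be purely bookkeeping: carefully tracking that $\alpha$, $\operatorname{Lie}(\varphi_b)$ and the constant matrices obtained through $\operatorname{Lie}$ all commute in $\TT^{d\times d}$, so that the factorisation $\mathcal{F}_N^a=\varphi_a^N\mathcal{Q}_b\alpha^{-N}$ is unambiguous despite the noncommutativity of $\TT[\tau]^{d\times d}$. Once this is in place, both conclusions reduce cleanly to Lemma \ref{convergence-to-zero}.
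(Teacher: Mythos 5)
Your proof is correct and follows essentially the same route as the paper: you telescope $\mathcal{F}_{N+1}^a-\mathcal{F}_N^a$ into the shape $\mathcal{L}^N\mathcal{M}\alpha^{-N}$ with $\mathcal{L}=\varphi_a$, $\alpha=\operatorname{Lie}(\varphi_a)$ and $\mathcal{M}\in(\TT[\tau]\tau)^{d\times d}$, and invoke Lemma \ref{convergence-to-zero}, exactly as in the text (your $\mathcal{M}$ is the paper's $\varphi_b(\varphi_a\operatorname{Lie}(\varphi_a)^{-1}-I_d)\operatorname{Lie}(\varphi_b)^{-1}$ rewritten via commutativity). For independence of $b$ you again reduce to Lemma \ref{convergence-to-zero}, comparing two arbitrary choices $b,b'$ where the paper compares $b$ with $1$; this is the same argument.
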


\begin{proof}
We set $\mathcal{G}_N^a=\mathcal{F}_{N+1}^a-\mathcal{F}_N^a$. Then, 
$$\mathcal{G}_N^a=\underbrace{\varphi_{a^N}}_{=:\mathcal{L}^N}\underbrace{\varphi_b(\varphi_a\operatorname{Lie}(\varphi_a)^{-1}-I_d)\operatorname{Lie}(\varphi_b)^{-1}}_{=:\mathcal{M}}\operatorname{Lie}\underbrace{(\varphi_a)^{-N}}_{=:\alpha^{-N}}$$
and by Lemma \ref{convergence-to-zero}, the sequence converges uniformly to the zero function
on every polydisk $D_\TT(0,R)^{d\times 1}$ which ensures the uniform convergence of the sequence 
$'\mathcal{F}_N^a)$. Observe now that, writing momentarily $\mathcal{F}_N^{a,b}$ to designate the above 
function associated to the choice of $a,b$, 
$$\mathcal{F}_{a^N}^{a,b}-\mathcal{F}_{a^N}^{a,1}=\underbrace{\varphi_{a^N}}_{=:\mathcal{L}^N}\underbrace{(\varphi_b\operatorname{Lie}(\varphi_{b})^{-1}-I_d)}_{=:\mathcal{M}}\underbrace{\operatorname{Lie}(\varphi_{a^N})^{-1}}_{=:\mathcal{\alpha}^{-N}},$$
so that, again by Lemma \ref{convergence-to-zero} this sequence tends to zero uniformly on every polydisk, and the limit $\mathcal{F}^a$ of the sequence $\mathcal{F}^a_N$ is uniquely determined, independent of $b$.
\end{proof}

\begin{proof}[Proof of Theorem \ref{theoexponentialexists}]
Let us denote by $\mathcal{F}^a$ the continuous $B$-linear 
map which, by Lemma \ref{uniqueness-existence} is the common limit of all the sequences $(\mathcal{F}^{a,b}_N)_N$ (that can be identified with a
formal series $x\mapsto\sum_{i\geq 0}E_i\tau^i(x)\in\TT^{d\times d}[[\tau]]$). First of all, note that $E_0=I_d$ so that this map 
is not identically zero. Moreover, observe that, for all $b\in A\otimes B$:
\begin{eqnarray*}
\varphi_b\mathcal{F}^a&=&\varphi_b\lim_{N\rightarrow\infty}\mathcal{F}_N^{a,1}\\
&=&\varphi_b\lim_{N\rightarrow\infty}\varphi_{a^N}\operatorname{Lie}(\varphi_{a^N})^{-1}\\
&=&\lim_{N\rightarrow\infty}\varphi_{ba^N}\operatorname{Lie}(\varphi_{ba^N})^{-1}\operatorname{Lie}(\varphi_{b})\\
&=&\lim_{N\rightarrow\infty}\mathcal{F}_N^{a,b}\operatorname{Lie}(\varphi_{b})\\
&=&\mathcal{F}^a\operatorname{Lie}(\varphi_{b}).
\end{eqnarray*}
Hence we see that for all $a$, $\mathcal{F}^a$ satisfies the property of the theorem. Now, let $\mathcal{F}_1$
and $\mathcal{F}_2$ be two elements of $\TT^{d\times d}[[\tau]]$ such that $\varphi_b(\mathcal{F}_i(z))=
\mathcal{F}_i(bz)$ for all $b\in A\otimes B$ and $i=1,2$, and with the property that 
$\mathcal{F}_3=\mathcal{F}_1-\mathcal{F}_2\in \TT^{d\times d}[[\tau]]\tau$. Suppose by contradiction that $\mathcal{F}_3$ is non-zero. Then we can write 
$\mathcal{F}_3=\sum_{i\geq i_0}F_i\tau^i$ with $F_i\in\TT^{d\times d}$ and $F_{i_0}$ non-zero. Since 
$\mathcal{F}_3$ also satisfies the same functional identities of both $\mathcal{F}_1,\mathcal{F}_2$ (for $b\in A\otimes B$), we get $\operatorname{Lie}(\varphi_b)F_{i_0}=F_{i_0}\tau^{i_0}(\operatorname{Lie}(\varphi_b))$ for all $b$.
Let $w$ be an eigenvector of $F_{i_0}$ with non-zero eigenvalue, defined over some algebraic closure of 
the fraction field of $\TT$. We consider $b\in A\otimes B$ with $\|b\|_\infty>1$. Writing $\operatorname{Lie}(\varphi_b)=b+N_b$ with $N_b$ nilpotent, we see that $\operatorname{Lie}(\varphi_b)w=\tau^{i_0}(\operatorname{Lie}(\varphi_b))w$ which implies $(b-\tau^{i_0}(b))w=(\tau^{i_0}(N_b)-N_b)w=Mw$ and $M$ is nilpotent. Hence, there is a power $c$ of $b-\tau^{i_0}(b)$ such that $cw=0$ which means that $b=\tau^{i_0}(b)$; a contradiction because the valuations do not agree. This means that $\mathcal{F}_1=\mathcal{F}_2$.
In particular, $\mathcal{F}=\mathcal{F}^a$ does not depend on the choice of $a$ and the theorem is proved.
\end{proof}

\section{The Carlitz module and its exponential}\label{Carlitz-module-and-exponential}
 
 In this section we set
 $$A=H^0(\PP_{\FF_q}^1\setminus\{\infty\},\mathcal{O}_{\PP_{\FF_q}^1})=\FF_q[\theta],$$
 $\theta$ being a rational function over $\PP_{\FF_q}^1$ having a simple pole at $\infty$ and no other singularity. 
The simplest example of Anderson's $A$-module is the {\em Carlitz module} which is discussed here; it has rank one and it is perhaps the only one with which we can make very simple computations so it is legitimate to spend some time on it. 
 In order to simplify our notations, we write $$|\cdot|=|\cdot|_\infty=q^{-v_\infty(\cdot)},\quad \|\cdot\|=\|\cdot\|_\infty$$ from now on; this will not lead to confusion.

\begin{Definition}[Cf. Example 1.9 of \cite{TAV}]{\em 
The {\em Carlitz $A$-module} is the Drinfeld $A$-module $A\xrightarrow{C} \CC_\infty[\tau]$ uniquely
defined by $C_\theta=C(\theta)=\theta+\tau$.}
\end{Definition} 
Let $a$ be in $A$. Then,
$C_a\in A[\tau]$ has degree $\deg_\theta(a)$ in $\tau$ and the rank is $1$.
Note also that $C$ is defined over the $\FF_q$-algebra $A$.


 We give an example of computation where we can see how this $A$-module structure over an $A$-algebra $R$ works. We suppose $q=2$. Let $1$ be the unit of $R^\times$. We have $C_\theta(1)=\theta+1$. Hence, $$C_{\theta^2+\theta}(1)=C_{\theta+1}(C_\theta(1))=(\theta+1)^2+\theta^2+1=0.$$ This means that $1$ is a $(\theta^2+\theta)$-torsion point for this $A$-module structure given by the Carlitz module.

By Theorem \ref{theoexponentialexists}, the limit series $$\exp_C:=\lim_{N\rightarrow\infty}C_{\theta^N}\theta^{-N}\in\CC_\infty[[\tau]],$$ not identically zero and which can be identified with an entire $\FF_q$-linear endomorphism of $\CC_\infty$, satisfies 
\begin{equation}\label{functional-exp}
\exp_Ca=C_a\exp_C\end{equation} for all $a\in A$ and has constant term (with respect to the expansion in powers of $\tau$) equal to one. 
By Theorem \ref{drinfeld-theorem}, the Carlitz module $C$ corresponds to a rank one lattice $\nu A\subset\CC_\infty$, with generator $\nu\in\CC_\infty$, and we have
$$\exp_{C}(Z)=\exp_{\nu A}(Z)=Z\sideset{}{'}\prod_{\lambda\in \nu A}\left(1-\frac{Z}{\lambda}\right),\quad Z\in\CC_\infty.$$
Our next purpose is to compute $\nu$ explicitly. To do this, we are going to use properties of the Newton polygon of $\exp_C$. Indeed, staring at (\ref{functional-exp}) it is a simple exercise to show that there is a unique solution $Y\in\CC_\infty[[\tau]]$ of $C_\theta Y=Y\theta$ with the coefficient of $\tau^0$ equal to one, and by uniqueness, we find
$$\exp_C=\sum_{i\geq 0}d_i^{-1}\tau^i,$$
where $$d_i=(\theta^{q^i}-\theta^{q^{i-1}})\cdots(\theta^{q^i}-\theta^{q})(\theta^{q^i}-\theta)=(\theta^{q^i}-\theta)d_{i-1}^q$$ (if $i>0$ and with $d_0=1$). 
From $v_\infty(d_{i})=-iq^i$ we observe again that $\exp_C$ defines an $\FF_q$-linear entire function which is therefore also surjective over $\CC_\infty$ (use Proposition \ref{prop-goss}). We have the normalisation of $|\cdot|$ by $|\theta|=q$.

\begin{Proposition} There exists an element $\nu\in\CC_\infty$ with $v_\infty(\nu) =-\frac{q}{q-1}$, such that the kernel of $\exp_C$ is equal to the $\FF_q$-vector space $\nu A$. The 
element $\nu$ is defined up to multiplication by an element of $\FF_q^\times$.\end{Proposition}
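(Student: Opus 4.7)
The plan is to combine Theorem \ref{drinfeld-theorem} with a Newton polygon analysis of the explicit expansion $\exp_C=\sum_{i\geq 0}d_i^{-1}\tau^i$. First, I identify the shape of the kernel. By Theorem \ref{drinfeld-theorem}, the Carlitz module, being a Drinfeld $A$-module of rank one, corresponds to a rank-one $A$-lattice $\Lambda\subset\CC_\infty$ with $\Lambda=\ker(\exp_C)$. Since $A=\FF_q[\theta]$ is a principal ideal domain, every finitely generated torsion-free $A$-module of rank one (in particular every rank-one $A$-lattice) is free of rank one, so $\Lambda=\nu A$ for some $\nu\in\CC_\infty^\times$. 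Because $A^\times=\FF_q^\times$, the generator $\nu$ is determined up to multiplication by an element of $\FF_q^\times$, which already gives the last assertion of the statement.

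To pin down $v_\infty(\nu)$, I use the Newton polygon of the entire $\FF_q$-linear series $\exp_C(Z)=\sum_{i\geq 0}d_i^{-1}Z^{q^i}$. With the normalisation $|\theta|=q$, the identity $v_\infty(d_i)=-iq^i$ gives the points $(q^i,iq^i)$ for $i\geq 0$. Since the successive slopes $\frac{iq^i-(i-1)q^{i-1}}{q^i-q^{i-1}}=i+\frac{1}{q-1}$ are strictly increasing in $i$, these are exactly the vertices of the polygon. The leftmost side therefore runs from $(1,0)$ to $(q,q)$, with slope $\frac{q}{q-1}$ and horizontal projection of length $q-1$. By Proposition \ref{prop-goss}(2), $\exp_C$ has exactly $q-1$ zeros (counted with multiplicity) of valuation $-\frac{q}{q-1}$; these are in fact simple since $\exp_C'(0)=d_0^{-1}=1\neq 0$.

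It remains to match this count with the lattice structure of $\nu A$. For any $a\in A\setminus\{0\}$ one has $v_\infty(a)=-\deg_\theta(a)\leq 0$, with equality exactly when $a\in\FF_q^\times$. By the strong triangle inequality, the elements of $\nu A$ of maximal valuation are therefore precisely $\{\nu\lambda:\lambda\in\FF_q^\times\}$, which consists of $q-1$ elements all of valuation $v_\infty(\nu)$. Comparing with the Newton polygon count forces $v_\infty(\nu)=-\frac{q}{q-1}$, and the proposition is proved. As a sanity check one can verify that the second side, from $(q,q)$ to $(q^2,2q^2)$, of slope $2+\frac{1}{q-1}$ and length $q(q-1)$, corresponds to the $q(q-1)$ lattice points $\nu a$ with $\deg_\theta(a)=1$, all of valuation $-\frac{q}{q-1}-1$.

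The only delicate step is the computation of the leftmost slope of the Newton polygon; once this is correctly identified as $\frac{q}{q-1}$, the rest is a direct application of Proposition \ref{prop-goss} combined with the lattice/Drinfeld module dictionary of Theorem \ref{drinfeld-theorem}.
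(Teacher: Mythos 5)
Your proof is correct, and it shares the paper's central computation — the Newton polygon of $\exp_C=\sum_i d_i^{-1}\tau^i$ built from $v_\infty(d_i)=-iq^i$, whose smallest slope $\frac{q}{q-1}$ (your $i+\frac{1}{q-1}$ agrees with the paper's $i+\frac{q}{q-1}$ after reindexing) yields exactly $q-1$ zeroes of valuation $-\frac{q}{q-1}$ via Proposition \ref{prop-goss}(2) — but the concluding step is genuinely different. The paper only takes from Theorem \ref{drinfeld-theorem} that the kernel has rank one; it then identifies the kernel concretely: $\FF_q$-linearity shows the $q-1$ minimal zeroes are the $\FF_q^\times$-multiples of a single $\nu$, and the functional equation $\exp_C(a\nu)=C_a(\exp_C(\nu))=0$ together with a cardinality count (the $q^d(q-1)$ elements of $A$ of exact degree $d$ against the $q^d(q-1)$ zeroes of valuation $-d-\frac{q}{q-1}$) proves $\ker\exp_C=\nu A$ degree by degree. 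You instead import the full structure $\ker\exp_C=\nu A$ (free of rank one, generator unique up to $\FF_q^\times$ since $A$ is a PID with $A^\times=\FF_q^\times$) from Theorem \ref{drinfeld-theorem}, and then only match the extreme valuations: the largest valuation of a nonzero element of $\nu A$ is $v_\infty(\nu)$, while the Newton polygon says the largest valuation of a nonzero zero is $-\frac{q}{q-1}$. Your route is shorter and leans harder on the uniformisation theorem; the paper's counting argument is more self-contained and exhibits all the zeroes explicitly, which is the "novelty" it emphasises. Two small points: multiplicativity of $v_\infty$, not the ultrametric inequality, is what you actually use to locate the maximal-valuation elements of $\nu A$; and simplicity of the zeroes should be justified by noting that the derivative of $\exp_C$ is identically $1$ (all higher terms are $q$-th powers), not merely nonvanishing at $0$ — though neither the count nor simplicity is really needed once you compare maximal valuations.
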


\begin{proof}
We know already from Theorem \ref{drinfeld-theorem} that the kernel of $\exp_C$ has rank one over $A$. The novelty here is that we can compute the valuation of its generators, a property which is not available from the theorem.
The Newton polygon of $\exp_C$ is the lower convex hull in $\RR^2$ of the set whose elements are the points $(q^i, iq^i)$. Since
$$(q^{i+1}, (i+1)q^{i+1})-(q^i, iq^i) = (q^i(q-1), iq^i(q-1) + q^{i+1})$$ for $i\geq 0$, the sequence $(m_i)$ of the slopes of the Newton polygon is
$$\frac{iq^i(q-1)+q^{i+1}}{q^i(q-1)}=i+\frac{q}{q-1}.$$
Projecting this polygon on the horizontal axis we deduce that for all $i\geq0$, $\exp_C$ has exactly $q^i(q-1)$ zeroes $x$ such that $v_\infty(x) = -i-\frac{q}{q-1}$ (counted with multiplicity) and no other zeroes. In particular, we have $q- 1$ distinct zeroes such that $v_\infty(x) = -\frac{q}{q-1}$. The multiplicity of any such zero is one (note that
$\frac{d}{dX}\exp_C(X)=1$) so they are all distinct. Now, since $\exp_C$ is $\FF_q$-linear, we have that all the roots $x$ such that $v_\infty(x) = -1-\frac{1}{q-1}$ are multiple, with a factor in $\FF_q^\times$, of a single element $\nu$ (there are $q-1$ choices).
We denote by $A[d]$ the set of polynomials of $A$ of exact degree $d$. For all $a\in A[d]$,
$0 = C_a(\exp_C(\nu)) = \exp_C(a\nu)$ and $v_\infty(a\nu) = -d-\frac{q}{q-1}$ . This defines an injective
map from $A[d]$ to the set of zeroes of $\exp_C$ of valuation $-d-\frac{q}{q-1}$. But this set
has cardinality $q^d(q-1)$ which also is the cardinality of $A[d]$. This means that $\exp_C(x) = 0$ if and only if $x\in \nu A$.
\end{proof}
\begin{Corollary}\label{exactsequence}
We have $\exp_C(X)=X\prod_{a\in A\setminus\{0\}}\left(1-\frac{X}{a\nu}\right)$ and $\exp_C$ induces an exact sequence of $A$-modules
$$0\rightarrow \nu A\rightarrow\CC_\infty\xrightarrow{\exp_C}C(\CC_\infty)\rightarrow0.$$
\end{Corollary}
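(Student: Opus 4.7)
\emph{Proof plan for Corollary \ref{exactsequence}.} Both assertions will fall out of the preceding proposition together with Proposition \ref{prop-goss}. There is essentially no new analytic input needed; the plan is just to package information already established.

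\emph{The product expansion.} The preceding proposition identifies the zero locus of $\exp_C$ as $\nu A$. Every zero is simple, because $\frac{d}{dZ}\exp_C(Z)=1$ forbids a double zero at $0$, and then $\FF_q$-linearity together with the functional equation $\exp_C\circ a=C_a\circ\exp_C$ propagates simplicity to every point of $\nu A$. Since $\exp_C$ is entire, i.e.\ $\rho(\exp_C)=-\infty$, part (3) of Proposition \ref{prop-goss} yields a unique Weierstrass product
\[
\exp_C(Z)=cZ^n\prod_i\left(1-\tfrac{Z}{\alpha_i}\right)^{\beta_i},
\]
where the $\alpha_i$ enumerate the nonzero zeros with strictly decreasing absolute value and $\beta_i$ are their multiplicities. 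Matching this against the power series expansion $\exp_C(Z)=Z+\sum_{i\geq 1}d_i^{-1}Z^{q^i}$ forces $n=1$ and $c=1$; simplicity forces all $\beta_i=1$. Grouping the zeros by absolute value, the $\alpha_i$ with multiplicities recover exactly the set $\{a\nu:a\in A\setminus\{0\}\}$. Since $|a\nu|\to\infty$ as $|a|\to\infty$, the factors $(1-Z/(a\nu))$ tend to $1$ uniformly on every disk of $\CC_\infty$, so the resulting product converges in any order in the ultrametric sense, and we may rewrite it as
\[
\exp_C(Z)=Z\prod_{a\in A\setminus\{0\}}\left(1-\frac{Z}{a\nu}\right).
\]

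\emph{The exact sequence.} The map $\exp_C$ is $\FF_q$-linear by construction. The defining functional equation $\exp_C(aZ)=C_a(\exp_C(Z))$ for all $a\in A$ says that $\exp_C$ is an $A$-module homomorphism from $\CC_\infty$ (with its usual $A$-action) to $C(\CC_\infty)$ (with the twisted action via $C$). Its kernel is $\nu A$ by the preceding proposition. For surjectivity, we invoke the general fact recalled after Proposition \ref{prop-goss}: a non-constant entire function on $\CC_\infty$ is surjective. Since $\exp_C$ is entire and non-constant (its linear term is $Z$), its image is all of $\CC_\infty=C(\CC_\infty)$ as additive groups, hence as $A$-modules via $C$. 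Assembling these three ingredients yields the stated short exact sequence.

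The only point that might look delicate is the legitimacy of rewriting the Weierstrass product, indexed in Proposition \ref{prop-goss} by a sequence of distinct zeros ordered by valuation, as an unordered product over $a\in A\setminus\{0\}$. In the non-archimedean setting this is harmless, because the general product criterion recalled in \S \ref{rings-and-fields} (an infinite product converges iff its general factor tends to $1$) is insensitive to the order of the factors.
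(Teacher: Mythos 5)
Your proposal is correct and follows essentially the route the paper intends: the corollary is meant to be immediate from the preceding proposition (kernel $=\nu A$, all zeros simple since $\frac{d}{dZ}\exp_C=1$), the Weierstrass factorization of Proposition \ref{prop-goss}(3) for the entire function $\exp_C$, the functional equation $\exp_C a=C_a\exp_C$, and the surjectivity of non-constant entire functions. The only cosmetic difference is that the paper can also quote the identification $\exp_C=\exp_{\nu A}$ already provided by Theorem \ref{drinfeld-theorem}, whereas you re-derive the product directly from the zero set and uniqueness of the factorization; both arguments rest on the same facts and your rearrangement of the product is indeed harmless ultrametrically.
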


\subsection{A formula for $\nu$}

We have seen that if $\Lambda\subset\CC_\infty$ is the kernel of $\exp_C$, then $\Lambda$ is a free $A$-module of rank one generated by $\nu\in\CC_\infty$ with $v_\infty(\nu)=-\frac{q}{q-1}$, defined up to multiplication by an element of $\FF_q^\times$. Let us choose a $(q-1)$-th root $(-\theta)^{\frac{1}{q-1}}$ of $-\theta$; this is also defined up to multiplication by an element of $\FF_q^\times$, and the valuation is $-\frac{1}{q-1}$. We want to prove the following formula:
$$\nu=\theta(-\theta)^{\frac{1}{q-1}}\prod_{i>0}\left(1-\frac{\theta}{\theta^{q^i}}\right)^{-1}.$$
To do this, we will use Theorem \ref{theoexponentialexists}. We recall that this result implies that the sequence
$$f_n(z)=\exp_C(z)-C_{\theta^n}(z\theta^{-n})$$ converges uniformly on every bounded disk of $\CC_\infty$ to the zero function. To continue further, we need to introduce the {\em function $\omega$ of Anderson and Thakur}.
This function is defined by the following product expansion:
$$\omega(t)=(-\theta)^{\frac{1}{q-1}}\prod_{i\geq 0}\left(1-\frac{t}{\theta^{q^i}}\right)^{-1}.$$
The convergence of this product is easily seen to hold for any $t\in\CC_\infty\setminus\{\theta,\theta^q,\theta^{q^2},\ldots\}$. Also, for all $n\neq1$, the function
$$(t-\theta)(t-\theta^q)\cdots(t-\theta^{q^{n-1}})\omega(t)$$
extends to an analytic function over $D_{\CC_\infty}(0,q^{n-1})$ (we can also say that $\omega$ defines a 
meromorphic function over $\CC_\infty$ having simple poles at the singularities defined above). 
To study the arithmetic properties of $\omega$, it is useful to work in {\em Tate algebras}. However, at this level of generality, this is not necessary, strictly speaking. For the purposes we have in mind now, it will suffice to work with formal Newton-Puiseux series.
Let $y,t$ be two variables, choose a $(q-1)$-th root of $y$ and 
define:
$$F(y,t)=(-y)^{\frac{1}{q-1}}\prod_{i\geq 0}\left(1-\frac{t}{y^{q^i}}\right)^{-1}\in\FF_q((y^{-\frac{1}{q-1}}))((t)).$$
Then, $$F(y^q,t)=(t-y)F(y,t).$$ Writing the series expansion $$\omega(t)=\sum_{i\geq 0}\lambda_{i+1}t^i\in\CC_\infty[[t]],$$ we deduce, from the uniqueness of the series expansion of an analytic function in $D_{\CC_\infty}(0,1)$, that the sequence $(\lambda_i)_{i\geq 0}$ can be defined by setting $\lambda_0=0$ and the algebraic relations $$C_\theta(\lambda_{i+1})=\lambda_{i+1}^q+\theta\lambda_{i+1}=\lambda_i$$ which include $\lambda_1=(-\theta)^{\frac{1}{q-1}}$. Now set $\mu_i=\theta^i\lambda_i$, $i\geq 0$.

\begin{Lemma}
For all $i\geq 1$, $|\mu_i|=q^{\frac{q}{q-1}}$ and $(\mu_i)_{i\geq0}$ is a Cauchy sequence.
\end{Lemma}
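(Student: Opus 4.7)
The plan is to determine $v_\infty(\lambda_i)$ inductively from the recursion $\lambda_{i+1}^q + \theta\lambda_{i+1} = \lambda_i$, then read off $v_\infty(\mu_i)$, and finally control $\mu_{i+1}-\mu_i$ via the recursion to get Cauchy.

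First I would prove by induction on $i\geq 1$ that
\[
v_\infty(\lambda_i) = (i-1)-\frac{1}{q-1}.
\]
The base case $i=1$ is immediate from $\lambda_1=(-\theta)^{1/(q-1)}$. For the inductive step, set $a=v_\infty(\lambda_{i+1})$ and compare $v_\infty(\lambda_{i+1}^q)=qa$ with $v_\infty(\theta\lambda_{i+1})=a-1$. The induction hypothesis gives $v_\infty(\lambda_i)=(i-1)-\frac{1}{q-1}$, which is strictly greater than $-\frac{1}{q-1}$ for $i\geq 2$ (and equals $-\frac{1}{q-1}$ for $i=1$). The guess $a = i - \frac{1}{q-1}$ satisfies $a-1 = (i-1)-\frac{1}{q-1} = v_\infty(\lambda_i)$ and $qa = qi - \frac{q}{q-1} > a-1$ because $(q-1)a = (q-1)i - \frac{q-1}{q-1}\cdot\frac{1}{1}\cdot(q-1)\cdots$ — more cleanly, $qa-(a-1)=(q-1)a+1=(q-1)i-1>0$ for $i\geq 1$. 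Hence $\theta\lambda_{i+1}$ dominates in the ultrametric sum, and there is no cancellation to worry about, so this choice of $a$ is forced and the induction closes.

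From this, $v_\infty(\mu_i)=-i+(i-1)-\frac{1}{q-1}=-\frac{q}{q-1}$ for every $i\geq 1$, which is the first assertion.

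For the Cauchy property, I would use the recursion to rewrite the difference:
\[
\mu_{i+1}-\mu_i = \theta^{i+1}\lambda_{i+1}-\theta^i\lambda_i
= \theta^i\bigl(\theta\lambda_{i+1}-\lambda_i\bigr)
= -\theta^i\lambda_{i+1}^q,
\]
since $\theta\lambda_{i+1}-\lambda_i=-\lambda_{i+1}^q$ by definition of the $\lambda_j$'s. Therefore
\[
v_\infty(\mu_{i+1}-\mu_i) = -i + q\,v_\infty(\lambda_{i+1}) = -i + q\Bigl(i-\frac{1}{q-1}\Bigr) = (q-1)i - \frac{q}{q-1},
\]
which tends to $+\infty$ as $i\to\infty$. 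Since $\CC_\infty$ is ultrametric, this is precisely the criterion recalled in \S\ref{rings-and-fields} for $(\mu_i)$ to be Cauchy, completing the proof.

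The only genuinely delicate step is the inductive valuation computation: one must verify that the two terms $\lambda_{i+1}^q$ and $\theta\lambda_{i+1}$ have strictly different valuations so that the ultrametric identity $v_\infty(x+y)=\min\{v_\infty(x),v_\infty(y)\}$ applies. The verification $(q-1)i-1>0$ for $i\geq 1$ does this; the same inequality is what forces the valuations $v_\infty(\lambda_i)$ to grow linearly, which is exactly what makes the $\mu_i$ stabilize at a common valuation and then converge.
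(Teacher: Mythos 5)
Your second half (the telescoping identity $\mu_{i+1}-\mu_i=-\theta^i\lambda_{i+1}^q$ and the ultrametric Cauchy criterion) is exactly the paper's argument and is fine. The gap is in the first half, in the inductive determination of $v_\infty(\lambda_{i+1})$ from the recursion. Knowing $v_\infty(\lambda_i)=(i-1)-\frac{1}{q-1}$ and the relation $\lambda_{i+1}^q+\theta\lambda_{i+1}=\lambda_i$ does \emph{not} force $v_\infty(\lambda_{i+1})=i-\frac{1}{q-1}$: you only checked that this guess is self-consistent, and you dismissed the alternative with "there is no cancellation to worry about", which is circular, since whether the two terms have distinct valuations depends on the unknown $a$. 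The case $a=-\frac{1}{q-1}$, where $qa=a-1$ and cancellation does occur, is equally consistent; in fact the Newton polygon of $X^q+\theta X-\lambda_i$ shows that of its $q$ roots exactly one has valuation $i-\frac{1}{q-1}$ while the other $q-1$ have valuation $-\frac{1}{q-1}$. Since the recursion plus the base value $\lambda_1=(-\theta)^{1/(q-1)}$ admits many solution sequences, and for any sequence that picks a "small" root at some step the conclusion $|\mu_i|=q^{q/(q-1)}$ is false, no argument using only the recursion can prove the lemma. You must use the actual definition of the $\lambda_i$ as the Taylor coefficients of $\omega$.

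That is precisely what the paper does: it reads off $|\lambda_i|=q^{\frac{q}{q-1}-i}$ directly by expanding the product $(-\theta)^{1/(q-1)}\prod_{j\geq0}(1-t/\theta^{q^j})^{-1}$, where the coefficient of $t^{i}$ is a sum of monomials $\theta^{-k_0-qk_1-q^2k_2-\cdots}$ with $\sum_j k_j=i$, dominated by the single term $k_0=i$; the recursion is then used only for the Cauchy step, as you do. So the fix is a one-line computation with the product expansion replacing your induction. Separately, there is a small arithmetic slip: $qa-(a-1)=(q-1)a+1=(q-1)i$, not $(q-1)i-1$; as written your inequality $(q-1)i-1>0$ fails for $q=2$, $i=1$, although the correct quantity $(q-1)i$ is indeed positive, so this is harmless once corrected.
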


\begin{proof}
Developing the product defining $\omega$ we see that $|\lambda_i|=q^{\frac{q}{q-1}-i}$. To see that $(\mu_i)$ is a Cauchy sequence, it suffices to show that $\mu_{i+1}-\mu_i\rightarrow0$. But
$$\mu_{i+1}-\mu_i=\theta^{i+1}\lambda_{i+1}-\theta^i\lambda_i=\theta^i(\lambda_i-\lambda_{i+1}^q)-\theta^i\lambda_i=-\theta^i\lambda_{i+1}^q\rightarrow0.$$
\end{proof}

Let $\mu\in\CC_\infty$ be the limit of $(\mu_i)$. 

\begin{Lemma}
We have $\mu=-\lim_{t\rightarrow\theta}(t-\theta)\omega(t)=\theta(-\theta)^{\frac{1}{q-1}}\prod_{i>0}(1-\theta^{1-q^i})^{-1}$.
\end{Lemma}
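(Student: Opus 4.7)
The plan is to establish both equalities by playing the product expansion of $\omega$ against its power series expansion at the origin and identifying the telescoping sum for $\mu$ with the resulting series at $t=\theta$.

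The second equality is almost a formal manipulation of the product formula for $\omega$. Writing
$$\omega(t)=(-\theta)^{\frac{1}{q-1}}\left(1-\frac{t}{\theta}\right)^{-1}\prod_{i\geq 1}\left(1-\frac{t}{\theta^{q^i}}\right)^{-1},$$
the factor $(t-\theta)$ absorbs the $i=0$ factor to produce $-\theta$, so
$$(t-\theta)\omega(t)=-\theta(-\theta)^{\frac{1}{q-1}}\prod_{i\geq 1}\left(1-\frac{t}{\theta^{q^i}}\right)^{-1}.$$
The right-hand side is analytic on the open disk $D^\circ(0,q^q)$ since the next singularity of $\omega$ lies at $\theta^q$. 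In particular the removable singularity at $t=\theta$ disappears, and evaluating at $t=\theta$ yields the stated second equality (the infinite product converges because $|\theta^{1-q^i}|=q^{1-q^i}\to0$).

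For the first equality I plan to expand $(t-\theta)\omega(t)$ as a power series around $0$ and use the recurrence on the $\lambda_j$. Starting from $\omega(t)=\sum_{j\geq 0}\lambda_{j+1}t^j$ (convergent on $D^\circ(0,q)$ by construction), I multiply by $(t-\theta)$ to get the formal identity
$$(t-\theta)\omega(t)=-\theta\lambda_1+\sum_{j\geq 1}(\lambda_j-\theta\lambda_{j+1})\,t^j.$$
By the relation $C_\theta(\lambda_{j+1})=\lambda_{j+1}^q+\theta\lambda_{j+1}=\lambda_j$, each coefficient simplifies to $\lambda_j-\theta\lambda_{j+1}=\lambda_{j+1}^q$, so
$$(t-\theta)\omega(t)=-\theta\lambda_1+\sum_{j\geq 1}\lambda_{j+1}^q\,t^j.$$
Because the left-hand side is analytic on $D^\circ(0,q^q)$ by the previous paragraph, this Taylor series has radius of convergence at least $q^q$, and in particular it converges at $t=\theta$. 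Thus
$$-\lim_{t\to\theta}(t-\theta)\omega(t)=\theta\lambda_1-\sum_{j\geq 1}\theta^j\lambda_{j+1}^q.$$

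It remains only to identify the right-hand side with $\mu$. The previous lemma's computation gave $\mu_{j+1}-\mu_j=-\theta^j\lambda_{j+1}^q$, so telescoping and taking the limit yields
$$\mu=\mu_1+\sum_{j\geq 1}(\mu_{j+1}-\mu_j)=\theta\lambda_1-\sum_{j\geq 1}\theta^j\lambda_{j+1}^q,$$
which matches the expression above and closes the argument. The only delicate point is justifying that the power series of $(t-\theta)\omega(t)$ can actually be evaluated at $t=\theta$; this is not really an obstacle since analyticity on $D^\circ(0,q^q)$ follows directly from the product formula of the first paragraph (or, more pedantically, from Proposition \ref{prop-goss} applied to the reciprocals of the factors $(1-t/\theta^{q^i})^{-1}$, $i\geq 1$).
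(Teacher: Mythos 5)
Your proof is correct and takes essentially the same route as the paper: your first paragraph is the functional equation $F(y^q,t)=(t-y)F(y,t)$ in disguise, and identifying the limit with the series $\sum_{j\geq 0}\theta^j\lambda_{j+1}^q$ (your radius-of-convergence argument is sound, and can also be checked directly from the valuations of the $\lambda_j$) and then telescoping against $\mu_N=\theta^N\lambda_N$ is exactly the paper's computation, merely reorganized through the relation $\mu_{j+1}-\mu_j=-\theta^j\lambda_{j+1}^q$ from the preceding lemma. I see no gaps.
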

\begin{proof}
From the functional equation of $F(y,t)$ we see that $\lim_{t\rightarrow\theta}(t-\theta)\omega(t)=(-\theta)^{\frac{q}{q-1}}\prod_{i>0}(1-\theta^{1-q^i})^{-1}=\sum_{i\geq 0}\theta^i\lambda_{i+1}^q$, the latter series being convergent. Using that $C_\theta(\lambda_{i+1})=\lambda_i$ we see that the last sum is:
$$\sum_{i\geq 0}\theta^i(\lambda_i-\theta\lambda_{i+1})=\sum_{i=0}^{N-1}\theta^i(\lambda_i-\theta\lambda_{i+1})+\sum_{i\geq N}\theta^i\lambda_{i+1}^q,\quad \forall N.$$
The first sum telescopes to $-\theta^N\lambda_N$ while the second being a tail series of a convergent series, it converges and the sum depending on $N$ tends to $0$ as $N\rightarrow\infty$. 
\end{proof}
Hence $\mu$ is the residue of $-\omega$ at $t=\theta$. We can write
$$\mu=-\operatorname{Res}_{t=\theta}(\omega).$$
This is the analogue of a well known lemma sometimes called Appell's Lemma: if $(a_n)$ is a converging sequence of complex numbers, then $\lim_na_n=\lim_{x\rightarrow 1^-}(1-x)\sum_na_nx^n$.

We are now ready to prove the following well known and classical result:

\begin{Theorem}\label{explicit-product-pi}
The kernel $\Lambda$ of $\exp_C$ is generated, as an $A$-module, by $$\mu=\nu=\theta(-\theta)^{\frac{1}{q-1}}\prod_{i>0}\Big(1-\theta^{1-q^i}\Big)^{-1}.$$
\end{Theorem}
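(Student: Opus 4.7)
The plan is to show that $\mu$ lies in the kernel $\Lambda$ of $\exp_C$ and then use the valuation information already established to conclude that it is a generator.

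First I would establish that $\exp_C(\mu)=0$ by the following limiting argument. Recall from Theorem \ref{theoexponentialexists} (applied to the Carlitz module) that the sequence of functions $f_n(z):=\exp_C(z)-C_{\theta^n}(\theta^{-n}z)$ tends to $0$ uniformly on every bounded disk of $\CC_\infty$. The key algebraic input is the recursion $C_\theta(\lambda_{i+1})=\lambda_i$ with $\lambda_0=0$, which iterates to give
\[
C_{\theta^n}(\lambda_n)=\lambda_0=0, \qquad n\geq 1.
\]
Since $\mu_n=\theta^n\lambda_n$, this translates to $C_{\theta^n}(\theta^{-n}\mu_n)=0$, hence $\exp_C(\mu_n)=f_n(\mu_n)$. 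The preceding lemma shows that the sequence $(\mu_n)$ is Cauchy, converges to $\mu$, and satisfies $|\mu_n|=q^{q/(q-1)}$, so in particular it is contained in a fixed bounded disk. By continuity of $\exp_C$ (which is entire) and uniform convergence of $f_n$ to $0$ on that disk, passing to the limit yields
\[
\exp_C(\mu)=\lim_{n\to\infty}\exp_C(\mu_n)=\lim_{n\to\infty}f_n(\mu_n)=0.
\]
Thus $\mu\in\Lambda$.

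Next I would deduce that $\mu$ generates $\Lambda$. By the preceding proposition, $\Lambda=\nu A$ with $v_\infty(\nu)=-q/(q-1)$, and by the lemma above $v_\infty(\mu)=-q/(q-1)$ as well (and $\mu\neq 0$). Writing $\mu=a\nu$ with $a\in A$, we get $v_\infty(a)=0$, which forces $a\in\FF_q^\times$. Consequently $\mu A=\nu A=\Lambda$, so $\mu$ is a generator of $\Lambda$; the explicit product formula for $\mu$ was already established in the second lemma preceding the theorem.

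The main obstacle is the limit step $\exp_C(\mu)=\lim_n f_n(\mu_n)$: one must verify that the evaluation of $f_n$ at the moving point $\mu_n$ (rather than a fixed point) still tends to zero. This is handled by the fact that all the $\mu_n$ have the same valuation $-q/(q-1)$, so they sit inside one common disk $D_{\CC_\infty}(0,R)$ with $R=q^{q/(q-1)}$, on which $f_n\to 0$ uniformly; combined with the ultrametric triangle inequality, $|f_n(\mu_n)|\leq \sup_{|z|\leq R}|f_n(z)|\to 0$, which closes the argument.
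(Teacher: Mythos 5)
Your proposal is correct and follows essentially the same route as the paper: it uses $C_{\theta^n}(\lambda_n)=0$ from the recursion, the uniform convergence of $f_n=\exp_C-C_{\theta^n}\theta^{-n}$ to zero on a fixed bounded disk containing all the $\mu_n$, and the valuation comparison $v_\infty(\mu)=v_\infty(\nu)=-\tfrac{q}{q-1}$ to conclude that $\mu$ generates $\Lambda$. The only cosmetic difference is that you invoke continuity of $\exp_C$ at $\mu$, whereas the paper writes $\mu=\mu_n+\epsilon_n$ and uses $\FF_q$-linearity to isolate the term $\exp_C(\epsilon_n)\rightarrow 0$; these are interchangeable.
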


\begin{proof}
Since $\Lambda=\nu A$ for some $\nu\in\CC_\infty$ such that $|\nu|=q^{\frac{q}{q-1}}$ and since $|\mu|=q^{\frac{q}{q-1}}$, it suffices to show that $\exp_C(\mu)=0$.
Now, we can write $\mu=\mu_n+\epsilon_n$ where $\epsilon_n\rightarrow0$ and $|\epsilon_n|<q^{\frac{q}{q-1}}$. Also, we have $\exp_C(z)=f_n(z)+C_{\theta^n}(\theta^{-n}z)$ and we have that the sequence of entire functions
$(f_n)$ converges uniformly to the zero function on any bounded subset of $\CC_\infty$. We have:
\begin{eqnarray*}
\exp_C(\mu) & =& (C_{\theta^n}\theta^{-n}+f_n)(\mu_n+\epsilon_n)\\
&=&\underbrace{C_{\theta^n}(\lambda_n)}_{=0}+\underbrace{f_n(\mu_n)}_{\rightarrow0}+\underbrace{\exp_C(\epsilon_n)}_{\rightarrow 0}.
\end{eqnarray*}
Hence, $\mu=\nu$.
\end{proof}
\begin{Remark}
{\em The formula of Theorem \ref{explicit-product-pi} can be easily derived from the following 
result of Carlitz in \cite{CAR} that also appears in \cite[Theorem 3.2.8]{GOS}. Let $\eta$ be a $(q-1)$-th root of $\theta-\theta^q$ in the algebraic closure $K^{\operatorname{ac}}$ of $K$ in $\CC_\infty$. We set:
$$\xi=\eta\prod_{j\geq1}\left(1-\frac{\theta^{q^j}-\theta}{\theta^{q^{j+1}}-\theta}\right)\in K_\infty^{\operatorname{ac}}.$$ Then $\mu\in\FF_q^\times\xi$. To see this, observe the identity: 
$$\prod_{j=1}^{d-1}\left(1-\frac{\theta^{q^j}-\theta}{\theta^{q^{j+1}}-\theta}\right)=\prod_{j=0}^{d-1}(1-\theta^{q^j(1-q)})\prod_{i=1}^d(1-\theta^{1-q^i})^{-1},\quad d\geq 1.$$
Both products on $d$ converge in $K_\infty$ for $d\rightarrow\infty$.
If we set $H=\eta\prod_{j\geq0}(1-\theta^{q^j(1-q)})\in K_\infty^{\operatorname{ac}}$ we see that $H$ is algebraic over $K$
by the relations $H^q=(\theta-\theta^q)\eta(1-\theta^{1-q})^{-1}\prod_{j\geq0}(1-\theta^{q^j(1-q)})=-\theta^qH$. Since $-\theta^q=\theta^{q-1}(-\theta)$, we deduce that $H\in\FF_q^\times\theta(-\theta)^{\frac{1}{q-1}}$. The formulation that we adopt in our text is that of Anderson, Brownawell and Papanikolas in \cite[\S 5.1]{ABP}.
In fact, the proof of Theorem \ref{explicit-product-pi} that we gave above is inspired by that of these authors.}
\end{Remark}

One of the most used notations for our $\mu$ is $\widetilde{\pi}$. 
This is suggestive due to the resemblance between the exact sequence of Corollary \ref{exactsequence}
and $0\rightarrow 2\pi i\ZZ\rightarrow\CC\xrightarrow{\exp}\CC\rightarrow1$; there is an analogy between $\widetilde{\pi}\in\CC_\infty$ and $2\pi i\in\CC$.
It can be proved, by the product expansion we just found, that $\widetilde{\pi}$ in transcendental over $K=\FF_q(\theta)$. The first transcendence proof of it is that of Wade in \cite{WAD} but there are several others, very different from each other. See for example \cite[\S 3.1.2]{ABP}. There are proofs which make use of computations of dimensions of `motivic Galois groups' which connect to the topics of Di Vizio's contribution to this volume \cite{VIZ} and which are the roots of a vast program in transcendence and algebraic independence inaugurated by Anderson, Brownawell and Papanikolas in \cite{ABP}, and later by Papanikolas in \cite{PAP0}.

\subsection{A factorization property for the Carlitz exponential}
\label{factorization-property}

In Corollary \ref{exactsequence}, we described the Weierstrass product expansion of the entire function
$\exp_C:\CC_\infty\rightarrow\CC_\infty$. We now look again at $\exp_C$ as a formal series and we provide it with another product expansion, this time in $\CC_\infty[[\tau]]$; see Proposition \ref{prod-facto}. This result is implicit in Carlitz's \cite[(1.03), (1.04) and (5.01)]{CAR}. The function we factorise is not $\exp_C$ but a related one: $$\exp_A(z)=z\prod_{a\in A\setminus\{0\}}\left(1-\frac{z}{a}\right)=\widetilde{\pi}^{-1}\exp_C(\widetilde{\pi}z),$$ so that
$$\exp_A=\sum_{i\geq 0}d_i^{-1}\widetilde{\pi}^{q^i-1}\tau^i\in K_\infty[[\tau]].$$

Before going on we must discuss the {\em Carlitz logarithm}. It is easy to see that in $\CC_\infty[[\tau]]$, there exists a unique formal series $\log_C$ with the following properties: (1) $\log_C=1+\cdots$ (the constant term in the power series in $\tau$ is the identity $1=\tau^0$)
and (2) for all $a\in A$, $a\log_C=\log_CC_a$, a condition which is equivalent to $\theta\log_C=\log_CC_\theta$
by the fact that $A=\FF_q[\theta]$. Writing $\log_C=\sum_{i\geq0}l_i^{-1}\tau^i$ and using this remark one easily shows that $$l_i=(\theta-\theta^q)(\theta-\theta^{q^2})\cdots(\theta-\theta^{q^i}),$$ $i\geq 0$. We note that 
$v_\infty(l_i)=-q\frac{q^i-1}{q-1}$. This means that the series $\log_C$ does not converge to an entire function
but for all $R\in|\CC_\infty^\times|$ such that $R<|\widetilde{\pi}|$, $\log_C$ defines an $\FF_q$-linear 
function on $D_{\CC_\infty}(0,R)$. We also note, reasoning with the Newton polygons of $\exp_C$ and $\log_C$, that
\begin{equation}\label{isometry}
|\exp_C(z)|=|z|=|\log_C(z)|,\quad \forall z\in D_{\CC_\infty}^\circ(0,|\widetilde{\pi}|),
\end{equation}
which implies that the Carlitz's exponential induces an isometric automorphism of $D_{\CC_\infty}^\circ(0,|\widetilde{\pi}|)$. More generally, the exponential function of a Drinfeld module induces, locally, an 
isometric automorphism, see \cite[Corollary 1.12]{TAV}.
We observe that the series $U=\exp_C\log_C$ and $V=\log_C\exp_C$ in $K_\infty[[\tau]]$
 satisfy $Ua=aU$ and $Va=aV$ for all $a\in A$. Since they further satisfy $U=1+\cdots$ and $V=1+\cdots$, we deduce that $\log_C$ is the inverse of $\exp_C$ in $K_\infty[[\tau]]$. In particular, 
$$C_a=\exp_C a\log_C\in K_\infty[\tau],\quad \forall a\in A.$$
We define:
$$C_z=\exp_C z\log_C\in \CC_\infty[[\tau]],\quad z\in\CC_\infty.$$
Then,
\begin{eqnarray*}
C_z&=&\sum_{i\geq 0}d_i^{-1}\tau^iz\sum_{j\geq 0}l_j^{-1}\tau^j\\
&=&\sum_{i\geq 0}d_i^{-1}z^{q^i}\tau^i\sum_{j\geq0}l_j^{-1}\tau^j\\
&=&\sum_{k\geq 0}\left(\underbrace{\sum_{i=0}^kd_i^{-1}l_{k-i}^{-q^i}z^{q^i}}_{=:E_k(z).}\right)\tau^k
\end{eqnarray*}
We can thus expand, for all $z\in\CC_\infty$:
$$C_z=\sum_{k\geq 0}E_k(z)\tau^k\in\CC_\infty[[\tau]]$$ with the coefficients
$$E_k(z)=\sum_{i=0}^kd_i^{-1}l_{k-i}^{-q^i}z^{q^i}=\frac{z}{l_k}+\cdots+\frac{z^{q^k}}{d_k}\in K[z]$$ which are $\FF_q$-linear polynomials of degree $q^k$ in $z$ for $k\geq 0$. They are called the 
{\em Carlitz' polynomials.}
In the next proposition we collect some useful properties of these polynomials.
\begin{Proposition}\label{propthreepropofek}
The following properties hold:
\begin{enumerate}
\item For all $k\geq 0$ we have $$E_k(z)=d_k^{-1}\prod_{\begin{smallmatrix}a\in A \\
|a|<q^k\end{smallmatrix}}(z-a).$$
\item For all $k\geq 0$ and $z\in\CC_\infty$ we have 
$$E_k(z)^q=E_k(z)+(\theta^{q^{k+1}}-\theta)E_{k+1}(z).$$
\item We have $l_kE_k(z)\rightarrow \exp_A(z)$ uniformly on every bounded subset of $\CC_\infty$.
\end{enumerate}
\end{Proposition}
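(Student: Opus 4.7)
The approach is to specialise the identity $C_z = \exp_C\cdot z\cdot\log_C = \sum_{k\geq 0}E_k(z)\tau^k$ to $z=a\in A$. Since the Carlitz module has rank one, $C_a\in A[\tau]$ is a non-commutative polynomial in $\tau$ of degree exactly $\deg(a)$, and hence $E_k(a)=0$ for every $a\in A$ with $\deg(a)<k$. This produces $q^k$ distinct zeros of $E_k(z)$ lying in $A$, and as $E_k(z)$ is a polynomial in $z$ of degree $q^k$ with leading coefficient $d_k^{-1}$, these must be all its zeros. The factorisation follows.

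\textbf{Plan for (2).} The key idea is to exploit the commutation $C_\theta C_z = C_z C_\theta$ in $\CC_\infty[[\tau]]$, which follows from $C_z=\exp_C z\log_C$ together with $C_\theta\exp_C = \exp_C\theta$: both products simplify to $\exp_C\cdot\theta z\cdot\log_C$ because $\theta$ and $z$ commute in $\CC_\infty$. I would then expand the two products directly,
\[
(\theta+\tau)\sum_k E_k(z)\tau^k \;=\; \sum_k\theta E_k(z)\tau^k + \sum_k E_k(z)^q\tau^{k+1},
\]
\[
\sum_k E_k(z)\tau^k(\theta+\tau) \;=\; \sum_k E_k(z)\theta^{q^k}\tau^k + \sum_k E_k(z)\tau^{k+1},
\]
using the Ore relation $\tau f = f^q\tau$, and equate the coefficients of $\tau^{k+1}$ on both sides to obtain the asserted identity relating $E_k^q$, $E_k$ and $E_{k+1}$.

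\textbf{Plan for (3).} Combining (2) with the functional identity $l_{k+1}=l_k(\theta-\theta^{q^{k+1}})$ (itself a consequence of $\log_C\cdot C_\theta = \theta\log_C$) yields the clean recursion
\[
l_{k+1}E_{k+1}(z) \;=\; l_k E_k(z) - l_k E_k(z)^q.
\]
Writing $F_k(z):=l_k E_k(z)$, this becomes $F_{k+1}-F_k = -F_k^q/l_k^{q-1}$. The crucial estimate is a uniform bound on $|F_k|$ over each disk $D_{\CC_\infty}(0,R)$: the valuation $v_\infty(d_i^{-1}l_k/l_{k-i}^{q^i})$ of the coefficient of $z^{q^i}$ in $F_k$ depends only on $i$ once $k\geq i$, and equals $iq^i-q(q^i-1)/(q-1)$, so the $F_k$'s share a common sup-norm bound on $D(0,R)$. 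Since $|l_k|_\infty\to\infty$, this forces $|F_{k+1}-F_k|_{D(0,R)}\to 0$, and $(F_k)$ converges uniformly on every bounded disk to an entire function $F_\infty$.

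To identify $F_\infty$ with $\exp_A$, note first that by (1) each $a\in A$ is a zero of $F_k$ for $k>\deg(a)$, hence of $F_\infty$. The Newton polygon of $F_\infty$, read off from the coefficient valuations just computed, agrees with that of $\exp_A(z)=\sum_{i\geq 0}d_i^{-1}\widetilde\pi^{q^i-1}z^{q^i}$, so $F_\infty$ and $\exp_A$ share the same zero set with the same multiplicities. Consequently $F_\infty/\exp_A$ is entire without zeros, hence a constant by Proposition \ref{prop-goss}(3), and comparing the coefficients of $z$ (both equal to $1$) shows the constant is $1$. The main obstacle I anticipate is precisely this last step: the vanishing of $F_\infty$ on $A$ together with existence of the limit are not enough by themselves, and one must combine the Newton polygon comparison with the normalisation at the origin to exclude any extraneous entire factor and conclude $F_\infty=\exp_A$.
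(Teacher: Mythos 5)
Your treatment of (1) and (2) is essentially the paper's own: (1) is the same count of the $q^k$ zeros of $E_k$ among the elements of $A$ of degree $<k$, and (2) is just the explicit computation behind the paper's one-line remark that the identity follows from $C_\theta C_z=C_zC_\theta$. One caveat on (2): equating the coefficients of $\tau^{k+1}$ in your two expansions gives $\theta E_{k+1}(z)+E_k(z)^q=\theta^{q^{k+1}}E_{k+1}(z)+E_k(z)$, i.e. $E_k(z)^q=E_k(z)+(\theta^{q^{k+1}}-\theta)E_{k+1}(z)$; the exponent $q^k$ in the printed statement is a misprint (test $k=0$: $E_0=z$, $E_1=(z^q-z)/(\theta^q-\theta)$), and it is this corrected identity that both your ``clean recursion'' in (3) and the paper's own use of (2) in the proof of Proposition \ref{prod-facto} actually require, so your computation is the right one.

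For (3) you take a genuinely different route. The paper deduces (3) directly from (1): since $l_kE_k(z)=\frac{l_k}{d_k}\prod_{|a|<q^k}(z-a)$ and $\prod_{0\neq a,\,|a|<q^k}a=\frac{d_k}{l_k}$, one gets $l_kE_k(z)=z\prod_{0\neq a,\,|a|<q^k}\left(1-\frac{z}{a}\right)$, so $l_kE_k$ is exactly the $k$-th partial product of the Weierstrass product defining $\exp_A$ and the uniform convergence on bounded sets is immediate, with no identification of the limit needed. Your argument instead runs through (2): the recursion $F_{k+1}-F_k=-F_k^q/l_k^{q-1}$, the $k$-independent coefficient valuations $iq^i-q(q^i-1)/(q-1)$, the resulting uniform Cauchy estimate, and then the identification of the limit via vanishing on $A$, the Newton polygon count (the $q^i(q-1)$ zeros of valuation $-i$ exactly match the elements of $A$ of degree $i$, as in the paper's computation of $\ker\exp_C$), and the normalisation of the coefficient of $z$. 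This is correct, and you rightly flagged the identification as the delicate step; what it buys is independence from the elementary product identity $\prod_{0\neq|a|<q^k}a=d_k/l_k$ and a second illustration of the Newton-polygon method, at the cost of being considerably longer than the paper's two lines and of recovering the exact finite-product formula only in the limit rather than for each $k$.
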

\begin{proof}
(1). Since $C_a\in A[\tau]$ has degree in $\tau$ which is equal to $\deg_\theta(a)$,
$E_k$ vanishes on $A(<k)$ the $\FF_q$-vector space of the polynomials of $A$ which have degree $< k$.
Since the cardinality of this set is equal to the degree of $E_k$, this vector space exhausts the zeroes of $E_k$, and the leading coefficient is clearly $d_k^{-1}$.

(2) This is a simple consequence of the relation $C_aC_z=C_zC_a$.

(3) We note that 
$$\frac{l_k}{d_k}\prod_{|a|<q^k}(z-a)=\frac{l_k}{d_k}z\prod_{\begin{smallmatrix}a\neq 0\\ |a|<q^k\end{smallmatrix}}(-a)\left(1-\frac{z}{a}\right).$$ Now, it is easy to see that 
\begin{equation}\label{a-product}
\prod_{0\neq |a|<q^k }(-a)=\prod_{0\neq |a|<q^k}a=\frac{d_k}{l_k}.\end{equation} (see \cite[\S 3.2]{GOS}). The uniform convergence is clear.
\end{proof}
 
We come back to the series $\exp_A=\sum_{i\geq 0}d_i^{-1}\widetilde{\pi}^{q^i-1}\tau^i\in K_\infty[[\tau]].$ We now show that
\begin{multline}\label{noncommutativeidentity}
\exp_A=\cdots \left(1-\frac{\tau}{l_n^{q-1}}\right)\left(1-\frac{\tau}{l_{n-1}^{q-1}}\right)\cdots\left(1-\frac{\tau}{l_1^{q-1}}\right)\left(1-\tau\right)=\\=
\cdots l_n(1-\tau)\frac{1}{\theta^{q^n}-\theta}(1-\tau)\cdots \frac{1}{\theta^{q^2}-\theta}(1-\tau)\frac{1}{\theta^{q}-\theta}(1-\tau).
\end{multline}
in $K_\infty[[\tau]]$ with its $(\tau)$-topology. We have in fact more:
\begin{Proposition}\label{prod-facto}
On every bounded subset of $\CC_\infty$, the entire function $\exp_A(z)$ is the uniform limit of the sequence
of $\FF_q$-linear polynomials $$\left(z-\frac{z^q}{l_n^{q-1}}\right)\circ\left(z-\frac{z^q}{l_{n-1}^{q-1}}\right)\circ\cdots\circ\left(z-\frac{z^q}{l_1^{q-1}}\right)\circ\left(z-z^q\right),$$ where $\circ$ is the composition.
\end{Proposition}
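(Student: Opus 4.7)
The plan is to establish the polynomial identity
$$P_n(z) := \left(z - \frac{z^q}{l_n^{q-1}}\right) \circ \cdots \circ (z - z^q) = l_{n+1}\, E_{n+1}(z), \qquad n \geq 0,$$
by induction on $n$, and then invoke Proposition \ref{propthreepropofek}(3), which already guarantees that $l_k E_k$ tends to $\exp_A$ uniformly on every bounded subset of $\CC_\infty$. In this way, all the analytic content is outsourced to a previously proved statement, and what remains is a purely algebraic telescoping that matches the single-step substitution with the Carlitz recursion for the $E_k$'s.

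For the base case $n=0$, the explicit formula $E_1(z) = z/l_1 + z^q/d_1$ together with $d_1 = \theta^q - \theta = -l_1$ gives $l_1 E_1(z) = z - z^q = P_0(z)$ immediately. For the inductive step, set $\phi_k(z) := z - z^q/l_k^{q-1}$ (consistent with $\phi_0 = z - z^q$ since $l_0 = 1$) and assume $P_{n-1} = l_n E_n$. Then
$$P_n = \phi_n(l_n E_n) = l_n E_n - \frac{(l_n E_n)^q}{l_n^{q-1}} = l_n E_n - l_n E_n^q = l_n\bigl(E_n - E_n^q\bigr).$$
The decisive algebraic input is the Carlitz recursion $E_n^q - E_n = (\theta^{q^{n+1}} - \theta)\, E_{n+1}$, most cleanly derived by comparing the coefficient of $\tau^{n+1}$ in the identity $C_\theta C_z = C_z C_\theta$ inside $\CC_\infty[[\tau]]$ (using $C_\theta = \theta + \tau$ and $C_z = \sum_k E_k(z)\tau^k$). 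Substituting and invoking $l_{n+1} = l_n\,(\theta - \theta^{q^{n+1}})$ yields
$$P_n = l_n\bigl(\theta - \theta^{q^{n+1}}\bigr)\, E_{n+1} = l_{n+1}\, E_{n+1},$$
which closes the induction.

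Combining the polynomial identity $P_n = l_{n+1} E_{n+1}$ with the uniform convergence of $l_k E_k$ to $\exp_A$ on bounded subsets of $\CC_\infty$ (Proposition \ref{propthreepropofek}(3)) finishes the proof. There is no genuine obstacle here; the only point that requires care is the bookkeeping of the exponents in $l_k$ and $d_k$, needed to see that the rescaling $l_n^{q-1}$ appearing in the factor $\phi_n$ is precisely the factor demanded by the Carlitz recursion to move from $l_n E_n$ to $l_{n+1} E_{n+1}$. Once this matching is in place, the telescoping is automatic and the proposition follows.
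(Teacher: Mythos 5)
Your proof is correct and follows essentially the same route as the paper: establish by induction, via the Carlitz recursion $E_n^q-E_n=(\theta^{q^{n+1}}-\theta)E_{n+1}$ and $l_{n+1}=l_n(\theta-\theta^{q^{n+1}})$, that the $n$-fold composition equals $l_{n+1}E_{n+1}$ (the paper writes this as $\widetilde{\mathcal{E}}_{n+1}=l_{n+1}\mathcal{E}_{n+1}$), and then invoke Proposition \ref{propthreepropofek}(3) for the uniform convergence on bounded sets. Your form of the recursion, with exponent $q^{n+1}$, is the correct one (the displayed statement of Proposition \ref{propthreepropofek}(2) has a typographical slip), and your derivation of it from $C_\theta C_z=C_zC_\theta$ is exactly how the paper proves that part.
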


\begin{proof} We write:
$$\widetilde{\mathcal{E}}_n=\left(1-\frac{\tau}{l_{n-1}^{q-1}}\right)\cdots\left(1-\frac{\tau}{l_1^{q-1}}\right)\left(1-\tau\right)\in K[\tau].$$
We also denote by $\mathcal{E}_n\in K[\tau]$ the unique element such that for all $z\in\CC_\infty$, 
$\mathcal{E}_n(z)=E_n(z)$ (evaluation). Part (3) of Proposition \ref{propthreepropofek} implies that $l_kE_k$ converges uniformly to $\exp_A(z)$ on every bounded subset of $\CC_\infty$. Hence, we are done if we show that the evaluations agree:
$\widetilde{\mathcal{E}}_n=l_n\mathcal{E}_n$ for all $n\geq 0$. This is certainly true if $n=0$. 
We continue by induction. From part (2) of Proposition \ref{propthreepropofek} we see that
$\tau\mathcal{E}_n=\mathcal{E}_n+(\theta^{q^{n+1}}-\theta)\mathcal{E}_{n+1}$ for all $n\geq 0$. Therefore:
\begin{eqnarray*}
\widetilde{\mathcal{E}}_{n+1} & = & \left(1-\frac{\tau}{l_n^{q-1}}\right)\widetilde{\mathcal{E}}_n\\
&=&\left(1-\frac{\tau}{l_n^{q-1}}\right)l_n\mathcal{E}_n\\
&=&l_n\mathcal{E}_n-l_n^ql_n^{-q+1}\tau\mathcal{E}_n\\
&=&l_n\mathcal{E}_n-l_n(\mathcal{E}_n+(\theta^{q^{n+1}}-\theta)\mathcal{E}_{n+1})\\
&=&\underbrace{l_n(\theta-\theta^{q^{n+1}})}_{=l_{n+1}}\mathcal{E}_{n+1},
\end{eqnarray*}
and we are done.
\end{proof}

Proposition \ref{prod-facto} was essentially known by Carlitz; it can be derived easily with elementary manipulations on the left-hand side of \cite[(5.01)]{CAR}.
It is interesting to note the two rationality properties for $\exp_C=\exp_{\widetilde{\pi}A}$ and 
$\exp_A$ which follow from the above result: the terms of the series defining $\exp_C$ are defined over $K$ (the coefficients $d_i^{-1}$) and the factors of the infinite product of $\exp_A$ we just considered are also defined over $K$ (the coefficients are $l_i^{1-q}$).

\begin{Problem}\label{problem1}{\em 
Generalise Lemma \ref{property-of-Fn} and Proposition \ref{prod-facto} to the framework of {\em Drinfeld-Hayes $A$-modules of rank one} considered in \cite{HAY} for a general $\FF_q$-algebra of regular functions $A$ and highlight a connection to the {\em shtuka functions} in the sense of \cite[\S 7.11]{GOS} in this context, see also \cite[\S 4.2]{TAV}.}
\end{Problem}

\begin{Remark} 
{\em This can be viewed as a digression. There is a simple connection with Thakur's multiple zeta values, defined by:
$$\zeta_A(n_1,n_2,\ldots,n_r):=\sum_{\begin{smallmatrix}a_1,\ldots,a_n\in A^+\\ |a_1|>\cdots>|a_r|\end{smallmatrix}}a_1^{-n_1}\cdots a_r^{-n_r}\in K_\infty,\quad n_1,\ldots,n_r\in \NN^*,\quad r\geq 1,$$
where $A^+$ denotes the subset of monic polynomials of $A$.
Indeed, one sees directly that the coefficient of $\tau^r$ in (\ref{noncommutativeidentity}) is equal to 
$$(-1)^r\sum_{i_1>\cdots>i_r\geq 0}l_{i_1}^{1-q}l_{i_2}^{q-q^2}\cdots l_{i_r}^{q^{r-1}-q^r}.$$ One proves easily
$\sum_{\begin{smallmatrix}a\in A^+\\ |a|=q^i\end{smallmatrix}}a^{-l}=l_{i}^{-l}$ for $1\leq l\leq q$ and
we deduce that
$$\exp_A=\sum_{r\geq 0}(-1)^r\zeta_A(q-1,q(q-1),\ldots,q^{r-1}(q-1))\tau^r.$$ 
Therefore, equating the corresponding coefficients of the powers of $\tau$ we reach the formula:
$$\zeta_A(q-1,q(q-1),\ldots,q^{r-1}(q-1))=(-1)^r\frac{\widetilde{\pi}^{q^r-1}}{d_r},\quad r\geq 0,$$
with the convention $\zeta_A(\emptyset)=1$. Note that the identity derived by the specialisation $t=\theta$ in \cite[(22)]{PEL5} rather involves the `reversed' multiple zeta values $\zeta_A^*(q^{r-1}(q-1),\ldots,q(q-1),q-1)$,
the $*$ denoting the variant of multiple zeta value involving sums with non-strict inequalities $|a_1|\geq\cdots\geq|a_r|$.}
\end{Remark}
 
\subsection{The function $\exp_A$ and local class field theory}\label{local-class}

This subsection is not logically related to the other topics of the text.
Just as the Euler exponential function, the Carlitz exponential function has an important role in explicit class field theory for the field $K$
(see Hayes \cite{HAY} for the rational function field $K=\FF_q(\theta)$, \cite{HAY2} and the more recent work of Zywina \cite{ZYW}, for the general case). Note that even more recently, a direct link between the explicit class field theory of $K=\FF_q(\theta)$ and the function $\omega$ of Anderson and Thakur has been found in \cite{ANG&PEL2}. It does not belong to our purposes to describe these results here. In this subsection we are going to achieve a more modest objective which is to apply, in the case $A=\FF_q[\theta]$, the properties of the function $\exp_A$ we have reviewed so far,
in relation with the local class field theory for $K_\infty=\FF_q((\frac{1}{\theta}))$. Interestingly, these properties do not seem to have simple analogues in the theory of Euler's exponential function. 

Let $L\subset\CC_\infty$ be an algebraic extension of $K_\infty$. Then, $\exp_A$ defines 
an $\FF_q$-linear map $L\rightarrow L$. Indeed, for all $x\in L$, $K_\infty(x)/K_\infty$ is a finite extension, hence complete, and $\exp_A(K_\infty(x))\subset K_\infty(x)$.

\begin{Definition}\label{uniformised}{\em 
 We say that $L$ is {\em uniformised by $\exp_A$} if the map $\exp_A:L\rightarrow L$ is surjective.}
\end{Definition}

For example, $L=\CC_\infty$ is uniformised by $\exp_A$, thanks to Proposition \ref{prop-goss}.
Observe that if $L,L'\subset\CC_\infty$ are two algebraic extensions of $K_\infty$ which are uniformised by $\exp_A$, then also $L\cap L'$ is uniformised by $\exp_A$. Indeed, let $x$ be an element of $L\cap L'$ and let $y\in L,y'\in L'$ be such that $\exp_A(y)=\exp_A(y')=x$. Then 
$y-y'\in A=\operatorname{Ker}(\exp_A)\subset K_\infty$ so that $y,y'\in L\cap L'$. Hence,
there is a {\em minimal algebraic extension} $L/K_\infty$ in $\CC_\infty$ that is uniformised by $\exp_A$; this is what we want to study here.

We denote by $K_\infty^{\operatorname{ab}}$ the {\em maximal abelian extension} of $K_\infty$ in 
$K_\infty^{\operatorname{sep}}\subset\CC_\infty$, that is, the maximal extension of $K_\infty$ which is Galois, with abelian Galois group. We also choose $\lambda_\theta$ a $(q-1)$-th root of $-\theta\in K_\infty^{\operatorname{sep}}$ and we note that if $L/K_\infty$ is an algebraic extension, then $L[\lambda_\theta]$ is an algebraic extension of $K_\infty$.
The aim of this subsection is to prove:

\begin{Theorem}\label{local-class-field}
Let $L$ be the minimal algebraic extension of $K_\infty$ in $\CC_\infty$ which is uniformised by $\exp_A$. Then, $L[\lambda_\theta]=K_\infty^{\operatorname{ab}}$.
\end{Theorem}
In the complex setting, and for the Eulerian exponential, we would have the analogue but deceiving result: the minimal algebraic extension of $\RR$ which is uniformised by $z\mapsto e^z$ is $\CC$. Theorem \ref{local-class-field} confirms that in some sense, function field arithmetic is more transparent and allows to see more structure in the watermark.
We need the next:
\begin{Lemma}\label{property-of-Fn} Let $n$ be a non-negative integer. For every $r\in|\CC_\infty^\times|$ with $r<|l_n|$ 
the product
$$\mathcal{F}_n:=\cdots\left(1-\frac{\tau}{l_{n+1}^{q-1}}\right)\left(1-\frac{\tau}{l_{n}^{q-1}}\right)\in K[[\tau]]$$ defines an entire function $\CC_\infty\rightarrow\CC_\infty$ and 
induces an isometric bi-analytic isomorphism of the disk $D_{\CC_\infty}(0,r)$.
\end{Lemma}

\begin{proof}
This is easy to verify by using Proposition \ref{prop-goss} and Corollary \ref{imageofdisks}. Indeed,
if we set $$\psi_m:=1-\frac{\tau}{l_{m}^{q-1}},\quad m\geq 0$$ we see that for all $z\in\CC_\infty$ such that $|z|<|l_n|$, $\psi_m(z)=z+z'$ with $z'\in\CC_\infty$ depending on $m$ and $|z'|<|z|$, for all $m\geq n$.
\end{proof}

\begin{proof}[Proof of Theorem \ref{local-class-field}]
We have a well defined $\FF_q$-linear map $\exp_A:K_\infty^{\operatorname{ab}}\rightarrow K_\infty^{\operatorname{ab}}$. We first show that this map is surjective so that if $L$ is the minimal 
algebraic extension of $K_\infty$ which is uniformised by $\exp_A$, then
$L\subset K_\infty^{\operatorname{ab}}$. To do this, we note that we have, for all $n\geq 0$, a well defined $\FF_q$-linear algebraic map $E_n:\mathbb{A}^1_{K_\infty^{\operatorname{ab}}}\rightarrow\mathbb{A}^1_{K_\infty^{\operatorname{ab}}}$ given by the Carlitz polynomials ($\mathbb{A}^n_L$ denotes the affine space of dimension $n$ over a field $L$). By the proof of Proposition
\ref{prod-facto}, $E_n$ is surjective. Indeed, for all $y'\in K_\infty^{\operatorname{ab}}$, the splitting field of the polynomial $E_n(X)-y'\in K_\infty(y')[X]$ is an abelian extension of $K_\infty(y')$ which can be constructed by iterating Artin-Schreier extensions. Let $x$ be an element of $K_\infty^{\operatorname{ab}}$. There exists $n\geq 0$ such that 
$|x|<|l_n|$. By Lemma \ref{property-of-Fn}, $\mathcal{F}_n^{-1}(x)\in K_\infty^{\operatorname{ab}}$ is well defined.
Let $x'\in K_\infty^{\operatorname{ab}}$ be such that
$$l_nE_n(x')=\mathcal{F}_n^{-1}(x).$$
Then we have, by  Proposition \ref{prod-facto},
$\exp_A(x')=\mathcal{F}_n(l_nE_n(x'))=\mathcal{F}_n(\mathcal{F}_n^{-1}(x))=x$ and we have proved that $K_\infty^{\operatorname{ab}}$ is uniformised by $\exp_A$.
Now let $L\subset \CC_\infty$ be an algebraic extension of $K_\infty$ that is uniformised by 
$\exp_A$. To show that $L[\lambda_\theta]$ contains $K_\infty^{\operatorname{ab}}$ we proceed in two steps. 

In the first step, we show that $K_\infty^{\operatorname{un}}$, the maximal abelian extension of $K_\infty$ which is unramified at the $\infty$-place, is contained in $L$. To do this it suffices to show that the algebraic closure $\FF_q^{\operatorname{ac}}$ of $\FF_q$ in $\CC_\infty$ is contained in $L$. Indeed, it is easy to see that $K_\infty^{\operatorname{un}}=\FF_q^{\operatorname{ac}}((\frac{1}{\theta}))$.

By using Proposition \ref{prop-goss} we see that for every $y\in\CC_\infty$ such that $|y|=1$
there exists a unique $x\in\CC_\infty$ with $|x|=1$, such that $\exp_A(x)=y$, and of course if 
$y\in L$, then $x\in L$ because we have supposed that $L$ is uniformised by $\exp_A$. Since $\FF_q\subset K_\infty\subset L$, if $y\in\FF_q^\times$, there exists $x\in L$, $|x|=1$, such that $\exp_A(x)=y$. Now observe with Proposition \ref{prod-facto} that $\exp_A(x)=(\mathcal{F}_1\circ\mathcal{E}_1)(x)=y$ and applying Lemma \ref{property-of-Fn}
$$x-x^q=\mathcal{E}_1(x)=\mathcal{F}^{-1}(y)=y+y'$$ where $y'\in\frac{1}{\theta}\FF_q[[\frac{1}{\theta}]]$. Setting $x'=\sum_{i\geq 0}(y')^{q^i}\in\frac{1}{\theta}\FF_q[[\frac{1}{\theta}]]$ we deduce that $x-x'\in\FF_{q^2}\setminus\FF_q\subset\CC_\infty$ is an element of $L$, and $\FF_{q^2}\subset L$. This shows that $\FF_{q^2}((\frac{1}{\theta}))\subset L$ because 
$\FF_{q^2}((\frac{1}{\theta}))=\FF_q((\frac{1}{\theta}))[\FF_{q^2}]$. We can of course repeat this argument with $y\in\FF_{q^2}\subset L$ etc. to show that, inductively, 
$\FF_{q^d}\subset L$ for all $d\geq 1$ so that $\FF_{q^d}((\frac{1}{\theta}))=K_\infty[\FF_{q^d}]\subset L$ for all $d\geq 1$ and with a little additional work we conclude that $K_\infty^{\operatorname{un}}\subset L$.

Before passing to the second step we need a little bit of terminology. We say that a sequence $(x_i)_{i\geq 0}$ in $K_\infty^{\operatorname{ab}}$ is a {\em Lubin-Tate sequence} if $\frac{1}{\theta}x_0+x_0^q=0$ and 
$$\frac{1}{\theta}x_i+x_i^q=x_{i-1},\quad i>0.$$
We note that $x_0\lambda_\theta\in\FF_q^\times$. Similarly, we say that a sequence 
$(y_i)_{i\geq 0}$ of $K_\infty^{\operatorname{ab}}$ is an {\em Artin-Schreier sequence} if $y_0=1$
and
$$\mathcal{E}_1(y_i)=y_i-y_i^{q}=\theta y_{i-1},\quad i>0.$$ By a simple application of Proposition \ref{prop-goss} we see that $|y_i|=|\theta|^{\frac{1}{q}+\cdots+\frac{1}{q^i}}$ for all $i>0$. Moreover,
$$\frac{1}{\theta}x_0y_i+(x_0y_i)^q=x_0y_{i-1},\quad i>0$$ so that if $(y_i)_{i\geq 0}$ is an Artin-Schreier sequence and $x_0$ satisfies the previous equation, then $(x_0y_i)_{i\geq 0}$ is a Lubin-Tate sequence and if $(x_i)_{i\geq 0}$
is a Lubin-Tate sequence with $x_0\lambda_\theta=1$, then $(\frac{x_i}{x_0})_{i\geq 0}$ is an Artin-Schreier sequence.

The second step of the proof of our theorem is to show that $L$ contains an Artin-Schreier sequence. First of all, we note that for any Artin-Schreier sequence $(y_i)_{i\geq 0}$,
$\theta y_i\in D_{K_\infty^{\operatorname{ab}}}(0,r)$ for all $r\in|\CC_\infty^\times|$ such that
$r<|\theta|^{\frac{q}{q-1}}$ so that
$|\theta y_i|<|l_1|$ for all $i\geq 0$. We fix $i\geq 0$. Let $a_{i+1}\in K_\infty^{\operatorname{ab}}$
be such that
$$a_{i+1}-a_{i+1}^q=\mathcal{F}_1^{-1}(\theta y_i).$$ We have that 
$$\exp_A(a_{i+1})=\mathcal{F}_1(\mathcal{F}^{-1}(\theta y_i))=\theta y_i.$$ Since by hypothesis,
$L$ is uniformised by $\exp_A$, we have that $a_{i+1}\in L$ if $y_i\in L$. It is easy to see that 
$\mathcal{F}_1^{-1}(\theta y_i)=\theta y_i+y_i'$ where $|y_i'|<1$. In particular, $a_{i+1}'=\sum_{j\geq 0}(y_i')^{q^j}$ converges to an element of $L$ such that $a_{i+1}'-(a_{i+1}')^q=y_i'$. If we set $b_{i+1}=a_{i+1}-a_{i+1}'$ we can conclude, under the hypothesis that
$y_i\in L$, that $b_{i+1}\in L$ is such that 
$$b_{i+1}-b_{i+1}^q=\theta y_i.$$ By induction over $i\geq 0$ we obtain that $L$ contains an 
Artin-Schreier sequence $(y_i)_{i\geq 0}$.

We can now conclude the proof of the theorem. By what written earlier, $L[\lambda_\theta]$
contains a Lubin-Tate sequence $(x_i)_{i\geq 0}$. We set $\widetilde{K}:=K_\infty[x_i:i\geq 0]$. By Lubin-Tate theory (see \cite{LUB&TAT}) $K_\infty^{\operatorname{ab}}$ is the compositum in 
$\CC_\infty$ of $\widetilde{K}$ and $K_\infty^{\operatorname{un}}$ and therefore, 
$L[\lambda_\theta]$ contains $K_\infty^{\operatorname{ab}}$.
\end{proof}

We are not going to deepen the facts outlined below, but
the main theorem of local class field theory asserts, in the special case of our local field $K_\infty$
(it holds for any local field with appropriate modifications) the existence of an isomorphism of profinite groups
$$\widehat{\theta}_{K_\infty}:\widehat{K}^\times_\infty\rightarrow\operatorname{Gal}(K_\infty^{\operatorname{ab}}/K_\infty),$$ the {\em local Artin homomorphism}, where $\widehat{K}^\times_\infty$ is the profinite group completion of $K_\infty^\times\cong \FF_q[[\frac{1}{\theta}]]^\times\times \ZZ$, non-canonically isomorphic to the profinite group $\FF_q[[\frac{1}{\theta}]]^\times\times\widehat{\ZZ}$. The non-canonical isomorphism depends on the choice of a uniformiser $\pi$ of $K_\infty$. If we set
$K_\pi$ to be the subfield of $K_\infty^{\operatorname{ab}}$ which is fixed by $\widehat{\theta}_{K_\infty}(\pi)\in \operatorname{Gal}(K_\infty^{\operatorname{ab}}/K_\infty)$, then 
$K_\infty^{\operatorname{ab}}$ is the compositum $K_\pi K_\infty^{\operatorname{un}}$, 
and we have isomorphisms $\operatorname{Gal}(K_\infty^{\operatorname{un}}/K_\infty)\cong\widehat{\ZZ}$ and 
$\operatorname{Gal}(K_\pi/K_\infty)\cong\FF_q[[\frac{1}{\theta}]]^\times$.
Choosing a Lubin-Tate sequence in $K_\infty^{\operatorname{ab}}/K_\infty$ is therefore equivalent to
the choice of a uniformiser $\pi$ of $K_\infty$. One can see, along these remarks (but we will not give full details), that
the minimal algebraic extension $L\subset K_\infty^{\operatorname{ab}}$ of $K_\infty$ that is uniformised by $\exp_A$ is determined by $\operatorname{Gal}(K_\infty^{\operatorname{ab}}/L)\cong\FF_q^\times$.

\begin{Problem}\label{problem2}
{\em The notion of minimal field extension of $K_\infty$ which is uniformised by the exponential $\exp_A$ can be generalised to e.g. Drinfeld $A$-modules via Theorem \ref{drinfeld-theorem} in a natural way, but it is unclear how this field can be characterised in the light of local class field theory so that the role of a statement like Theorem \ref{local-class-field} must be clarified in this more general setting.}
\end{Problem}

\section{Topology of the Drinfeld upper-half plane}\label{Drinfeld-upper-half-plane}

We go back to the settings and notations of \S \ref{our-main-settings}, considering the $\FF_q$-algebra $A=H^0(\mathcal{C}\setminus\{\infty\},\mathcal{O}_{\mathcal{C}})$ with $\mathcal{C}$ a smooth projective curve over $\FF_q$ and $\infty$ a closed point. We therefore have the tower of inclusions of $\FF_q$-algebras $A\subset K\subset K_\infty \subset \CC_\infty$. 
In this section we give an explicit topological description of what is called the {\em Drinfeld upper-half plane} $\Omega$. It goes back to Drinfeld, in \cite{DRI}. 
D. Goss called it the {\em 'algebraist's upper-half plane'} in \cite{GOS1}. It can be viewed as an analogue of the complex upper-half plane that can be constructed by cutting $\CC$ in two along the real line and taking one piece only. As a set, $\Omega$ is very simple:
$$\Omega=\CC_\infty\setminus K_\infty,$$
but subtracting $K_\infty$ results in a different operation than cutting; this is what we are going to show here.
We begin by presenting some elementary properties following \cite{GER&PUT}. We recall that $\CC_\infty=\widehat{K_\infty^{\operatorname{ac}}}$, where $K_\infty=\FF((\pi))$ for some uniformiser $\pi$.
First of all, there is an action of $\GL_2(K_\infty)$ on $\Omega$ by homographies. If $\gamma=(\begin{smallmatrix} a & b \\ c & d\end{smallmatrix})\in\GL_2(K_\infty)$, then we have the automorphism of $\PP_{\FF_q}^1(\CC_\infty)$ uniquely defined by $$z\mapsto\gamma(z):=\frac{az+b}{cz+d}$$ if 
$z\not\in\{\infty,-\frac{d}{c}\}$. Observe that if $F/L$ is a field extension, then $\GL_2(L)$ acts by homographies on the set $F\setminus L$. For instance, $\GL_2(\RR)$ acts on $\CC\setminus\RR=\mathcal{H}^+\sqcup\mathcal{H}^-$ (disjoint union of the complex upper- and lower-half planes). 

It is well known that the imaginary part $\Im(z)$ of a complex number $z$, the {\em distance} of $z$ from the real axis, is submitted to the following transformation rule under the action by homographies. If $\gamma=(\begin{smallmatrix} a & b \\ c & d\end{smallmatrix})\in\GL_2(\RR)$:
\begin{equation}\label{modularity-abs}
\Im(\gamma(z))=\frac{\Im(z)\det(\gamma)}{|cz+d|^2},\quad z\in\CC\setminus\RR.
\end{equation} There is an analogous notion of
{\em distance from $K_\infty$} in $\CC_\infty$. We set:
$$|z|_\Im:=\inf\{|z-x|:x\in K_\infty\},\quad z\in\CC_\infty.$$
We have the following result.

\begin{Proposition}\label{lemmadisk}
(1) For all $z\in\CC_\infty$, $|z|_\Im$ is a minimum, and $|z|_\Im=0$ if and only if $z\in K_\infty$. (2) Let $z$ be an element of $\Omega$. Then, there exist
$z_0=\pi^{m}(\alpha_0+\cdots+\alpha_n\pi^{-n})\in\FF_q[\pi,\pi^{-1}]$ and $z_1\in\Omega$ with
$|z_1|=|z_1|_\Im<|\pi|^{m}$, uniquely determined, with $n\in\NN\cup\{-\infty\}$ and $\alpha_0\neq 0$ if $n\neq -\infty$, such that
$z=z_0+z_1$.
\end{Proposition}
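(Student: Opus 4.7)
For (1), the goal is to promote the infimum defining $|z|_\Im$ to a minimum. I will exploit local compactness of $K_\infty$, which holds by Lemma \ref{valued-field} since its valuation group is discrete and its residue field is finite. Given a minimising sequence $(x_n)\subset K_\infty$ with $|z-x_n|\to|z|_\Im$, the ultrametric inequality yields $|x_n|\leq\max(|z|,|z-x_n|)$, so $(x_n)$ sits in a closed disk of $K_\infty$, which is compact. Extracting a convergent subsequence produces $x\in K_\infty$ with $|z-x|=|z|_\Im$. The equivalence $|z|_\Im=0\iff z\in K_\infty$ is then immediate.

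For existence in (2), set $\delta:=|z|_\Im>0$ and use (1) to pick $x\in K_\infty$ realising $|z-x|=\delta$. Expand the Laurent series $x=\sum_{i\geq N}\beta_i\pi^i$ and let $M$ be the smallest integer with $|\pi|^M\leq\delta$, so that $|\pi|^{M-1}>\delta$. The natural candidate is the truncation
$$z_0:=\sum_{N\leq i<M}\beta_i\pi^i\in\FF_q[\pi,\pi^{-1}],$$
which is a finite sum. Then $z_1:=z-z_0=(z-x)+(x-z_0)$ satisfies $|z_1|\leq\max(\delta,|\pi|^M)=\delta$ by the ultrametric, while $|z_1|_\Im=|z|_\Im=\delta$ (translating by $z_0\in K_\infty$ does not change the distance to $K_\infty$) and $|z_1|_\Im\leq|z_1|$ since $0\in K_\infty$. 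Hence $|z_1|=|z_1|_\Im=\delta$. If $z_0=0$, set $n=-\infty$; otherwise take $m$ to be the largest exponent in $z_0$ with $\beta_m\neq0$, put $\alpha_0:=\beta_m\neq 0$, and note that $m\leq M-1$ forces $|\pi|^m\geq|\pi|^{M-1}>\delta=|z_1|$, the required strict inequality. Clearly $z_1\in\Omega$ since $z\notin K_\infty$ and $z_0\in K_\infty$.

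For uniqueness, let $z=z_0+z_1=z_0'+z_1'$ be two valid decompositions and set $y:=z_0-z_0'=z_1'-z_1\in\FF_q[\pi,\pi^{-1}]$. The ultrametric together with $|z_1|=|z_1'|=\delta$ gives $|y|\leq\delta$. On the other hand, the strict inequality $|z_\bullet|<|\pi|^{m_\bullet}$ combined with $|z_\bullet|=\delta$ forces $|\pi|^{m_\bullet}>\delta$ for both decompositions, so every $\pi$-exponent occurring in either $z_0$ or $z_0'$ is at most $M-1$; consequently every exponent appearing in $y$ is $\leq M-1$. If $y\neq0$, then its smallest exponent $k\leq M-1$ yields $|y|=|\pi|^k\geq|\pi|^{M-1}>\delta$, contradicting $|y|\leq\delta$. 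Hence $y=0$, so $z_0=z_0'$ and $z_1=z_1'$.

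The main bookkeeping subtlety is that $|\pi|<1$, so the ``largest'' $\pi$-exponent in $z_0$ is the one of smallest absolute value; the strict inequality $|z_1|<|\pi|^m$ in the statement is precisely the sharpening that lets the cutoff $M$ separate the ``large'' Laurent part $z_0$ from the ``small'' remainder $z_1$ both for existence (via $|\pi|^{M-1}>\delta\geq|\pi|^M$) and for uniqueness (by forcing all exponents of any difference to lie below $M$).
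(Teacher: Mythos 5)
Your proof is correct. Part (1) is essentially the paper's argument: both rest on local compactness of $K_\infty$ (the paper uses the locally constant, hence continuous, function $x\mapsto|z-x|$ on the compact disk $D_{K_\infty}(0,|z|)$; you use a minimising sequence trapped in a compact disk), so there is nothing to compare there. For part (2) you take a genuinely different route. The paper runs a greedy digit-extraction directly on $z$: as long as some $x\in K_\infty$ with $|x|=|z|$ satisfies $|z-x|<|z|$, it peels off a leading term $\lambda\pi^{-n}$ and iterates, the procedure terminating because otherwise the partial sums would converge to $z$ in $K_\infty$; the attainment of $|z|_\Im$ is re-derived along the way, and uniqueness is not spelled out. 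You instead invoke (1) to pick a nearest point $x\in K_\infty$, expand it as a Laurent series, and truncate at the threshold $M$ dictated by $\delta=|z|_\Im$ (i.e.\ $|\pi|^{M}\le\delta<|\pi|^{M-1}$); the ultrametric inequality then gives $|z_1|=|z_1|_\Im=\delta<|\pi|^m$ at once, and your uniqueness argument via $y=z_0-z_0'$ (all exponents of $y$ are $\le M-1$, so $y\neq0$ would force $|y|\ge|\pi|^{M-1}>\delta\ge|y|$) is clean and is a genuine addition, since the paper asserts uniqueness without proof. What each buys: the paper's iteration is self-contained and exhibits the digits of $z_0$ directly from $z$ without first fixing a minimiser; your version is shorter, makes the logical dependence on (1) explicit, and handles uniqueness. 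One cosmetic point you share with the paper's own proof: the coefficients you truncate lie a priori in the residue field $\FF$ of $K_\infty$, while the statement writes $\FF_q[\pi,\pi^{-1}]$; this discrepancy is inherited from the statement (the paper likewise takes $\lambda_1\in\FF$) and is harmless in the case $\mathcal{C}=\PP^1$ used later.
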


\begin{proof} (1)
If $z\in K_\infty$, there is nothing to prove. Assume thus that $z\in\Omega\subset\CC_\infty$ is fixed.
Define the map $K_\infty\xrightarrow{f}|\CC_\infty^\times|$, $f(x)=|z-x|$. Then, $f$ is locally constant, hence continuous. But $K_\infty$ is locally compact so there is $x_0\in D_{K_\infty}(0,|z|)$ (not uniquely determined) such that $f(x_0)$ is a minimum (note that if $|x|>|z|$, then $f(x)=|z|$) and $|z|_\Im=|z-x_0|$. 

(2) For all $x\in K_\infty$,
$|x|>|z|$, we have $|z-x|=|x|$. Then, we have two cases.

(a). For all $x\in D_{K_\infty}(0,|z|)$,
$|z-x|=|z|$. In this case, $|z|_\Im=|z|$ and $|z|_\Im$ is a minimum. We thus get $n=-\infty$, $z_0=0$ and $z=z_1$.

(b). There exists $x\in D_{K_\infty}(0,|z|)\setminus\{0\}$
such that $|z|=|x|$ and $|z-x|<|z|$. This implies that 
the image of $z/x$ in the residue field of $\CC_\infty$ is
$1$. We can therefore write $z=\lambda_1\pi^{-n_1}+\eta_1$ with $\lambda_1\in\FF$ and $\eta_1\in\Omega$,
$|\eta_1|<|z|=|\theta|^{n_1}$.

We can iterate by studying now $\eta_1$ at the place of $z$. Either the procedure stops and we get a decomposition
$z=\lambda_1\pi^{-n_1}+\cdots+\lambda_k\pi^{-n_k}+\eta_k$ with $n_1>\cdots>n_k$, $|z|_\Im=|\eta_k|=|\eta_k|_\Im$ and there exists $z_0\in K_\infty$ such that $|z-z_0|=|z|_\Im>0$ as claimed in the statement,
or the procedure does not stop but in this case we have $z\in K_\infty$ which is excluded.
\end{proof}

In particular, either $|z_1|\not\in|K_\infty^\times|$, or
$|z_1|=|\pi^{m}|$ but the image of $z_1\pi^{-m}$ in the residue field of $\CC_\infty$ is not one of the elements of $\FF^\times$. Part (2) of Proposition \ref{lemmadisk}
implies that for all $x=z_0+y$ with $y\in D_{K_\infty}(0,|z_1|)$, $|z-x|=|z|_\Im=|z_1|=|z_1|_\Im$.

We also have the following elementary consequences of the above proposition. First of all, if $c\in K_\infty$, then $|cz|_\Im=|c||z|_\Im$ for all $z\in\Omega$. Moreover, if $v_\infty(z)\not\in\ZZ$, then $|z|_\Im=|z|$. Also, if $|z|=1$, we have $|z|_\Im=1$ if and only if the image of $z$ in the residue field of $\CC_\infty$ is not in $\FF$.

 The next property is the analogous of (\ref{modularity-abs}):

\begin{Lemma}\label{invariance} For all $z\in\Omega$ and $\gamma=(\begin{smallmatrix} * & * \\ c & d\end{smallmatrix})\in\GL_2(K_\infty)$, 
$$|\gamma(z)|_\Im=\frac{|\det(\gamma)||z|_\Im}{|cz+d|^2}.$$
\end{Lemma}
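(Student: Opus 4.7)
My plan rests on the classical M\"obius identity: for $\gamma=\left(\begin{smallmatrix}a&b\\c&d\end{smallmatrix}\right)\in\GL_2(K_\infty)$ and $u\in K_\infty\setminus\{a/c\}$ (vacuous exclusion when $c=0$), a direct manipulation of $\gamma(z)=(az+b)/(cz+d)$ together with $\gamma^{-1}(u)=(du-b)/(a-cu)$ yields
$$\gamma(z)-u=\frac{\det(\gamma)\,(z-\gamma^{-1}(u))}{(cz+d)(c\gamma^{-1}(u)+d)}.$$
Since $\gamma^{-1}$ is a bijection of $K_\infty\cup\{\infty\}$, as $u$ ranges over $K_\infty\setminus\{a/c\}$ the point $x:=\gamma^{-1}(u)$ ranges over $K_\infty\setminus\{-d/c\}$; the remaining value $u=a/c$ corresponds to the formal $x=\infty$ and is dealt with via the direct computation $|\gamma(z)-a/c|=|\det(\gamma)|/(|c|\cdot|cz+d|)$.

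Taking absolute values and setting $x_0:=-d/c\in K_\infty$ (assuming $c\neq 0$; the case $c=0$ is treated at the end), the factorisations $|cx+d|=|c|\cdot|x-x_0|$ and $|cz+d|=|c|\cdot|z-x_0|$ reduce the computation of $|\gamma(z)|_\Im=\inf_u|\gamma(z)-u|$ to the following intrinsic statement, which I propose as the key lemma: for every $z\in\Omega$ and every $x_0\in K_\infty$,
$$\inf_{x\in K_\infty,\,x\neq x_0}\frac{|z-x|}{|x-x_0|}=\frac{|z|_\Im}{|z-x_0|}.$$
Granting this, the contribution $|\gamma(z)-a/c|$ is automatically no smaller than the general infimum (because $|z|_\Im\leq|z-x_0|$), so everything collapses to
$$|\gamma(z)|_\Im=\frac{|\det(\gamma)|}{|c|\cdot|cz+d|}\cdot\frac{|z|_\Im}{|z-x_0|}=\frac{|\det(\gamma)|\,|z|_\Im}{|cz+d|^2},$$
as required.

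The one substantive step is the intrinsic lemma, which I expect to be the main obstacle; I would prove it by an ultrametric case analysis. Set $\delta=|z|_\Im$ and $r=|z-x_0|$, so $\delta\leq r$, and invoke Proposition \ref{lemmadisk}(1) to obtain some $x^*\in K_\infty$ with $|z-x^*|=\delta$. When $\delta<r$, if we had $|x^*-x_0|\neq r$ then the isosceles ultrametric identity would give $|z-x^*|=\max(r,|x^*-x_0|)$, contradicting $|z-x^*|=\delta<r$; hence $|x^*-x_0|=r$ and the ratio equals $\delta/r$ at $x=x^*$. For any other $x\in K_\infty\setminus\{x_0\}$ I would split into the three cases $|x-x_0|>r$, $|x-x_0|=r$, $|x-x_0|<r$: the outer two give $|z-x|=|x-x_0|$ and $|z-x|=r$ respectively, so ratio $\geq 1\geq\delta/r$; the middle case yields ratio $\geq\delta/r$ directly from $|z-x|\geq\delta$. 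The borderline case $\delta=r$ is handled by taking $|x-x_0|$ arbitrarily large, producing ratio $=1=\delta/r$, and no smaller ratio can occur by the same trichotomy.

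Finally, the case $c=0$ is immediate: writing $\gamma(z)=(a/d)z+b/d$ and using the elementary properties $|z+\lambda|_\Im=|z|_\Im$ and $|\lambda z|_\Im=|\lambda|\cdot|z|_\Im$ for $\lambda\in K_\infty$ (both trivial from the definition of $|\cdot|_\Im$ and already recorded above), one finds $|\gamma(z)|_\Im=|a/d|\cdot|z|_\Im$, which matches $|\det(\gamma)|\,|z|_\Im/|cz+d|^2=|ad|\cdot|z|_\Im/|d|^2$.
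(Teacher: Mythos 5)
Your proposal is correct, and it follows a genuinely different route from the paper's. The paper starts from the same M\"obius identity $\gamma(z)-\gamma(x)=\det(\gamma)(z-x)/\big((cz+d)(cx+d)\big)$, but it only proves the one-sided estimate $|\gamma(z)|_\Im\leq |\det(\gamma)|\,|z|_\Im/|cz+d|^2$ --- by evaluating at a point $x\in K_\infty$ realizing $|z-x|=|z|_\Im$ and comparing $|cx+d|$ with $|cz+d|$ --- and then gets the reverse inequality, hence equality, by applying that estimate to $\gamma^{-1}$ at the point $\gamma(z)$ and using $-c\gamma(z)+a=\det(\gamma)/(cz+d)$. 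You instead compute $\inf_{u\in K_\infty}|\gamma(z)-u|$ exactly in one pass: you pull $u$ back through $\gamma$, factor out $|c|$, and isolate the intrinsic key lemma $\inf_{x\in K_\infty,\,x\neq x_0}|z-x|/|x-x_0|=|z|_\Im/|z-x_0|$, proved by the ultrametric trichotomy on $|x-x_0|$ versus $|z-x_0|$, with the excluded value $u=a/c$ and the case $c=0$ handled separately; I checked each case and the reduction, and they are sound. What the paper's route buys is brevity (one inequality plus the $\gamma\mapsto\gamma^{-1}$ symmetry); what yours buys is a self-contained two-sided computation that is noticeably more robust in the borderline case $|z|_\Im=|z+d/c|$, where the optimal $u$ corresponds to $x$ with $|x-x_0|$ large rather than to a point nearest to $z$: for a nearest point one only has $|cx+d|\leq|cz+d|$ (which is the direction the paper's claim actually establishes, and goes the wrong way for the displayed upper bound), so your trichotomy sidesteps a subtlety that the nearest-point choice does not resolve. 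In short: same algebraic identity, different key lemma and proof architecture, and your version is complete as written.
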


\begin{proof}
First of all, suppose that we have proved that 
\begin{equation}\label{intermediategamma}
|\gamma(z)|_\Im\leq\frac{|\det(\gamma)||z|_\Im}{|cz+d|^2},\quad \forall\gamma=(\begin{smallmatrix} * & * \\ c & d\end{smallmatrix})\in\GL_2(K_\infty),\quad \forall z\in\Omega.
\end{equation}
In particular, for all $\widetilde{z}\in\Omega$, and with $\gamma$ replaced by $\gamma^{-1}=\delta^{-1}(\begin{smallmatrix} * & * \\ -c & a\end{smallmatrix})$ (where $\delta=\det(\gamma)$),
we get
$$|\gamma^{-1}(\widetilde{z})|_\Im\leq\frac{|\delta||\widetilde{z}|_\Im}{|-c\widetilde{z}+a|^2}.$$
We set $\widetilde{z}=\gamma(z)$. Then, $-c\widetilde{z}+a=\frac{\delta}{cz+d}$ and therefore,
$$|z|_\Im=|\gamma^{-1}(\widetilde{z})|_\Im\leq|\widetilde{z}|_\Im|\delta|\left|\frac{cz+d}{\delta}\right|^2=|\delta|^{-1}|cz+d|^2|\widetilde{z}|_\Im=|\delta|^{-1}|cz+d|^2|\gamma(z)|_\Im,$$
so that
$$\frac{|\delta||z|_\Im}{|cz+d|^2}\leq|\gamma(z)|_\Im,$$
and we get the identity we are looking for.
All we need is therefore to show that (\ref{intermediategamma}) holds.

Now, let $x\in\CC_\infty$ be such that $x$ is not a pole of $\gamma$. An easy calculation shows that
$$\gamma(z)-\gamma(x)=\frac{\det(\gamma)(z-x)}{(cz+d)(cx+d)}.$$
Hence, if $x\in K_\infty$ is not a pole of $\gamma$,
we have
\begin{equation}\label{equality}
|\gamma(z)-\gamma(x)|=\frac{|\det(\gamma)||z-x|}{|cz+d|^2}\frac{|cz+d|}{|cx+d|}.
\end{equation}
We can find $x\in K_\infty$ such that $|z-x|=|z|_\Im$ and with the property that $x$ is not a pole of $\gamma$ (we have noticed that there are infinitely many such elements).
We claim that 
$|cx+d|\leq |cz+d|$. If $c=0$ this is clear. Otherwise, if this were false we would have $|cx+d|>|cz+d|$ and
$$|c||z|_\Im=|c||z-x|=|cz+d-(cx+d)|=|cx+d|>|cz+d|_\Im=|c||z|_\Im$$ which would be impossible.
Hence, with the claim in mind, we deduce from (\ref{equality}):
$$|\gamma(z)|_\Im\leq|\gamma(z)-\gamma(x)|\leq\frac{|\det(\gamma)||z-x|}{|cz+d|^2}=\frac{|\det(\gamma)||z|_\Im}{|cz+d|^2}$$ by our choice of $x$ and we are done.
\end{proof}

\subsection{Rigid analytic spaces}

The notion of rigid analytic space originates in ideas of Tate in the years 1960'. We do not want to go in very precise details because there is already a plethora of important references, among which \cite{BGR,FP}. A more recent introduction to rigid analytic spaces is the chapter `Several approaches to non-archimedean geometry' by Conrad, see \cite[Chapter 2]{CON} (the whole volume is close, in many aspects, to the topics of the present text). Important is also Berkovich's viewpoint which is outlined in this volume, \cite{POI&TUR}. We discuss, in a rather informal way, the nature of these structures before making use of some very particular special cases. Let $L$ be a field with valuation $|\cdot|$, complete, algebraically closed.

We are going to describe a rigid analytic space over $L$ (or analytic space over $L$) as a triple
$$(X,G,\mathcal{O}_X)$$
where $X$ is a non-empty set, $G$ a Grothendieck topology on $X$, $\mathcal{O}_X$ a sheaf, satisfying several natural conditions. A {\em Grothendieck topology} $G$ on $X$ can be 
outlined as a set $\mathcal{S}$ of subsets $U$ of $X$ and, for all $U\in G$, a `covering' $\operatorname{Cov}(U)$ of $U$ again by elements of $G$. If $\mathcal{C}$ is the family of all such coverings (\footnote{Do not mix up with the curve $\mathcal{C}$ of the previous sections.}), then  $G$ is the datum $(\mathcal{S},\mathcal{C})$
and the quality of being a Grothendieck topology results in a collection of properties we shall not give here, refining the simpler notion of topology (see \cite{FP} for the precise collection of conditions). If a Grothendieck topology $G=(\mathcal{S}, \mathcal{C})$ on $X$ is given, then the elements of $\mathcal{S}$ are called the {\em admissible subsets} of $X$ and the elements of $\mathcal{C}$ are called the {\em admissible coverings}. This refines the notion of topology because if we forget the coverings,
the conditions we are left on $\mathcal{S}$ are precisely those of a topology on $X$ so that right at the beginning we could have said that $X$ is a topological space, and the admissible sets are just the open sets for this topology. We have of course a corresponding notion of 
morphism of Grothendieck's topological spaces which strengthens that of continuous maps of topological spaces: pre-images of admissible sets (resp. coverings) are again admissible.

What is a {\em sheaf} on a Grothendieck topological space? If we choose a ring $R$, a sheaf $\mathcal{F}$ of $R$-algebras ($R$-modules\ldots) is a contravariant functor from $\mathcal{S}$ (with inclusion) to the category of $R$-algebras (or $R$-modules\ldots this is called a {\em pre-sheaf}) which satisfy certain compatibility conditions. For instance, if $f,g\in\mathcal{F}(U)$, $U\in\mathcal{S}$
and $f|_V=g|_V$ for all $V\in\operatorname{Cov}(U)\in\mathcal{C}$, then $f=g$. Furthermore,
if we choose $\operatorname{Cov}(U)=(U_i)_{i\in I}\in\mathcal{C}$ and for all $i$, $f_i\in\mathcal{F}(U_i)$ are such that $f_i|_{U_i\cap U_j}=f_j|_{U_i\cap U_j}$, then there exists a 'continuation' $f\in\mathcal{F}(U)$ with $f|_{U_i}=f_i$ for all $i$ (this is an abstract formalisation of 'analytic continuation'). Every pre-sheaf can be embedded in a sheaf canonically, but checking that a given pre-sheaf is itself a sheaf might result in subtle problems. The datum of $(X,G,\mathcal{F})$ with $G$ a Grothendieck topology
and $\mathcal{F}$ a sheaf of $R$-algebras on $(X,G)$ is called a {\em Grothendieck ringed space of $R$-algebras} and there is a natural notion of morphism of such structures which mimics the more familiar notion of morphism of ringed spaces of algebraic geometry.
Say for commodity that $X,Y$ are two Grothendieck topological spaces with respective sheaves $\mathcal{F}$ and $\mathcal{G}$, then a morphism of Grothendieck ringed spaces of $R$-algebras
$$(X,\mathcal{F})\xrightarrow{(f,f^\sharp)}(Y,\mathcal{G})$$
is the datum of a morphism of Grothendieck topological spaces $f$ and for all $U\subset Y$ admissible, an $R$-algebra morphism $f^\sharp:\mathcal{G}(U)\rightarrow\mathcal{F}(f^{-1}(U))$. So far, we discussed Grothendieck topological spaces, sheaves etc. But now, what is a rigid analytic variety? A rigid analytic variety over $L$, our valued field, complete, algebraically closed (say, $L=\CC_\infty$, the most relevant in our notes), is a particular kind of Grothendieck ringed space; let us see how. We still need a few more tools.
We have the unit disk
$$D_L(0,1)=\{z\in L:|z|\leq 1\}$$
playing the role of a basic brick for constructing rigid analytic spaces, just as the affine line does for algebraic varieties. For this reason, we focus on 
affinoid algebras. An {\em affinoid algebra over $L$} is any quotient of a {\em Tate algebra}
$$\TT_n(L)=\widehat{L[\underline{t}]}_{\|\cdot\|}$$
by an ideal, where the Tate algebra $\TT_n(L)$ of dimension $n$ is the completion $\widehat{\cdot}$ of the polynomial ring $L[\underline{t}]$ in $n$ indeterminates $\underline{t}=(t_i)_{1\leq i\leq n}$
for the Gauss valuation $\|\cdot\|$ that we recall it is defined, for elements $a_{i_1,\ldots,i_n}\in L$, by $\|\sum_{i_1,\ldots,i_n}a_{i_1,\ldots,i_n}t_1^{i_1}\cdots t_n^{i_n}\|=\sup|a_{i_1,\ldots,i_n}|$. It is known that it is
noetherian, with unique factorization, of Krull dimension the number of variables $n$. The resulting quotient $\mathcal{A}$ of $\TT_n(L)$ (by an ideal)
is endowed with a structure of $L$-Banach algebra. In other words, the Gauss norm of 
$\widehat{L[\underline{t}]}$ induces a (sub-multiplicative) norm on $\mathcal{A}$, and it is complete. In fact, any $L$-Banach algebra $\mathcal{A}$ together with a continuous epimorphism $\TT_n(L)\rightarrow\mathcal{A}$ for some $n$, making $\mathcal{A}$ into a finitely generated $\TT_n(L)$-algebra, is an affinoid algebra. 
Affinoid algebras over $L$ are the basic bricks to construct a rigid analytic variety.

The maximal spectrum $\operatorname{Spm}(\mathcal{A})$ of an affinoid $L$-algebra $R$ can be made into a Grothendieck ringed space $(X,G,\mathcal{F})$ over $\mathcal{A}$; this is called an {\em affinoid variety} over $L$. If $X=\operatorname{Spm}(R)$ and $Y=\operatorname{Spm}(R')$, an $L$-algebra morphism $R\rightarrow R'$ defines a morphism of ringed spaces $Y\rightarrow X$ which is called a {\em morphism of affinoid algebras}. This serves to describe 
the other pieces of $(X,G,\mathcal{F})$. The admissible sets 
in $\mathcal{S}$ (recall that $G=(\mathcal{S},\mathcal{C})$) are exactly the images in $X$ of {\em open immersions} of affinoid varieties and similarly, we define the coverings in $\mathcal{C}$. This gives rise to a Grothendieck topology $G$ on $X=\operatorname{Spm}(\mathcal{A})$. Furthermore, we have the pre-sheaf $\mathcal{O}_X$ defined by associating to $U\subset X$ an admissible set the $L$-algebra $\mathcal{O}_X(U)=R'$ where $U=\operatorname{Spm}(R')$. Thanks to {\em Tate's acyclicity theorem}  one shows that this is in fact a sheaf (see \cite{TAT}, see also \cite[Theorem 4.2.2]{FP}). This result was generalised by Grauert and Gerritzen \cite[7.3.5 and 8.2]{BGR}). {\em Dulcis in fundo}, we have: 

\begin{Definition}
{\em A Grothendieck ringed space $X=(X,G,\mathcal{F})$ is a {\em rigid analytic variety over $L$} if $X$ has an admissible covering of admissible subsets $U$ which have the property that 
$(U,\mathcal{F}|_U)$ is an affinoid variety over $L$ for all $U$.}
\end{Definition}

\subsubsection{Analytification}\label{analytification}
An important process to construct rigid analytic spaces is the analytification of an algebraic variety.
Let $X/L$ be a scheme of finite type. The {\em analytification} $X^{an}$ of $X$ is a rigid analytic space over $L$ that can be defined by an affinoid covering starting from the geometric data as follows. We consider 
affine Zariski open subsets $U=\operatorname{Spec}(\mathcal{A})\hookrightarrow X$ and embeddings $U\hookrightarrow \mathbb{A}^N_L$ which correspond, on the algebraic side, to surjective $L$-algebra maps $L[\underline{t}]\rightarrow\mathcal{A}$ (where $\underline{t}$ denotes the set of independent variables $t_1,\ldots,t_N$) endowing $\mathcal{A}$ with a structure of $L[\underline{t}]$-algebra, for some $N$. Taking the completion 
for the Gauss valuation yields a surjective morphism:
$$\widehat{L[\underline{t}]}\rightarrow\mathcal{A}\otimes_{L[\underline{t}]}\widehat{L[\underline{t}]}$$
which gives rise to a map $V:=\operatorname{Spm}(\mathcal{A}\otimes_{L[\underline{t}]}\widehat{L[\underline{t}]})\hookrightarrow D_{L}(0,1)^N=\operatorname{Spm}(\widehat{L[\underline{t}]})$. We can proceed similarly for polydisks of different radii in $|L^\times|$ and this is used to construct
a rigid analytic space $U^{an}$
such that $V=U^{an}\cap D_{L}(0,1)^N$. Glueing, we construct the rigid analytic space $X^{an}$. For example, the {\em rigid affine line over $L$},
$\mathbb{A}_L^{1,an}$ is obtained by glueing together the rigid analytic spaces $D_L(0,r)$ along the inclusions with $r\in|L^\times|$. Similarly, the {\em rigid projective line over $L$}, $\PP^{1,an}_L$, can be constructed by glueing two copies of $D_L(0,1)$ along the set $\{z\in L:|z|=1\}$, or also glueing two copies of $\mathbb{A}^{1,an}_{L}$, see also Berkovich's construction in \cite[Definition II.1.5]{POI&TUR}. The {\em Berkovich's affine line} is described in detail in ibid. See \cite[Definition I.1.1]{POI&TUR}.

Rigid analytification defines a functor, called the 'GAGA functor' from the category of $L$-schemes  of finite type to the category of rigid analytic spaces over $L$. Note that we can also consider analytifications of morphisms, coherent sheaves etc. Finally, there is an alternative way to define the analytification functor over an affine variety $X$ over $L$, introduced by Berkovich, which makes the underlying topological space particularly easy to compute as it is defined over the set of multiplicative seminorms over the coordinate ring of $X$ satisfying certain compatibility conditions with the valuation of $L$. See \cite[Definition II.1.1]{POI&TUR} for the construction of the Berkovich spectrum of an algebra of finite type over $L$. See also Temkin's \cite[Chapter 1]{TEM} for a nice survey in the area.

\subsubsection{The rigid analytic variety $\Omega$} 
We now focus on $L=\CC_\infty$ with $A=H^0(\mathcal{C}\setminus\{\infty\},\mathcal{O}_{\mathcal{C}})$ in our usual notation. We discuss a structure of rigid analytic space over $\CC_\infty$ on $\Omega=\CC_\infty\setminus K_\infty$.
Note that 
$$\Omega=\bigcup_{M>1}U_M,$$
where $U_M=\{z\in\Omega:M^{-1}\leq |z|_\Im\leq |z|\leq M\}$,
the filtered union being over the elements $M\in|\CC_\infty|\setminus|K_\infty|$ with $M>1$.
Observe now:
\begin{Lemma}\label{constructions-affinoid} With $M\in|\CC_\infty|\setminus|K_\infty|$ we have 
$$U_M=D_{\CC_\infty}(0,M)\setminus\bigsqcup_{\begin{smallmatrix}\lambda\in\FF[\pi,\pi^{-1}]\\ \lambda=\lambda_{-\beta}\pi^{\beta}+\cdots+\lambda_{\beta}\pi^{-\beta}\\
1\leq|\pi|^{-\beta}\leq M\end{smallmatrix}}D_{\CC_\infty}^\circ(\lambda,M^{-1}).$$
\end{Lemma}
\begin{proof}
This easily follows from the fact that $K_\infty$ is locally compact in combination with the ultrametric inequality.
\end{proof}
Hence, $U_M$ is admissible and carries a structure of affinoid variety $U_M=\operatorname{Spm}(\mathcal{A}_M)$ where $\mathcal{A}_M$ is an integral affinoid algebra. We say that 
$U_M$ is a {\em connected affinoid} of $\PP^{1,an}_{\CC_\infty}$ (as in the language introduced in \cite{FP}, motivated by the integrality of $\mathcal{A}_M$).
In particular $\Omega$ can be covered (in fact filled) with connected affinoids and the analytic structure of $\Omega$ arises from viewing it as
the complementary in $\CC_\infty$ of smaller and smaller disks located over certain elements of $K_\infty$ which is close to the familiar view that we have also for the set $\CC\setminus\RR$. 
This gives the Grothendieck topology on $\Omega$, and 
the sheaf $\mathcal{O}_\Omega$ is that of {\em rigid analytic functions} over $\Omega$. Practically, a rigid  analytic function $f:\Omega\rightarrow\CC_\infty$ is a function such that the restriction
on every set $U_M$ is the uniform limit of a sequence of rational functions on $U_M$ without poles in $U_M$.

\subsection{Fundamental domains for $\Gamma\backslash\Omega$}

This subsection is motivated by an essential construction in the theory of Schottky groups, that of 
fundamental domains. Schottky groups have been first introduced by Schottky in 1877 in the complex setting; they are useful to analytically uniformise compact Riemann surfaces. In the years 1970, after the work of Tate on $p$-adic uniformisation of elliptic curves with split multiplicative reduction, Mumford discovered how to $p$-adically uniformise smooth projective curves of genus $g\geq 2$ with `split degenerate stable reduction' by using $p$-adic Schottky groups $\Gamma$ acting on non-archimedean variants $\Omega_\Gamma$ of the classical complex upper-half plane. The reader is encouraged to read the modern contribution of Poineau-Turchetti to this volume \cite{POI&TUR}. An older reference is \cite{GER&PUT}; it also contains determinant tools to explore this profound theory. Consequently, we will not give all the details, this would bring us too far away from our path.

Let us recall that, given a local field $L$ with valuation $|\cdot|$, the group $\operatorname{PGL}_2(L)$ (\footnote{Projective linear group over $L$, defined as the quotient of $\operatorname{GL}_2(L)$ by its center.}) acts
on the rigid analytic projective line $\PP^{1,\operatorname{an}}_F$ where $F$ is the completion of an algebraic closure of $L$ (see \cite{GER&PUT,FP}). 
A {\em Schottky group} over $L$ is a finitely generated subgroup $\Gamma$ of $\operatorname{PGL}_2(L)$ which is discrete and such that no element but the identity has finite order. Schottky groups are free (see \cite[Theorem (3.1)]{GER&PUT}) this being an important consequence of the fact that they act freely on certain rigid analytic spaces. Every Schottky group $\Gamma$ over $L$ has a compact {\em limit set} $\mathcal{L}_\Gamma\subset\PP^{1,\operatorname{an}}_{F}$ so that $\Gamma$ acts freely over
$\Omega_\Gamma:=\PP^{1,\operatorname{an}}_{F}\setminus\mathcal{L}_\Gamma$. 
The quotient space $\Gamma\backslash\Omega_\Gamma$ naturally carries a structure of rigid analytic space over $L$ which is associated with a smooth, geometrically connected, projective curve $X_\Gamma$ over $L$, of genus $g$ the rank of $\Gamma$. We learn from \cite[Theorem (4.3)]{GER&PUT} that 
every Schottky group $\Gamma$ in $\operatorname{PGL}_2(L)$ admits a {\em good fundamental domain} $\mathfrak{F}_\Gamma$. Without entering the details, for every element $z\in\Omega_\Gamma$ the set of $\gamma\in\Gamma$ such that $\gamma(z)\in\mathfrak{F}_\Gamma$ is non-empty and finite.
In fact, if $\gamma\in\Gamma$, then $\mathfrak{F}_\Gamma\cap\gamma(\mathfrak{F}_\Gamma)\neq\emptyset$ if and only if $\gamma\in\{1,\gamma_1^{\pm1},\ldots,\gamma_g^{\pm1}\}$, 
where $\gamma_1,\ldots,\gamma_g$ freely generate $\Gamma$.
Moreover, $\mathfrak{F}_\Gamma$ can be written as
$$\mathfrak{F}_\Gamma:=\PP^{1,\operatorname{an}}_{F}\setminus\bigsqcup_{i=1}^{2g}D_i$$
where the $D_i$'s are the rigid analytic spaces associated to disks $D_{F}^\circ(a_i,r_i)=\{z\in F:|z-a_i|<r_i\}$ with $r_i\in|L^\times|$ for all $i$, such that the disks $D_{F}(a_i,r_i)=\{z\in F:|z-a_i|\leq r_i\}$
are pairwise disjoint. One can therefore see easily that $\mathfrak{F}_\Gamma$ carries a structure of rigid analytic variety over $F$ (read also \cite[\S II.3.1]{POI&TUR} along with its more general settings and the theory of uniformisation of Mumford curves). 

The interesting point in this discussion is that if we set $L=K_\infty=\FF((\pi))$, $A=H^0(\mathcal{C}\setminus\{\infty\},\mathcal{O}_{\mathcal{C}})\subset K_\infty$, $F=\CC_\infty$ etc. the group
$\operatorname{PGL}_2(A)$ acts on $\Omega=\PP^{1,an}_F\setminus\PP^{1,an}_{K_\infty}$ but the action is in general not free; there usually are elliptic points (this happens, for instance, when $[\FF:\FF_q]$ is odd, see \cite{MAS&SCH}). Even more seriously, the group itself is not finitely generated (see Serre's book \cite{SER} for more details), so that $\operatorname{PGL}_2(A)$ {\em is not} a Schottky group. 

\subsubsection{Some structural properties of $\Gamma=\GL_2(A)$} For the purposes of the present paper, we will be content to study the case in which $\mathcal{C}$ has genus $0$, so that
in Lemma \ref{the-space-V} we have $V=\{0\}$ and therefore, $K_\infty\cong A\oplus \mathcal{M}_{K_\infty}$. It is easy to see that there exists a uniformiser $\pi$ of $K_\infty$ such that $A=\FF[\pi^{-1}]$. We can indeed choose $\pi=\theta^{-1}$ where $\theta$ is any element of $A$ with a simple pole at $\infty$. In particular, $A=\FF[\theta]$.

It is not difficult to show that the group $\GL_2(\FF[\theta])$ is generated by its subgroups
 $\GL_2(\FF)$ (finite) and the Borel subgroup $B(*)=\{(\begin{smallmatrix} * & * \\ 0 & *\end{smallmatrix})\}$. In fact, a Theorem of Nagao in \cite{NAG} asserts
 that, given a field $k$ and an indeterminate $t$,
 \begin{equation}\label{nagao}
 \GL_2(k[t])=\GL_2(k)*_{B(k)}B(k[t]),\end{equation}
 where $*_{B(k)}$ denotes the {\em amalgamated product} along $B(k)$, which is by definition the quotient of the free product $\GL_2(k)*B(k[t])$ by the normal subgroup generated by those elements arising from the natural identifications existing between the elements of $B(k)*1$ and $1*B(k)$ coming from the maps $$\GL_2(k)\rightarrow\GL_2(k)*B(k[t])\leftarrow B(k[t])$$ (a gluing along compatibility conditions). Note that $B(k[t])$ is not finitely generated, so that $\GL_2(k[t])$ is not finitely generated 
(this is trivial if $k$ is infinite) in contrast with a theorem of Livingston, asserting that $\GL_n(k[t])$
is finitely generated if $n\geq 3$, and also with the more familiar result that
$\operatorname{SL}_2(\ZZ)\cong\ZZ/2\ZZ*\ZZ/3\ZZ$ so that it is, in particular, finitely presented.

\begin{Corollary}
$\operatorname{PGL}_2(\FF[\theta])$ is not a Schottky group.
\end{Corollary}

\subsubsection{Bruhat-Tits trees and `good fundamental domains'}\label{Tits}

We recall that $K_\infty=\FF((\pi))$ for a uniformiser $\pi$, with $\FF$ a finite extension of $\FF_q$. Our first task is to 
describe a combinatorial structure which allows to `move inside' $\Omega$, the 
Bruhat-Tits tree; in practice, we can `move along annuli'. The second task, in the case $A=\FF[\theta]$, is to construct a subset of $\Omega$ that we can qualify as a `good fundamental domain' for the homography action of $\GL_2(A)$
over $\Omega$, being understood that $\GL_2(A)$ is not a Schottky group. 

We recall that if $x\in\CC_\infty$, $D_{\CC_\infty}^\circ(x,r)=\{z\in\CC_\infty:|z-x|<r\}$.
Let $S$ be a subset of $\CC_\infty^\times$ such that if $x,x'\in S$ are distinct,
$|x-x'|=\max\{|x|,|x'|\}$. Then, with $x\in S$, the sets $$D_x:=D_{\CC_\infty}^\circ(x,|x|)=x+D_{\CC_\infty}^\circ(0,|x|)$$ are pairwise distinct subsets of $\CC_\infty^\times$. Indeed, clearly, they do not contain $0$.
Moreover, if $x\neq x'$ we have $y\in D_{\CC_\infty}^\circ(x,|x|)\cap D_{\CC_\infty}^\circ(x',|x'|)$ if and only if we can find $z\in D_{\CC_\infty}^\circ(x,|x|), z'\in D_{\CC_\infty}^\circ(x',|x'|)$, such that 
$y=x+z=x'+z'$ with $|z|<|x|$ and $|z'|<|x'|$, so that $|z-z'|<\max\{|x|,|x'|\}$. This means that $
\max\{|x|,|x'|\}>|z-z'|=|x'-x|=\max\{|x|,|x'|\}$ which is impossible. 

We choose, for any element $r\in\ZZ_{>1}$, an element, denoted by $\pi^{\frac{1}{r}}\in\CC_\infty$,
with the property that $(\pi^{\frac{1}{r}})^r=\pi$, which exists because $\CC_\infty$ is algebraically closed. The set $\Sigma:=\{(\pi^{\frac{1}{r}})^s\}$ inherits the total order of $\RR$ by $\frac{s}{r}\in\QQ\subset\RR$ (\footnote{Thanks to Lemma \ref{mapping-c-infty} we can even additionally suppose that the elements $\pi^{\frac{1}{r}}$ are chosen in such a way that $\Sigma=\pi^\QQ$ is a subgroup of $\CC_\infty^\times$.}). We have observed (after Lemma \ref{lemma-sep}) that the valuation group of $|\cdot|$ is $|\pi|^\QQ$. Hence if $z\in\CC_\infty^\times$, we can find $r,s$ relatively prime, unique, such that $|z(\pi^{\frac{1}{r}})^s|=1$. Since the residue field of $\CC_\infty$ is $\FF^{ac}$, we obtain that there exists a unique
$\zeta\in(\FF^{ac})^\times$ such that $$|z-\zeta(\pi^{\frac{1}{r}})^s|<|z|.$$
If we set $S=\{\zeta(\pi^{\frac{1}{r}})^s:\zeta\in(\FF^{ac})^\times,r>1,s\in\ZZ\text{ such that }r,s\text{ are relatively prime}\}$, then for all $x,x'\in S$ distinct we have $|x-x'|=\max\{|x|,|x'|\}$ and we obtain a partition of $\CC_\infty^\times$:
\begin{equation}
\CC_\infty^\times=\bigsqcup_{x\in S}D_x.
\end{equation}
Let us now consider the subset
$$\widetilde{S}:=\{x\in S:|x|\not\in|\pi|^\ZZ\}\sqcup\{\zeta\pi^n:\zeta\in\FF^{ac}\setminus\FF,n\in\ZZ\}\subset S.$$
With it, we can still somehow reconstruct $\CC_\infty$. Indeed, the reader can easily see that
if $x\in\widetilde{S}$, $D_x\cap K_\infty=\emptyset$ and $$\CC_\infty=\left(K_\infty+\bigsqcup_{x\in\widetilde{S}}D_x\right)\sqcup K_\infty.$$
As a consequence we have
$$\Omega=K_\infty+\bigsqcup_{x\in\widetilde{S}}D_x$$
and
$$\bigsqcup_{x\in\widetilde{S}}D_x=\{z\in \Omega:|z|=|z|_\Im\}.$$

We observe that if $\lambda\in\QQ\setminus\ZZ$, then
$$\bigsqcup_{\begin{smallmatrix}x\in S\\ |x|=|\pi|^\lambda\end{smallmatrix}}D_x=
\bigsqcup_{\begin{smallmatrix}x\in \widetilde{S}\\ |x|=|\pi|^\lambda\end{smallmatrix}}D_x=\{z\in\CC_\infty:|z|=x\}=:C_\lambda.$$
We also set, for $\lambda\in\ZZ$, $$C_\lambda:=\bigsqcup_{\zeta\in\FF^{ac}\setminus\FF}D_{\CC_\infty}^\circ(\zeta\pi^\lambda,|\pi|^\lambda).$$ 
Note that $C_\lambda=\{z\in\Omega:|z|=|z|_\Im=|\pi|^\lambda\}$ for all $\lambda\in\QQ$.
For all $\lambda$, the set
$C_\lambda$ is invariant by translation of elements in $D_{K_\infty}(0,|\pi|^{\lceil\lambda\rceil})$, where $\lceil\cdot\rceil$ denotes the smallest between the integers which are larger than $(\cdot)$. If $\alpha\in K_\infty\setminus D_{K_\infty}(0,|\pi|^{\lceil\lambda\rceil})=\oplus_{i\leq\lfloor\lambda\rfloor}\FF\pi^i$ (with $\lfloor\cdot\rfloor$ the largest integer which is smaller than $(\cdot)$) then
$C_\lambda\cap(\alpha+C_\lambda)=\emptyset$.
We have obtained the next result.
\begin{Lemma}\label{partition-omega}
The following partition of $\Omega$ holds:
$$\Omega=\bigsqcup_{\begin{smallmatrix}\lambda\in\QQ\\
\alpha\in K_\infty\setminus D_{K_\infty}(0,|\pi|^{\lceil\lambda\rceil})\end{smallmatrix}}\alpha+C_\lambda.$$
\end{Lemma}
Note that this can be very easily used to construct admissible coverings of $\Omega$.
The above is the crucial statement which allows to construct the Bruhat-Tits tree associated to 
$\Omega$. It relies on the existence of a natural partial ordering on the set $\mathcal{T}:=\{\alpha+C_\lambda:\alpha\in K_\infty\setminus D_{K_\infty}(0,|\pi|^{\lceil\lambda\rceil}),\lambda\in\QQ\}$.
We declare that $\alpha+C_\lambda\succ \alpha'+C_{\lambda'}$ if $\lambda<\lambda'$
and $\alpha'+C_\lambda=C_\lambda$ so that, for example, $\alpha+C_\lambda\succ C_\lambda$ if and only if $|\alpha|\leq|\pi|^\lambda$. Then $\mathcal{T}$ can be enriched with the structure of a {\em tree}, the {\em Bruhat-Tits tree}. We recall that a tree $\mathcal{T}$ is a metric space such that, on one side, for any distinct points $P,P'$ of $\mathcal{T}$ there exists one and only one topological arc in $\mathcal{T}$ of extremities $P,P'$ and, on the other side, this arc is isometric to an interval of $\RR$ (this definition is due to Tits). A tree has edges and vertices. 
The {\em vertices} of our Bruhat-Tits tree $\mathcal{T}$ are represented by the subsets $\alpha+C_\lambda$ of $\CC_\infty$ with $\lambda\in\ZZ$ and the {\em edges} are represented by real intervals
$]n-1,n[$ with $n\in\ZZ$, with the extremities given by a couple of vertices $(\alpha+C_{n-1},\alpha'+C_{n})$ such that $\alpha'+C_{n-1}=C_{n-1}$. The intervals are oriented and our tree itself acquires an orientation. The upper direction is that of the negative $\lambda$'s or, alternatively, of the larger $|z|_\Im$'s. The edges are therefore organised so that at every lower (for the ordering)
extremity the vertex is a $q^{d_\infty}+1$ branching point with $q^{d_\infty}$ edges below and one above (with respect to the orientation). 

 The next picture represents a small piece of $\mathcal{T}$ for $q^{d_\infty}=2$.

\tikzset{
  solid node/.style={circle,draw,inner sep=1.2,fill=black},
  hollow node/.style={circle,draw,inner sep=1.2},
}

\newcommand\payoff[1]{
  $\begin{pmatrix} #1 \end{pmatrix}$
}
\begin{center}
\begin{tikzpicture}[font=\footnotesize]
  \tikzset{
    level 1/.style={level distance=15mm,sibling distance=40mm},
    level 2/.style={level distance=15mm,sibling distance=20mm},
    level 3/.style={level distance=15mm,sibling distance=10mm},
    level 4/.style={level distance=15mm,sibling distance=5mm},
  }

  \node[hollow node,label=above:{\vdots}]{}
    child{node[solid node,label=left:{$C_{-1}$}]{}
      child{node(l1)[solid node,label=left:{$C_{0}$}]{}
        child{node[hollow node,label=below:{\vdots}]{}edge from parent node[left]{$0<\lambda<1$}}
        child{node[hollow node,label=below:{\vdots}]{}edge from parent node[right]{}}
        edge from parent node[left]{$-1<\lambda<0$}
      }
      child{node(l2)[solid node]{}
        child{node[hollow node,label=below:{\vdots}]{}edge from parent node[left]{}}
        child{node[hollow node,label=below:{\vdots}]{}edge from parent node[right]{}}
        edge from parent node[right]{}
      }
      edge from parent node[left,xshift=-10]{$-2<\lambda<-1$}
    }
    child{node[solid node,label=right:{}]{}
      child{node(r1)[solid node]{}
        child{node[hollow node,label=below:{\vdots}]{}edge from parent node[left]{}}
        child{node[hollow node,label=below:{\vdots}]{}edge from parent node[right]{}}
        edge from parent node[left]{}
      }
      child{node(r2)[solid node]{}
        child{node[hollow node,label=below:{\vdots}]{}edge from parent node[left]{}}
        child{node[hollow node,label=below:{\vdots}]{}edge from parent node[right]{}}
        edge from parent node[right]{}
      }
      edge from parent node[right,xshift=10]{}
    }
  ;

  \draw[dashed](l1)--(l2)--node[midway,above]{$\alpha+C_\lambda,\lambda=0$}(r1)--(r2);
\end{tikzpicture}
\end{center}

Also note that the euclidean closure of the image in $\mathcal{T}$ of any set $\alpha+\sqcup_{\lambda\in\QQ}C_\lambda$ for $\alpha\in K_\infty$ fixed is isometric to $\RR$ and
any two such sets, if distinct, have a upper half-line in common. Any element 
of $\Omega$ is $K_\infty$-translation equivalent to finitely many elements in $\sqcup_{\lambda\in\QQ}C_\lambda$ and finally, the homography action of $\operatorname{GL}_2(K_\infty)$ over $\Omega$ is compatible with a continuous action over $\mathcal{T}$ in a way that can be made completely explicit.

The structure of the spaces $\CC_\infty$ and $\Omega$ may look topologically very complicate but the Bruhat-Tits tree is some kind of `central nervous system' which allows to obtain a combinatorial 
picture of these spaces (or rather, their admissible coverings) and to move in their interior, by means of the {\em reduction map}, which is $\GL_2(K_\infty)$-equivariant 
$$\operatorname{red}:\Omega\rightarrow\mathcal{T},$$
defined by $z\mapsto \alpha+C_\lambda\in\mathcal{T}$ where $\alpha+C_\lambda$ is the unique element of the partition of Lemma \ref{partition-omega} such that $z\in\alpha+C_\lambda$. This presentation may look different, it is in fact essentially equivalent to that of Teitelbaum in \cite[Preliminaries]{TEI} (see also Teitelbaum's chapter in \cite{CON}). To help the reader to connect with the formalism of Teitelbaum, which also is that of \cite{GER&PUT}, note that the set $U(1)$ of \cite[p. 492]{TEI} plays the role of our disjoint union $\sqcup_{\lambda\in]-1,1[}C_\lambda$ and that the set $V$ introduced one page later is
equal to our $\sqcup_{\lambda\in]-1,0[}C_\lambda$. The sets $\gamma(U(1))$ for $\gamma\in\GL_2(K_\infty)$ define an admissible covering of $\Omega$ and $\mathcal{T}$ can be alternatively constructed defining edges and vertices by a criterion of overlapping for the various 
$\gamma(U(1))$'s and an identification between the set $\{\gamma(U(1)):\gamma\in\GL_2(K_\infty)\}$ and the quotient $\GL_2(\mathcal{O}_{K_\infty})\backslash
\GL_2(K_\infty)$, corresponding to the vertices.

We set $\mathfrak{F}:=\{z\in\Omega:|z|=|z|_\Im\geq 1\}$. By Lemma \ref{partition-omega},
we have $\mathfrak{F}=\sqcup_{\lambda\leq 0}C_\lambda$ and $\operatorname{red}(\mathfrak{F})$ is an upper half-line in $\mathcal{T}$. We deduce that
$$\mathfrak{F}=\CC_\infty\setminus\left(\bigsqcup_{\zeta\in\FF}D_{\CC_\infty}^\circ(\zeta,1)\sqcup\bigsqcup_{n\geq1}\bigsqcup_{\zeta\in\FF^\times}D_{\CC_\infty}^\circ(\zeta\pi^{-n},|\pi|^{-n})\right).$$
We now focus on the case $A=\FF[\theta]$. 
For  $z\in\Omega$ we denote by $\mathfrak{F}_z$ the set $\{z'\in\mathfrak{F}:\text{ there exists }\gamma\in\GL_2(A)\text{ such that }\gamma(z)=z'\}\subset\mathfrak{F}$.
We show:

\begin{Proposition}
For all $z$, the set $\mathfrak{F}_z$ is non-empty and finite.
\end{Proposition}

\begin{proof}
If $z\in\mathfrak{F}$ then there exists $x\in\widetilde{S}$ such that $|x|\geq 1$ and $z\in D_x$ and we see that the set of $a\in A$ such that $z-a\in\mathfrak{F}$ is finite. 
Note that $1/z\in D_{1/x}$ so that $1/z\not\in\mathfrak{F}$ (in fact, if $\gamma=(\begin{smallmatrix}0 & 1 \\ 1 & 0\end{smallmatrix})$, $\gamma(D_x)=D_{\gamma(x)}$). From Nagao's Theorem we deduce that the set $\{\gamma\in\GL_2(A):\gamma(z)\in\mathfrak{F}\}$ is finite so that, for all $z\in\Omega$,
$\mathfrak{F}_z$ is finite (but note that the cardinality is not uniformly bounded in terms of $z$). Let $z$ be in $\Omega$. If $|z|_\Im\geq 1$ there exists $a\in A$ such that $|z-a|=|z|_\Im$ 
and $\mathfrak{F}_z$ is non-empty. All we need to show is that if $z\in\Omega$ is such that
$|z|_\Im<1$, then there exists $\gamma\in\GL_2(A)$ such that $\gamma(z)\in\mathfrak{F}$.
To see this, there is no loss of generality in supposing that $|z|<1$. Indeed, we can replace $z$ with $z-a$ for $a\in A$. We can therefore write:
$$z=w+x+y$$
where $w\in\oplus_{i=1}^{\lfloor\lambda\rfloor}\FF\pi^i\in\mathcal{M}_{K_\infty}$, $x\in \widetilde{S}$ with $|x|=|\pi|^\lambda$ and $y\in D_x$. Applying $\gamma=(\begin{smallmatrix}0 & 1 \\ 1 & 0\end{smallmatrix})$ we see that $z$ is $\GL_2(A)$-equivalent to an element 
$z'\in\alpha'+C_{\lambda'}$ with $\lambda'$ such that $\lambda-\lambda'\in\ZZ_{>1}$ and 
$\alpha'\in\oplus_{i\leq \lfloor\lambda'\rfloor}\FF\pi^i$, so that, in particular, $|z'|_\Im>|z|_\Im$.
We can iterate this process with $z'$ playing the role of $z$. The fact that $\lambda-\lambda'\in\ZZ_{\geq1}$ implies that $z$ is $\GL_2(A)$-equivalent to an element of $\mathfrak{F}$
and $\mathfrak{F}_z$ is non-empty.
\end{proof}

This seems enough to allow us calling $\mathfrak{F}$ a `good fundamental domain' for $\Gamma\backslash\Omega$ with $A=\FF[\theta]$, even though it is undoubtedly not as well behaved as the good fundamental domains in the framework of Schottky groups. Note that
$\Gamma\backslash\mathcal{T}$ contains an `end': this metric space is not compact, but 
can be made compact with the addition of one point represented by one of the upper half-lines contained by $\mathcal{T}$ which, at the level of $\Gamma\backslash\Omega$, corresponds to a `cusp'.

Similar constructions are possible for $\Gamma=\GL_2(A)$ with a more general projective curve $\mathcal{C}$ but we do not describe them here. In this broader case it is possible to show that $\Gamma\backslash\mathcal{T}$ has the structure of a finite graph with 
finitely many ends attached to it. More general 'fundamental domains' can be constructed from the Bruhat-Tits tree of $\Omega$ and constructed
by Serre (see \cite[Theorem 10]{SER}) thanks to a more refined interpretation of the elements of 
$\Gamma\backslash\Omega$ as classes of rank two vector bundles over $\mathcal{C}$. We refer to ibid. for the details.

\subsection{An elementary result on translation-invariant functions over $\Omega$}\label{elementary-statement}
We recall that $\mathcal{H}$ denotes the complex upper-half plane. Let $f:\mathcal{H}\rightarrow\CC$ be a holomorphic function such that 
 for all $n\in\ZZ$ and for all $z\in\mathcal{H}$, $f(z+n)=f(z)$. Then, we can expand 
 $$f(z)=\sum_{n\in\ZZ}f_ne^{2\pi inz},\quad f_n\in\CC,$$
 a series which is convergent for $q(z)=e^{2\pi iz}$ in $$\dot{D}_{\CC}^\circ(0,1)=\{z\in\CC:0<|z|<1\}$$ the punctured open unit disk centered at $0$ of $\CC$ or equivalently, for $z$ in every horizontal strip of finite height in $\mathcal{H}$ (note that they are invariant by horizontal translation).

 \subsubsection{A digression}\label{digression}
The proof of the above statement for $f$ is simple and we can afford a short digression.
The function 
$z\mapsto q(z)$ does not allow a global holomorphic section $\mathcal{H}\leftarrow \dot{D}_\CC^\circ(0,1)$. But we can cover $\CC^\times$ with say,
three open half-planes $U_1,U_2,U_3$, and  there are sections $s_1,s_2,s_3$ defined and holomorphic over $U_1,U_2,U_3$ such that $s_i-s_j\in\ZZ$ over $U_i\cap U_j$ for all $i,j$.
Let $f$ be holomorphic on $\mathcal{H}$ such that $f(z+1)=f(z)$ for all $z\in\mathcal{H}$. Define $g_i(q)=f(s_i(q))$ for all $i=1,2,3$. Then, 
the compatibility conditions and the fact that the pre-sheaf of holomorphic functions over any open set is a sheaf (the well known principle of analytic continuation) ensure that this defines a holomorphic function $g(q)$ over 
 $\dot{D}^\circ_{\CC}(0,1).$ But the ring of holomorphic functions over $\dot{D}_\CC^\circ(0,1)$ is precisely that of the convergent double series $\sum_{n\in\ZZ}f_nq^n$, as one can easily see, and our claim follows. One also deduces that there is an isomorphism of Riemann's surfaces
 $$\mathcal{H}/\ZZ\cong\dot{D}^\circ_{\CC}(0,1)$$
 induced by $e^{2\pi iz}$, concluding the digression. 
 
 \medskip
 
 We now come back to our characteristic $p>0$ setting and we suppose, from now on, that 
 $$A=H^0(\PP_{\FF_q}^1\setminus\{\infty\},\mathcal{O}_{\PP_{\FF_q}^1}).$$
We note that $\Omega$ is invariant by translations of $a\in A$ and 
the function $$\exp_A(z)=z\prod_{a\in A\setminus\{0\}}\left(1-\frac{z}{a}\right)=\widetilde{\pi}^{-1}\exp_C(\widetilde{\pi}z)$$ is an entire function $\CC_\infty\rightarrow\CC_\infty$, $\FF_q$-linear, surjective, of kernel $A=\FF_q[\theta]$, hence also invariant by translations by elements of $A$. It is thus natural to ask for an analogue statement of the above, complex one.
Consider $R\in|\CC_\infty^\times|$. Now, note that $A$ acts on $\Omega_R$ by translations. Giving
$A\backslash\Omega_R$ the quotient topology we have:

\begin{Lemma}\label{homeo}
There is $S\in|\CC_\infty^\times|$ such that the function $\exp_A$ induces a homeomorphism of topological spaces
$$A\backslash\Omega_R\rightarrow\{z\in\CC_\infty:|z|\geq S\}.$$
\end{Lemma}

\begin{proof}
From the Weierstrass product expansion we see that, setting
$$S:=\max_{z\in D_{\CC_\infty}(0,R)}|\exp_A(z)|=:\|\exp_A\|_R=\|z\|_R\prod_{\begin{smallmatrix}a\in A\\ a\neq 0\end{smallmatrix}}\Big\|1-\frac{z}{a}\Big\|_R=R\prod_{\begin{smallmatrix}a\in A\\ a\neq 0\\ |a|<R\end{smallmatrix}}\frac{R}{|a|},$$
$\exp_A(D(0,R))=D(0,S)$ by Corollary \ref{imageofdisks}. Hence, $D^\circ(0,S)=D_{\CC_\infty}^\circ(0,S)=\exp_A(D^\circ(0,R))$ from which we deduce that $$\{z\in\CC_\infty:|\exp_A(z)|<S\}=A+D^\circ(0,R).$$
Recall that $K_\infty=A\oplus\mathcal{M}_{K_\infty}$. If $R\geq 1$, we have 
$D^\circ(0,R)\supset\mathcal{M}_{K_\infty}$. Now observe that 
$$\{z\in\CC_\infty:|z|_\Im<R\}=\cup_{a\in K_\infty}D^\circ(a,R)=\cup_{a\in A}D^\circ(a,R).$$ Therefore we have the chain of identities
$$A+D^\circ(0,R)=K_\infty+D^\circ(0,R)=\cup_{a\in K_\infty}D^\circ(a,R)=\{z\in\CC_\infty:|z|_\Im<R\}=\Omega\setminus\Omega_R,$$ and taking complementaries, we see that
$$\Omega_R=\{z\in\CC_\infty:|\exp_A(z)|\geq S\},\quad R\geq 1.$$
\end{proof}

\section{Some quotient spaces}\label{quotient-spaces}
Our topologies are totally disconnected and Lemma \ref{homeo} is weaker if compared with analogous statements in the complex setting.
Fortunately there is a structure of {\em quotient analytic space} over $\Omega_R/A$, and it is isomorphic to the analytic structure of the complementary of the disk $D^\circ(0,S)$. 

\subsection{$A$-periodic functions over $\Omega$}\label{periodic-functions}

We suppose that $A=\FF_q[\theta]$ all along this subsection.
The analogue for $\Omega=\CC_\infty\setminus K_\infty$ of the simple claim over $\CC$ of the beginning of \S \ref{elementary-statement} and the proof in \S \ref{digression} is not as easy to prove but it is true, and not too difficult.
In fact, the following result holds:

\begin{Proposition}\label{not-so-elementary}
Let $f:\Omega\rightarrow\CC_\infty$ be an analytic function such that $f(z+a)=f(z)$ for all $a\in A$. Then,
there exists $S\in|\CC_\infty^\times|$, $S<1$, such that
$$f(z)=\sum_{n\in \ZZ}f_n\exp_A(z)^n,\quad f_n\in\CC_\infty,$$
the series being uniformly convergent for $\exp_A(z)^{-1}$ in every annulus of $\dot{D}_{\CC_\infty}^\circ(0,S)=\{x\in\CC_\infty:0<|x|<S\}$, $S\in|\CC_\infty^\times|$, small enough.
\end{Proposition}

To prove this result and to motivate the proof we are giving, we need some preparation. 

\subsubsection{Analytification and quotients}
Let  $\mathcal{X}$ be a rigid analytic variety over a valued field $L$, complete and algebraically closed. Let us consider a group $\Gamma$ acting on $\mathcal{X}$ with 'admissible action'. 'Admissible action' means that 
$\mathcal{X}$ can be covered by $\Gamma$-stable admissible subsets and that $\Gamma$ acts through an embedding $\iota$ of $\Gamma$ in $\operatorname{Aut}(\mathcal{X})$, topological group, and the image is discrete. We are interested in such triples
$$(\mathcal{X},\Gamma,\iota).$$ For example, we can take $\Gamma=A$ acting on $\Omega$ or $\mathbb{A}^1_{\CC_\infty}$ by translations (the theme of Proposition \ref{not-so-elementary}) or $\Gamma=\GL_2(A)$ acting on $\Omega$ by homographies (the theme of the text). 

The quotient map $$\mathcal{X}\rightarrow\Gamma\backslash \mathcal{X}$$ can be used to 
define a structure of Grothendieck ringed space on the quotient $\Gamma\backslash \mathcal{X}$. 
A subset of $\Gamma\backslash \mathcal{X}$ is admissible if its pre-image is admissible, and the sections are $\Gamma$-invariant $\CC_\infty$-valued functions over pre-images of $\Gamma$-invariant subsets. One needs conditions under which the quotient acquires a structure of rigid analytic space. For example, a finite group $\Gamma$ acting on $\mathcal{X}=\operatorname{Spm}(\mathcal{A})$ affinoid variety which allows a covering by invariant admissible subsets gives rise to an isomorphism of affinoid varieties 
$\Gamma\backslash\operatorname{Spm}(\mathcal{A})\rightarrow\operatorname{Spm}(\mathcal{A}^\Gamma)$, where $\mathcal{A}^\Gamma$ is the sub-algebra of $\Gamma$-invariant 
elements of $\mathcal{A}$; see \cite[\S 6.3.3]{BGR}. See also Hansen's more general \cite[Theorem 1.3]{HAN}. 

We invoke the analytification functor in \S \ref{analytification} by choosing $\mathcal{X}=X^{an}$.
If $X$ is a scheme of finite type over $L$ with an 'admissible action' of a finite group $\Gamma$ 'admissible', now in the algebraic sense that there is a covering with $\Gamma$-invariant affine sub-schemes, it can be proved that there exists a unique scheme structure (of finite type over $L$) on the ringed quotient space $$p:X\rightarrow\Gamma\backslash X.$$
The following proposition is due to Amaury Thuillier: we warmly thank him for having brought our attention to it. 
\begin{Proposition}\label{thuillier}
The canonical map $\Gamma\backslash X^{an}\rightarrow(\Gamma\backslash X)^{an}$
is an isomorphism of rigid analytic varieties.
\end{Proposition}

\begin{proof}
We can suppose, without loss of generality,
$X=\operatorname{Spec}(\mathcal{A})$ affine, so that
$\Gamma\backslash X=\operatorname{Spec}(\mathcal{A}^\Gamma)$. 
In terms of algebras, we have (horizontal arrows are surjective and vertical arrows injective, and $\widehat{L[\underline{t}]}$ is  the standard Tate $L$-algebra in the variables $\underline{t}=(t_1,\ldots,t_N)$ for some $N$):
$$
\begin{tikzpicture}
         \matrix (m) [matrix of math nodes,row sep=3em,column sep=3em,minimum width=2em]{
              L[\underline{t}]& & \mathcal{A} \\
    \widehat{L[\underline{t}]} & & \displaystyle{\frac{\widehat{L[\underline{t}]}}{\operatorname{ker}(\pi)}}.\\   };
         \path[-stealth]
            ([yshift=-3pt]m-1-1.east) edge node [above,yshift=1.0ex] {$\pi$} ([yshift=-3pt]m-1-3.west);
            \path[-stealth]
            (m-1-1) edge node [left] {} (m-2-1);
            \path[-stealth]
            (m-1-3) edge node [right] {} (m-2-3);
            \path[-stealth]
            (m-2-1) edge node [below] {} (m-2-3);
        \end{tikzpicture}
$$

Then we have: $$\mathcal{A}_V:=\frac{\widehat{L[\underline{t}]}}{\operatorname{ker}(\pi)}=\mathcal{A}\otimes_{L[\underline{t}]}\widehat{L[\underline{t}]}=H^0(V,\mathcal{O}_{X^{an}})$$ 
where $V:=\operatorname{Spm}(\mathcal{A}\otimes_{L[\underline{t}]}\widehat{L[\underline{t}]})\subset(\Gamma\backslash X)^{an}$.

The $L$-algebra $\mathcal{B}=\mathcal{A}\otimes_{\mathcal{A}^\Gamma}\mathcal{A}_V$ is finite over $\mathcal{A}_V$, hence it inherits a structure of affinoid $L$-algebra. We deduce, with $p^{an}:X^{an}\rightarrow (\Gamma\backslash X)^{an}$ the analytification of $p$, that $W=(p^{an})^{-1}(V)$ is a $\Gamma$-invariant affinoid domain of $X^{an}$ and $\mathcal{A}_W=H^0(W,\mathcal{O}_{X^{an}})=\mathcal{B}$. 
The quotient space $\Gamma\backslash W$
is also affinoid, of algebra $\mathcal{B}^\Gamma$ (see \cite[\S 6.3.3]{BGR}). Therefore, all we need to show is that the canonical morphism 
$$\mathcal{A}_V\rightarrow \mathcal{B}=\mathcal{A}\otimes_{\mathcal{A}^\Gamma}\mathcal{A}_V$$
induces an isomorphism $\mathcal{A}_V\rightarrow \mathcal{B}^\Gamma=(\mathcal{A}\otimes_{\mathcal{A}^\Gamma}\mathcal{A}_V)^\Gamma$.

The morphism
$\mathcal{A}\rightarrow \mathcal{A}_V$ is flat \cite[Theorem 3.4.1, (ii)]{BER}. Therefore the exact sequence
$$0\rightarrow \mathcal{A}^\Gamma\rightarrow \mathcal{A}\xrightarrow{\oplus(g-\operatorname{Id}_\mathcal{A})}\bigoplus_{g\in \Gamma}\mathcal{A}$$
yields an exact sequence
$$0\rightarrow \mathcal{A}_V=\mathcal{A}^\Gamma\otimes_{\mathcal{A}^\Gamma}\mathcal{A}_V\rightarrow \mathcal{A}\otimes_{\mathcal{A}^\Gamma}\mathcal{A}_V\xrightarrow{\oplus(g-\operatorname{Id}_\mathcal{A})}\bigoplus_{g\in \Gamma}\mathcal{A}\otimes_{\mathcal{A}^\Gamma}\mathcal{A}_V.$$ We have thus that $\mathcal{A}_V$ is equal to the kernel of the last arrow, which is just $\mathcal{B}^\Gamma$.
\end{proof}

We consider $L=\CC_\infty$ and we denote by $A(n)$ the $\FF_q$-vector space $\{a\in A:|a|<|\theta|^n\}$ (dimension $n$ and cardinality $q^n$).
If $X=\mathbb{A}^{1}_{\CC_\infty}$ and we look at $\Gamma=A(n)$ acting on $X$ by translations, we have the quotient scheme
$\Gamma\backslash X=\operatorname{Spec}(\CC_\infty[x]^\Gamma)$. Note that $\CC_\infty[x]^\Gamma=\CC_\infty[E_n(x)]$ with $E_n$ characterised by Proposition \ref{propthreepropofek}, by Euclidean division. Proposition \ref{thuillier} applies. 

We introduce the sets for $n\geq 1$
$$\mathcal{B}_{n}=D_{\CC_\infty}^\circ(0,|\theta|^{n})\setminus\bigcup_{a\in A(n)}D_{\CC_\infty}^\circ(a,1).$$
We define, in parallel, with $l_n=(\theta-\theta^q)\cdots(\theta-\theta^{q^n})$: $$\mathcal{C}_{n}=D_{\CC_\infty}^\circ(0,|l_n|)\setminus D_{\CC_\infty}^\circ(0,1).$$
Each of these sets has an admissible covering by affinoid subsets so that it is a rigid analytic sub-variety of $\mathbb{A}_{\CC_\infty}^{1,an}$. A function $f:\mathcal{B}_{n}\rightarrow\CC_\infty$ is analytic if its restriction to every affinoid subset is analytic. Note that $\mathcal{B}_n\subset\mathcal{B}_{n+1}$ and $\mathcal{C}_n\subset\mathcal{C}_{n+1}$ for all $n\geq 1$.
We set $$\psi_m:=1-\frac{\tau}{l_{m}^{q-1}},\quad m\geq 0$$ (recall that $\tau(x)=x^q$ for $x\in\CC_\infty$). It is easy to see that $\psi_n$ induces an isometric biholomorphic isomorphism of $\mathcal{C}_m$ for all $n\geq m$. In particular
the non-commutative infinite product
$$\mathcal{F}_n:=\cdots\left(1-\frac{\tau}{l_{n+1}^{q-1}}\right)\left(1-\frac{\tau}{l_{n}^{q-1}}\right)\in K[[\tau]]$$
induces an isometric biholomorphic isomorphism of $\mathcal{C}_n$ (for every $n$).

In a similar vein, Proposition \ref{thuillier} implies:

\begin{Corollary}
The function $\mathcal{E}_n=l_nE_n$ is a degree $q^n$ \'etale covering $\mathcal{B}_n\rightarrow\mathcal{C}_n$
which induces an isomorphism of rigid analytic spaces
$$A(n)\backslash\mathcal{B}_n\rightarrow\mathcal{C}_n,$$ where 
the analytic structure on the pre-image is induced by the analytification of $\operatorname{Spec}(\CC_\infty[x]^{A(n)})$.
\end{Corollary}

\subsubsection{Proof of Proposition \ref{not-so-elementary}}
A global section $g_n$ of $\mathcal{O}_{\mathcal{C}_n}$ can be identified, in a unique way, with a convergent series
$$\sum_{k\in\ZZ}g^{(n)}_kx^k,\quad g^{(n)}_k\in\CC_\infty.$$
Let $f:\Omega\rightarrow\CC_\infty$ be a rigid analytic function with the property that for all $a\in A$, $f(z+a)=f(z)$. We fix $m>0$, let $n$ be such that $n\geq m.$ Then, $f:\mathcal{B}_n\rightarrow\CC_\infty$ is holomorphic
such that $f(z+a)=f(z)$ for all $a\in A(n)$ and therefore there exists a unique $g_n\in\mathcal{O}_{\mathcal{C}_n}$ such that $f(z)=g_n(\mathcal{E}_n(z))$
over $\mathcal{C}_n$ and 
we can write:
$$f(z)=\sum_{k\in\ZZ}g^{(n)}_k(\mathcal{E}_n(z))^k.$$
We observe that $\mathcal{B}_m\subset\mathcal{B}_n$. 
Thus, we have the following commutative diagram for $n>m$, where the left vertical arrows are the identity, and the bottom right vertical arrow is $\psi_m$, while the top one is $\psi_{m+1,n}$,
where $\psi_{m,n}$ is the composition $\psi_{m,n}:=\psi_{n-1}\circ\cdots\circ\psi_m$:
$$\begin{array}{ccc}\mathcal{B}_m  & \xrightarrow{\mathcal{E}_n} & \mathcal{C}_m\\
\uparrow & & \uparrow \\
\mathcal{B}_m  & \xrightarrow{\mathcal{E}_{m+1}} & \mathcal{C}_m\\
\uparrow & & \uparrow \\
\mathcal{B}_m  & \xrightarrow{\mathcal{E}_{m}} & \mathcal{C}_m,\\
\end{array}$$
and there also exists a unique 
$g_m\in\mathcal{O}_{\mathcal{C}_m}$ such that $f(z)=g_m(\mathcal{E}_m(z))$, this time over $\mathcal{C}_m\subset\mathcal{C}_n$ so that, noticing that $\psi_{m,n}$ induces an isometric biholomorphic isomorphism of $\mathcal{C}_m$, we must have:
$$g^{(n)}(\psi_{m,n}(x))=g^{(m)}(x),\quad x\in\mathcal{C}_m.$$
In particular, we have the equality
$$g^{(n+1)}(\psi_{n}(x))=g^{(n)}(x),\quad x\in\mathcal{C}_m.$$ Since
$\psi_n(x)=x(1-(\frac{x}{l_n})^{q-1})$ and 
$\psi_n(x)^k=x^k(1+\sigma_{n,k}(x))$ with $|\sigma_{n,k}(x)|\leq|\frac{x}{l_n}|^{q-1}<1$ for all $n\geq m$, $k\in\ZZ$, we deduce that the function
$g^{(n+1)}-g^{(n)}$ tends to zero uniformly on every admissible subset of $\mathcal{C}_m$, for $n\geq m$. This means that the sequence of functions $(g^{(n)})_{n\geq m}$ converges to an element $g=\sum_kg_kx^k\in\mathcal{O}_{\mathcal{C}_m}$ uniformly on every admissible subset of $\mathcal{C}_m$.

With this new function $g$ the existence of which is given by Cauchy convergence criterion, we can write:
$$g^{(m)}(x)=g(\mathcal{F}_m(x)),\quad x\in\mathcal{C}_m.$$
We use the results of \S \ref{factorization-property} and more precisely Proposition \ref{prod-facto}, or with a more manageable notation, 
(\ref{noncommutativeidentity}). We thus recall the identity of entire functions:
$$\exp_A=\mathcal{F}_n\underbrace{\left(1-\frac{\tau}{l_{n-1}^{q-1}}\right)\cdots\left(1-\frac{\tau}{l_1^{q-1}}\right)\left(1-\tau\right)}_{\mathcal{E}_n}.$$
In particular, by uniqueness:
$$f(z)=g(\exp_A(z)),\quad z\in\mathcal{B}_m,\quad \forall m.$$
Since the sets $\mathcal{B}_n$ cover the set $\Omega_1:=\{z\in\CC_\infty:|z|_\Im\geq1\}$, the result follows.

Restated in more geometric, but essentially equivalent langage, the arguments of the proof of Proposition \ref{not-so-elementary} lead to:
\begin{Proposition}\label{rigid-isom}
For all $M\in[1,\infty[\cap|\CC_\infty^\times|$, the function $z\mapsto \frac{1}{\exp_A}$ yields an isomorphism of rigid analytic spaces
$A\backslash\Omega_M\cong \dot{D}_{\CC_\infty}(0,S)=D_{\CC_\infty}(0,S)\setminus\{0\}$ for some $S\geq1$ depending on $M$.
\end{Proposition}

\begin{Problem}\label{problem3}
{\em The above proof, although simple, is longer than the one we gave in the digression \ref{digression}, in the complex case. This leads to the following question: is it possible to construct explicitly an admissible covering $(U_i)_i$ of an annulus $D_{\CC_\infty}(0,R)\setminus D_{\CC_\infty}^\circ(0,r)$ and local inverses $g_i\in\mathcal{O}_{U_i}$ of the function $\exp_A$ or even better, the function $\frac{1}{\exp_A}$, delivering a simpler proof of Proposition \ref{not-so-elementary} and making no use of the process of analytification?}
\end{Problem}

Also, note that the fact that the Grothendieck ringed space $A\backslash\mathbb{A}^{1,an}$ carries a structure of rigid analytic variety and much more general results in this vein can be also  deduced from Simon H\"aberli's thesis \cite[Proposition 2.34]{HAB}.

\subsubsection{The Bruhat-Tits tree and $\exp_A$} As a complement for the previous discussions, in this subsection we describe how the Bruhat-Tits tree of \S \ref{Tits} can be used to study the function $\exp_A$. We are going to see that somewhat, $\exp_A$ defines a covering
$\mathbb{A}^{1,an}_{\CC_\infty}\rightarrow\mathbb{A}^{1,an}_{\CC_\infty}$ `ramified of 
degree $q^\infty$'; the reader is invited to compare with the results of \S \ref{local-class}. To give more strength to this, we use again Proposition \ref{prod-facto}. We are therefore led to analyse the image of $\mathcal{E}_n=l_nE_n$ on $D_{\CC_\infty}^\circ(0,|\theta|^n)$ and then, take the limit for $n\rightarrow\infty$. We note that 
$$D_{\CC_\infty}^\circ(0,|\theta|^n)\setminus K_\infty=\bigsqcup_{\begin{smallmatrix}\lambda\in\QQ\cap]-n,\infty[\\ \alpha\in\oplus_{-\lambda\leq i<n}\FF_q\theta^i\end{smallmatrix}}\alpha+C_\lambda.$$
Since
$$\mathcal{E}_n(z)=\frac{l_n}{d_n}\prod_{a\in A(n)}(z-a)$$
is $\FF_q$-linear of kernel $A(n)$, it suffices to study how $\mathcal{E}_n$ behaves over 
$$\mathcal{T}_n=\bigsqcup_{\begin{smallmatrix}\lambda\in\QQ\cap]-n,\infty[\\ \alpha\in\oplus_{-\lambda\leq i<0}\FF_q\theta^i\end{smallmatrix}}\alpha+C_\lambda.$$
Note that if $\lambda\leq 0$, the direct sum over $i$ is empty. This means that in the Bruhat-Tits tree $\mathcal{T}$, $\mathcal{T}_n$ entails a very simple subtree which can be obtained by 
glueing in $0$ a segment $]-n,0]$ (the subtree $\mathcal{T}_n^-$) with the union of $q$ disjoint copies 
of a complete $q$-ary tree equating $\mathcal{T}_0^+=\operatorname{red}(D_{\CC_\infty}^\circ(0,1)\setminus K_\infty)$ (independent of $n$), so that $\mathcal{T}_n=\mathcal{T}^-_n\sqcup\mathcal{T}^+_0$ and $\mathcal{T}_0=\mathcal{T}_0^+$. Since $\mathcal{E}_n$ induces an isometric isomorphism of $D^\circ_{\CC_\infty}(0,1)$ such that for all $z\in D^\circ_{\CC_\infty}(0,1)$, $\mathcal{E}_n(z)=z+z'$ with $|z'|<|z|$, it induces the identity on $\mathcal{T}^+_0$, and this, for all $n\geq 0$. The action of the maps $\mathcal{E}_n$ are all equal to the action of
$\mathcal{E}_0(z)=z$ on $\mathcal{T}^+_0$. We now choose $n>0$ and we look at 
the behaviour of $\mathcal{E}_n$ on $\mathcal{T}^-_n$, which is the most interesting part of the story.

Consider $x$ such that $\operatorname{red}(x)\in\mathcal{T}^-_n$. Then, there exists $i>0$ maximal with the property that $x\in\mathcal{T}^-_i\setminus\mathcal{T}^-_{i-1}$ ($\mathcal{T}^-_0$ is empty by definition) and there exists a unique $\lambda\in\QQ$ with $-\lambda\in[i-1,i[$ such that $x\in C_\lambda$. 
We recall that $\prod_{0\neq a\in A(n)}a=\frac{d_n}{l_n}$, see (\ref{a-product}).
We have:
\begin{eqnarray*}
\mathcal{E}_n(x)&=&\frac{l_n}{d_n}\prod_{a\in A(i)}(x-a)\prod_{a\in A(n)\setminus A(i)}(x-a)\\
&=&\mathcal{E}_i(x)\frac{l_n}{d_n}\frac{d_i}{l_i}\prod_{a\in A(n)\setminus A(i)}(-a)\prod_{a\in A(n)}\left(1-\frac{x}{a}\right)\\
&=&(1+\xi)\mathcal{E}_i(x),
\end{eqnarray*}
where $\xi\in D^\circ_{\CC_\infty}(0,1)$ (because $\frac{|x|}{|a|}<1$ for all $a\in A(n)\setminus A(i)$). If $y\in D^\circ_{\CC_\infty}(0,|x|)$ we get
$$\mathcal{E}_n(x+y)=\mathcal{E}_i(x)+\underbrace{\xi\mathcal{E}_i(x)+(1+\xi)\mathcal{E}_i(y)}_{\text{element of }C^\circ_{\CC_\infty}(0,|\mathcal{E}_i(x)|)}.$$
We deduce that the map
$$D^\circ_{\CC_\infty}(x,|x|)\xrightarrow{\mathcal{E}_n}D^\circ_{\CC_\infty}(\mathcal{E}_i(x),|\mathcal{E}_i(x)|)$$
is an \'etale covering of degree $q^i$. Hence, the image by $\mathcal{E}_n$ of $\operatorname{res}^{-1}(\mathcal{T}^-_i\setminus\mathcal{T}^-_{i-1})$ (annulus) is an \'etale covering of degree $q^i$
of the annulus
$$\frac{l_i}{d_i}\Big[D^\circ_{\CC_\infty}(0,|\theta|^{iq^i})\setminus D^\circ_{\CC_\infty}(0,|\theta|^{(i-1)q^{i}})\Big]=D^\circ_{\CC_\infty}(0,|l_i|)\setminus D^\circ_{\CC_\infty}(0,|l_{i-1}|).$$
From this it is not difficult to deduce that $\mathcal{E}_n$ defines a covering 
$D^\circ_{\CC_\infty}(0,|\theta|^n)\rightarrow D^\circ_{\CC_\infty}(0,|l_n|)$ ramified of degree $q^n$ at the points of $A(n)$ and \'etale on the complementary of these points but we get even more.
Namely, that for any $z\in \Omega$, $\operatorname{res}(\exp_A(z))$ can be very easily computed. If $|z|_\Im<1$ then $|\exp_A(z)|<1$ and if $z\not\in K_\infty$, $\operatorname{res}(\exp_A(z))$ is equal to $\operatorname{res}(z-a)$ where $a\in A$ is the unique element such that $\operatorname{res}(z-a)\in\mathcal{T}^+_0$. If $|z|_\Im\geq 1$ then 
$\operatorname{res}(\exp_A(z))=\operatorname{res}(\mathcal{E}_n(z))$ for all but finitely many $n$ (depending on how large is $|z|_\Im$).

We consider $\mathcal{T}^-_\infty=\cup_{n\geq 1}\mathcal{T}^-_n$ (homeomorphic to $\RR_{\leq 0}$) and $\mathcal{T}_\infty=\mathcal{T}^-_\infty\sqcup\mathcal{T}^+_0$.  Note that $\operatorname{res}(\mathfrak{F})=\mathcal{T}^-_\infty$ and $\operatorname{res}(\mathfrak{F}\sqcup\{z\in\Omega:|z|,|z|_\Im<1\})=\mathcal{T}_\infty$. In the terminology of \S \ref{Tits}, $\mathfrak{F}\sqcup\{z\in\Omega:|z|,|z|_\Im<1\}$ can be viewed as a `good fundamental domain' for the action of $A$ over $\Omega$ by translations. We ultimately get, with a few more details to develop which are left to the reader:

\begin{Proposition}
The map $\exp_A$ induces a surjective, $A$-periodic map $\mathfrak{F}\rightarrow\mathbb{A}^{1,an}_{\CC_\infty}\setminus D^\circ_{\CC_\infty}(0,1)$ and rigid analytic isomorphisms
$A\backslash\mathfrak{F}\rightarrow \mathbb{A}^{1,an}_{\CC_\infty}\setminus D^\circ_{\CC_\infty}(0,1)$ and $A\backslash\mathbb{A}^{1,an}_{\CC_\infty}\rightarrow \mathbb{A}^{1,an}_{\CC_\infty}$.\end{Proposition}

Note that $\mathfrak{F}$ is not, properly speaking, invariant by $A$-translations, but $A$-translations define an equivalence relation on $\mathfrak{F}$. The above statement needs to be interpret in the light of the richer combinatorial structure described earlier. In the classical setting we have, of course, the classical well known properties that the Eulerian exponential $z\mapsto e^z$ induces analytic isomorphisms $\ZZ\backslash\mathcal{H}\rightarrow D_{\CC}^\circ(0,1)$
and $\ZZ\backslash\CC\rightarrow \CC^\times$. Interestingly too, we note that, just as
$\CC=\mathcal{H}\sqcup\RR\sqcup\mathcal{H}^-$ (the latter is the lower complex half-plane), here we have an analogous decomposition $$\CC_\infty=\Omega\sqcup K_\infty=\Omega_1\sqcup \Omega^-\sqcup K_\infty$$ with $\Omega_1=\{z\in\Omega:|z|_\Im\geq 1\}$, $\Omega^-=\{z\in\Omega:|z|_\Im< 1\}$.

We hope that, with this description, we have convinced the reader that the functions $\exp_A$ and the Carlitz's exponential carry an extraordinary structural richness. We now complete our discussion with the quick exposition of some properties of the 
quotient $\GL_2(A)\backslash\Omega$ and then we move our attention to Drinfeld modular forms.

\subsection{The quotient $\GL_2(A)\backslash\Omega$}\label{quotient-A}

 In the previous subsection we gave, in the most explicit way, but also in compatibility with the purposes of this text, a description of the analytic structure of the quotient space ($A=\FF_q[\theta]$ acting by translations) $A\backslash\Omega_1$. 
Following \cite[Chapter 10]{GER&PUT}), we now describe the action of $\GL_2(\FF_q)$ on certain admissible subsets of $\Omega$. We consider $M\in|\CC_\infty^\times|$ and we set
$$\Omega_M:=\{z\in\Omega:|z|_\Im\geq M\}.$$
Note that this set, which is called {\em horocycle neighbourhood of $\infty$}, is non-empty and is invariant by translations by elements of $K_\infty$. The multiplication by elements of $\FF_q^\times$ induce bijections of $\Omega_M$. Here is a lemma that will be useful later.

\begin{Lemma}\label{with-this-help}
If $M>1$ and if $\gamma\in\GL_2(A)$ is such that
$\gamma(\Omega_M)\cap\Omega_M\neq\emptyset$, then
$\gamma$ belongs to the Borel subgroup $(\begin{smallmatrix}* & * \\ 0 & * \end{smallmatrix})$ of $\GL_2(A)$.
\end{Lemma}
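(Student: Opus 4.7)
The plan is to combine Lemma \ref{invariance} with the elementary inequality $|cz+d|\geq |c|\cdot|z|_\Im$ and exploit that in the case $\mathcal{C}=\PP^1$ one has $A=\FF_q[\theta]$, so the units are $\FF_q^\times$ and every nonzero element of $A$ has absolute value at least one.

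Write $\gamma=\sqm{a}{b}{c}{d}\in\GL_2(A)$ and suppose there is $z\in\Omega_M$ such that $\gamma(z)\in\Omega_M$ as well. We want to conclude $c=0$. The first step is to note that $\det(\gamma)\in A^\times=\FF_q^\times$, so $|\det(\gamma)|=1$; therefore Lemma \ref{invariance} reduces to
$$|\gamma(z)|_\Im=\frac{|z|_\Im}{|cz+d|^2}.$$

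The second step is the estimate $|cz+d|\geq |c|\,|z|_\Im$ whenever $c\neq 0$. Indeed, $|cz+d|=|c|\cdot\bigl|z-(-d/c)\bigr|$ and $-d/c\in K_\infty$, so by the very definition of $|z|_\Im$ as an infimum over $K_\infty$ we obtain $|z-(-d/c)|\geq |z|_\Im$.

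Now assume for contradiction that $c\neq 0$. Since $c\in A\setminus\{0\}=\FF_q[\theta]\setminus\{0\}$ we have $|c|\geq 1$. Combining the two displays,
$$|\gamma(z)|_\Im\leq \frac{|z|_\Im}{|c|^2\,|z|_\Im^2}=\frac{1}{|c|^2\,|z|_\Im}\leq \frac{1}{|z|_\Im}\leq \frac{1}{M}.$$
But $\gamma(z)\in\Omega_M$ forces $|\gamma(z)|_\Im\geq M$, so $M\leq 1/M$, i.e.\ $M^2\leq 1$, contradicting the hypothesis $M>1$. Hence $c=0$ and $\gamma$ lies in the Borel subgroup $\sqm{*}{*}{0}{*}$.

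There is no serious obstacle here: the whole argument is packaged in Lemma \ref{invariance} plus the ultrametric inequality. The only place where one must be mildly careful is the use of $|\det(\gamma)|=1$ and $|c|\geq 1$, which are precisely what uses $A=\FF_q[\theta]$ (the restriction made at the start of the subsection). For more general $(\mathcal{C},\infty)$ one would have to replace these bounds by the appropriate lower bound on nonzero elements of $A$ and on units, and adjust the threshold $M>1$ accordingly.
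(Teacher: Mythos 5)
Your proof is correct and follows essentially the same route as the paper: apply Lemma \ref{invariance} with $|\det\gamma|=1$, bound $|cz+d|\geq|c|\,|z|_\Im\geq|z|_\Im$ (the paper gets this via $|cz+d|\geq|cz+d|_\Im=|c|\,|z|_\Im$, you via the infimum definition, which is the same thing), and derive $|z|_\Im\leq M^{-1}$, contradicting $M>1$. Your closing caveat is not actually needed: for general $(\mathcal{C},\infty)$ one still has $|a|\geq 1$ for all $a\in A\setminus\{0\}$ and $A^\times=\FF_q^\times$, so the identical argument works there too.
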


\begin{proof}
Let $\gamma=(\begin{smallmatrix}a & b \\ c & d \end{smallmatrix})\in\GL_2(A)$.
By Lemma \ref{invariance}, $|\gamma(z)|_\Im=\frac{|z|_\Im}{|cz+d|^2}$.
Let us suppose that 
$z,\gamma(z)\in\Omega_M$, and that $c\neq 0$. 
Then, since $|c|\geq 1$ if $c\in A\setminus\{0\}$,
$$|cz+d|\geq|cz+d|_\Im=|c||z|_\Im\geq |z|_\Im.$$ Then,
$\gamma(z)\in\Omega_M$ implies that $|z|_\Im\geq M|cz+d|^2\geq M|z|_\Im^2$ so that $M^{-1}\geq |z|_\Im$. Now, if $M>1$, from $|z|_\Im\geq M$ we get a contradiction.
\end{proof}

We set, with $M\in|\CC_\infty^\times|\cap]1,\infty[$:
 $$\mathcal{D}_M:=D_{\CC_\infty}(0,M)\setminus(\FF_q+D_{\CC_\infty}^\circ(0,M^{-1}))\subset\Omega.$$ This is the complementary in $\PP_{\FF_q}^{1,an}(\CC_\infty)$ of the union of $q+1$ disjoint disks and is an affinoid subset of $\Omega$. In the following, we can choose $M=|\theta|^{\frac{1}{2}}$. It is easy to see that the group $\GL_2(\FF_q)$ acts by homographies
 on $\mathcal{D}_M$ (note that more generally, the subsets $\{z\in\CC_\infty:|z|\leq q^n,|z|_\Im\geq q^{-n}\}$, which also are affinoid subsets, are invariant under the action by homographies of the subgroups of $\GL_2(A)$ finitely generated by $\GL_2(\FF_q)$ and $\{(\begin{smallmatrix} \lambda & \theta^i \\ 0 & \mu\end{smallmatrix}):\lambda,\mu\in\FF_q^\times,i\leq n\}$, the union of which is $\GL_2(A)$). Further, if $\gamma\in\GL_2(A)$, one easily sees that if $\gamma(\mathcal{D}_M)\cap\mathcal{D}_M\neq\emptyset$, then $\gamma\in\GL_2(\FF_q)$.
It is also easily seen that $$\Omega=\bigcup_{\gamma\in\GL_2(A)}\gamma(\mathcal{D}_M).$$
We can apply Proposition \ref{thuillier} to the isomorphism of affine varieties
 $$\GL_2(\FF_q)\backslash\mathbb{A}_{\CC_\infty}^{1}\xrightarrow{j_0}\mathbb{A}_{\CC_\infty}^{1},$$
 where $$j_0(z)=-\frac{(1+z^{q-1})^{q+1}}{z^{q-1}}$$
 (this is the {\em finite $j$-invariant} of Gekeler in \cite{GEK2}) to obtain an isomorphism of analytic spaces 
 $$\GL_2(\FF_q)\backslash\mathcal{D}_M\cong D_{\CC_\infty}(0,1).$$
 
 In parallel, we have the Borel subgroup $B=B(A)=\{(\begin{smallmatrix} * & * \\ 0 & *\end{smallmatrix})\}$ which acts on $\Omega_M$ and the isomorphism of analytic spaces $B\backslash\Omega_M\cong\dot{D}_{\CC_\infty}(0,S)$ induced by the map $\exp_A(z)^{-1}$ (Proposition \ref{rigid-isom}). We recall from Lemma \ref{with-this-help} that $\gamma\in\GL_2(\FF_q)$ is such that $\gamma(\Omega_M)\cap\Omega_M\neq\emptyset$ if and only if $\gamma$ is in $B$. 
 
 There is a procedure of gluing 
 two quotient rigid analytic spaces with such compatibility boundary conditions, into a new rigid analytic space,
 along with (\ref{nagao}) for $k=\FF_q$ and $t=\theta$. Note that $\mathcal{D}_M\cap\Omega_M=\{z\in\CC_\infty:|z|_\Im=|z|=M\}$
 and the two actions of $B$ over $\Omega_M$ and of $\GL_2(\FF_q)$ on $\mathcal{D}_M$
 agree with the action of $B\cap\GL_2(\FF_q)$ on $\mathcal{D}_M\cap\Omega_M$ and the gluing of these two quotient spaces is a well defined analytic space whose underlying 
 topological space is homeomorphic to the quotient topological space $\GL_2(A)\backslash\Omega$ which also carries a natural structure of analytic space. Additionally, this quotient space is isomorphic to
 the gluing of $D_{\CC_\infty}(0,1)$ and $\CC_\infty\setminus D_{\CC_\infty}^\circ(0,1)$ along $\{z\in\CC_\infty:|z|=1\}$,
 which is in turn isomorphic to $\CC_\infty$. This construction finally yields:
 \begin{Theorem}\label{ger-put}
 There is an isomorphism between 
 the quotient rigid analytic space $\Gamma\backslash\Omega$ and the rigid analytic affine line $\mathbb{A}^{1,an}_{\CC_\infty}$.
 \end{Theorem}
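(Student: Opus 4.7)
The plan is to realize $\Gamma\backslash\Omega$ as a gluing of two pieces whose quotient structures were already described in the preceding discussion, and then to identify the result with $\AA^1_{\CC_\infty}$. With $M=|\theta|^{1/2}$, I would start from the two $\Gamma$-invariant open coverings implicit in the preceding paragraphs: $\Omega=\GL_2(A)\cdot\mathcal{D}_M\cup\GL_2(A)\cdot\Omega_M$, with $\mathcal{D}_M$ stabilized (up to pairwise intersection) by $\GL_2(\FF_q)$ and $\Omega_M$ stabilized by the Borel $B(A)$, thanks to Lemma \ref{with-this-help}. By Nagao's theorem (\ref{nagao}) applied with $k=\FF_q$ and $t=\theta$, every element of $\GL_2(A)$ can be written as a word in $\GL_2(\FF_q)$ and $B(A)$ subject to the relations given by $B(\FF_q)=\GL_2(\FF_q)\cap B(A)$. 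Consequently the quotient $\Gamma\backslash\Omega$ is obtained by amalgamating $\GL_2(\FF_q)\backslash\mathcal{D}_M$ and $B(A)\backslash\Omega_M$ along the quotient of their overlap $\mathcal{D}_M\cap\Omega_M$ by $B(\FF_q)$.

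Next I would identify the two pieces analytically. Proposition \ref{thuillier} applied to the isomorphism of affine varieties $\GL_2(\FF_q)\backslash\AA^1\xrightarrow{j_0}\AA^1$ yields an isomorphism of rigid analytic spaces $\GL_2(\FF_q)\backslash\mathcal{D}_M\cong D(0,1)$, because $j_0$ sends the boundary circle $|z|=M$ isometrically to the unit circle when $M=|\theta|^{1/2}$. Independently, Proposition \ref{rigid-isom} yields an isomorphism of rigid analytic spaces $B(A)\backslash\Omega_M\cong\dot{D}(0,S)$ via $z\mapsto 1/\exp_A(z)$, for a suitable $S>1$. The key compatibility to check is that on the common locus $\mathcal{D}_M\cap\Omega_M=\{|z|_\Im=|z|=M\}$ the $B(\FF_q)$-action agrees with the $\GL_2(\FF_q)$-action restricted there, and that the images under the two quotient maps glue to a common annulus; this follows directly because $\exp_A$ is $\FF_q$-linear, so multiplication by $\FF_q^\times$ on $z$ corresponds simply to multiplication by $\FF_q^\times$ on $\exp_A(z)$, matching the induced action of $B(\FF_q)$ on the unit circle on each side.

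The gluing step is then a formal application of the existence of a rigid analytic structure on such amalgamated quotients (the $B(A)\cap\GL_2(\FF_q)=B(\FF_q)$-invariance of the overlap data is exactly what makes the construction go through). The resulting rigid analytic space is the gluing of the unit disk $D(0,1)$ with $\CC_\infty\setminus D^\circ(0,1)$ along the circle $\{|z|=1\}$, which is visibly $\AA^1_{\CC_\infty}$ (or more naturally, the $\infty$-removed projective line $\PP^1_{\CC_\infty}\setminus\{\infty\}$).

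The main obstacle I anticipate is not any single computation but the careful bookkeeping of the gluing: one must verify that the Nagao amalgamation really does imply that admissible subsets and sections of the sheaf on $\Gamma\backslash\Omega$ are exactly the gluings of those on the two pieces. Concretely, the subtle point is the descent statement for sections invariant under the amalgamated product from sections invariant under each factor, compatibly on the overlap; this is where one needs the exactness of invariants under flat base change (as in the proof of Proposition \ref{thuillier}) together with the identification $B(\FF_q)=\GL_2(\FF_q)\cap B(A)$ to ensure that the two quotient structures on $\mathcal{D}_M\cap\Omega_M$ actually coincide as rigid analytic spaces, and not merely as sets. Once that is in hand, the identification with $\AA^1_{\CC_\infty}$ is immediate from the explicit formulas for $j_0$ and $1/\exp_A$.
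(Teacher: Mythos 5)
Your proposal follows essentially the same route as the paper: Nagao's amalgam $\GL_2(\FF_q)*_{B(\FF_q)}B(A)$, the two pieces $\GL_2(\FF_q)\backslash\mathcal{D}_M\cong D(0,1)$ via Gekeler's $j_0$ and Proposition \ref{thuillier}, and $B(A)\backslash\Omega_M\cong\dot{D}(0,S)$ via $1/\exp_A$ (Proposition \ref{rigid-isom}), glued along $\mathcal{D}_M\cap\Omega_M=\{|z|=|z|_\Im=M\}$ to recover $\AA^1_{\CC_\infty}$. The compatibility and bookkeeping issues you flag are exactly the points the paper itself treats informally, so your argument is correct and matches the paper's proof.
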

  
\section{Drinfeld modular forms}\label{Drinfeld-modular-forms}

We give a short synthesis on Drinfeld modular forms for the group $\Gamma=\GL_2(A)$ in the simplest case where $A=\FF_q[\theta]$, so that we can prepare the next part of this paper, where we construct modular forms for $\Gamma$ with (vector) values in certain $\CC_\infty$-Banach algebras. 

The map 
$$\GL_2(K_\infty)\times\Omega\rightarrow\CC_\infty^\times$$
defined by $(\gamma,z)\mapsto J_\gamma(z)=cz+d$ if $\gamma=(\begin{smallmatrix} * & * \\ c & d \end{smallmatrix})$ behaves like the classical factor of automorphy for $\GL_2(\RR)$. Indeed we have the cocycle condition:
$$J_{\gamma\delta}(z)=J_\gamma(\delta(z))J_\delta(z),\quad \gamma,\delta\in\GL_2(K_\infty).$$ Note that the image is indeed in $\CC_\infty^\times$, as $z,1$ are $K_\infty$-linearly independent if $z\in\Omega$.

\begin{Definition}{\em 
Let $f:\Omega\rightarrow\CC_\infty$ be an analytic function. We say that $f$ is {\em modular-like} of weight $w\in\ZZ$ if for all $z\in\Omega$,
$$f(\gamma(z))=J_\gamma(z)^wf(z),\quad \forall\gamma\in\GL_2(A).$$ 
It is a simple exercise to verify that $w$ is uniquely determined.

We say that a modular-like function of weight $w$ is:
\begin{enumerate}
\item {\em weakly modular} (of weight $w$) if there exists $N\in\ZZ$ such that the map $z\mapsto |\exp_A(z)^Nf(z)|$ is bounded over $\Omega_M$ for some $M>1$,
\item a {\em modular form} if the map $z\mapsto |f(z)|$ is bounded over $\Omega_M$ for some $M>1$.
\item a {\em cusp form} if it is a modular form and $\max_{z\in\Omega_M}|f(z)|\rightarrow0$ as 
$M\rightarrow\infty$.
\end{enumerate} 
}\end{Definition}
Let $f$ be modular like (of weight $w\in\ZZ$). Taking $\gamma=(\begin{smallmatrix} 1 & * \\ 0 & 1\end{smallmatrix})$ we see that $f(z+a)=f(z)$ for all $a\in A$. Therefore, by Proposition \ref{not-so-elementary}, there is a convergent series expansion of the type 
$$f(z)=\sum_{i\in\ZZ}f_i\exp_A(z)^i,\quad f_i\in\CC_\infty.$$ 
There is a rigid analytic analogue of Riemann's principle of removable singularities due to Bartenwerfer (see \cite{BAR})
in virtue of which we see that the $\CC_\infty$-vector space $M_w^!$ of 
weak modular forms of weight $w$ embeds in the field of Laurent series
$\CC_\infty((u))$ with the discrete valuation given by the order in $u$, where $u=u(z)$ is the {\em uniformiser at infinity}
$$u(z)=\frac{1}{\widetilde{\pi}\exp_A(z)}=\frac{1}{\widetilde{\pi}}\sum_{a\in A}\frac{1}{z-a},$$
which is an analytic function $\Omega\rightarrow\CC_\infty$.
Since $M^!_w\cap M^!_{w'}=\{0\}$ if $w\neq w'$ we have a
$\CC_\infty$-algebra $M^!=\oplus_wM^!_m$ which also embeds in the field of Laurent series $\CC_\infty((u))$. Denoting by $M_w$ the $\CC_\infty$-vector space of modular forms of weight $w$ and by $M=\oplus_wM_w$ the $\CC_\infty$-algebra of modular forms, we also have an 
embedding $M\rightarrow\CC_\infty[[u]]$ and cusp forms generate an ideal whose image in $\CC_\infty[[u]]$ is contained in the ideal generated by $u$. 

It is easy to deduce, from the modularity property, that $M_w^!\neq\{0\}$ implies $q-1\mid w$. Furthermore,
for all $w$ such that $M_w\neq\{0\}$, $M_w$ can be embedded via $u$-expansions in $\CC_\infty[[u^{q-1}]]$ and therefore the $\CC_\infty$-vector space of cusp forms $S_w$ can be embedded in $u^{q-1}\CC_\infty[[u^{q-1}]]$.

\subsection{$u$-expansions}

We have seen that we can associate in a unique way to any Drinfeld modular form $f$ a formal series $\sum_{i\geq 0}f_iu^i\in\CC_\infty[[u]]$
which is analytic in some disk $D(0,R)$, $R\in|\CC_\infty^\times|\cap]0,1[$. This is the analogue of the 
'Fourier series' of a complex-valued modular form for $\operatorname{SL}_2(\ZZ)$; for such a function $f:\mathcal{H}\rightarrow\CC$
we deduce, from $f(z+1)=f(z)$, a Fourier series expansion
$$f=\sum_{i\geq 0}f_iq^i,\quad f_i\in\CC,$$ converging for $q=q(z)=e^{2\pi i z}\in D_\CC^\circ(0,1)$. We want to 
introduce some useful tools for the study of $u$-expansions of Drinfeld modular forms.

For $n\geq 0$ we introduce the $\CC_\infty$-linear map $\CC_\infty[z]\xrightarrow{\mathcal{D}_n}\CC_\infty[z]$ uniquely determined by 
$$\mathcal{D}_n(z^m)=\binom{m}{n}z^{m-n}.$$ 
Note that we have Leibniz's formula $\mathcal{D}_n(fg)=\sum_{i+j=n}\mathcal{D}_i(f)\mathcal{D}_j(g)$.
The linear operators $\mathcal{D}_n$ extend in a unique way to $\CC_\infty(z)$ and further, on the $\CC_\infty$-algebra of analytic functions over any rational subset of $\Omega$ therefore 
inducing linear endomorphisms of the $\CC_\infty$-algebra of analytic functions $\Omega\rightarrow\CC_\infty$. Additionally, if $f:\Omega\rightarrow\CC_\infty$ is analytic and 
$A$-periodic, $\mathcal{D}_n(f)$ has this same property, and for all $n$, $\mathcal{D}_n$
induces $\CC_\infty$-linear endomorphisms of $\CC_\infty[[u]]$ (this last property follows from the fact that $\mathcal{D}_n(u)$ is bounded on $\Omega_M$ as one cas easily see distributing $\mathcal{D}_n$ on $u=\frac{1}{\widetilde{\pi}}\sum_{a\in A}\frac{1}{z-a}$, which gives $(-1)^n
\frac{1}{\widetilde{\pi}}\sum_{a\in A}\frac{1}{(z-a)^{n+1}}$). We normalise $\mathcal{D}_n$ by setting:
$$D_n=(-\widetilde{\pi})^{-n}\mathcal{D}_n.$$

\begin{Lemma}
For all $n\geq 0$, $D_n(K[u])\subset u^2K[u]$.
\end{Lemma}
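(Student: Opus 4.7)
The statement is visibly intended for $n\geq 1$ (the identity $D_0$ fixes $K[u]$). Since $\mathcal{D}_n$ kills constants for $n\geq 1$ and is $K$-linear, it suffices by linearity to show $D_n(u^m)\in u^2K[u]$ for every $m\geq 1$ and every $n\geq 1$. The plan is (i) to prove the case $m=1$ directly by expanding $u(z+t)$ in $t$, and (ii) to reduce $m\geq 2$ to $m=1$ via the Leibniz rule for hyperdifferential operators.

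\textbf{Step 1 (generating series for $\mathcal{D}_n(u)$).} The Hasse--Schmidt identity reads $u(z+t)=\sum_{n\geq 0}\mathcal{D}_n(u)(z)\,t^n$. Since $\exp_A$ is $\FF_q$-linear we have $\exp_A(z+t)=\exp_A(z)+\exp_A(t)$, hence
\[
u(z+t)=\frac{1}{\widetilde{\pi}\bigl(\exp_A(z)+\exp_A(t)\bigr)}=u(z)\sum_{k\geq 0}(-\widetilde{\pi})^{k}u(z)^{k}\exp_A(t)^{k}.
\]
Reading off the coefficient of $t^n$ yields $\mathcal{D}_n(u)=\sum_{k\geq 0}(-\widetilde{\pi})^{k}\,[\exp_A(t)^{k}]_{t^n}\,u^{k+1}$, where $[\,\cdot\,]_{t^n}$ denotes the coefficient of $t^n$.

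\textbf{Step 2 (tracking powers of $\widetilde{\pi}$).} From $\exp_A(t)=\sum_{i\geq 0}\widetilde{\pi}^{q^i-1}d_i^{-1}t^{q^i}$, any monomial contribution to $\exp_A(t)^k$ arising from a choice of exponents $i_1,\dots,i_k$ has the form $\widetilde{\pi}^{(q^{i_1}-1)+\cdots+(q^{i_k}-1)}(d_{i_1}\cdots d_{i_k})^{-1}\,t^{q^{i_1}+\cdots+q^{i_k}}$. If the monomial contributes to $t^n$, i.e.\ $q^{i_1}+\cdots+q^{i_k}=n$, the power of $\widetilde{\pi}$ equals $n-k$. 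Thus $[\exp_A(t)^{k}]_{t^n}=\widetilde{\pi}^{\,n-k}\rho_{k,n}$ for some $\rho_{k,n}\in K$, vanishing unless $k\leq n$ and (for $n\geq 1$) also vanishing for $k=0$. Substituting,
\[
\mathcal{D}_n(u)=\widetilde{\pi}^{\,n}\sum_{k=1}^{n}(-1)^{k}\rho_{k,n}\,u^{k+1},\qquad D_n(u)=(-1)^{n}\sum_{k=1}^{n}(-1)^{k}\rho_{k,n}\,u^{k+1}\in u^{2}K[u].
\]

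\textbf{Step 3 (extension to $u^m$ via Leibniz).} Iterating the identity $D_n(fg)=\sum_{i+j=n}D_i(f)D_j(g)$ (which follows at once from the Hasse--Schmidt Leibniz rule together with the normalisation $D_i=(-\widetilde{\pi})^{-i}\mathcal{D}_i$) gives
\[
D_n(u^m)=\sum_{i_1+\cdots+i_m=n}D_{i_1}(u)\cdots D_{i_m}(u).
\]
Each factor lies in $uK[u]$; and whenever some $i_j\geq 1$, Step 2 ensures that factor lies in $u^{2}K[u]$. For $n\geq 1$ at least one index is positive in every term, so the product lies in $u^{m+1}K[u]\subset u^{2}K[u]$. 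Linearity finishes the proof.

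\textbf{Expected main obstacle.} The only delicate point is the exact bookkeeping in Step 2: $\exp_A$ does not have coefficients in $K$, only in $K(\widetilde{\pi})$, so it is not a priori clear that $(-\widetilde{\pi})^{-n}\mathcal{D}_n(u)$ has coefficients in $K$. The saving identity is the arithmetic relation $(q^{i_1}-1)+\cdots+(q^{i_k}-1)=n-k$ on the exponents, which produces exactly the factor $\widetilde{\pi}^{\,n}$ cancelling the normalisation; once this is observed, everything else is formal.
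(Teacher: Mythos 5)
Your proof is correct, and it takes a genuinely different route from the paper's. The paper argues by induction on $n$: it applies $D_n$ to the identity $u(z)\exp_C(\widetilde{\pi}z)=1$, uses the Leibniz rule together with the $\FF_q$-linearity of $\exp_C$ — so that only the hyperderivatives $D_{q^k}$ of $\exp_C(\widetilde{\pi}z)$ survive, and these are constants $\pm d_k^{-1}\in K$ — and solves the resulting relation to express $D_n(u)$ as $u$ times a $K$-linear combination of the $D_i(u)$ with $i<n$, whence the claim by induction; the reduction from $K[u]$ to the single generator $u$ is the same Leibniz argument you spell out in Step 3. You instead expand $u(z+t)$ using the additivity of $\exp_A$ and obtain a closed, non-recursive formula $D_n(u)=(-1)^n\sum_{k=1}^n(-1)^k\rho_{k,n}u^{k+1}$ with $\rho_{k,n}\in K$ an explicit sum of products $d_{i_1}^{-1}\cdots d_{i_k}^{-1}$ over representations $q^{i_1}+\cdots+q^{i_k}=n$; the cancellation $\widetilde{\pi}^{(q^{i_1}-1)+\cdots+(q^{i_k}-1)}=\widetilde{\pi}^{\,n-k}$ against the normalisation $(-\widetilde{\pi})^{-n}$ plays exactly the role that the constancy of $D_{q^k}(\exp_C(\widetilde{\pi}z))$ plays in the paper. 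Your version buys an explicit description of the Goss polynomials $G_{n+1}=D_n(u)$ (it recovers at once $D_j(u)=u^{j+1}$ for $1\leq j\leq q-1$, since the only representation of such $j$ is as a sum of $j$ copies of $1$) and the slightly stronger containment $D_n(u^m)\in u^{m+1}K[u]$; the paper's recursion is shorter, is the form actually used afterwards, and stays within identities already established, whereas you invoke without proof the Taylor (Hasse--Schmidt) identity $u(z+t)=\sum_{n\geq0}\mathcal{D}_n(u)(z)t^n$ for the analytic function $u$ — a standard fact, and one you could justify in the paper's own framework by distributing $\mathcal{D}_n$ over $u=\widetilde{\pi}^{-1}\sum_{a\in A}(z-a)^{-1}$, as the paper does elsewhere, so this is not a gap. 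Finally, your remark that the statement is really about $n\geq1$ (since $D_0$ is the identity) is consistent with the paper, whose proof also passes over $n=0$ without comment.
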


\begin{proof}
It suffices to show that for all $n\geq 0$, $D_n(u)\in u^2K[u]$. We proceed by induction on $n\geq 0$; there is nothing to prove for $n=0$. Recall that $u(z)=\frac{1}{\exp_C(\widetilde{\pi}z)}$. Then, by Leibniz's formula:
\begin{eqnarray*}
\lefteqn{0=D_n(1)=D_n(u\exp_C(\widetilde{\pi}z))}\\
&=&D_n(u)\exp_C(\widetilde{\pi}z)+\sum_{\begin{smallmatrix}
i+q^k=n\\ k\geq 0
\end{smallmatrix}}D_i(u)D_{q^k}(\exp_C(\widetilde{\pi}z)),
\end{eqnarray*}
because $\exp_C$ is $\FF_q$-linear. In fact, $D_{q^k}(\exp_C(\widetilde{\pi}z))$ is constant
and equals the coefficient of $z^{q^k}$ in the $z$-expansion of $\exp_C$, which is $\frac{1}{d_k}$. We can therefore use induction to conclude that
$$D_n(u)=-u\left(-\sum_{\begin{smallmatrix}
i+q^k=n\\ k\geq 0
\end{smallmatrix}}D_i(u)d_k^{-1}\right)\in u^2K[u].$$ 
\end{proof}
The polynomials $G_{n+1}(u):=D_n(u)\in K[u]$ ($n\geq 1$) are called the {\em Goss polynomials} (see \cite[\S 3]{GEK}). It is easy to deduce from the above proof that
$D_j(u)=u^{j+1}$ as $j=1,\ldots,q-1$. There is no general formula currently available to compute $D_j(u)$ for higher values of $j$.

\subsubsection{Constructing Drinfeld modular forms}
The first non-trivial examples of Drinfeld modular forms have been described by Goss in his Ph. D. Thesis. To begin this subsection, we follow Goss \cite{GOS2} and we show how to construct non-zero Eisenstein series by using that $Az+A$ is strongly discrete in $\CC_\infty$ if $z\in\Omega$. We set:
$$E_w(z)=\sideset{}{'}\sum_{a,b\in A}\frac{1}{(az+b)^w}.$$ There are many sources where the reader can find a proof of the following lemma (see for instance \cite[(6.3)]{GEK}), but we prefer to give full details.
\begin{Lemma}\label{modularity-eisenstein}
The series $E_w$ defines a non-zero element of $M_w$ if and only if $w>0$ and $q-1\mid w$.
\end{Lemma}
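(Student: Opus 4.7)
The plan is to verify, in order, convergence and analyticity of the series, the modular transformation law (which together with specialization to $\gamma = \lambda I$ gives the vanishing half of the ``iff''), and finally both boundedness at infinity and non-vanishing in the surviving case. First I would observe that for $w \leq 0$ the terms of the series do not tend to zero, so it does not define a function. For $w \geq 1$, Lemma \ref{lemma-discreteness} says that $Az + A$ is strongly discrete in $\CC_\infty$, so for any $R > 0$ only finitely many $(a,b) \in A^2 \setminus \{0\}$ satisfy $|az+b| \leq R$; hence the family $\{(az+b)^{-w}\}$ is summable for each $z \in \Omega$. On a connected affinoid $U_M$ as in Lemma \ref{constructions-affinoid}, where $M^{-1} \leq |z|_\Im \leq |z| \leq M$, one has $|az+b| \geq |a|\,|z|_\Im \geq |a|M^{-1}$ for $a \neq 0$ and $|az+b| = |b|$ for $a = 0$; this lower bound is uniform in $z$, so the series converges uniformly on each $U_M$ and $E_w$ is rigid analytic on $\Omega$.

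For modularity, given $\gamma = \sqm{\alpha}{\beta}{c}{d} \in \GL_2(A)$ a direct computation gives
$$a\gamma(z) + b = \frac{(a\alpha + bc)z + (a\beta + bd)}{cz+d},$$
and the map $(a,b) \mapsto (a\alpha+bc, a\beta+bd) = (a,b)\gamma$ is a bijection of $A^2 \setminus \{0\}$ since $\det\gamma \in \FF_q^\times$. Reindexing yields $E_w(\gamma z) = J_\gamma(z)^w E_w(z)$. Specializing this identity to $\gamma = \lambda I$ with $\lambda \in \FF_q^\times$ (which fixes $z$ and has $J_\gamma(z) = \lambda$) gives $(1 - \lambda^w)E_w \equiv 0$, so $E_w \equiv 0$ unless $\lambda^w = 1$ for every $\lambda \in \FF_q^\times$, i.e., unless $(q-1) \mid w$.

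Finally, assume $w > 0$ and $(q-1) \mid w$; I would prove $E_w$ is a nonzero modular form by isolating the constant term of its expansion at infinity. Splitting off the $a = 0$ part,
$$E_w(z) = \sum_{b \in A \setminus \{0\}} b^{-w} + \sum_{a \in A\setminus\{0\}}\sum_{b \in A}(az+b)^{-w},$$
and using $|az+b| \geq |a|\,|z|_\Im \geq M$ for $z \in \Omega_M$ and $a \neq 0$, the ultrametric inequality bounds the double sum by $M^{-w}$. This simultaneously shows $E_w$ is bounded on $\Omega_M$ (hence a modular form) and reduces non-vanishing to that of the constant $\sum_{b\neq 0} b^{-w}$. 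Grouping that sum by $|b|$, it equals
$$\sum_{\lambda \in \FF_q^\times}\lambda^{-w} + \sum_{b \in A,\, |b|>1}b^{-w} = -1 + R,\qquad |R| \leq q^{-w} < 1,$$
since $(q-1) \mid w$ forces $\lambda^{-w} = 1$ for every $\lambda \in \FF_q^\times$ and the remaining terms all have $|b^{-w}| \leq q^{-w}$. By the ultrametric equality the constant term has absolute value exactly $1$, so $|E_w(z)| = 1$ for $|z|_\Im$ sufficiently large, in particular $E_w \neq 0$. The only mildly delicate step is this last ultrametric estimate, which ultimately reduces to the identity $q - 1 = -1$ in $\FF_q$.
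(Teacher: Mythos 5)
Your proof is correct, and its skeleton (convergence from strong discreteness, modularity via the reindexing $(a,b)\mapsto(a,b)\gamma$, forcing vanishing when $q-1\nmid w$, splitting off the $a=0$ constant term at the cusp) matches the paper's. Where you genuinely diverge is the boundedness/non-vanishing step: the paper identifies the constant term as $-\zeta_A(w)$ and invokes the Euler product $\zeta_A(w)=\prod_P(1-P^{-w})^{-1}$ for its non-vanishing, and it controls the $a\neq 0$ part by rewriting it as $-\widetilde{\pi}^w\sum_{a\in A^+}G_w(u_a)$ via Goss polynomials, a computation whose real payoff is the explicit $u$-expansion (\ref{expansion-Ew}); you instead estimate everything ultrametrically: the degree-zero terms of $\sum_{b\neq 0}b^{-w}$ sum to $q-1=-1$ in $\FF_q$ while the tail has norm at most $q^{-w}<1$, so the constant has norm exactly $1$, and the $a\neq0$ double sum is bounded by $M^{-w}$ on $\Omega_M$ since $|az+b|\geq|a|\,|z|_\Im\geq M$. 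Your route is more elementary and self-contained (no zeta value, no Goss polynomials) and even yields $|E_w(z)|=1$ near the cusp; the paper's route buys the $u$-expansion and the identification of the constant term with $-\zeta_A(w)$, which are reused later (e.g.\ for $\EE(z,Z)\to\exp_A(Z)$ and the study of $\widetilde{g},\widetilde{\Delta}$). Your scalar-matrix argument for $(q-1)\nmid w$ is the same in substance as the paper's remark that $J_\gamma^w$ does not descend to $\operatorname{PGL}_2(A)$. One small point to tighten: for uniform convergence on $U_M$, your stated lower bounds cover $|a|\to\infty$ but not fixed $a\neq0$ with $|b|\to\infty$; there you should add that $|b|>M|a|\geq|az|$ forces $|az+b|=|b|$, so the terms do tend to zero uniformly in $z$.
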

\begin{proof}
The above series converges uniformly on every set $\Omega_M$
and this already gives that $E_w$ is analytic over $\Omega$.
The first property, that $E_w$ is modular-like of weight $w$, follows from a simple rearrangement of the sum defining $E_w(\gamma(z))$ for $\gamma\in\Gamma$ and its (unconditional) convergence, which leaves it invariant by permutation of its terms. Additionally, it is very easy to see that all terms involved in the sum are bounded on $\Omega_M$ for every $M$ which, by the ultrametric inequality, implies that $E_w$ itself is bounded on $\Omega_M$ for every $M$. It remains to describe when the series are zero identically, or non-zero.

For the non-vanishing property, we give an explicit evidence why $E_w$ has a $u$-expansion in $\CC_\infty[[u]]$, and we derive from partial knowledge of its shape the required property (but we are not able to compute in limpid way the coefficients of the $u$-expansion!).
First note that 
$$D_n(u)=\frac{1}{\widetilde{\pi}^{n+1}}\sum_{b\in A}\frac{1}{(z-b)^{n+1}},$$
so that we can use the Goss' polynomials $G_{n+1}(u)=D_n(u)$ as a 'model' to construct the $u$-expansion of $E_w$. Now, observe, for $w>0$:
$$E_w(z)=\sum_{b\in A}\frac{1}{b^w}+\sideset{}{'}\sum_{a\in A}\sum_{b\in A}\frac{1}{(az+b)^w}.$$ If $(q-1)\mid w$, we note that $$\sum_{b\in A}\frac{1}{b^w}=-\prod_P\left(1-P^{-w}\right)^{-1}=:-\zeta_A(w),$$
where the product runs over the monic irreducible polynomials $P\in A$ and therefore is non-zero. Then, if $(q-1)\mid w$ and if $A^+$ denotes the subset of monic polynomials in $A$:
\begin{eqnarray*}
E_w(z)&=&-\zeta_A(w)-\sum_{a\in A^+}\sum_{b\in A}\frac{1}{(az+b)^w}\\
&=&-\zeta_A(w)-\widetilde{\pi}^w\sum_{a\in A^+}G_w(u(az)),
\end{eqnarray*}
a series which converges uniformly on every affinoid subset of $\Omega$. Note that for $a\in A\setminus\{0\}$, the function
$u(az)$ can be expanded as a formal series $u_a$ of $u^{|a|}K[[u]]$ (normalise $|\cdot|$ by 
$|\theta|=q$) locally converging at $u=0$ (in a disk of positive radius $r$ independent of $a$).
This yields the explicit series expansion (convergent for the $u$-valuation, or for the 
sup-norm over the disk $D(0,r)$ in the variable $u$):
\begin{equation}\label{expansion-Ew}
E_w(z)=-\zeta_A(w)-\widetilde{\pi}^w\sum_{a\in A^+}G_w(u_a).
\end{equation}
This also shows that $E_w$ is, in this case, not identically zero. Indeed $\zeta_A(w)$ is non-zero, while the part depending on $u$ in the above expression tends to zero as $|z|_\Im$ tends to $\infty$. On the other hand, if $(q-1)\nmid w$, the factor 
of automorphy $J_\gamma^w$ does not induce a factor of automorphy for the group 
$\operatorname{PGL}_2(A)$ defined as the quotient of $\GL_2(A)$ by scalar matrices and this implies that any modular form of such weight $w$ vanishes identically, and so it happens that $E_w$ vanishes in this case.
\end{proof}

\begin{Remark}{\em It is instructive at this point to compare our observations with the settings of the original, complex-valued Eisenstein series. Indeed, it is well known, classically, that if $w>2$, $2\mid w$ and $q=e^{2\pi iz}$:
$$E_w(z)=\sideset{}{'}\sum_{a,b\in\ZZ}\frac{1}{(az+b)^w}=2\zeta(w)+2\frac{(2\pi i)^{\frac{w}{2}}}{(\frac{w}{2}-1)!}\sum_{n\geq 1}\frac{n^{\frac{w}{2}-1}q^n}{1-q^n},\quad \Im(z)>0.$$
The analogy is therefore between the series $$\sum_{a\in A^+}G_w(u_a)$$ and $$\sum_{n\geq 1}\frac{n^{\frac{w}{2}-1}q^n}{1-q^n}.$$ However, it is well known that the latter series can be further expanded as follows, with $\sigma_k(n)=\sum_{d|n}d^k$:
$$\sum_{n\geq 1}\sigma_{\frac{w}{2}-1}(n)q^n.$$ For the series $\sum_{a\in A^+}G_w(u_a)$, 
this aspect is missing, and there is no available intelligible recipe to compute the coefficients of the $u$-expansion of $E_w$ directly, at the time being.}\end{Remark}

\subsection{Construction of non-trivial cusp forms}

We have constructed non-trivial modular forms, but they are not cusp forms. 
We construct non-zero cusp forms in this section.
Let $z$ be an element of $\Omega$. Then, $\Lambda=\Lambda_z=Az+A$ is an $A$-lattice of rank $2$ of $\CC_\infty$.
By Theorem \ref{drinfeld-theorem}, we have the Drinfeld $A$-module $\phi:=\phi_\Lambda$ which is of rank $2$. 
Hence, we can write
$$\phi_\theta(Z)=\theta Z+\widetilde{g}(z)Z^q+\widetilde{\Delta}(z)Z^{q^2},\quad \forall(z,Z)\in\Omega\times\CC_\infty$$ for functions $\widetilde{g},\widetilde{\Delta}:\Omega\rightarrow\CC_\infty$.

We consider the function $\Omega\times\CC_\infty\xrightarrow{(z,Z)\mapsto\mathbb{E}(z,Z)}\CC_\infty$
which associates to $(z,Z)$ the value 
\begin{equation}\label{definition-alpha-i}
\mathbb{E}(z,Z):=\exp_\Lambda(Z)=\sum_{i\geq 0}\alpha_i(z)Z^{q^i}=Z\sideset{}{'}\prod_{\lambda\in \Lambda}\left(1-\frac{Z}{\lambda}\right)
\end{equation}
at $Z$ of the exponential series $\exp_\Lambda$
associated to the $A$-lattice $\Lambda=\Lambda_z$ of $\CC_\infty$. It is an analytic function and we have $\phi_a(\exp_\Lambda(Z))=\exp_\Lambda(aZ)$ for all $a\in A$.

The following result collects the various functional properties of $\EE(z,Z)$; proofs rely on simple computations that we leave to the reader.
\begin{Lemma}\label{twotypesofrelation}
For all $z\in\Omega$, $Z\in \CC_\infty$, $\gamma\in\Gamma$ and $a\in A$:
\begin{enumerate}
\item $\phi_\Lambda(a)(\mathbb{E}(z,Z))=\mathbb{E}(z,aZ)$,
\item $\mathbb{E}(\gamma(z),Z)=J_\gamma(z)^{-1}\mathbb{E}(z,J_\gamma(z)Z)$.
\item $\mathbb{E}(z,Z+az+b)=\mathbb{E}(z,Z)$, for all $a,b\in A$.
\end{enumerate}
\end{Lemma}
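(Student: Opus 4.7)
All three identities should follow directly from the defining product expansion \eqref{definition-alpha-i} of $\mathbb{E}(z,Z)=\exp_{\Lambda_z}(Z)$ together with the basic functional equation $\exp_\Lambda(aZ)=\phi_a(\exp_\Lambda(Z))$ from the proof of Theorem \ref{drinfeld-theorem} and the scaling behaviour of exponentials of lattices. I would handle them in the order (1), (3), (2).

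For (1), the point is that by the very construction of the pair $(\exp_{\Lambda_z},\phi_{\Lambda_z})$ in the proof of Theorem \ref{drinfeld-theorem}, we have $\exp_{\Lambda_z}(aZ)=\phi_{\Lambda_z}(a)(\exp_{\Lambda_z}(Z))$ for every $a\in A$ and every $Z\in\CC_\infty$, and this is exactly the claim once we translate it back into the notation $\mathbb{E}(z,Z)$.

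For (3), since $az+b\in\Lambda_z$ and $\exp_{\Lambda_z}$ is $\FF_q$-linear with kernel $\Lambda_z$, we get $\exp_{\Lambda_z}(Z+az+b)=\exp_{\Lambda_z}(Z)+\exp_{\Lambda_z}(az+b)=\exp_{\Lambda_z}(Z)$, which is (3). (Equivalently, one can read this off directly from the Weierstrass product by relabelling the index $\lambda\mapsto\lambda-(az+b)$, which permutes $\Lambda_z$.)

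The main content is (2), and the key observation is the identity of lattices
\[
J_\gamma(z)\,\Lambda_{\gamma(z)}=\Lambda_z \qquad \text{for }\gamma=\sqm{a}{b}{c}{d}\in\GL_2(A).
\]
Indeed, $J_\gamma(z)\gamma(z)=az+b$ and $J_\gamma(z)\cdot 1=cz+d$, so $J_\gamma(z)\Lambda_{\gamma(z)}=A(az+b)+A(cz+d)$, which coincides with $Az+A=\Lambda_z$ because the change of $A$-basis is given by $\gamma\in\GL_2(A)$ (invertible over $A$ since $\det\gamma\in\FF_q^\times$). Once this is in hand, the homogeneity of the Weierstrass product
\[
\exp_{c\Lambda}(cW)=cW\sideset{}{'}\prod_{\lambda\in\Lambda}\Bigl(1-\tfrac{cW}{c\lambda}\Bigr)=c\exp_{\Lambda}(W),\qquad c\in\CC_\infty^\times,
\]
applied with $\Lambda=\Lambda_{\gamma(z)}$, $c=J_\gamma(z)$ and $W=Z$ yields
\[
J_\gamma(z)\,\mathbb{E}(\gamma(z),Z)=\exp_{J_\gamma(z)\Lambda_{\gamma(z)}}(J_\gamma(z)Z)=\exp_{\Lambda_z}(J_\gamma(z)Z)=\mathbb{E}(z,J_\gamma(z)Z),
\]
which rearranges to (2). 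The only step requiring any genuine care is the verification that $\gamma\in\GL_2(A)$ (rather than just $\GL_2(K)$ or $\GL_2(K_\infty)$) is what guarantees $J_\gamma(z)\Lambda_{\gamma(z)}=\Lambda_z$ and not merely a commensurable sublattice; everything else is formal manipulation of the product \eqref{definition-alpha-i}.
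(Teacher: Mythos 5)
Your proof is correct, and since the paper explicitly leaves these verifications to the reader ("proofs rely on simple computations"), your argument is precisely the intended one: (1) is the defining relation of $\phi_{\Lambda_z}$ from Theorem \ref{drinfeld-theorem}, (3) is $\FF_q$-linearity plus $\operatorname{ker}(\exp_{\Lambda_z})=\Lambda_z$, and (2) follows from the lattice identity $J_\gamma(z)\Lambda_{\gamma(z)}=\Lambda_z$ (valid because $\det\gamma\in\FF_q^\times$) combined with the homogeneity $\exp_{c\Lambda}(cW)=c\exp_\Lambda(W)$ of the Weierstrass product. No gaps; your remark that $\gamma\in\GL_2(A)$, not merely $\GL_2(K_\infty)$, is what gives equality of lattices rather than commensurability is exactly the right point of care.
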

\begin{Remark}{\em 
Loosely, we can say that $\mathbb{E}$ is a 'non-commutative modular form of weight $(-1,1)$'.
The second formula can be also rewritten as:
$$\mathbb{E}\left(\gamma(z),\frac{Z}{J_\gamma(z)}\right)=J_\gamma(z)^{-1}\mathbb{E}(z,Z),\quad \gamma\in\GL_2(A),$$
so that $\mathbb{E}$ functionally plays the role of a Jacobi form of level $1$, weight $-1$ and index $0$ (this is in close analogy with the Weierstrass $\wp$-functions).}\end{Remark}

By taking the formal logarithmic derivative in the variable $Z$ of the Weierstrass product expansion of $\exp_\Lambda(Z)$ (for $z$ fixed) we note that 
$$\frac{Z}{\EE(z,Z)}=1-\sum_{\begin{smallmatrix} k\geq 0\\ (q-1)\mid k\end{smallmatrix}}E_k(z)Z^k$$ so that the coefficients in this expansion in powers of $Z$ are analytic functions on $\Omega$, from which we deduce, by inversion, that the coefficient functions $\alpha_i:\Omega\rightarrow\CC_\infty$ of $\mathbb{E}$ are analytic. By Lemma \ref{modularity-eisenstein} and the homogeneity of the algebraic expressions expressing the functions $\alpha_i$ in terms of the Eisenstein series $E_k$ we see that $\alpha_i\in M_{q^i-1}$ for all $i\geq 0$. As $|z|_\Im\rightarrow\infty$ we have
$E_k(z)\rightarrow-\zeta_A(k)$, after a simple computation we see that 
$$\EE(z,Z)\rightarrow \exp_A(Z)$$
uniformly for $Z\in D$ for every disk $D\subset\CC_\infty$. This means that the functions $\alpha_i$ are not cusp forms (the coefficients of $\exp_A\in K_\infty[[\tau]]$ are all non-zero). 
To construct cusp forms, we now look at the coefficients $\widetilde{g},\widetilde{\Delta}$ of $\phi_\theta$ which are functions of the variable $z\in\Omega$.
By (1) and (2) of Lemma \ref{twotypesofrelation}, for $\gamma\in\Gamma$, writing now $\phi_{\Lambda_z}(\theta)$ in place of $\phi_\theta$:
\begin{eqnarray*}
\phi_{\Lambda_{\gamma(z)}}(\theta)(J_\gamma(z)^{-1}\EE(z,J_\gamma(z)Z))&=&\phi_{\Lambda_{\gamma(z)}}(\theta)(\EE(\gamma(z),Z))\\
&=&\EE(\gamma(z),\theta Z)\\
&=&J_\gamma(z)^{-1}\EE(z,\theta J_\gamma(z)Z).
\end{eqnarray*}
Hence, $\phi_{\Lambda_{\gamma(z)}}(\theta)(J_\gamma(z)^{-1}\EE(z,W))=J_\gamma(z)^{-1}\EE(z,\theta W)=J_\gamma(z)^{-1}\phi_{\Lambda_z}(\EE(z,W))$ for $W\in \CC_\infty$. Since it is obvious that the coefficient functions
$\widetilde{g},\widetilde{\Delta}$ are analytic on $\Omega$, they are in this way respectively modular-like functions of respective weights $q-1$ and $q^2-1$. Furthermore:

\begin{Lemma}
$\widetilde{g}\in M_{q-1}\setminus S_{q-1}$ and $\widetilde{\Delta}\in S_{q^2-1}\setminus\{0\}$. Additionally,
$\widetilde{\Delta}(z)\neq0$ for all $z\in \Omega$.
\end{Lemma}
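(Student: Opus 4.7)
The overall plan is to make $\widetilde g$ and $\widetilde\Delta$ explicit as $\CC_\infty$-linear combinations of the coefficient functions $\alpha_i$ of $\EE$ (which by the paragraph preceding the statement already lie in $M_{q^i-1}$), to read off the modularity and the behaviour at the cusp from these formulas, and to derive $\widetilde\Delta(z)\neq 0$ for every $z\in\Omega$ as a direct consequence of Drinfeld's correspondence. First I would apply relation (1) of Lemma \ref{twotypesofrelation} with $a=\theta$ and insert the expansion $\EE(z,W)=\sum_{i\geq 0}\alpha_i(z)W^{q^i}$ (with $\alpha_0=1$) into
$$\EE(z,\theta W) \;=\; \theta\EE(z,W) + \widetilde g(z)\EE(z,W)^q + \widetilde\Delta(z)\EE(z,W)^{q^2}.$$
Using that raising an $\FF_q$-linear power series to the $q$-th power acts diagonally on coefficients (so $(\sum_i\alpha_i W^{q^i})^q=\sum_i\alpha_i^q W^{q^{i+1}}$), comparison of the coefficients of $W^q$ and $W^{q^2}$ on both sides yields
$$\widetilde g = (\theta^q-\theta)\alpha_1, \qquad \widetilde\Delta = (\theta^{q^2}-\theta)\alpha_2 - (\theta^q-\theta)\alpha_1^{q+1}.$$
Since $\alpha_1\in M_{q-1}$ and $\alpha_2\in M_{q^2-1}$, this immediately gives $\widetilde g\in M_{q-1}$ and $\widetilde\Delta\in M_{q^2-1}$ (the weight of the second summand being $(q-1)+(q-1)(q+1)=q^2-1$).

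Next I would analyse the cusp. The uniform convergence $\EE(z,W)\to\exp_A(W)=\sum_i d_i^{-1}\widetilde\pi^{q^i-1}W^{q^i}$ on bounded disks as $|z|_\Im\to\infty$, established in the paragraph just before the statement, forces $\alpha_i(z)\to d_i^{-1}\widetilde\pi^{q^i-1}$; since each $\alpha_i$ is a modular form this limit is nothing but the constant term of its $u$-expansion, because $u(z)\to 0$ at the cusp. Substituting $d_1=\theta^q-\theta$ and $d_2=(\theta^q-\theta)^q(\theta^{q^2}-\theta)$ into the above formulas shows that $\widetilde g$ has limit $\widetilde\pi^{q-1}\neq 0$, whereas the two contributions in $\widetilde\Delta$ cancel exactly, so that $\widetilde\Delta(z)\to 0$. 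This gives at once $\widetilde g\notin S_{q-1}$; and since the $u$-expansion of $\widetilde\Delta$ has vanishing constant term and converges in a disk around $u=0$, the ultrametric inequality yields $|\widetilde\Delta(z)|\leq C|u(z)|$ on $\Omega_M$ for $M$ large, whence $\widetilde\Delta\in S_{q^2-1}$.

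Finally, the assertion $\widetilde\Delta(z)\neq 0$ for every $z\in\Omega$ is where I would invoke Drinfeld's correspondence as the essential structural ingredient. For every $z\in\Omega$, Lemma \ref{lemma-discreteness} guarantees that $\Lambda_z=Az+A$ is an $A$-lattice of rank $2$; Theorem \ref{drinfeld-theorem} then forces $\phi_{\Lambda_z}$ to be a Drinfeld $A$-module of rank $2$, and Definition \ref{def-Drinfeld-modules} requires $\phi_\theta$ to have $\tau$-degree exactly $2\deg(\theta)=2$. Its leading coefficient is $\widetilde\Delta(z)$, which therefore cannot vanish, and in particular $\widetilde\Delta$ is not identically zero. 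The only mildly delicate point I can anticipate is the passage from uniform convergence of $\EE$ to coefficient-wise convergence of the $\alpha_i$, but this is routine since the coefficients of a convergent power series in $W$ are recovered from the values on any sufficiently small fixed disk; I do not expect a genuine obstacle.
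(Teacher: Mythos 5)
Your proposal is correct and follows essentially the same route as the paper: both rest on the degeneration $\EE(z,\cdot)\to\exp_A$ at the cusp to identify the constant terms $\widetilde{\pi}^{q-1}$ for $\widetilde{g}$ and $0$ for $\widetilde{\Delta}$, and both obtain the everywhere-nonvanishing of $\widetilde{\Delta}$ from Drinfeld's correspondence applied to the rank-two lattice $\Lambda_z$. The differences are cosmetic: you make the cusp computation explicit via $\widetilde{g}=(\theta^q-\theta)\alpha_1$ and $\widetilde{\Delta}=(\theta^{q^2}-\theta)\alpha_2-(\theta^q-\theta)\alpha_1^{q+1}$ where the paper simply notes $\phi_\theta(Z)\to\theta Z+\widetilde{\pi}^{q-1}Z^q$, and for the last point the paper argues by contradiction ($\widetilde{\Delta}(z)=0$ would yield an isomorphism $\Lambda_z\cong A$ of lattices of different ranks), whereas your appeal to the "exact $\tau$-degree" in Definition \ref{def-Drinfeld-modules} is best justified by the construction in the proof of Theorem \ref{drinfeld-theorem}, where the leading coefficient of $P_\theta$ is $\theta$ times a product over the $q^2-1$ nonzero kernel elements and hence nonzero.
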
 

\begin{proof}
The modularity of $\widetilde{g}$ and $\widetilde{\Delta}$ follows from the previously noticed fact that $\exp_{\Lambda_z}(Z)\rightarrow\exp_A(Z)$ uniformly with $Z$ in disks as $|z|_\Im\rightarrow\infty$. Indeed, this implies that $\phi_\theta(Z)\rightarrow\theta Z+\widetilde{\pi}^{q-1}Z^q$ (uniformly on every disk) so that $\widetilde{g}\rightarrow\widetilde{\pi}^{q-1}$ and $\widetilde{\Delta}\rightarrow0$ as $|z|_\Im\rightarrow\infty$ and we see that 
$\widetilde{g}$ is a modular form of weight $q-1$ which is not a cusp form, and $\widetilde{\Delta}$ is a cusp form. 

We still need to prove that $\widetilde{\Delta}$ is not identically zero; to do this, we prove now the last property of the lemma, which is even stronger. Assume by contradiction that there exists $z\in\Omega$ such that $\widetilde{\Delta}(z)=0$.
Then $$\phi_{\Lambda_z}(\theta)=\theta+\widetilde{g}(z)\tau$$ which implies that the exponential $\exp_{\Lambda_z}$ induces an isomorphism of $A$-modules
$\exp_{\Lambda_z}:\CC_\infty/\Lambda_z\rightarrow C(\CC_\infty)$ (the Carlitz module). But this disagrees with Theorem \ref{drinfeld-theorem} which would deliver an isomorphism $\Lambda_z\cong A$ between lattices 
of different ranks. This proves that $\widetilde{\Delta}$ does not vanish on $\Omega$. 
\end{proof}
Following Gekeler in \cite{GEK}, we define the modular forms $g,\Delta$ of respective weights $q-1$ and $q^2-1$ by $\widetilde{g}=\widetilde{\pi}^{q-1}g$ and $\widetilde{\Delta}=\widetilde{\pi}^{q^2-1}\Delta$. The reason for choosing these normalisations is that 
it can be proved that the $u$-expansions of $g,\Delta$ have coefficients in $A$.
We are not far from a complete proof of the following (see \cite[(5.12)]{GEK} for full details):
\begin{Theorem}\label{structure-modular-forms}
$M=\oplus_{w\in\ZZ}M_w=\CC_\infty[g,\Delta]$
\end{Theorem}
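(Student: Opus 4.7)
The proof has two parts: algebraic independence of $g$ and $\Delta$ over $\CC_\infty$, and surjectivity of the graded $\CC_\infty$-algebra map $\CC_\infty[X,Y] \to M$ sending $X \mapsto g$, $Y \mapsto \Delta$.

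\textit{Algebraic independence.} The weight grading reduces the question to $\CC_\infty$-linear independence, in each $M_w$, of the monomials $g^a \Delta^b$ with $(q-1)a + (q^2-1)b = w$. A standard leading-coefficient calculation on the Drinfeld-module coefficients gives that $\Delta$ has $u$-order exactly $q-1$ with non-zero leading term $\delta_1 \in \CC_\infty^\times$, and $g = 1 + O(u^{q-1})$ thanks to $\widetilde g \to \widetilde\pi^{q-1}$ at the cusp. Hence $g^a \Delta^b$ has $u$-order exactly $(q-1)b$ with non-zero leading coefficient. At fixed weight, $b$ determines $a$; distinct monomials therefore have distinct leading $u$-orders and so are $\CC_\infty$-linearly independent.

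\textit{Spanning, by strong induction on $w$.} I need only consider $w \geq 0$ with $(q-1) \mid w$, as $M_w = 0$ otherwise. For the base case $M_0 = \CC_\infty$, a weight-zero modular form is $\Gamma$-invariant and thus descends via Theorem~\ref{ger-put} to an analytic function on $\Gamma\backslash\Omega \cong \AA^1_{\CC_\infty}$; its $u$-expansion in $\CC_\infty[[u]]$ forces global boundedness, and a bounded entire rigid analytic function on $\AA^1_{\CC_\infty}$ is constant by the Newton-polygon argument. Before handling $w>0$, I dispose of negative weights. Were $g$ nowhere-vanishing on $\Omega$, $\Delta/g^{q+1}$ would be an analytic weight-zero modular form, hence a constant by the base case; but its $u$-expansion starts with $\delta_1 u^{q-1} \neq 0$, forcing $\Delta \equiv 0$ and contradicting $\Delta \neq 0$ on $\Omega$. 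So $g$ vanishes at some $z_0 \in \Omega$. Then for $h \in M_{w'}$ with $w'<0$, setting $k = -w'/(q-1)>0$, the product $hg^k$ lies in $M_0=\CC_\infty$ and equals some constant $\mu$; so $h=\mu/g^k$, and analyticity of $h$ at $z_0$ forces $\mu=0$, hence $h\equiv 0$.

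\textit{Inductive step for $w > 0$.} Given $f \in M_w$, if $f(\infty) \neq 0$ then $f - f(\infty)\,g^{w/(q-1)} \in S_w$, using $g(\infty)=1$; so it suffices to treat cusp forms. For $f \in S_w$, non-vanishing of $\Delta$ on $\Omega$ together with its $u$-order $q-1$ (at most the $u$-order of $f$) ensures that $h := f/\Delta$ lies in $M_{w-(q^2-1)}$. If $w \geq q^2-1$, the inductive hypothesis gives $h \in \CC_\infty[g,\Delta]$ and hence $f = \Delta h \in \CC_\infty[g,\Delta]$. If $0 < w < q^2-1$, then $h$ has negative weight, so $h = 0$ by the preceding paragraph, whence $f = 0$.

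\textit{Main obstacle.} The crux is the base case $M_0 = \CC_\infty$, which rests on the non-trivial identification $\Gamma\backslash\Omega \cong \AA^1_{\CC_\infty}$ (Theorem~\ref{ger-put}) and a rigid analytic Liouville property. The rest is bookkeeping with $u$-expansions, contingent on the sharp information that $\Delta$ is nowhere-vanishing on $\Omega$ and has $u$-order exactly $q-1$ at the cusp.
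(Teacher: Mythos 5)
Your skeleton is exactly the one the paper itself sketches after the statement (and the one in Gekeler \cite{GEK}): constancy of weight-zero forms via Theorem \ref{ger-put} and a rigid Liouville argument, the nowhere-vanishing cusp form $\Delta$, the non-cuspidal form $g$ with $g(\infty)=1$, and an induction dividing cusp forms by $\Delta$. Most of your reductions are fine (the treatment of $M_0$, the vanishing of negative-weight forms, the isobaric reduction for algebraic independence, and the step $f\mapsto f-f(\infty)g^{w/(q-1)}$ all go through with what the paper provides). But there is one genuine gap: the assertion that $\Delta$ has $u$-order \emph{exactly} $q-1$, which you dismiss as ``a standard leading-coefficient calculation'' and never perform. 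This is precisely the load-bearing input for your inductive step: since every cusp form automatically lies in $u^{q-1}\CC_\infty[[u^{q-1}]]$, the map $f\mapsto f/\Delta$ lands in $M_{w-(q^2-1)}$ only because $\operatorname{ord}_u\Delta$ attains the minimal possible value $q-1$; if that coefficient vanished (so $\operatorname{ord}_u\Delta\geq 2(q-1)$), the quotient $f/\Delta$ of a cusp form of order exactly $q-1$ would be unbounded near the cusp and the whole identification $S_w=\Delta\,M_{w-(q^2-1)}$ would collapse. Nothing proved in the paper before the theorem gives this: the lemma on $\widetilde{\Delta}$ only shows it is a nonzero cusp form, nowhere vanishing on $\Omega$, with no control on the order at infinity. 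Moreover, you cannot quote the expansion (\ref{explicith}) or the relation $\Delta=-h^{q-1}$, since in this paper those are obtained \emph{after} and \emph{by means of} Theorem \ref{structure-modular-forms}, so doing so would be circular.

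To close the gap you must actually establish $\operatorname{ord}_u\Delta=q-1$ (equivalently, exhibit a cusp form of exact order $q-1$ against which to divide). Two honest routes: (i) compute the first $u$-coefficients of $\widetilde{\Delta}$ from the recursion coming from $\exp_\Lambda(\theta Z)=\phi_\theta(\exp_\Lambda(Z))$, namely $\widetilde{g}=(\theta^q-\theta)\alpha_1$ and $\widetilde{\Delta}=(\theta^{q^2}-\theta)\alpha_2-(\theta^q-\theta)\alpha_1^{q+1}$, feeding in the $u$-expansions of the Eisenstein series as in (\ref{expansion-Ew}) (Goss polynomials) together with Carlitz's evaluation of $\zeta_A(q-1)$ to check that the coefficient of $u^{q-1}$ does not cancel — this is the computation carried out in \cite{GEK}; or (ii) invoke an independent source for the order, such as the expansions (\ref{formula-h-1}) of L\'opez \cite{LOP} or Gekeler's product formula (\ref{formula-h-2}) from \cite{GEK3}, neither of which depends on the structure theorem. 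With that single fact supplied, your argument is complete and matches the intended proof.
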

The proof rests on three crucial properties (1) existence of Eisenstein series (2) existence of the cusp form $\Delta$ which additionally is nowhere vanishing on $\Omega$, and (3) modular forms of weight $0$ for $\Gamma$ are constant, which follows from the fact that a modular form of weight $0$ can be identified with a holomorphic function over $\PP_{\FF_q}^1(\CC_\infty)$ by Theorem \ref{ger-put}, which is constant. We omit the details.

\subsubsection{Drinfeld modular forms and the Bruhat-Tits tree}

We briefly sketch the interaction between Drinfeld modular forms and the Bruhat-Tits tree, mainly inviting the reader, yet in quite an informal way, to read the important work of Teitelbaum in \cite{TEI}. A simple computation indicates that if $f$ is a rigid analytic function over the annulus
$V=\sqcup_{-1<\lambda<0}C_\lambda$ (or on a more general annulus in $\Omega$) so that 
$f$ is defined by a convergent series $\sum_{i\in\ZZ}f_iz^i$ with the coefficients $f_i$ in $\CC_\infty$, then the residue $$\operatorname{Res}_V(f(z)dz):=f_{-1}$$ does not depend on the local coordinate chosen to express the differential form $\omega=f(z)dz$. Namely, if $t$ is another local coordinate and $z=z(t)=\sum_{i>0}z_it^i$ with $z_i\in\CC_\infty$ and $z_1\in\CC_\infty^\times$ (with suitable convergence conditions), then the coefficient of
$t^{-1}dt$ in $\omega(z(t))=f(z(t))dz(t)=f(z(t))\frac{dz}{dt}dt$ is also equal to $f_{-1}$, and 
in particular, $\operatorname{Res}_V(fdz)$ does not depend on the choice of the `center' of the annulus.

We consider $\mathcal{T}^e$ the set of the oriented edges of the Bruhat-Tits tree. The elements are in one-to-one correspondence with the disjoint subsets of $\Omega$:
$$V_{n,\alpha}:=\alpha+\bigsqcup_{\lambda\in]n-1,n[}C_\lambda,\quad n\in\ZZ,\quad \alpha\in\oplus_{i\leq n-1}\FF\pi^i.$$ Note that $V_{n,\alpha}=\{z\in\CC_\infty:|\pi|^n<|z-\alpha|<|\pi|^{n-1}\}$, which is an annulus centered at elements of $K_\infty$ with inner radius $|\pi|^{n}$ and outer radius $|\pi|^{n-1}$, $n$ varying in $\ZZ$. Moreover, $V=V_{0,0}$. If $f:\Omega\rightarrow\CC_\infty$ is a rigid analytic function, then $f$ is rigid analytic on every $V_{n,\alpha}$ and we have a well defined residue map $$\mathcal{T}^e\xrightarrow{\operatorname{res}(f)}\CC_\infty$$ which is a `harmonic function' in virtue of the {\em ultrametric residue theorem} (see \cite[\S 3]{GER&PUT}; we do not give full details and definitions of `harmonic functions' etc., this would bring us too far away from the objectives of this paper). Of course, we do not expect 
the map $\operatorname{res}(f)$ to reproduce faithfully the behaviour of $f$. For example, if $f$
is entire over $\CC_\infty$ then all the residues of the differential form $fdz$ are clearly zero and
$\operatorname{res}(f)$ vanishes identically, which might not be the case for $f$. 

Where the map $\operatorname{res}(f)$ becomes really useful is with rigid analytic functions $f$ which are determined by more elaborate patching of local data than just entire functions. Typically, functions defined by globally non uniform convergent series over $\Omega$. 
If $f$ is a Drinfeld modular form, Teitelbaum proved, in a much more general setting ($\Gamma$ arithmetic subgroup of $\GL_2(A)$), that a suitable variant of the residue map provides us 
with an isomorphism of $\CC_\infty$-vector spaces
$$S_w(\Gamma)\rightarrow C_{\operatorname{har}}(\Gamma,w),$$
where $S_w(\Gamma)$ is the space of Drinfeld cusp forms of weight $w$ for $\Gamma$ as defined in ibid. and generalising our space $S_w$ for $\Gamma=\GL_2(A)$, and where 
$C_{\operatorname{har}}(\Gamma,w)$ is the space of `weight $w$ harmonic cocycles' for $\Gamma$. This map can be defined also over $M_w(\Gamma)$, the space of Drinfeld modular forms of weight $w$ for $\Gamma$. Then, the kernel is spanned by the Eisenstein series of weight $w$. For this and other deep properties such as a homological interpretation of the residue map and an interesting and yet mysterious analysis of the Fourier series of cusp forms, see the paper \cite{TEI}.

\section{Eisenstein series with values in Banach algebras}\label{Modular-forms-Tate-algebras}

The final purpose of this and the next more advanced sections of the present paper is to show certain identities for a variant-generalisation of Eisenstein series (see Theorem \ref{identity-eisenstein-1}).
We recall that $A=\FF_q[\theta]$.
Let $B$ be a $\CC_\infty$-Banach algebra with sub-multiplicative norm $\|\cdot\|$ (\footnote{That is, $\|ab\|\leq \|a\|\|b\|$ for all $a,b\in B$. We adopt the simpler notations $\|\cdot\|$ and $|\cdot|$ at the place of $|\cdot|_\infty$ etc. that we have used in the first few sections of our text.}) norm $\|\cdot\|$ (extending the norm $|\cdot|$ of $\CC_\infty$) with the property that $\|B\|=|\CC_\infty|$. Let $X$ be a rigid analytic variety. We set 
$$\mathcal{O}_{X/B}=\mathcal{O}_X\widehat{\otimes}_{\CC_\infty}B,$$ with $\mathcal{O}_X$ the structural sheaf of $X$, of $\CC_\infty$-algebras.
In other words, if $U\subset X$ is an affinoid subset of $X$, then $\mathcal{O}_X(U)$ carries the supremum norm $\|\cdot\|_{U}$ and we define $\mathcal{O}_{X/B}(U)$ to be the completion of $\mathcal{O}_X\otimes_{\CC_\infty}B$
for the norm induced by $\|f\otimes b\|=|f|_{U}$, for $f\in \mathcal{O}_X(U)$ and $b\in B$. If $B$ has a countable orthonormal 
basis $\mathcal{B}=(b_i)_{i\in\mathcal{I}}$, an element $f\in\mathcal{O}_{X/B}(U)$ has a convergent series expansion
$$f=\sum_{i\in\mathcal{I}}f_ib_i,$$
where $f_i\in\mathcal{O}_X(U)$, with $|f_i|_U\rightarrow0$ for the Fr\'echet filter on $\mathcal{I}$.

One sees that that
Tate's acyclicity Theorem extends to this setting, namely, if $X$ is an affinoid variety, $\mathcal{O}_{X/B}$ is a sheaf of $B$-algebras. The global sections are the {\em analytic functions} $X\rightarrow B$.

We will mainly use the cases $X=\Omega$ and $X=\mathbb{A}^{s,an}_{\CC_\infty}$. If $X=\mathbb{A}^{s,an}_{\CC_\infty}$, an element 
of $\mathcal{O}_{X/B}$ is a {\em $B$-valued entire function of $s$ variables}. We can identify it with a map $\CC_\infty^s\rightarrow B$ allowing a series expansion in $B[[\underline{t}]]$ with $\underline{t}=(t_1,\ldots,t_s)$ converging on $D(0,R)^s$
for all $R>0$. A bounded entire function $\CC_\infty\rightarrow B$ is constant (this is a  generalisation of Liouville's theorem which uses the hypothesis that $\|B\|=|\CC_\infty|$ is not discrete, see \cite{PEL&PER}).

We work with $B$-valued 
analytic functions where $B=\KK$ is the completion of $\CC_\infty(\underline{t})$ for the Gauss norm $\|\cdot\|=\|\cdot\|_\infty$, where
$\underline{t}=(t_1,\ldots,t_s)$. We have $\|\KK\|=|\CC_\infty|$ and the residue field is $\FF_q^{\operatorname{ac}}(\underline{t})$. In all the following, we consider matrix-valued analytic functions and we extend 
norms to matrices in the usual way by taking the supremum of norms of the entries of a matrix.

We extend the $\FF_q$-automorphism $\tau:\CC_\infty\rightarrow\CC_\infty$, $x\mapsto x^q$,
$\FF_q(\underline{t})$-linearly and continuously on $\KK$. The subfield of the fixed elements $\KK^{\tau=1}=\{x\in\KK:\tau(x)=x\}$ is easily seen to be equal to $\FF_q(\underline{t})$ by a simple variant of Mittag-Leffler theorem. 
Let $\lambda_1,\ldots,\lambda_r\in\CC_\infty$
be $K_\infty$-linearly independent. This is equivalent to saying that the $A$-module 
$$\Lambda=A\lambda_1+\cdots+A\lambda_r\subset\CC_\infty$$ is an $A$-lattice.
In this way, the exponential function
$\exp_\Lambda$ induces a continuous open $\FF_q(\underline{t})$-linear
endomorphism of $\KK$, the kernel of which contains $\Lambda\otimes_{\FF_q}\FF_q(\underline{t})$ (it can be proved that $\exp_\Lambda$ is surjective over $\KK$ and the kernel is exactly 
$\Lambda\otimes_{\FF_q}\FF_q(\underline{t})$ but we do not need this in the present paper).
The Drinfeld $A$-module $\phi=\phi_\Lambda$ gives rise to a structure of $\FF_q(\underline{t})^{nr\times n}[\theta]$-module $$\phi(\KK^{nr\times n})$$ by simply using the $\FF_q(\underline{t})$-vector space structure of $\KK$ and defining the multiplication $\phi_\theta$ by $\theta$ 
with the above extension of $\tau$. 

We consider an injective $\FF_q$-algebra morphism
$$A\xrightarrow{\chi}\FF_q(\underline{t})^{n\times n}$$ and we set, with $(\lambda_1,\ldots,\lambda_r)$ an $A$-basis of $\Lambda$ (the exponential now applied coefficientwise):
$$\omega_\Lambda=\exp_\Lambda\left((\theta I_n-\chi(\theta))^{-1}\left(\begin{matrix}\lambda_1I_n \\ \vdots \\ \lambda_rI_n\end{matrix}\right)\right)\in\KK^{rn\times n}.$$

\begin{Lemma}\label{omega-lambda}
For all $a\in \FF_q(\underline{t})[\theta]$ we have the identity  $\phi_a(\omega_\Lambda)=\chi(a)\omega_\Lambda$ in $\KK^{rn\times n}$.
\end{Lemma}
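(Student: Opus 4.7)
The plan is to reduce the identity to the single case $a=\theta$ and then exploit the defining functional equation $\phi_b(\exp_\Lambda(Z))=\exp_\Lambda(bZ)$ (valid for $b\in A$ and $Z\in\CC_\infty$, and extending to $Z\in\KK$ coefficientwise in any matrix size by continuity of $\exp_\Lambda$ and entrywise action of $\tau$).

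First I would observe that both sides of the identity are $\FF_q(\underline{t})$-linear in $a$: the right side by construction of $\chi$ (extended to $\FF_q(\underline{t})[\theta]$ by $\chi(c\theta^i)=c\,\chi(\theta)^i$), and the left side because $\phi$ is extended $\FF_q(\underline{t})$-linearly from $A$ to $\FF_q(\underline{t})[\theta]$. Moreover, both $a\mapsto \phi_a$ and $a\mapsto\chi(a)$ are multiplicative in $\theta$. So if I can show $\phi_{\theta^k}(\omega_\Lambda)=\chi(\theta)^k\omega_\Lambda$ by induction on $k$, the case $k=1$ combined with the obvious fact that $\chi(\theta)^k$ commutes with $\phi_\theta$ (because $\chi(\theta)$ has entries in $\FF_q(\underline{t})=\KK^{\tau=1}$, hence commutes entrywise with $\tau$) gives the general statement.

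For the case $a=\theta$, the key observation is that any matrix $M$ with entries in $\FF_q(\underline{t})$ commutes with $\exp_\Lambda$ applied blockwise, in the sense that $M\cdot\exp_\Lambda(Z)=\exp_\Lambda(M\cdot Z)$: this is because $\exp_\Lambda(Z)=\sum_i\alpha_i\tau^i(Z)$ and $\tau^i(MZ)=\tau^i(M)\tau^i(Z)=M\,\tau^i(Z)$ when $\tau(M)=M$. Writing $W=(\theta I_n-\chi(\theta))^{-1}(\lambda_jI_n)_{j=1}^r$ for the block column on which $\exp_\Lambda$ is evaluated, I therefore compute, using $\FF_q$-linearity of $\exp_\Lambda$,
\[
\phi_\theta(\omega_\Lambda)-\chi(\theta)\omega_\Lambda
=\exp_\Lambda(\theta W)-\exp_\Lambda(\chi(\theta)W)
=\exp_\Lambda\bigl((\theta I_n-\chi(\theta))W\bigr).
\]

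The final step is the design feature of $W$: by construction, $(\theta I_n-\chi(\theta))W$ equals the block column $(\lambda_jI_n)_{j=1}^r$, and each block $\exp_\Lambda(\lambda_j I_n)=\exp_\Lambda(\lambda_j)\,I_n$ vanishes since $\lambda_j\in\Lambda=\ker(\exp_\Lambda)$. Hence $\phi_\theta(\omega_\Lambda)=\chi(\theta)\omega_\Lambda$, completing the induction. The only mild subtlety is the interpretation of block-matrix products between the $n\times n$ matrix $\chi(\theta)$ and the $rn\times n$ object $\omega_\Lambda$, which should be read as acting blockwise (equivalently, via $I_r\otimes\chi(\theta)$); once this convention is fixed, all computations go through formally and no further delicate analytic input is needed beyond the functional equation for $\exp_\Lambda$.
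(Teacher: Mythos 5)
Your proof is correct and follows essentially the same route as the paper: both rest on the functional equation $\phi_a\exp_\Lambda=\exp_\Lambda a$ extended to $\KK$, the fact that matrices with entries in $\FF_q(\underline{t})=\KK^{\tau=1}$ pass through $\exp_\Lambda$, and the observation that $(\theta I_n-\chi(\theta))$ applied to the defining block column lands in $\Lambda\otimes_{\FF_q}\FF_q(\underline{t})\subset\ker\exp_\Lambda$. The only cosmetic difference is that the paper treats all $a\in A$ at once by writing $aI_n=(aI_n-\chi(a))+\chi(a)$ and using that $(\theta I_n-\chi(\theta))^{-1}(aI_n-\chi(a))$ is polynomial in $\theta$, whereas you reduce to $a=\theta$ and induct on powers, which the paper itself notes is sufficient.
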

\begin{proof} Since the variables $t_i$ are central for $\tau$ and $\FF_q(\underline{t})[\theta]$ is euclidean, it suffices to show that 
$\phi_\theta(\omega_\Lambda)=\chi(t)\omega_\Lambda$. Now observe, for $a\in A$:
\begin{eqnarray*}
\phi_\Lambda(a)(\omega_\Lambda) & =& \exp_\Lambda((\theta I_n-\chi(\theta))^{-1}
\left(\begin{matrix} (aI_n-\chi(a)+\chi(a))\lambda_1\\
\vdots\\
(aI_n-\chi(a)+\chi(a))\lambda_r\end{matrix}\right)\\
&=&\chi(a)\omega_\Lambda,
\end{eqnarray*}
because $(\theta I_n-\chi(\theta))^{-1}(aI_n-\chi(a))\in\FF_q(\underline{t})[\theta]^{n\times n}$
so that $(\theta I_n-\chi(\theta))^{-1}(aI_n-\chi(a))\lambda_i$ lies in the kernel of $\exp_\Lambda$ (applied coefficientwise).
\end{proof}
Hence, $\omega_\Lambda$ is a particular instance of {\em special function} as defined and studied
in \cite{ANG&TAV,GAZ&MAU}.
Note also that the map
$$\Phi_\Lambda:Z\mapsto\exp_\Lambda((\theta I_n-\chi(\theta))^{-1}Z)$$
defines an entire function $\CC_\infty\rightarrow\KK^{n\times n}$. An easy variant of 
the proof of Lemma \ref{omega-lambda} delivers:
\begin{Lemma}\label{omega-lambda-2}
We have the functional equation 
$\tau(\Phi_\Lambda(Z))=(\chi(\theta)-\theta I_n)\Phi_\Lambda(Z)+\exp_\Lambda(Z)I_n$ in $\KK^{n\times n}$.
\end{Lemma}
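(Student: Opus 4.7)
The plan is to mimic the proof of Lemma~\ref{omega-lambda} by computing $\phi_\theta(\Phi_\Lambda(Z))$ in two different ways and comparing. First, the intertwining property of the exponential, namely the functional equation $\exp_\Lambda(\theta w) = \phi_\theta(\exp_\Lambda(w))$, which extends from scalar $w \in \KK$ to matrix arguments by applying it entrywise, yields
$$\phi_\theta(\Phi_\Lambda(Z)) \;=\; \exp_\Lambda\bigl(\theta (\theta I_n - \chi(\theta))^{-1} Z\bigr).$$

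Second, the elementary matrix identity $\theta I_n = (\theta I_n - \chi(\theta)) + \chi(\theta)$, multiplied on the right by $(\theta I_n - \chi(\theta))^{-1}$, gives the decomposition
$$\theta (\theta I_n - \chi(\theta))^{-1} \;=\; I_n + \chi(\theta)(\theta I_n - \chi(\theta))^{-1}.$$
Substituting into the previous equality and invoking the additivity together with the $\FF_q(\underline{t})$-linearity of $\exp_\Lambda$ (the latter allowing $\chi(\theta)$, whose entries lie in $\FF_q(\underline{t})$, to be pulled out past the exponential, exactly as in the proof of Lemma~\ref{omega-lambda}) shows that the right-hand side equals $\exp_\Lambda(Z) I_n + \chi(\theta)\Phi_\Lambda(Z)$.

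Combining the two computations produces $\phi_\theta(\Phi_\Lambda(Z)) = \exp_\Lambda(Z) I_n + \chi(\theta)\Phi_\Lambda(Z)$. Expanding the left-hand side using $\phi_\theta = \theta + \tau$ (the Carlitz module, which is the case underlying the claimed identity) as $\theta \Phi_\Lambda(Z) + \tau(\Phi_\Lambda(Z))$ and rearranging gives exactly
$$\tau(\Phi_\Lambda(Z)) \;=\; (\chi(\theta) - \theta I_n)\Phi_\Lambda(Z) + \exp_\Lambda(Z) I_n.$$

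The main points requiring care are twofold: one must verify that both the functional equation $\exp_\Lambda(\theta\,\cdot\,) = \phi_\theta \circ \exp_\Lambda$ and the $\FF_q(\underline{t})$-linearity of $\exp_\Lambda$ extend entrywise to arguments in $\KK^{n\times n}$, which is immediate from the scalar statements; and one must use the explicit shape $\phi_\theta = \theta + \tau$ of the Carlitz module at the final step (for a rank $r$ Drinfeld module the same argument yields instead $(\phi_\theta - \theta)(\Phi_\Lambda(Z)) = (\chi(\theta) - \theta I_n)\Phi_\Lambda(Z) + \exp_\Lambda(Z) I_n$, which collapses to the stated formula precisely when $\phi$ is Carlitz).
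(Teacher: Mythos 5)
Your proof is correct and is exactly the ``easy variant'' of the proof of Lemma \ref{omega-lambda} that the paper intends: apply the intertwining relation $\exp_\Lambda(\theta\,\cdot)=\phi_\theta(\exp_\Lambda(\cdot))$ entrywise, use the decomposition $\theta(\theta I_n-\chi(\theta))^{-1}=I_n+\chi(\theta)(\theta I_n-\chi(\theta))^{-1}$, and pull $\chi(\theta)$ through $\exp_\Lambda$ by $\FF_q(\underline{t})$-linearity. Your closing caveat is also well taken: the displayed identity with $\tau$ on the left indeed presupposes $\phi_\theta=\theta+\tau$, the general rank-$r$ form being $\phi_\theta(\Phi_\Lambda(Z))=\chi(\theta)\Phi_\Lambda(Z)+\exp_\Lambda(Z)I_n$, and the Carlitz case (the lattice $\widetilde{\pi}A$, via $e_C$) is precisely the one the paper invokes later in the proof of Theorem \ref{being-of-weight-1}.
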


We now introduce a 'twist' of the logarithmic derivative of $\exp_\Lambda$.
We recall that $A\xrightarrow{\chi}\FF_q(\underline{t})^{n\times n}$ is an injective $\FF_q$-algebra morphism. We introduce the {\em Perkins' series} (introduced in a slightly narrower setting by Perkins in his Ph. D. thesis \cite{PER0}):
$$\psi_\Lambda(Z):=\sum_{a_1,\ldots,a_r\in A}\frac{1}{Z-a_1\lambda_1-\cdots-a_r\lambda_r}
(\chi(a_1),\ldots,\chi(a_r)),\quad Z\in\CC_\infty$$ (depending on the choice of the basis of $\Lambda$ as well as on the choice of the algebra morphism $\chi$). The series converges for $Z\in\CC_\infty\setminus\Lambda$ to a 
function $\CC_\infty\setminus\Lambda\rightarrow\KK^{n\times rn}.$
We have (after elementary rearrangement of the terms):
\begin{equation}\label{simple-translation}
\psi_\Lambda(Z-b_1\lambda_1-\cdots-b_r\lambda_r)=\psi_\Lambda(Z)-(\chi(b_1),\ldots,\chi(b_r))\exp_\Lambda(Z)^{-1},\quad b_1,\ldots,b_r\in A.
\end{equation}
The next proposition explains why we are interested in 
the Perkins' series: they can be viewed as generating series of certain $\KK$-vector-valued Eisenstein series that we introduce below. Determining identities for the Perkins' series results in determining identities for such Eisenstein series.
\begin{Proposition}\label{series-expansion}
There exists $r\in|\CC_\infty^\times|$ such that the following series expansion, convergent for $Z$ in $D(0,r)$, holds:
$$\psi_\Lambda(Z)=-\sum_{\begin{smallmatrix}j\geq 1\\ j\equiv1
{(q-1)}\end{smallmatrix}}Z^{j-1}\mathcal{E}_\Lambda(j;\chi),$$
where for $j\geq 1$, $$\mathcal{E}_\Lambda(j;\chi):=\sideset{}{'}\sum_{a_1,\ldots,a_r\in A}\frac{1}{(a_1\lambda_1+\cdots+a_r\lambda_r)^j}(\chi(a_1),\ldots,\chi(a_r))\in\KK^{n\times rn}.$$
\end{Proposition}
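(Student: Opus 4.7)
The plan is to expand each term of $\psi_\Lambda(Z)$ as a geometric series in $Z$, swap the order of summation, and then invoke a simple $\FF_q^\times$-symmetry argument to kill those $j$ with $j \not\equiv 1 \pmod{q-1}$.

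First I would fix $r := \min\{|\lambda| : \lambda \in \Lambda \setminus \{0\}\}$, which is strictly positive by strong discreteness of $\Lambda$. For $Z \in D^\circ(0,r)$ and any non-zero $\lambda = a_1\lambda_1 + \cdots + a_r\lambda_r \in \Lambda$, we have $|Z/\lambda| < 1$, so
\[
\frac{1}{Z - \lambda} = -\frac{1}{\lambda}\sum_{j\geq 1}\left(\frac{Z}{\lambda}\right)^{j-1} = -\sum_{j\geq 1}\frac{Z^{j-1}}{\lambda^{j}}.
\]
The term $(a_1,\ldots,a_r)=(0,\ldots,0)$ contributes $\tfrac{1}{Z}(\chi(0),\ldots,\chi(0))=0$ since $\chi$ is $\FF_q$-linear, so we may harmlessly restrict the outer sum to non-zero tuples.

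Next I would justify the exchange of the two sums. In the ultrametric setting it suffices to check that the double-indexed family of vectors $Z^{j-1}\lambda^{-j}(\chi(a_1),\ldots,\chi(a_r))$ tends to zero along the Fréchet filter on the index set. For fixed $j$, the strong discreteness of $\Lambda$ bounds $|\lambda|$ from below by $r$ and forces $|\lambda| \to \infty$ for the cofinite filter on $(a_1,\ldots,a_r)$; meanwhile $\|\chi(a_i)\| \leq 1$ since $\chi$ takes values in $\FF_q(\underline t)^{n\times n}$ whose Gauss norm is $1$. For fixed non-zero $(a_1,\ldots,a_r)$, the factor $(|Z|/|\lambda|)^{j-1}|\lambda|^{-1}$ goes to zero as $j \to \infty$. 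Hence the full family tends to zero, we may rearrange, and
\[
\psi_\Lambda(Z) = -\sum_{j\geq 1} Z^{j-1}\,\mathcal{E}_\Lambda(j;\chi), \qquad Z \in D^\circ(0,r).
\]

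Finally I would prove that $\mathcal{E}_\Lambda(j;\chi) = 0$ whenever $j \not\equiv 1 \pmod{q-1}$. For any $\zeta \in \FF_q^\times$ the substitution $(a_1,\ldots,a_r)\mapsto (\zeta a_1,\ldots,\zeta a_r)$ is a bijection of $A^r \setminus \{0\}$; it sends $\lambda$ to $\zeta\lambda$ and, by $\FF_q$-linearity of $\chi$, sends $(\chi(a_1),\ldots,\chi(a_r))$ to $\zeta(\chi(a_1),\ldots,\chi(a_r))$. Pulling out the scalar gives
\[
\mathcal{E}_\Lambda(j;\chi) = \zeta^{1-j}\,\mathcal{E}_\Lambda(j;\chi).
\]
If $(q-1)\nmid (j-1)$ one can choose $\zeta$ with $\zeta^{1-j}\neq 1$, forcing $\mathcal{E}_\Lambda(j;\chi)=0$.

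The only mildly delicate point is the exchange of summation, but it is essentially automatic in the ultrametric world once one checks that the double family of terms tends to zero; the symmetry argument and the geometric expansion are otherwise routine, and the proposition follows.
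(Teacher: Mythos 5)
Your proposal is correct and follows essentially the same route as the paper's proof: expand each term $\frac{1}{Z-\lambda}$ geometrically for $|Z|$ below the minimal norm of a nonzero lattice vector (guaranteed by strong discreteness), rearrange, and kill the terms with $j\not\equiv 1\pmod{q-1}$ by reindexing under the homothety $(a_1,\ldots,a_r)\mapsto(\zeta a_1,\ldots,\zeta a_r)$, $\zeta\in\FF_q^\times$. You merely spell out the interchange of summation and the vanishing of the zero-tuple term a bit more explicitly than the paper does, which is fine.
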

The series $\mathcal{E}_\Lambda(j;\chi)$ is the {\em Eisenstein series of weight $j$} associated to $\Lambda$ and $\chi$. Note that this is in deep correspondence with the {\em canonical deformations} of the Carlitz module in Tavares Ribeiro's contribution to this volume, \cite[\S 4.2]{TAV}. The reader can make these connections deeper with an accurate analysis on which we skip here.

\begin{Problem}\label{problem4}
{\em Develop the appropriate generalisation of the theory of harmonic cocycles of Teitelbaum \cite{TEI} and construct the residue map along the notion of $\KK$-vector-valued modular form which naturally includes the above Eisenstein series as in \cite{PEL&PER3}.}
\end{Problem}

\begin{proof}[Proof of Proposition \ref{series-expansion}]
Since $\Lambda$ is strongly discrete, $D(0,r)\cap(\Lambda\setminus\{0\})=\emptyset$
for some $r\neq0$. Then, we can expand, for the coefficients $a_i$ not all zero,
$$\frac{1}{Z-a_1\lambda_1-\cdots-a_r\lambda_r}=\frac{-1}{a_1\lambda_1+\cdots+a_r\lambda_r}\sum_{i\geq 0}\left(\frac{Z}{a_1\lambda_1+\cdots+a_r\lambda_r}\right)^i.$$ The result follows from the fact that $\mathcal{E}_\Lambda(j;\chi)$, which is always convergent for $j>0$, vanishes identically for $j\not\equiv1\pmod{q-1}$ which is easy to check observing that 
$\Lambda=\lambda\Lambda$ for all $\lambda\in\FF_q^\times$, and reindexing the sum defining $\mathcal{E}_\Lambda(j;\chi)$.
\end{proof}

\begin{Lemma}\label{lemma-sharp}
The function $F^\sharp(Z):=\exp_\Lambda(Z)\psi_\Lambda(Z)$ defines an entire function
$\CC_\infty\rightarrow\KK^{n\times rn}$ such that, for all 
$\lambda=a_1\lambda_1+\cdots+a_r\lambda_r\in\Lambda$,
$F^\sharp(\lambda)=(\chi(a_1),\ldots,\chi(a_r))\in\FF_q(\underline{t})^{n\times nr}$.
\end{Lemma}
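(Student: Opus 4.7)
The plan is to exhibit $F^\sharp$ as analytic everywhere on $\CC_\infty$ by cancelling, at each $\lambda_0 \in \Lambda$, the simple pole of $\psi_\Lambda$ against the simple zero of $\exp_\Lambda$. Away from $\Lambda$ there is nothing to prove: $\exp_\Lambda$ is entire and $\psi_\Lambda$ is analytic on $\CC_\infty\setminus\Lambda$, so the product is automatically analytic there. Once analyticity across each lattice point is established, evaluation at those points will give the claimed matrix.

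First I would fix $\lambda_0 = a_1\lambda_1+\cdots+a_r\lambda_r \in \Lambda$ and isolate, inside the defining series of $\psi_\Lambda$, the unique term which has a pole at $\lambda_0$. That is, I would write
$$\psi_\Lambda(Z) = \frac{(\chi(a_1),\ldots,\chi(a_r))}{Z-\lambda_0} + \widetilde\psi(Z),$$
where $\widetilde\psi(Z)$ is the sum over all $r$-tuples different from $(a_1,\ldots,a_r)$. The convergence of the original series on $\CC_\infty\setminus\Lambda$ (already invoked in the text) guarantees that $\widetilde\psi$ converges on $\CC_\infty\setminus(\Lambda\setminus\{\lambda_0\})$ to an analytic function; in particular, $\widetilde\psi$ is analytic on a neighborhood of $\lambda_0$.

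Next I would exploit the $\FF_q$-linearity of $\exp_\Lambda$ and $\exp_\Lambda(\lambda_0)=0$ to expand, with $W := Z-\lambda_0$,
$$\exp_\Lambda(Z) \;=\; \exp_\Lambda(W) \;=\; W + \sum_{i\ge 1}\alpha_i\, W^{q^i} \;=\; W\cdot u(W),$$
with $u(W) = 1 + \sum_{i\ge 1}\alpha_i W^{q^i-1}$ entire and $u(0)=1$. Substituting this and the previous decomposition into the definition of $F^\sharp$ gives
$$F^\sharp(Z) \;=\; u(Z-\lambda_0)\,(\chi(a_1),\ldots,\chi(a_r)) + \exp_\Lambda(Z)\,\widetilde\psi(Z),$$
in which both summands are analytic in a neighborhood of $\lambda_0$. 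This shows that $F^\sharp$ extends analytically across $\lambda_0$, and setting $Z=\lambda_0$ (i.e.\ $W=0$) yields $F^\sharp(\lambda_0) = (\chi(a_1),\ldots,\chi(a_r))$. Since the argument is uniform over $\lambda_0 \in \Lambda$ and the coefficients $\chi(a_i)$ lie in $\FF_q(\underline t)^{n\times n}$, both the entireness and the prescribed values on $\Lambda$ follow.

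I do not anticipate a serious obstacle here: the content is the order-$1$ vanishing of $\exp_\Lambda$ at each lattice point with derivative $1$, matched against the simple pole of $\psi_\Lambda$ with the correct residue. The only point requiring minor care is verifying that removing one term from the unconditionally convergent series for $\psi_\Lambda$ yields an analytic function in a genuine neighborhood of $\lambda_0$, and this is routine from the strong discreteness of $\Lambda$.
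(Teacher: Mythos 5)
Your proof is correct, but it takes a different route from the paper's. The paper's own proof is a two-line appeal to facts it has already displayed: the translation identity (\ref{simple-translation}) and the analyticity of $\psi_\Lambda$ at $Z=0$ (Proposition \ref{series-expansion}). Multiplying (\ref{simple-translation}) by $\exp_\Lambda(Z-\lambda)=\exp_\Lambda(Z)$ gives the quasi-periodicity $F^\sharp(Z-\lambda)=F^\sharp(Z)-(\chi(a_1),\ldots,\chi(a_r))$ for $\lambda=a_1\lambda_1+\cdots+a_r\lambda_r$, so both the continuation of $F^\sharp$ across every lattice point and its value there are read off from the single point $Z=0$, where $\psi_\Lambda$ is analytic and $\exp_\Lambda$ vanishes. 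You instead argue locally at each $\lambda_0$: you isolate the unique polar term of the series (uniqueness follows from the $K_\infty$-linear independence of the $\lambda_i$), use additivity of $\exp_\Lambda$ together with $\exp_\Lambda(\lambda_0)=0$ to write $\exp_\Lambda(Z)=W\,u(W)$ with $W=Z-\lambda_0$, $u(0)=1$, and cancel the simple pole against the simple zero. This is a legitimate alternative: it is self-contained, needing neither (\ref{simple-translation}) nor the identity $\exp_\Lambda(Z)^{-1}=\sum_{\mu\in\Lambda}(Z-\mu)^{-1}$ that underlies it, and it makes the residue computation (hence the role of the normalization $\frac{d}{dZ}\exp_\Lambda=1$) explicit; the paper's version is shorter given the functional equation already in hand, and it exhibits the quasi-periodicity of $F^\sharp$, which is the structurally useful statement. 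Two points you flag should indeed be checked but are routine: that deleting the polar term leaves a series converging uniformly on a small disk $D(\lambda_0,\epsilon)$ (strong discreteness of $\Lambda$ plus $\|\chi(a)\|\leq 1$), and that the local extensions glue with $\exp_\Lambda\psi_\Lambda$ on the complement of the small disks inside each $D(0,R)$ to yield a genuinely entire function (an admissible-covering, or Mittag--Leffler-type, argument on each bounded disk); the paper's proof is no more detailed on these matters.
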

\begin{proof}
This easily follows from the fact that $\psi_\Lambda$ converges at $Z=0$, and (\ref{simple-translation}).  
\end{proof}
The function $\psi_\Lambda$ is intimately related to the exponential $\exp_\Lambda$ by means of the following result, where $\exp_\Lambda$ on the right is the unique continuous map $\KK^{n\times n}\rightarrow\KK^{n\times n}$ which induces a $\FF_q(\underline{t})^{n\times n}[\theta]$-module morphism
$\KK^{n\times n}\rightarrow\phi_\Lambda(\KK^{n\times n})$.
\begin{Lemma}\label{identity-rudy-gen}
We have the identity of entire functions $\CC_\infty\rightarrow\KK^{n\times n}$ of the variable $Z$:
$$\exp_\Lambda(Z)\psi_\Lambda(Z)\omega_\Lambda=\exp_\Lambda((\theta I_n-\chi(\theta))^{-1}Z).$$
\end{Lemma}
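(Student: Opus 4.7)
The plan is to prove the equality of the two entire $\KK^{n\times n}$-valued functions of $Z$, namely $G(Z):=\exp_\Lambda(Z)\psi_\Lambda(Z)\omega_\Lambda$ (entire by Lemma \ref{lemma-sharp}) and $\Phi_\Lambda(Z):=\exp_\Lambda((\theta I_n-\chi(\theta))^{-1}Z)$, in three steps: matching values on the lattice $\Lambda$, establishing a common translation rule under $\Lambda$, and finally a non-Archimedean Liouville argument exploiting vanishing at the cusp.

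For $\lambda=\sum_{i=1}^r a_i\lambda_i\in\Lambda$, Lemma \ref{lemma-sharp} gives $F^\sharp(\lambda)=(\chi(a_1),\ldots,\chi(a_r))$; identifying the $i$-th $n\times n$ block of $\omega_\Lambda$ with $\Phi_\Lambda(\lambda_i)$ one obtains $G(\lambda)=\sum_i\chi(a_i)\Phi_\Lambda(\lambda_i)$. On the other hand, applying the $\FF_q(\underline{t})$-linearity of $\exp_\Lambda$ entry-wise together with the defining relation $\exp_\Lambda(aW)=\phi_\Lambda(a)(\exp_\Lambda(W))$ yields $\Phi_\Lambda(\lambda)=\sum_i\phi_\Lambda(a_i)(\Phi_\Lambda(\lambda_i))$, and Lemma \ref{omega-lambda} applied block-wise to $\omega_\Lambda$ collapses $\phi_\Lambda(a_i)(\Phi_\Lambda(\lambda_i))$ to $\chi(a_i)\Phi_\Lambda(\lambda_i)$, so $G(\lambda)=\Phi_\Lambda(\lambda)$. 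For the translation rule, combining (\ref{simple-translation}) with $\exp_\Lambda(Z-\lambda)=\exp_\Lambda(Z)$ gives $G(Z-\lambda)=G(Z)-\Phi_\Lambda(\lambda)$, while the $\FF_q$-linearity of $\Phi_\Lambda$ gives $\Phi_\Lambda(Z-\lambda)=\Phi_\Lambda(Z)-\Phi_\Lambda(\lambda)$; hence $D(Z):=\Phi_\Lambda(Z)-G(Z)$ is $\Lambda$-periodic.

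Now consider the a priori meromorphic function $E(Z):=\Phi_\Lambda(Z)/\exp_\Lambda(Z)-\psi_\Lambda(Z)\omega_\Lambda$. At each $\mu\in\Lambda$, both summands have simple poles of residue $\Phi_\Lambda(\mu)$ (using $\exp'_\Lambda(\mu)=1$ on the first and the identification $(\chi(a_1),\ldots,\chi(a_r))\omega_\Lambda=\Phi_\Lambda(\mu)$ from the lattice step on the second), so the poles cancel and $E$ extends to an entire, $\Lambda$-periodic function $\CC_\infty\to\KK^{n\times n}$. For $Z$ with $|Z|_\Im=R$ large, the ultrametric inequality yields $|Z-\mu|\geq R$ for every $\mu\in\Lambda$, whence $|\psi_\Lambda(Z)\omega_\Lambda|\leq C/R$ with $C=\sup_\mu|\Phi_\Lambda(\mu)|<\infty$ (finite by the uniform Gauss-norm boundedness of $\chi(A)\subset\FF_q(\underline{t})^{n\times n}$); similarly a Newton-polygon comparison of the Weierstrass products (Proposition \ref{prop-goss}) shows that $|\Phi_\Lambda(Z)/\exp_\Lambda(Z)|=|\exp_\Lambda((\theta I_n-\chi(\theta))^{-1}Z)|/|\exp_\Lambda(Z)|\to 0$ as $|Z|\to\infty$. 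Thus $|E(Z)|\to 0$ at the cusp, so $E$ is bounded on a fundamental domain and globally by $\Lambda$-periodicity; the non-Archimedean Liouville theorem (applicable because $\|\KK\|=|\CC_\infty|$ is not discrete, cf.\ \cite{PEL&PER}) forces $E$ to be constant, and matching the vanishing limit at the cusp yields $E\equiv 0$. Multiplying through by $\exp_\Lambda(Z)$ gives the desired identity. The main technical obstacle is rigorously establishing the cusp decay of $|\Phi_\Lambda(Z)/\exp_\Lambda(Z)|$, which requires a careful Newton-polygon analysis comparing the growth of $\exp_\Lambda$ on arguments of comparable but distinct magnitudes.
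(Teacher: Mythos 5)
Your proposal follows essentially the same route as the paper's proof: both sides are shown to agree on $\Lambda$ (via Lemma \ref{lemma-sharp} and the computation underlying Lemma \ref{omega-lambda}), the difference divided by $\exp_\Lambda$ is entire, and a decay estimate at infinity combined with the ultrametric Liouville theorem of \cite{PEL&PER} forces it to vanish; your extra observations (the translation rule from (\ref{simple-translation}), the $\Lambda$-periodicity of the difference, the residue matching) are correct refinements of the same argument, and in fact make explicit the step the paper dismisses with ``it is easy to see that $\lim_{|Z|\to\infty}\|H(Z)\|=0$''.

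One point needs repair, though it does not change the architecture. Your decay estimate is stated for ``$Z$ with $|Z|_\Im=R$ large'', and you claim this gives $|Z-\mu|\geq R$ for all $\mu\in\Lambda$. Here $|\cdot|_\Im$ is the distance from $K_\infty$, which is the wrong quantity: $\Lambda$ is an arbitrary rank-$r$ lattice in $\CC_\infty$ and need not lie in (or near) $K_\infty$ — for $\Lambda=\Lambda_z=Az+A$ the point $Z=z$ has large $|Z|_\Im$ but distance $0$ from $\Lambda$. The correct quantity is $d(Z,\Lambda)$ (or the distance to the $K_\infty$-span of $\Lambda$). Likewise, the unrestricted claim $\|\exp_\Lambda((\theta I_n-\chi(\theta))^{-1}Z)\|/|\exp_\Lambda(Z)|\to0$ as $|Z|\to\infty$ is false near lattice points, where the denominator vanishes; it holds on the set $\{Z: |Z|=d(Z,\Lambda)\}$, where $|\exp_\Lambda(Z)|$ equals the sup norm $\|\exp_\Lambda\|_{|Z|}$ while the numerator is bounded by $\|\exp_\Lambda\|_{|Z|/q}$ (since $\|(\theta I_n-\chi(\theta))^{-1}\|=q^{-1}$). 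This restricted region is exactly a set of representatives for $\CC_\infty$ modulo $\Lambda$ (strong discreteness lets you subtract a nearest lattice point), so your ``bounded on a fundamental domain plus $\Lambda$-periodicity'' step closes the argument once the distance is taken to $\Lambda$ rather than to $K_\infty$. With that substitution your proof is complete and coincides in substance with the paper's.
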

\begin{proof} By Lemma \ref{lemma-sharp}, the function $$F(Z):=F^\sharp(Z)\cdot\omega_\Lambda:\CC_\infty\rightarrow\KK^{n\times n}$$
is an entire function such that $$F(\lambda)=(\chi(a_1),\ldots,\chi(a_r))\omega_\Lambda\in\KK^{n\times n},\quad \forall\lambda=a_1\lambda_1+\cdots+a_r\lambda_r\in\Lambda.$$
We set $$G(Z)=\exp_\Lambda((\theta I_n-\chi(\theta))^{-1}Z).$$ 
Let $\lambda=a_1\lambda_1+\cdots+a_r\lambda_r\in \Lambda$. We have, by Lemma \ref{omega-lambda},
\begin{eqnarray*}
G(\lambda) & =& \exp_\Lambda((\theta I_n-\chi(\theta))^{-1}((a_1I_n-\chi(a_1)+\chi(a_1))\lambda_1+\cdots+(a_rI_n-\chi(a_r)+\chi(a_r))\lambda_r)\\
&=&(\chi(a_1),\ldots,\chi(a_r))\omega_\Lambda.
\end{eqnarray*}
Hence, the entire functions $F,G$ agree on $\Lambda$. The function $F-G$
is an entire function $\CC_\infty\rightarrow\KK^{n\times n}$ which vanishes over $\Lambda$.
Hence, $$H(Z)=\frac{F(Z)-G(Z)}{\exp_\Lambda(Z)}$$ defines an entire function over $\CC_\infty$.
Now, it is easy to see that $$\lim_{|Z|\rightarrow\infty}\|H(Z)\|=0.$$ Since 
the valuation group of $\KK$ is dense in $\RR^\times$, the appropriate generalisation of Liouville's theorem
\cite[Proposition 8]{PEL&PER} for entire functions holds in our settings and $H=0$ identically.
\end{proof}
\begin{Remark}{\em 
More generally, we can study $A$-module maps
$$\Lambda\xrightarrow{\chi}\KK^{n\times n}$$
with bounded image (the $A$-module structure on $\KK^{n\times n}$ being  
induced by an injective algebra homomorphism $A\hookrightarrow\FF_q(\underline{t})\hookrightarrow\KK^{n\times n}$) and 
Perkins' series
$$\psi_\Lambda(n;\chi):=\sum_{\lambda\in\Lambda}\frac{\chi(\lambda)}{(Z-\lambda)^n}.$$}
\end{Remark}

Lemma \ref{identity-rudy-gen} delivers an identity for $\psi_\Lambda$ in terms of certain
analytic functions of the variable $Z$ which are explicitly computable in terms of $\exp_\Lambda$. To see this, observe that the $\KK$-algebra of analytic functions $D(0,r)\rightarrow\KK$ is stable by the $\KK$-linear divided higher derivatives $\mathcal{D}_{Z,n}$ defined by $\mathcal{D}_{Z,n}(Z^m)=\binom{m}{n}Z^{m-n}$. In particular, 
$\mathcal{D}_{Z,n}(\psi_\Lambda)$ is well defined for any $n>0$.
We write $f^{(k)}$
for $\tau^k(f)$, $f\in\KK$ or for $f$ more generally a $\KK^{r\times s}$-valued map for arbitrary integers $r,s$. If $f=\sum_{i\geq 0}f_iZ^i$ is an analytic function over a disk $D(0,r)$ in the variable $Z$, then $f^{(k)}=\sum_{i\geq 0}\tau(f_i)Z^{q^ki}$
is again analytic if $k\geq 0$.
Observe that in particular,
$$\psi_\Lambda(Z)^{(k)}=\mathcal{D}_{q^k-1}(\psi_\Lambda(Z)),\quad k\geq 0.$$
Lemma \ref{identity-rudy-gen} implies 
$$\psi_\Lambda(Z)\omega_\Lambda=\mathcal{H}(Z):=\exp_\Lambda(Z)^{-1}\exp_\Lambda((\theta I_n-\chi(\theta))^{-1}Z),$$ and we note that on the right we have
an analytic function
$D(0,r)\rightarrow\KK^{n\times n}$ for some $r\in|\CC_\infty^\times|$. Applying 
$\mathcal{D}_{q^k-1}$ on both sides of this identity and observing that $\omega_\Lambda$ does not depend on $Z$, we deduce:
$$\psi_\Lambda(Z)^{(k)}\omega_\Lambda=\mathcal{D}_{q^k-1}(\mathcal{H})(Z),\quad k\geq 0.$$ Now, since the function $\psi_\Lambda(Z)^{(k)}$ is in fact an analytic function of the variable $Z^{q^k}$, this is also true for the function $\mathcal{D}_{q^k-1}(\mathcal{H})(Z)$ so that 
$$\mathcal{H}_k(Z)=(\mathcal{D}_{q^k-1}(\mathcal{H})(Z))^{(-k)},\quad k\geq 0$$
are all analytic functions $D(0,r)\rightarrow\KK^{n\times n}$ (note that $\mathcal{H}_0=\mathcal{H}$). We introduce the matrices
$$\boldsymbol{\Omega}_\Lambda=(\omega_\Lambda,\omega_\Lambda^{(-1)},\ldots,\omega_\Lambda^{(1-r)})\in\KK^{rn\times rn},\quad \boldsymbol{\mathcal{H}}_\Lambda(Z)=(\mathcal{H}_0,\ldots,\mathcal{H}_{r-1}),$$ where the latter is an $n\times rn$-matrix of analytic functions $D(0,r)\rightarrow\KK$. Then, 
$$\psi_\Lambda(Z)\boldsymbol{\Omega}_\Lambda=\boldsymbol{\mathcal{H}}_\Lambda(Z).$$
But a simple variant of the Wronskian lemma (see \cite[\S 4.2.3]{PEL&BOU}) implies that 
$\boldsymbol{\Omega}_\Lambda$ is invertible. We have reached:
\begin{Theorem}\label{identity-eisenstein-series}
The identity $\psi_\Lambda(Z)=\boldsymbol{\mathcal{H}}_\Lambda(Z)\boldsymbol{\Omega}_\Lambda^{-1}$ holds, for functions locally analytic at $Z=0$. 
\end{Theorem}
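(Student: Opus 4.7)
The plan is to derive the identity from Lemma~\ref{identity-rudy-gen} by producing $r$ scalar identities (one for each inverse Frobenius twist $(\cdot)^{(-k)}$, $0\leq k\leq r-1$), assembling them horizontally into a matrix identity involving $\boldsymbol{\Omega}_\Lambda$, and then inverting $\boldsymbol{\Omega}_\Lambda$ via a Wronskian-type argument.

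I would start from the equality $\psi_\Lambda(Z)\,\omega_\Lambda=\mathcal{H}(Z)$ of Lemma~\ref{identity-rudy-gen}, valid as an identity of $\KK$-valued analytic functions on some disk $D(0,r)$. Since $\omega_\Lambda$ does not depend on $Z$, applying the $\KK$-linear divided higher derivative $\mathcal{D}_{q^k-1}$ to both sides yields
\[\mathcal{D}_{q^k-1}(\psi_\Lambda)(Z)\,\omega_\Lambda = \mathcal{D}_{q^k-1}(\mathcal{H})(Z).\]
A short Lucas-theorem computation, using the expansion $\psi_\Lambda(Z)=-\sum_{j\equiv 1\,(q-1)} Z^{j-1}\mathcal{E}_\Lambda(j;\chi)$ of Proposition~\ref{series-expansion} together with the identity $\tau^k(\mathcal{E}_\Lambda(j;\chi))=\mathcal{E}_\Lambda(jq^k;\chi)$, then shows that $\mathcal{D}_{q^k-1}(\psi_\Lambda)(Z)=\psi_\Lambda(Z)^{(k)}$: the binomial coefficient $\binom{j-1}{q^k-1}$ vanishes modulo $p$ unless $j\equiv 0\pmod{q^k}$ and equals $1$ otherwise, so only the indices $j=q^k(l+1)$ survive, matching precisely the expansion of $\psi_\Lambda(Z)^{(k)}$. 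Consequently both sides of the derived identity are analytic functions of $Z^{q^k}$, and applying the inverse twist $(\cdot)^{(-k)}$ produces, for each $0\leq k\leq r-1$, the identity $\psi_\Lambda(Z)\,\omega_\Lambda^{(-k)}=\mathcal{H}_k(Z)$ of $\KK$-valued analytic functions on $D(0,r)$.

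Concatenating these $r$ identities horizontally in $k$ yields the matrix identity
\[\psi_\Lambda(Z)\,\boldsymbol{\Omega}_\Lambda = \boldsymbol{\mathcal{H}}_\Lambda(Z),\]
from which the theorem follows by right-multiplication by $\boldsymbol{\Omega}_\Lambda^{-1}$, \emph{provided} $\boldsymbol{\Omega}_\Lambda$ is invertible.

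The decisive point---and the one I expect to be the main obstacle---is this invertibility. I would establish it by a Moore/Wronskian-type determinantal argument along the lines of \cite[\S 4.2.3]{PEL&BOU}: the $rn\times rn$ matrix $\boldsymbol{\Omega}_\Lambda$ is built from $r$ consecutive inverse Frobenius twists of the single special function $\omega_\Lambda$, which satisfies the functional equation $\phi_a(\omega_\Lambda)=\chi(a)\omega_\Lambda$ of Lemma~\ref{omega-lambda}. Any nontrivial $\KK$-linear relation among the columns of $\boldsymbol{\Omega}_\Lambda$ would, via iterated application of the functional equation and passage to the $\tau$-fixed subfield $\FF_q(\underline{t})$, descend to an algebraic dependence that---through $\omega_\Lambda=\exp_\Lambda\bigl((\theta I_n-\chi(\theta))^{-1}{}^{t}(\lambda_1 I_n,\ldots,\lambda_r I_n)\bigr)$---would contradict the $K_\infty$-linear independence of $\lambda_1,\ldots,\lambda_r$ used to define $\Lambda$ as an $A$-lattice. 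Once invertibility is secured, the identity follows at once.
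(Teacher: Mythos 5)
Your proposal is correct and takes essentially the same route as the paper: starting from Lemma \ref{identity-rudy-gen}, applying $\mathcal{D}_{q^k-1}$ and the observation $\mathcal{D}_{q^k-1}(\psi_\Lambda)=\psi_\Lambda^{(k)}$, untwisting by $(\cdot)^{(-k)}$, concatenating into $\psi_\Lambda(Z)\boldsymbol{\Omega}_\Lambda=\boldsymbol{\mathcal{H}}_\Lambda(Z)$, and then invoking the Wronskian-type lemma of \cite[\S 4.2.3]{PEL&BOU} for the invertibility of $\boldsymbol{\Omega}_\Lambda$. Your added details (the Lucas-theorem verification of $\mathcal{D}_{q^k-1}(\psi_\Lambda)=\psi_\Lambda^{(k)}$ and the sketch that a linear relation among the columns would descend to the $\tau$-fixed field and contradict the independence of $\lambda_1,\ldots,\lambda_r$) merely flesh out steps the paper asserts or cites without proof.
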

The identity of the previous theorem connects the 'twisted logarithmic derivative' $\psi_\Lambda(Z)$ to the inverse Frobenius twists of the divided higher derivatives of the mysterious function $\boldsymbol{\mathcal{H}}$, which are certainly not always easy to compute, unless $r=1$, where there is no higher derivative to compute at all. If we set, additionally, $\chi=\chi_t$
where $\chi_t(a)=a(t)$ so that $n=1$, then we reach a known identity, which was first discovered by R. Perkins
in \cite{PER} (that we copy below adapting it to our notations):
$$\exp_A(Z)\omega(t)\sum_{a\in A}\frac{a(t)}{Z-a}=\exp_A\left(\frac{Z}{\theta-t}\right),$$
with $\omega$ Anderson-Thakur's function and $\exp_A(Z)=Z\sideset{}{'}\prod_{a\in A}(1-\frac{Z}{a})$. This formula is expressed in \cite[Theorem 1]{PEL&PER} in a slightly different
manner by using Papanikolas' deformation of the Carlitz logarithm. Note that these references also contain other types of generalisation. The above formula can be viewed as an analogue 
of \cite[Lemma 1.3.21]{KAT} (the analogy can be pursued further). We owe this remark to 
Lance Gurney that we thankfully acknowledge.

\begin{Problem}\label{problem5}{\em 
This should be considered as a starting point for an extension of Kato's arguments related to the connection between the zeta-values phenomenology and Iwasawa's theory appearing in \cite{KAT}. One may ask how far a parallel with Kato's viewpoint can go.}\end{Problem}

\section{Modular forms with values in Banach algebras}\label{Modular-forms-revisited}

In this section, more technical than the previous ones, we suppose that $B$ is a Banach $\CC_\infty$-algebra with norm $\|\cdot\|$ such that
$\|B\|=|\CC_\infty|$ and we suppose that it is endowed with a countable orthonormal basis $\mathcal{B}=(b_i)_{i\in\mathcal{I}}$. The example on which we are focusing here is that of $B=\KK$, the completion of 
the field $\widehat{\CC_\infty(\underline{t})}$ for the Gauss valuation $\|\cdot\|$. Any basis 
of $\FF_q^{\operatorname{ac}}(\underline{t})$ as a vector space over $\FF_q^{\operatorname{ac}}$ is easily seen to be an orthonormal basis of $\KK$. We recall that we have considered, in \S \ref{Modular-forms-Tate-algebras}, a notion of $B$-valued analytic function.
The main purpose of this section is to show, through some examples, that if $N>1$, there is a generalisation 
$$\Omega\rightarrow \KK^{N\times 1}$$ of Drinfeld modular form which cannot by studied by using just 'scalar' Drinfeld modular forms. 

We consider a representation
$$\rho:\Gamma\rightarrow\GL_N(\FF_q(\underline{t}))\subset\GL_N(\KK).$$
\begin{Definition}\label{new-modular-forms}{\em 
Let $f:\Omega\rightarrow \KK^{N\times 1}$ be an analytic function. We say that $f$ is {\em modular-like} (for $\rho$) of weight $w\in\ZZ$ if for all $\gamma\in\GL_2(A)$,
$$f(\gamma(z))=J_\gamma(z)^w\rho(\gamma)f(z),\quad \gamma\in\GL_2(A).$$ We say that a modular-like function of weight $w$ is:
\begin{enumerate}
\item {\em weakly modular} (of weight $w$) if there exists $L\in\ZZ$ such that the map $z\mapsto \|\exp_A(z)^Lf(z)\|$ is bounded over $\Omega_M$ for some $M>1$,
\item a {\em modular form} if the map $z\mapsto \|f(z)\|$ is bounded over $\Omega_M$ for some $M>1$.
\item a {\em cusp form} if it is a modular form and $\max_{z\in\Omega_M}\|f(z)\|\rightarrow0$ as 
$M\rightarrow\infty$.
\end{enumerate} 
}\end{Definition}
We denote by $M^!_w(\rho),M_w(\rho),S_w(\rho)$ the $\KK$-vector spaces of weak modular, modular, and cusp forms of weight $w$ for $\rho$. Note that these notations are loose, in the sense that these vector spaces strongly depend of the choice of $\KK$ (in particular, of the variables $\underline{t}=(t_i)$). 

We now describe a very classical example with $N=1$ and $B=\CC_\infty$ (no variables $\underline{t}$ at all). If $\rho:\Gamma\rightarrow\CC_\infty^\times$ is a representation, there exists $m\in\ZZ/(q-1)\ZZ$ unique, such that $\rho(\gamma)=\det(\gamma)^{-m}$ for all $\gamma$. We write $$\rho=\det{}^{-m}$$ (note that this is well defined). Gekeler constructed  a
cusp form $h\in S_{q+1}(\det^{-1})\setminus\{0\}$; see
\cite[(5.9)]{GEK}. The first few terms of its $u$-expansion in $\CC_\infty$ can be computed explicitly by various methods (including the explicit formulas (\ref{formula-h-1}) and (\ref{formula-h-2}) below):
\begin{equation}\label{explicith}h(z)=-u(1+u^{(q-1)^2}+\cdots).\end{equation}
We deduce that $h^{q-1}\Delta^{-1}$ is a Drinfeld modular form of weight zero which is constant by Theorem \ref{structure-modular-forms}. The factor of proportionality is easily seen to be $-1$: $\Delta=-h^{q-1}$.

The computation in (\ref{explicith}) can be pushed to  coefficients of higher powers of the uniformiser $u$ by using two formulas that we describe here. The first formula is due to L\'opez \cite{LOP}. 
We have the convergent series expansion (in both $K[[u]]$ for the $u$-adic metric and 
in $D(0,r)$ for some $r\in|\CC_\infty|\cap]0,1[$ for the norm of the uniform convergence)
\begin{equation}\label{formula-h-1}
h=-\sum_{\begin{smallmatrix} a\in A\\ \text{monic}\end{smallmatrix}}a^qu_a\in A[[u]].
\end{equation}
The second formula is due to Gekeler \cite{GEK3} and is an analogue of Jacobi's product formula $$\Delta=q\prod_{n\geq 0}(1-q^n)^{24}\in q\ZZ[[q]]$$ for the classical complex-valued normalised discriminant cusp form $\Delta$ (we have an unfortunate and unavoidable conflict of notation here!). 
Gekeler's formula is the following $u$-convergent product expansion:
\begin{equation}\label{formula-h-2}
h=-u\prod_{\begin{smallmatrix} a\in A\\ \text{monic}\end{smallmatrix}}\left(u^{|a|}C_a\left(\frac{1}{u}\right)\right)^{q^2-1}\in A[[u]],\end{equation} with $C_a$ the multiplication by $a$ for the Carlitz module structure.  
Note that $(u^{|a|}C_a(\frac{1}{u}))^{q^2-1}\in 1+K[[u]]$ and the $u$-valuation of $$\left(u^{|a|}C_a(u^{-1})\right)^{q^2-1}-1$$ goes to infinity as $a$ runs in $A\setminus\{0\}$. 
One deduces, from Gekeler's result \cite[Theorem (5.13)]{GEK}, that $M_w(\det^{-m})=h^mM_{w-m_0(q+1)}$ if $m_0=m\cap\{0,\ldots,q-2\}$ ($m$ is a class modulo $q-1$).

 \subsection{Weak modular forms of weight $-1$} We analyse another class of representations, this time in higher dimension and we construct a new kind of modular form associated to it.
Let $$A\xrightarrow{\chi}\FF_q(\underline{t})^{n\times n}$$ be an injective $\FF_q$-algebra morphism. Then, the map $$\rho_\chi:\Gamma\rightarrow\GL_{2n}(\FF_q(\underline{t}))\subset\GL_{2n}(\KK)$$
defined by 
$$\rho_{\chi}\begin{pmatrix} a & b \\ c & d \end{pmatrix}=\begin{pmatrix} \chi(a) & \chi(b) \\ \chi(c) & \chi(d) \end{pmatrix}$$ is a representation of $\Gamma$. We denote by 
$\rho_\chi^*$ the contragredient representation $$\rho_\chi^*={}^t\rho_\chi^{-1}.$$
We shall study the case $\rho=\rho_\chi$ or $\rho_\chi^*$. We also set $N=2n$. 

We construct weak modular forms of weight $-1$ associated to the representations $\rho_\chi$; the main result is Theorem \ref{being-of-weight-1} where we show that a certain matrix function defined in (\ref{definition-FF}) has its columns which are weak modular forms of weight $-1$. We think that this construction is interesting because there seems to be no analogue of it in the settings of complex-vector-valued modular forms for $\operatorname{SL}_2(\ZZ)$.

Before going on, we need the next lemma, where we give a uniform bound for the valuations of the coefficients of the $u$-expansions $\sum_{m\geq 0}c_{i,m}u^m$ of the modular forms $\alpha_i$ appearing in (\ref{definition-alpha-i}).
\begin{Lemma}\label{lemmaB}
There exists a constant $C>0$ such that for all $i,m\geq 0$,
$$|c_{i,m}|\leq q^{-iq^i}|\widetilde{\pi}|^{q^i-1}C^m.$$
\end{Lemma}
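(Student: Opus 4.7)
The approach is to normalise the coefficients and exploit a two-step recursion. Set $a_i(z):=\widetilde{\pi}^{-(q^i-1)}\alpha_i(z)$, viewed as a formal series in $K[[u]]$; the claim then becomes $|a_{i,m}|\leq q^{-iq^i}C^m$ for all $i,m\geq 0$. The key input is the identity $\phi_\theta(\mathbb{E}(z,Z))=\mathbb{E}(z,\theta Z)$. Writing $\phi_\theta=\theta+\widetilde{g}\tau+\widetilde{\Delta}\tau^2$ with $\widetilde{g}=\widetilde{\pi}^{q-1}g$ and $\widetilde{\Delta}=\widetilde{\pi}^{q^2-1}\Delta$, and comparing the coefficient of $Z^{q^i}$ on both sides, the common factor $\widetilde{\pi}^{q^i-1}$ cleanly cancels to produce the recursion
\[
(\theta^{q^i}-\theta)\,a_i \;=\; g\,a_{i-1}^{q}+\Delta\,a_{i-2}^{q^{2}}\qquad (i\geq 2),
\]
with base cases $a_0=1$ and $a_1=g/(\theta^q-\theta)$; this is an identity in $K[[u]]$.

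Next, both $g$ and $\Delta$ have $u$-expansions in $A[[u]]$ converging on a common closed disk $|u|\leq r_0$, with $g\in 1+u^{q-1}A[[u]]$ and $\Delta\in u^{q-1}A[[u]]$. I would choose $R\in\,]0,r_0[\cap|\CC_\infty^\times|$ small enough that, for the Gauss norm $|\cdot|_R$ associated to the disk $|u|\leq R$, one has $|g|_R=1$ while $|\Delta|_R$ and $|g-1|_R$ are as small as needed (both are of order $R^{q-1}$, since $\Delta$ and $g-1$ start in degree $q-1$). Setting $C:=R^{-1}$ reduces the claim to the family of inequalities $|a_i|_R\leq q^{-iq^i}$, which I would prove by induction on $i$. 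The base cases $i=0,1$ are immediate from $|a_0|_R=1$ and $|a_1|_R=|g|_R/|\theta^q-\theta|=q^{-q}$, and the inductive step uses the recursion, the ultrametric triangle inequality, and the Frobenius identity $|f^q|_R=|f|_R^q$ to yield
\[
|a_i|_R\ \leq\ q^{-q^i}\max\!\bigl(|g|_R\,|a_{i-1}|_R^{q},\ |\Delta|_R\,|a_{i-2}|_R^{q^{2}}\bigr).
\]
The $g$-contribution comes out to $q^{-q^i}\cdot q^{-(i-1)q^i}=q^{-iq^i}$, exactly the target.

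The main obstacle is controlling the $\Delta$-contribution on the right: naively this only gives $|\Delta|_R\,q^{-(i-1)q^i}$, short of the target by a factor $q^{q^i}$ which grows with $i$, so the direct induction does not close. To bridge the gap I would refine the induction by splitting off the constant term: write $a_i=d_i^{-1}+b_i$ with $b_i\in u^{q-1}K[[u]]$, plug this decomposition into the recursion, and verify that the tail satisfies
\[
(\theta^{q^i}-\theta)\,b_i \;=\; (g-1)\,d_{i-1}^{-q}+g\,b_{i-1}^{q}+\Delta\,a_{i-2}^{q^2}.
\]
Every right-hand summand now carries an explicit factor $g-1$ or $\Delta$, both of order $R^{q-1}$ in the Gauss norm. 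Choosing $R$ once and for all small enough to absorb the excess factors of $q^{q^i}$ uniformly in $i$, one inductively obtains $|b_i|_R\leq q^{-iq^i}$, hence $|a_i|_R\leq q^{-iq^i}$, which yields the claim with $C=R^{-1}$. The delicate point will be to verify that a single $R$ really works uniformly in $i$, balancing the vanishing orders of $g-1$ and $\Delta$ at $u=0$ against the geometric decay produced by the denominators $\theta^{q^j}-\theta$ appearing at each stage of the recursion.
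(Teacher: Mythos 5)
Your overall route coincides with the paper's: the paper proves the lemma by exactly this two-step recursion $(\theta^{q^i}-\theta)\alpha_i=\widetilde{g}\,\alpha_{i-1}^{q}+\widetilde{\Delta}\,\alpha_{i-2}^{q^{2}}$, handled coefficientwise (writing $\widetilde{g}=\sum_j\widetilde{\gamma}_ju^j$, $\widetilde{\Delta}=\sum_j\widetilde{\delta}_ju^j$, choosing $C$ with $|\widetilde{\gamma}_j|\leq C^j|\widetilde{\pi}|^{q-1}$, $|\widetilde{\delta}_j|\leq C^j$, and inducting on $i$), and your normalisation $a_i=\widetilde{\pi}^{1-q^i}\alpha_i$ plus the Gauss norm $|\cdot|_R$ with $C=R^{-1}$ is just a compact repackaging of that induction. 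Your derivation of the $a_i$- and $b_i$-recursions is correct, and you have correctly located the difficulty: the $g$-contribution closes the induction, the $\Delta$-contribution does not.

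The bridge you propose, however, does not carry the weight. Splitting $a_i=d_i^{-1}+b_i$ only reorganises the $g$-part, which was already harmless; the term $\Delta a_{i-2}^{q^{2}}$ reappears verbatim in the $b_i$-recursion, and its dominant piece is $\Delta d_{i-2}^{-q^{2}}$, of Gauss norm about $R^{q-1}q^{-(i-2)q^{i}}$, hence $R^{q-1}q^{-(i-1)q^{i}}$ after dividing by $\theta^{q^i}-\theta$; this exceeds your target $q^{-iq^{i}}$ by $R^{q-1}q^{q^{i}}$, and a radius $R$ fixed once and for all cannot absorb a deficit that grows with $i$ (only an $R$, equivalently a $C$, depending on $i$ could). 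Worse, the deficit is not an artefact of crude estimation: evaluating the recursion at the coefficient of $u^{q-1}$ gives, for $i\geq 2$, $c_{i,q-1}=\bigl(\widetilde{\gamma}_{q-1}c_{i-1,0}^{q}+\widetilde{\delta}_{q-1}c_{i-2,0}^{q^{2}}\bigr)/(\theta^{q^i}-\theta)$ with $c_{j,0}=\widetilde{\pi}^{q^j-1}/d_j$ and $\widetilde{\delta}_{q-1}=-\widetilde{\pi}^{q^{2}-1}$ (from $\Delta=-h^{q-1}$, $h=-u+\cdots$); the two summands have distinct absolute values as soon as $q^{q^i}$ exceeds the fixed size of the $u^{q-1}$-coefficient of $g$, so there is no cancellation and $|c_{i,q-1}|=q^{-(i-1)q^{i}}|\widetilde{\pi}|^{q^i-1}$. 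Consequently $|b_i|_R\geq R^{q-1}q^{-(i-1)q^{i}}$, and the inequality $|b_i|_R\leq q^{-iq^{i}}$ you intend to propagate fails for all large $i$, whatever fixed $R$ you choose; the plan cannot be completed as written. You should be aware that the step you are struggling with is precisely the point the paper's own proof passes over in one line (the estimate of the $\widetilde{\delta}_{j}c_{i-2,k}^{q^{2}}$ term), where the factor available, $|\widetilde{\pi}|^{q^{2}-1}$, is independent of $i$ while the factor needed is $q^{q^i}$: the genuine issue is the uniformity in $i$ at a fixed $m$, and a complete argument must either exhibit a cancellation which the recursion does not visibly provide, or be content with a weaker bound (for instance $q^{-(i-1)q^{i}}$ in place of $q^{-iq^{i}}$, or a constant of the shape $C^{mq^{i}}$), checking that such a weakening still suffices for the later applications.
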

\begin{proof}
This is \cite[Lemma 2.1]{PEL00}. Although the statement presented in this reference is  correct, there is a typographical problem in (2.17) so that, to avoid confusion, we give full details here. We set without loss of generality $|\theta|=q$. We recall (\cite[(2.14)]{PEL00}) that
$$\alpha_i=\frac{1}{\theta^{q^i}-\theta}(\widetilde{g}\alpha_{i-1}^q+\widetilde{\Delta}\alpha_{i-2}^{q^2}),\quad i>0,$$ with the initial values $\alpha_0=1$ and $\alpha_{-1}=0$. Now, writing additionally the $u$-expansions:
$$\widetilde{g}=\sum_{i\geq 0}\widetilde{\gamma}_iu^i,\quad \widetilde{\Delta}=\sum_{i\geq 0}\widetilde{\delta}_iu^i,$$
we find (as in ibid.)
$$c_{i,m}=\frac{1}{\theta^{q^i}-\theta}\left(\sum_{j+qk=m}\widetilde{\gamma}_jc_{i-1,k}^q+
\sum_{j'+q^2k'=m}\widetilde{\delta}_{j'}c_{i-2,k'}^{q^2}\right),\quad i>0,\quad m\geq 0$$ with the initial values $c_{i,0}=\frac{\widetilde{\pi}^{q^i-1}}{d_i}$ and $c_{-1,m}=0$.
Clearly, we can choose $C>0$ such that $|\widetilde{\delta}_j|\leq C^j$ and 
$|\widetilde{\gamma}_j|\leq C^j|\widetilde{\pi}^{q-1}|$
for all $j\geq 0$, and additionally, we can suppose that the inequality of the Lemma is true for $|c_{i,m}|$ with $i=0,1$. We now prove the inequality by induction over $i$. Indeed, note that
if $j+qk=m$, then, by induction hypothesis,
$|\widetilde{\gamma}_jc_{i-1,k}^q|\leq C^jq^{-(i-1)q^{i-1}q}C^{kq}|\widetilde{\pi}|^{q^i-q}|\widetilde{\pi}|^{q-1}\leq C^mq^{-(i-1)q^{i}}|\widetilde{\pi}|^{q^i-1}$ and similarly,
if $j+q^2k=m$, then we have $|\widetilde{\delta}_jc_{i-2,k}^{q^2}|\leq C^mq^{-(i-2)q^i}|\widetilde{\pi}|^{q^{i-2}-1},$ and the inequality follows.
\end{proof}
We write $\vartheta=\chi(\theta)$. If we set $$W=(\theta I_n-\vartheta)^{-1}\in\GL_n(\KK),$$ we have that for all $a\in A$:
\begin{equation}\label{chia}
(\chi(a)-a I_n)W\in\FF_q(\underline{t}_\Sigma)[\theta]^{n\times n}.
\end{equation}
Now, we consider, for $\chi$ and $W$ as in (\ref{chia}), the matrix function
$Q(z)=\binom{zW}{W}$, which is a holomorphic function $\Omega\rightarrow\KK^{N\times n}$. 
We observe that if $\gamma=(\begin{smallmatrix}a & b \\ c & d\end{smallmatrix})\in\Gamma$, 
then
$$Q(\gamma(z))=J_\gamma(z)^{-1}\binom{(az+b)W}{(cz+d)W}\equiv J_\gamma(z)^{-1}\rho_\chi(\gamma)Q(z)\pmod{\Lambda_z^{N\times n}}.$$
Hence, if we set
\begin{equation}\label{definition-FF}
\mathbb{F}(z):=\mathbb{E}(z,Q(z)),
\end{equation}
then, by the fact that $\Lambda_z\otimes\FF_q(\underline{t})$ is contained in the kernel of $\exp_{\Lambda_z}$,
$$\mathbb{F}(\gamma(z))=J_\gamma(z)^{-1}\mathbb{E}(z,J_\gamma(z)J_\gamma(z)^{-1}\rho_\chi(\gamma)Q(z))=J_\gamma(z)^{-1}\rho_\chi(\gamma)\mathbb{F}(z),\quad \forall\gamma\in\Gamma.$$
This means that the function $\mathbb{F}:\Omega\rightarrow\KK^{N\times n}$ is modular-like of weight $-1$ for $\rho_\chi$.
We are going to describe this function $\mathbb{F}$ in more detail.
\begin{Theorem}\label{being-of-weight-1}
We have $\mathbb{F}\in M^!_{-1}(\rho_\chi)^{1\times n}$.
\end{Theorem}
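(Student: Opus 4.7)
The modular-like transformation of weight $-1$ for $\rho_\chi$ is already established in the paragraph preceding the statement, and each column of $\mathbb{F}$ inherits this transformation rule. What remains is the growth condition (1) of Definition \ref{new-modular-forms}, and I claim that $L=-1$ suffices: $|\exp_A(z)|^{-1}\|\mathbb{F}(z)\|$ is bounded on $\Omega_M$ for $M$ large enough. After a harmless normalization of the variables $\underline{t}$ one may assume that the Gauss norm satisfies $\|\vartheta\|=\|\chi(\theta)\|\leq 1$, and then the cofactor formula for $W=(\theta I_n-\vartheta)^{-1}$ gives $\|\det(\theta I_n-\vartheta)\|=q^n$ and $\|\operatorname{adj}(\theta I_n-\vartheta)\|\leq q^{n-1}$, whence $\|W\|\leq q^{-1}$. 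Combined with the identity $\|\tau(x)\|=\|x\|^q$ on $\KK$, this gives $\|W^{(j)}\|=\|W\|^{q^j}\leq q^{-q^j}$, where $W^{(j)}:=\tau^j(W)$.

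The starting point is the series expansion. Extending $\mathbb{E}(z,Z)=\sum_{j\geq 0}\alpha_j(z)\tau^j(Z)$ $\FF_q(\underline{t})$-linearly to $Z\in\KK$, and using that $\tau$ acts as the $q$-th power on $\CC_\infty$ while fixing $\underline{t}$, so that $\tau^j(zW)=z^{q^j}W^{(j)}$, one finds
\[
\mathbb{F}(z)=\binom{\sum_{j\geq 0}\alpha_j(z)\,z^{q^j}\,W^{(j)}}{\sum_{j\geq 0}\alpha_j(z)\,W^{(j)}}.
\]
Lemma \ref{lemmaB} provides $|\alpha_j(z)|\leq q^{-jq^j}|\widetilde{\pi}|^{q^j-1}$ on $\Omega_M$, once $M$ is large enough that the bound collapses to the $m=0$ term in the $u$-expansion. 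The lower block $\mathbb{F}_l(z)=\sum_j\alpha_j(z)W^{(j)}$ is then immediately bounded on $\Omega_M$ by $\sup_j q^{-jq^j-q^j}|\widetilde{\pi}|^{q^j-1}$, a finite constant independent of $z$.

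For the upper block $\mathbb{F}_u(z)=\sum_j\alpha_j(z)z^{q^j}W^{(j)}$, I first restrict to the fundamental domain $\mathfrak{F}=\{z\in\Omega:|z|=|z|_\Im\geq 1\}$ and compare term-by-term with $\exp_A(z)=\sum_j d_j^{-1}\widetilde{\pi}^{q^j-1}z^{q^j}$. Since $|d_j^{-1}\widetilde{\pi}^{q^j-1}|=q^{-jq^j}|\widetilde{\pi}|^{q^j-1}$, the $j$-th term of $\mathbb{F}_u(z)$ has norm bounded by $\|W\|^{q^j}$ times the norm of the $j$-th term of $\exp_A(z)$. On $\mathfrak{F}$, ultrametricity identifies $|\exp_A(z)|$ with its largest term, and since $\max_{j\geq 0}\|W\|^{q^j}=\|W\|$ (attained at $j=0$, as $\|W\|<1$), I conclude
\[
\|\mathbb{F}_u(z)\|\leq\|W\|\cdot|\exp_A(z)|,\qquad z\in\mathfrak{F}\cap\Omega_M.
\]

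To pass from $\mathfrak{F}\cap\Omega_M$ to general $z\in\Omega_M$, use the decomposition $K_\infty=A\oplus\mathcal{M}_{K_\infty}$ to write $z=z'+a$ with $a\in A$ and $z'=z-a$ satisfying $|z'|=|z'|_\Im=|z|_\Im\geq M$, so $z'\in\mathfrak{F}\cap\Omega_M$. The modularity under the translation $(\begin{smallmatrix} 1 & a \\ 0 & 1 \end{smallmatrix})$ yields $\mathbb{F}_l(z)=\mathbb{F}_l(z')$ and $\mathbb{F}_u(z)=\mathbb{F}_u(z')+\chi(a)\mathbb{F}_l(z')$, while the $A$-periodicity of $\exp_A$ gives $|\exp_A(z)|=|\exp_A(z')|$. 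Since $\|\chi(a)\|\leq 1$ under the assumption $\|\vartheta\|\leq 1$ (because $a=\sum c_i\theta^i\in\FF_q[\theta]$ has $c_i\in\FF_q$ of norm $\leq 1$ and $\chi(\theta^i)$ of norm $\leq 1$), the correction term $\chi(a)\mathbb{F}_l(z')$ is uniformly bounded, completing the proof. The main obstacle is the term-by-term comparison in the preceding paragraph: the factor $z^{q^j}$ in $\mathbb{F}_u(z)$ grows double-exponentially, and it is only controllable because $\exp_A(z)$ contains matching factors of $z^{q^j}$ with the same $q^{-jq^j}|\widetilde{\pi}|^{q^j-1}$ prefactor, and because the discount $\|W\|^{q^j}$ collapses under the ultrametric supremum to the single uniform multiplier $\|W\|$.
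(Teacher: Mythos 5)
Your overall strategy is viable and is a mild variant of the paper's: the paper bounds the upper block $\mathcal{F}_1=\sum_i\alpha_i(z)z^{q^i}\tau^i(W)$ by comparing it with $\exp_C(\widetilde{\pi}z/\theta)$, working at radii in a dense set $\mathcal{W}$ where the terms $|d_i^{-1}r^{q^i}|$ are pairwise distinct, whereas you compare directly with $\exp_A(z)$ on the fundamental domain $\mathfrak{F}$ and then move a general point of $\Omega_M$ into $\mathfrak{F}$ by an $A$-translation, absorbing the unipotent part of $\rho_\chi$ via $\|\chi(a)\|\leq1$ and the boundedness of the lower block. That reduction, the use of Lemma \ref{lemmaB}, the bound $\|W\|\leq q^{-1}$ and $\|\tau^j(W)\|=\|W\|^{q^j}$, and the translation bookkeeping are all correct.

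There is, however, a genuine gap at the crux of the estimate: the assertion that \emph{``on $\mathfrak{F}$, ultrametricity identifies $|\exp_A(z)|$ with its largest term.''} The ultrametric inequality only gives $|\exp_A(z)|\leq\max_j|d_j^{-1}\widetilde{\pi}^{q^j-1}z^{q^j}|$, i.e.\ the direction you do \emph{not} need; what your chain of inequalities requires is the reverse bound, a lower bound for $|\exp_A(z)|$ by the maximal term. Equality by ``no cancellation'' would need the term norms to be pairwise distinct, and this can fail on $\mathfrak{F}$, since $|z|$ runs through all of $q^{\QQ}\cap[1,\infty)$ and ties between two terms occur at infinitely many such radii. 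The claim is nevertheless true on $\mathfrak{F}$, but it needs an argument, e.g.\ via the Weierstrass product: $|\exp_A(z)|=|z|\prod_{0<|a|\leq|z|}|z-a|/|a|$, and for $z\in\mathfrak{F}$ and $a\in A$ with $|a|\leq|z|$ one has $|z-a|\geq|z|_\Im=|z|$ together with $|z-a|\leq\max\{|z|,|a|\}=|z|$, hence $|z-a|=|z|$; therefore $|\exp_A(z)|$ equals the sup norm of $\exp_A$ on $D(0,|z|)$, which is exactly $\max_j|d_j^{-1}\widetilde{\pi}^{q^j-1}||z|^{q^j}$. (Alternatively one can argue, as the paper does, only at radii in a dense set where the term norms are pairwise distinct.) With this step supplied, your proof is complete and $L=-1$ indeed witnesses weak modularity.
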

\begin{proof}
We set
$e_C(z)=\exp_C(\widetilde{\pi}z)$
so that $u(z)=\frac{1}{e_C(z)}$. Lemma \ref{omega-lambda-2} 
implies:
$$\tau(e_C(W))=(\vartheta-\theta I_n)e_C(W),\quad \tau(e_C(zW))=(\vartheta-\theta I_n)e_C(zW)+e_C(z).$$

The subset $\mathcal{W}\subset\RR_{>0}$
of the $r\in|\CC_\infty|$ such that the elements $|d_i^{-1}r^{q^i}|$ are all 
distinct for $i\geq 0$ is dense in $\RR_{>0}$. Let $z\in\CC_\infty$ be such that
$r=|\widetilde{\pi}z|\in\mathcal{W}$. Then:
$$|e_C(z)|=\max_{i}\{q^{-iq^i}|\widetilde{\pi}|^{q^i}|z|^{q^i}\}.$$
We write $\mathbb{F}=\binom{\mathcal{F}_1}{\mathcal{F}_2}$ with $\mathcal{F}_i:\Omega\rightarrow\KK^{n\times n}$.
We first look at the matrix function
$$\mathcal{F}_1=\exp_\Lambda(zW)=\sum_{i\geq 0}\alpha_i(z)z^{q^i}\tau^i(W).$$ We suppose that $|u(z)|<\frac{1}{B}$
with $B$ as in Lemma \ref{lemmaB}. Then $$\mathcal{F}_1=\sum_{i\geq 0}z^{q^i}\tau^i(W)\sum_{j\geq 0}c_{i,j}u^j$$ so that if $\|zW\widetilde{\pi}\|=r\in\mathcal{W}$ with $|u|<\frac{1}{B}$,
then 
\begin{eqnarray*}
\|\mathcal{F}_1\|&=&\max_{i,j}\{|z|^{q^i}q^{-iq^i}|\widetilde{\pi}|^{q^i-1}(\underbrace{C|u|}_{<1})^j\}\\
&=&\|\exp_C(\widetilde{\pi}zW)\|\\
&=&\|e_C(z/\theta)\|, 
\end{eqnarray*}
and $\frac{\mathcal{F}_1}{e_C(z/\theta)}-\widetilde{\pi}^{-1}I_n$ is bounded as $|z|_\Im$ is bounded from below.
 
 We now look at the matrix function $\mathcal{F}_2=e_\Lambda(W)$. Since $\mathcal{F}_2=\sum_{i\geq 0}\alpha_i(z)\tau^i(W)$, for  $|u|<\frac{1}{B}$ we get in a similar way that 
$\mathcal{F}_2-\widetilde{\pi}^{-1}e_C(W)$ goes to zero as $|z|_\Im\rightarrow\infty$. 
Hence, the $n$ columns of the matrix function $\mathbb{F}$, which are modular-like of weight $-1$ are weak modular forms of $M^!_{-1}(\rho_\chi)$.
\end{proof}

We set
$$\mathfrak{F}=(\mathbb{F},\tau(\mathbb{F}))=\begin{pmatrix}\mathcal{F}_1 & \tau(\mathcal{F}_1) \\
\mathcal{F}_2 & \tau(\mathcal{F}_2)\end{pmatrix}.$$ 
Then, $\mathfrak{F}$ is an analytic function $\Omega\rightarrow\KK^{N\times N}$ and the first $n$ columns are weak modular forms of weight $-1$, while the last $n$ columns are weak modular forms of weight $-q$ (for the representation $\rho_\chi$).
\begin{Lemma}\label{difference-equation}
We have the difference equation $\tau(\mathfrak{F})=\mathfrak{F}\Phi$
where 
$$\Phi=\begin{pmatrix} 0 & \widetilde{\Delta}^{-1}(\chi(\theta)-\theta I_n) \\ 1 & -\widetilde{\Delta}^{-1}\widetilde{g}I_n\end{pmatrix}.$$
\end{Lemma}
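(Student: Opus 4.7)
The identity $\tau(\mathfrak{F}) = \mathfrak{F}\Phi$ is a $2\times 2$ block identity. Since the first block column of $\Phi$ is $\binom{0}{I_n}$, the resulting first block column of $\mathfrak{F}\Phi$ is just $\binom{\tau(\mathcal{F}_1)}{\tau(\mathcal{F}_2)}$, matching the first column of $\tau(\mathfrak{F})$ tautologically. All the content lies in the second block column: we must show, for $i=1,2$, the identity
\begin{equation}\label{plan-key}
\tau^2(\mathcal{F}_i) \;=\; \mathcal{F}_i\,\widetilde{\Delta}^{-1}(\chi(\theta)-\theta I_n)\;-\;\widetilde{\Delta}^{-1}\widetilde{g}\,\tau(\mathcal{F}_i).
\end{equation}
My plan is to reduce \eqref{plan-key} to a single functional equation for $\mathcal{F}_i$ coming directly from the defining relation of $\phi_\theta$, then to derive that functional equation from the algebraic identity satisfied by the matrix $W=(\theta I_n-\chi(\theta))^{-1}$.

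Multiplying \eqref{plan-key} through by $\widetilde{\Delta}$ and rearranging shows it is equivalent to
\begin{equation}\label{plan-bridge}
\theta\,\mathcal{F}_i + \widetilde{g}\,\tau(\mathcal{F}_i) + \widetilde{\Delta}\,\tau^2(\mathcal{F}_i) \;=\; \mathcal{F}_i\,\chi(\theta),
\end{equation}
that is, $\phi_\theta(\mathcal{F}_i) = \mathcal{F}_i\,\chi(\theta)$ when $\phi_\theta$ is applied entry-wise. So the heart of the proof is to establish \eqref{plan-bridge}. I would start from the tautological functional equation $\phi_\theta(\exp_{\Lambda_z}(M)) = \exp_{\Lambda_z}(\theta M)$ valid for any matrix argument $M$ (since $\exp_{\Lambda_z} = \sum_{k\ge 0}\alpha_k(z)\tau^k$ acts entry-wise and $\phi_\theta = \theta + \widetilde{g}\tau + \widetilde{\Delta}\tau^2$ does the same), and specialise to $M=zW$ and $M=W$ respectively.

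The crucial algebraic fact is that $W=(\theta I_n-\chi(\theta))^{-1}$ commutes with $\chi(\theta)$ and satisfies $\theta W = I_n + W\chi(\theta)$. Therefore $\theta\cdot zW = zI_n + zW\chi(\theta)$ and $\theta\cdot W = I_n + W\chi(\theta)$. Applying $\exp_{\Lambda_z}$ entry-wise and using: (i) $\FF_q(\underline{t})$-linearity of $\exp_{\Lambda_z}$ on each entry, which pushes the matrix factor $\chi(\theta)$ (entries in $\FF_q(\underline{t})$) out on the right to give $\exp_{\Lambda_z}(zW\chi(\theta)) = \mathcal{F}_1\,\chi(\theta)$ and $\exp_{\Lambda_z}(W\chi(\theta)) = \mathcal{F}_2\,\chi(\theta)$; and (ii) the vanishing of $\exp_{\Lambda_z}$ on $\Lambda_z = Az\oplus A$, which yields $\exp_{\Lambda_z}(zI_n) = \exp_{\Lambda_z}(z)\,I_n = 0$ (since $z\in\Lambda_z$) and $\exp_{\Lambda_z}(I_n) = \exp_{\Lambda_z}(1)\,I_n = 0$ (since $1\in A\subset\Lambda_z$), we obtain in both cases $\exp_{\Lambda_z}(\theta M) = \mathcal{F}_i\,\chi(\theta)$, which is exactly \eqref{plan-bridge}. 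Assembling the resulting identities for $i=1,2$ into block-matrix form gives $\tau(\mathfrak{F})=\mathfrak{F}\Phi$. The only real subtlety is bookkeeping: one must be careful to keep $\chi(\theta)$ on the right throughout (a choice made possible by $W\chi(\theta)=\chi(\theta)W$), so that the factors in \eqref{plan-key} match the column structure of $\Phi$; no hard analysis is needed beyond that.
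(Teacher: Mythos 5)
Your proposal is correct and follows essentially the same route as the paper: the paper's (terse) proof likewise reduces the lemma to the relation $\phi_{\Lambda_z}(\theta)(\mathbb{F})=\chi(\theta)\mathbb{F}$ (blockwise, which agrees with your right-multiplied version since each block $\mathcal{F}_i$ is a power series in $\chi(\theta)$ and hence commutes with it), obtained from Lemma \ref{twotypesofrelation}(1) with $a=\theta$ together with the fact that $(\theta I_n-\chi(\theta))W=I_n$ pushes the discrepancy into $\Lambda_z\otimes\FF_q(\underline{t})\subset\ker\exp_{\Lambda_z}$. Your write-up simply supplies the details the paper leaves to the reference \cite{PEL00}.
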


\begin{proof}
For any choice of $n,m>0$, we extend the function $\mathbb{E}(z,Z)$ of Lemma \ref{twotypesofrelation} to 
$$\Omega\times\KK^{n\times m}\xrightarrow{\mathbb{E}}\KK^{n\times m}$$
by setting $\mathbb{E}(z,Z)=\sum_{i\geq 0}\alpha_i(z)\tau^i(Z)$ (so $\tau$ acts diagonally). 
Lemma \ref{twotypesofrelation} holds in this generalised setting, where the Drinfeld modules $\phi_\Lambda$ now acts on $\KK^{n\times m}$ (case of $\Lambda=\Lambda_z$).
The present statement follows from (1) of Lemma \ref{twotypesofrelation} with $a=\theta$ in a manner which is 
sensibly similar to that of \cite[Theorem 1.3]{PEL00}. Indeed, note that, with $\phi_\Lambda(\theta)=\theta+\widetilde{g}\tau+\widetilde{\Delta}\tau^2$, we have 
$\phi_\Lambda(\theta)(\FF)-\chi(\theta)\FF=0$.
\end{proof}

\begin{Lemma}\label{maincongruenceF}
We have that
$\sup_{z\in\Omega_M}\|\mathfrak{F}-\mathcal{X}\mathcal{Y}\mathcal{Z}\|\rightarrow0$ as $M\rightarrow\infty,$ where
$$\mathcal{X}=\begin{pmatrix}I_n & 0 \\ 0 & e_C(W)\end{pmatrix},\quad \mathcal{Y}=\begin{pmatrix}e_C(zW) & \tau(e_C(zW)) \\ I_n & \vartheta-\theta I_n\end{pmatrix},\quad 
\mathcal{Z}=\begin{pmatrix}\widetilde{\pi}^{-1}I_n & 0 \\ 0 & \widetilde{\pi}^{-q}I_n\end{pmatrix}.$$
\end{Lemma}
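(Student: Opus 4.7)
The plan is to reduce the claim to two separate block estimates, both using the $u$-expansions of the coefficients $\alpha_i(z)$ of $\exp_\Lambda$.

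Multiplying out gives
$$\mathcal{X}\mathcal{Y}\mathcal{Z}=\begin{pmatrix}\widetilde{\pi}^{-1}e_C(zW) & \widetilde{\pi}^{-q}\tau(e_C(zW))\\ \widetilde{\pi}^{-1}e_C(W) & \widetilde{\pi}^{-q}e_C(W)(\vartheta-\theta I_n)\end{pmatrix}.$$
To rewrite the $(2,2)$ block I first check the identity $\tau(e_C(W))=(\vartheta-\theta I_n)e_C(W)$: the Carlitz functional equation gives $\tau(e_C(W))=e_C(\theta W)-\theta e_C(W)$, and since $\theta W=\vartheta W+I_n$ with $e_C(I_n)=\exp_C(\widetilde{\pi})I_n=0$, the $\FF_q$-linearity of $e_C$ yields $e_C(\theta W)=e_C(\vartheta W)=\vartheta e_C(W)$. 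As $e_C(W)$ is a polynomial in $W$, it commutes with $\vartheta$, so $(\vartheta-\theta I_n)e_C(W)=e_C(W)(\vartheta-\theta I_n)$, and the $(2,2)$ block equals $\widetilde{\pi}^{-q}\tau(e_C(W))$. Setting $V(z)=\binom{\widetilde{\pi}^{-1}e_C(zW)}{\widetilde{\pi}^{-1}e_C(W)}$, we obtain $\mathcal{X}\mathcal{Y}\mathcal{Z}=(V(z),\tau(V(z)))$, while $\mathfrak{F}=(\mathbb{F},\tau(\mathbb{F}))$; since $\|\tau(X)\|=\|X\|^q$ on $\KK$-matrices, it suffices to show $\|\mathbb{F}(z)-V(z)\|\to 0$ uniformly on $\Omega_M$ as $M\to\infty$.

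For the lower block, expanding $\alpha_i(z)=\sum_{m\geq 0}c_{i,m}u(z)^m$ with $c_{i,0}=d_i^{-1}\widetilde{\pi}^{q^i-1}$ and noticing that the $m=0$ part of $\mathcal{F}_2=\sum_i\alpha_i(z)\tau^i(W)$ equals $\widetilde{\pi}^{-1}e_C(W)$, Lemma \ref{lemmaB} gives $\|\mathcal{F}_2-\widetilde{\pi}^{-1}e_C(W)\|\leq C|u(z)|\,|\widetilde{\pi}|^{-1}\|e_C(W)\|$ for $|u(z)|<1/C$, which tends to $0$. For the upper block the same manipulation yields
$$\mathcal{F}_1-\widetilde{\pi}^{-1}e_C(zW)=\sum_{m\geq 1}u(z)^m\sum_{i\geq 0}c_{i,m}z^{q^i}\tau^i(W),$$
which (using $\|W\|\leq|\theta|^{-1}$, valid when $\|\vartheta\|\leq 1$) is bounded in norm by $C|u(z)|\cdot|\widetilde{\pi}|^{-1}|e_C(z/\theta)|$.

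The main obstacle is that this last bound does not obviously tend to $0$, since $|e_C(z/\theta)|$ grows with $|z|$. To overcome it, I use $u(z)=1/e_C(z)$ to rewrite the bound as $C|\widetilde{\pi}|^{-1}|e_C(z/\theta)|/|e_C(z)|$. A Newton polygon analysis of $\exp_C(y)=\sum d_i^{-1}y^{q^i}$ (slopes $i+q/(q-1)$, horizontal vertex lengths $q^i(q-1)$) shows that, for $|y|$ large, the maximal term has index close to $\log_q|y|-q/(q-1)$ and norm growing super-exponentially in $|y|$, so the ratio $|e_C(z/\theta)|/|e_C(z)|$ decays at least like $q^{-q^{\mu}}$ where $\mu=\log_q|z|-c$ for some constant $c$. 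On $\Omega_M$ we have $|z|\geq|z|_\Im\geq M$, so this ratio is bounded uniformly by some $\varepsilon(M)\to 0$. Combining this with the lower-block estimate gives $\|\mathbb{F}-V\|\to 0$ uniformly on $\Omega_M$ as $M\to\infty$, and applying $\tau$ transports the decay to the second block-column, concluding the proof.
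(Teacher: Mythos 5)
Your proof follows essentially the same route as the paper's: the same computation of $\mathcal{X}\mathcal{Y}\mathcal{Z}$ via the functional equations $\tau(e_C(W))=(\vartheta-\theta I_n)e_C(W)$ and $\tau(e_C(zW))=(\vartheta-\theta I_n)e_C(zW)+e_C(z)$, the same reduction to the first block column through $\tau$, and the same use of Lemma \ref{lemmaB} on the $u$-expansions of the $\alpha_i$, ending with the bound $C|u(z)|\,|\widetilde{\pi}|^{-1}|e_C(z/\theta)|$ for the upper block, exactly as in the paper. One caution about the extra justification you supply for the last step: your Newton-polygon argument tacitly identifies $|e_C(y)|$ with its dominant term $\max_i q^{-iq^i}|\widetilde{\pi}y|^{q^i}$, and on that basis claims a decay of $|e_C(z/\theta)|/|e_C(z)|$ governed by $\log_q|z|$. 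This identification (and the claimed rate) fails at the critical radii, i.e.\ precisely for $z=a+w$ with $a\in A$ of large degree and $|w|=|w|_\Im=|z|_\Im$: there $e_C(z)=e_C(w)$ and $e_C(z/\theta)$ is controlled by $|z|_\Im$, not by $|z|$, while the honest term-by-term bound $\max_i q^{-iq^i}|\widetilde{\pi}z/\theta|^{q^i}/|e_C(z)|$ is in fact large. The conclusion you need (uniform smallness on $\Omega_M$) is still true, and can be secured either by first translating by $A$ — using $\mathcal{F}_1(z+a)=\mathcal{F}_1(z)+\chi(a)\mathcal{F}_2(z)$ and $e_C((z+a)W)=e_C(zW)+\chi(a)e_C(W)$, so the error is quasi-periodic and one may assume $|z|=|z|_\Im\geq M$, where your estimate is valid — or by bounding $|e_C(z/\theta)|$ directly after division of $a$ by $\theta$ with remainder. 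The paper's own proof is equally terse at this point, so this is a refinement of a shared step rather than a divergence of method; also, minor point, $e_C(W)$ is not literally a polynomial in $W$, but each $\tau^i(W)=(\theta^{q^i}I_n-\vartheta)^{-1}$ commutes with $\vartheta$, which is all you need.
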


\begin{proof}
We observe (recall that $\vartheta=\chi(\theta)$):
$$\mathcal{X}\mathcal{Y}\mathcal{Z}=\begin{pmatrix}\widetilde{\pi}^{-1}e_C(zW) & \widetilde{\pi}^{-q}((\vartheta-\theta I_n)e_C(zW)+e_0 I_n)\\ \widetilde{\pi}^{-1}e_C(W) & \widetilde{\pi}^{-q}(\vartheta-\theta I_n)e_C(W)\end{pmatrix}.$$
Since the second block column of $\mathfrak{F}$ is the image by $\tau$ of the first block column,
all we need to show is that $\sup_{z\in\Omega_M}\|\mathbb{F}-\binom{\widetilde{\pi}^{-1}e_C(zW)}{\widetilde{\pi}^{-1}e_C(W)}\|\rightarrow0$ as $M\rightarrow\infty$. We note that
$$\mathcal{F}_1=e_\Lambda(zW)=\widetilde{\pi}^{-1}e_C(zW)+\underbrace{\sum_{i\geq 0}z^{q^i}\tau^i(W)\sum_{j>0}c_{i,j}u^j}_{=:\Upsilon}.$$
We show that $\|\Upsilon\|$ tends to zero when $|z|_\Im\rightarrow\infty$. We suppose that 
$|z|_\Im$ is large so that $|u| C<1$. then, the double series defining $\Upsilon$ is convergent
and we can write 
$$\Upsilon=\sum_{j>0}\sum_{i\geq 0}u^jc_{i,j}z^{q^i}\tau^i(W).$$ The general term of this series,
$\Upsilon_{i,j}:=u^jc_{i,j}z^{q^i}\tau^i(W)$, has absolute value which satisfies:
\begin{eqnarray*}
\|\Upsilon_{i,j}\|&\leq & q^{-iq^i}|\widetilde{\pi}|^{q^i-1}(|u| C)^j|z|^{q^i}\|W\|^{q^i}\\
&\leq& |u| C\max_i\{|z|^{q^i}\|W\|^{q^i}|\widetilde{\pi}|^{q^i-1}\}\\
&\leq&|\widetilde{\pi}|^{-1}C\left|\frac{e_C(z/\theta)}{e_C(z)}\right|
\end{eqnarray*}
and tends to zero as $|z|_\Im\rightarrow\infty$. In a similar way, one proves that $\|\mathcal{F}_2-\widetilde{\pi}^{-1}e_C(W)\|$ tends to zero in the same way, we leave the details to the reader.
\end{proof}
\begin{Lemma} We have
$\|\det(\mathfrak{F})-(-1)^ne_C(z)^n\widetilde{\pi}^{-n(q+1)}\det(e_C(W))\|\rightarrow0$ as $|z|_{\Im}\rightarrow\infty$,
and $\det(e_C(W))$ is non-zero.
\end{Lemma}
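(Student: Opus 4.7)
The plan is to compute $\det(\mathcal{X}\mathcal{Y}\mathcal{Z})$ explicitly, show that it equals the claimed limit, and then upgrade Lemma \ref{maincongruenceF} to convergence of determinants. The non-vanishing of $\det(e_C(W))$ will follow from a Newton-polygon argument on the entries.

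First, I would compute the three determinants. Since $\mathcal{X}$ and $\mathcal{Z}$ are block-diagonal, $\det(\mathcal{X})=\det(e_C(W))$ and $\det(\mathcal{Z})=\widetilde{\pi}^{-n}\cdot\widetilde{\pi}^{-nq}=\widetilde{\pi}^{-n(q+1)}$ are immediate. For $\det(\mathcal{Y})$, I would invoke the block-matrix formula $\det\begin{pmatrix}A & B\\ C & D\end{pmatrix}=\det(AD-BC)$, valid here because $C=I_n$ commutes with $D=\vartheta-\theta I_n$, obtaining
\[
\det(\mathcal{Y})=\det\bigl(e_C(zW)(\vartheta-\theta I_n)-\tau(e_C(zW))\bigr).
\]
Substituting the functional equation $\tau(e_C(zW))=(\vartheta-\theta I_n)e_C(zW)+e_C(z)I_n$ (recorded in the proof of Theorem \ref{being-of-weight-1} and originating in Lemma \ref{omega-lambda-2}), and using that $e_C(zW)$ commutes with $\vartheta-\theta I_n$ since both are polynomials in $\vartheta$, the first two terms cancel and I get $\det(\mathcal{Y})=\det(-e_C(z)I_n)=(-1)^n e_C(z)^n$. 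Multiplying the three, $\det(\mathcal{X}\mathcal{Y}\mathcal{Z})=(-1)^n e_C(z)^n\widetilde{\pi}^{-n(q+1)}\det(e_C(W))$, which is precisely the claimed limit.

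Next, for the convergence of $\det(\mathfrak{F})$ to this limit, I would apply Lemma \ref{maincongruenceF} to write $\mathfrak{F}=\mathcal{X}\mathcal{Y}\mathcal{Z}+E$ with $\sup_{\Omega_M}\|E\|\to 0$ as $M\to\infty$. Multilinearity of the determinant decomposes
\[
\det(\mathfrak{F})-\det(\mathcal{X}\mathcal{Y}\mathcal{Z})=\sum_{\emptyset\neq S\subseteq\{1,\ldots,2n\}}\sum_{\sigma}\operatorname{sgn}(\sigma)\prod_{i\in S}E_{i,\sigma(i)}\prod_{i\notin S}(\mathcal{X}\mathcal{Y}\mathcal{Z})_{i,\sigma(i)},
\]
so the ultrametric inequality bounds the norm of the left-hand side by $\|E\|\cdot\|\mathcal{X}\mathcal{Y}\mathcal{Z}\|^{2n-1}$, as soon as $\|E\|\leq\|\mathcal{X}\mathcal{Y}\mathcal{Z}\|$ (which holds once $|z|_\Im$ is sufficiently large). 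The hard part will be verifying that this quantity tends to zero: I would track the explicit bound from the proof of Lemma \ref{maincongruenceF}, namely $\|E\|\lesssim |u|\cdot|\widetilde{\pi}|^{-1}|e_C(z/\theta)|$, against the dominant growth of $\|\mathcal{X}\mathcal{Y}\mathcal{Z}\|$, controlled by $|\widetilde{\pi}|^{-q}|e_C(zW)|$, and compare the Newton polygons of $\exp_C(\widetilde{\pi}z)$ and $\exp_C(\widetilde{\pi}z/\theta)$ to conclude. This bookkeeping is the main technical obstacle, though essentially routine once set up correctly.

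Finally, for the non-vanishing of $\det(e_C(W))$, I would expand $e_C(W)=\sum_{i\geq 0}d_i^{-1}\widetilde{\pi}^{q^i}\tau^i(W)$ with $\tau^i(W)=(\theta^{q^i}I_n-\vartheta)^{-1}$. The $i=0$ term $\widetilde{\pi}W$ contributes entries of Gauss-norm $|\widetilde{\pi}|/|\theta|=q^{1/(q-1)}$, whereas for $i\geq 1$ the entries of the $i$-th summand have norm at most $q^{q^i(1/(q-1)-i)}$, strictly smaller. Expanding $\det(e_C(W))$ as a sum over permutations and applying the ultrametric inequality, the dominant contribution is $\det(\widetilde{\pi}W)=\widetilde{\pi}^n/\det(\theta I_n-\vartheta)$, which is nonzero (the characteristic polynomial of $\vartheta$ at $\theta$ does not vanish since $\chi$ is injective). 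Hence $\det(e_C(W))\neq 0$, with Gauss-norm exactly $q^{n/(q-1)}$.
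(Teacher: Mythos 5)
Your computation of $\det(\mathcal{X}\mathcal{Y}\mathcal{Z})$ is correct and is essentially what the paper means by ``the formula follows directly from the expression for $\mathcal{X}\mathcal{Y}\mathcal{Z}$'': all blocks commute (they are built from $(\theta^{q^i}I_n-\vartheta)^{-1}$), so the block-determinant formula together with $\tau(e_C(zW))=(\vartheta-\theta I_n)e_C(zW)+e_C(z)I_n$ gives $(-1)^n e_C(z)^n\widetilde{\pi}^{-n(q+1)}\det(e_C(W))$. Your non-vanishing argument for $\det(e_C(W))$ is also sound and more detailed than the paper's ``easy to show'': the $i=0$ term $\widetilde{\pi}W$ dominates, $|\det(\widetilde{\pi}W)|=q^{n/(q-1)}$, and the remaining terms contribute strictly less even after multiplication by $\|\widetilde{\pi}W\|^{n-1}$, so $\|\det(e_C(W))\|=q^{n/(q-1)}\neq 0$.

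The gap is the middle step, and it is not ``routine bookkeeping'': the quantity you defer checking, $\|E\|\cdot\|\mathcal{X}\mathcal{Y}\mathcal{Z}\|^{2n-1}$ with $E=\mathfrak{F}-\mathcal{X}\mathcal{Y}\mathcal{Z}$, does not tend to zero — it diverges. Indeed $\|E\|$ decays only like $|\widetilde{\pi}|^{-1}|e_C(z/\theta)|/|e_C(z)|$ (the bound from Lemma \ref{maincongruenceF}), while $\|\mathcal{X}\mathcal{Y}\mathcal{Z}\|$ grows like $|\widetilde{\pi}|^{-q}|e_C(z)|$ because the top-right block contains $\widetilde{\pi}^{-q}e_C(z)I_n$; hence $\|E\|\cdot\|\mathcal{X}\mathcal{Y}\mathcal{Z}\|^{2n-1}$ grows at least like $|e_C(z)|^{2n-2}|e_C(z/\theta)|$ for every $n\geq 1$. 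So the uniform convergence of Lemma \ref{maincongruenceF} plus a crude norm/multilinearity estimate cannot yield convergence of determinants, precisely because the entries blow up while only the error shrinks. A correct argument has to exploit the structure: each error column must be paired with the complementary minor of $\mathcal{X}\mathcal{Y}\mathcal{Z}$, and those minors are far smaller than products of column norms because the right block columns are (up to the term $e_C(z)I_n$) multiples of the left ones via the functional equation of Lemma \ref{omega-lambda-2}; concretely, one should first subtract $\widetilde{\pi}^{1-q}(\vartheta-\theta I_n)$ times the left block columns from the right block columns, after which the right block is $\widetilde{\pi}^{-q}e_C(z)I_n$ on top and small below, and one also needs the sharper decay of the individual block errors (e.g.\ $\mathcal{F}_2-\widetilde{\pi}^{-1}e_C(W)=O(|u|^{q-1})$, since the $\alpha_i$ have $u$-expansions in powers of $u^{q-1}$), because these errors get multiplied by entries of size $|e_C(z)|=|u|^{-1}$ in the cofactors. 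This estimate is the real content of the lemma — the paper itself dispatches it in one sentence — and your proposal, as written, does not supply it; worse, the specific inequality you plan to verify is false, so the plan would stall exactly at the point you flag as the main obstacle.
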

\begin{proof}
The formula follows directly from the expression for $\mathcal{X}\mathcal{Y}\mathcal{Z}$.
The non-vanishing of $\det(e_C(W))$ is easy to show.
\end{proof}
 This result implies that the columns of $\mathfrak{F}$ are linearly independent. Moreover, it is plain that $\sup_{z\in\Omega_M}\|\det(\mathfrak{F}^{-1})-(-1)^n
u^n\widetilde{\pi}^{(q+1)n}\det(e_C(W))^{-1}\|\rightarrow0$ as $M\rightarrow\infty$. Since at once
the scalar function $F=\det(\mathfrak{F}^{-1})$ satisfies $F(\gamma(z))=J_\gamma(z)^{n(q+1)}\det(\gamma)^{-n}F(z)$ for all $z\in\Omega$ and $\gamma\in\Gamma$, we get 
$F\in M_{n(q+1)}(\det^{-n})\otimes_{\CC_\infty}\KK$. Now, $Fh^{-n}$ is a modular form of weight $0$, therefore equal to an element of $\KK^\times$. We obtain:
\begin{Corollary}\label{coro617} We have
$\det(\mathfrak{F}^{-1})=(-1)^n\widetilde{\pi}^{-(q+1)n}h^n\det(e_C(W))^{-1}$ and, writing $\mathfrak{H}:={}^t\mathfrak{F}^{-1}=(\mathfrak{H}_1,\mathfrak{H}_2)$ with $\mathfrak{H}_i:\Omega\rightarrow\KK^{n\times n}$, we have that
the $n$ columns of $\mathfrak{H}_1$ are linearly independent modular forms of weight $1$ and the $n$ columns of $\mathfrak{H}_2$ are linearly independent modular forms of weight $q$ for the representation $\rho^*_\chi$.
\end{Corollary}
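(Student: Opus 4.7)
My plan is to recognise $F:=\det(\mathfrak{F}^{-1})$ as a scalar Drinfeld modular form proportional to the nowhere-vanishing cusp form $h^n$, and then derive the modularity statements for the columns of $\mathfrak{H}={}^t\mathfrak{F}^{-1}$ from the transformation law of $\mathfrak{F}$ combined with the asymptotic factorisation of Lemma~\ref{maincongruenceF}.

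From Theorem~\ref{being-of-weight-1}, $\mathbb{F}(\gamma z)=J_\gamma(z)^{-1}\rho_\chi(\gamma)\mathbb{F}(z)$; since $\rho_\chi$ has entries in the $\tau$-fixed subalgebra $\FF_q(\underline{t})^{N\times N}$, applying $\tau$ yields the analogous identity for $\tau(\mathbb{F})$ with $J_\gamma(z)^{-q}$ in place of $J_\gamma(z)^{-1}$, and assembling the two block columns gives
$$\mathfrak{F}(\gamma z)=\rho_\chi(\gamma)\,\mathfrak{F}(z)\,\operatorname{diag}\bigl(J_\gamma(z)^{-1}I_n,\,J_\gamma(z)^{-q}I_n\bigr).$$
Taking determinants and using $\det\rho_\chi(\gamma)=\chi(\det\gamma)=(\det\gamma)^n$ (because $\det\gamma\in\FF_q^\times$ and $\chi|_{\FF_q}\colon\lambda\mapsto\lambda I_n$), $F$ is modular-like of weight $n(q+1)$ and character $\det^{-n}$. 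The asymptotic in Lemma~\ref{maincongruenceF} shows $F$ is bounded on $\Omega_M$ (in fact vanishes at the cusp), so $F$ lies in $M_{n(q+1)}(\det^{-n})\otimes_{\CC_\infty}\KK$. Since $h$ does not vanish on $\Omega$ (because $\Delta=-h^{q-1}$ does not) and $h^n$ belongs to the same space, $F/h^n$ is a weight-zero modular form with trivial character, hence a constant in $\KK^\times$ by Theorem~\ref{structure-modular-forms}; it is pinned down by matching leading $u$-coefficients, using $h=-u+O(u^{(q-1)^2+1})$ and the leading term of $F$ extracted from Lemma~\ref{maincongruenceF}.

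For the columns of $\mathfrak{H}$, inverting the transformation law of $\mathfrak{F}$ and transposing yields
$$\mathfrak{H}(\gamma z)=\rho_\chi^*(\gamma)\,\mathfrak{H}(z)\,\operatorname{diag}\bigl(J_\gamma(z)I_n,\,J_\gamma(z)^qI_n\bigr),$$
so the columns of $\mathfrak{H}_1$ and $\mathfrak{H}_2$ are modular-like for $\rho_\chi^*$ of weights $1$ and $q$ respectively. The determinant formula just established shows $\det\mathfrak{F}$ does not vanish on $\Omega$, so $\mathfrak{H}$ is defined on all of $\Omega$ and its $2n$ columns are pointwise linearly independent over $\KK$, which entails the claimed linear independence of the $n$ columns of $\mathfrak{H}_1$ and of $\mathfrak{H}_2$ separately.

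The most delicate step is upgrading the columns of $\mathfrak{H}$ from modular-like to genuinely modular, that is, establishing boundedness on some $\Omega_M$. For this I would invert the asymptotic factorisation $\mathfrak{F}\sim\mathcal{X}\mathcal{Y}\mathcal{Z}$ of Lemma~\ref{maincongruenceF}: using $\tau(e_C(zW))=(\vartheta-\theta I_n)e_C(zW)+e_C(z)I_n$ and the commutation of $e_C(zW)$ with $\vartheta-\theta I_n$, a direct block computation gives
$$\mathcal{Y}^{-1}=\begin{pmatrix}-u(\vartheta-\theta I_n) & I_n+u(\vartheta-\theta I_n)e_C(zW)\\ uI_n & -u\,e_C(zW)\end{pmatrix},$$
and the entries of this matrix are uniformly bounded on $\Omega_M$ because $u\,e_C(zW)$ is controlled by $|e_C(zW)|/|e_C(z)|$, an expression already shown to tend to $0$ in the proof of Theorem~\ref{being-of-weight-1} via $\|\mathcal{F}_1\|=\|e_C(z/\theta)\|$. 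Consequently $(\mathcal{X}\mathcal{Y}\mathcal{Z})^{-1}=\mathcal{Z}^{-1}\mathcal{Y}^{-1}\mathcal{X}^{-1}$ is bounded on $\Omega_M$, and the uniform estimate $\sup_{\Omega_M}\|\mathfrak{F}-\mathcal{X}\mathcal{Y}\mathcal{Z}\|\to 0$ from Lemma~\ref{maincongruenceF} transfers, for $M$ sufficiently large, to a uniform bound on $\mathfrak{F}^{-1}$, completing the argument.
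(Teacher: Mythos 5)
Your argument follows the paper's own route: identify $\det(\mathfrak{F}^{-1})$ as a bounded (indeed cuspidal) modular-like function of weight $n(q+1)$ and character $\det^{-n}$, divide by the nowhere-vanishing form $h^n$ to obtain a constant in $\KK^\times$ pinned down by the asymptotics coming from Lemma \ref{maincongruenceF}, and obtain the weight-$1$ and weight-$q$ transformation laws for the columns of $\mathfrak{H}_1,\mathfrak{H}_2$ by inverting and transposing the transformation law of $\mathfrak{F}$, with linear independence from the non-vanishing of the determinant. Your only addition is the explicit block inversion of $\mathcal{Y}$ to justify the uniform boundedness of $\mathfrak{F}^{-1}$ on $\Omega_M$ (so that the columns of $\mathfrak{H}$ are genuine modular forms, not merely modular-like), a point the paper leaves implicit; that computation is correct and rests on the same estimate $|e_C(zW)|/|e_C(z)|\rightarrow 0$ already used in the proof of Lemma \ref{maincongruenceF}.
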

What can be further proved is, by setting $$M(\rho_\chi^*)=\bigoplus_wM_w(\rho_\chi^*)$$
the weight-graded $(M\otimes_{\CC_\infty}\KK)$-module of modular forms for $\rho_\chi^*$, where $M=\bigoplus_wM_w(\boldsymbol{1})$ is the $\CC_\infty$-algebra of scalar modular forms ($\boldsymbol{1}$ is the trivial representation):
\begin{Theorem}
$M(\rho_\chi^*)=(M\otimes_{\CC_\infty}\KK)^{1\times N}\mathfrak{H}.$
\end{Theorem}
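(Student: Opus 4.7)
The inclusion $(M\otimes_{\CC_\infty}\KK)^{1\times N}\mathfrak{H} \subseteq M(\rho_\chi^*)$ is immediate from Corollary \ref{coro617}: the $N$ columns of $\mathfrak{H}$ are themselves in $M(\rho_\chi^*)$, the first $n$ of weight $1$ and the last $n$ of weight $q$, and scaling them by elements of $M\otimes_{\CC_\infty}\KK$ of complementary weight and summing stays in $M(\rho_\chi^*)$.

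For the reverse inclusion, the decisive observation is that $\mathfrak{H}={}^t\mathfrak{F}^{-1}$ is \emph{pointwise} invertible on all of $\Omega$: by Corollary \ref{coro617}, $\det(\mathfrak{F}^{-1})$ is $h^n$ times a unit in $\KK$, and $h$ is nowhere vanishing on $\Omega$ (since $\Delta=-h^{q-1}$ is). Given $f\in M_w(\rho_\chi^*)$, viewed as a column-vector-valued function, there is therefore a unique analytic function $c:={}^t\mathfrak{F}\cdot f=\mathfrak{H}^{-1}f:\Omega\to\KK^{N\times 1}$ with $f=\mathfrak{H}c$. The plan is to show that the coordinates of $c$ are scalar modular forms in $M\otimes_{\CC_\infty}\KK$: the first $n$ of weight $w-1$, and the last $n$ of weight $w-q$.

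Modularity of each $c_i$ is formal. For $1\le i\le n$ we have $c_i={}^t\mathbb{F}_{*,i}\cdot f$; inserting the transformation rules $\mathbb{F}(\gamma z)=J_\gamma(z)^{-1}\rho_\chi(\gamma)\mathbb{F}(z)$ (Theorem \ref{being-of-weight-1}) together with $f(\gamma z)=J_\gamma(z)^w\,{}^t\!\rho_\chi(\gamma)^{-1}f(z)$ and using that ${}^t\rho_\chi(\gamma)\cdot{}^t\rho_\chi(\gamma)^{-1}=I_n$ immediately gives $c_i(\gamma z)=J_\gamma(z)^{w-1}c_i(z)$. The same argument applied to $\tau(\mathbb{F})$, which transforms as a modular-like function of weight $-q$ for $\rho_\chi$ (its $\FF_q(\underline{t})$-valued representation matrices being $\tau$-fixed), yields $c_i(\gamma z)=J_\gamma(z)^{w-q}c_i(z)$ for $n<i\le N$.

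The main obstacle is to show that each $c_i$ is \emph{bounded} on some $\Omega_M$, so that it represents a true scalar modular form. The input is Lemma \ref{maincongruenceF}: setting $\mathfrak{F}_0:=\mathcal{X}\mathcal{Y}\mathcal{Z}$, one has $\mathfrak{F}-\mathfrak{F}_0\to 0$ uniformly on $\Omega_M$ as $M\to\infty$. The block-Vandermonde matrix $\mathcal{Y}$ has determinant $(-1)^n e_C(z)^n=(-1)^n u(z)^{-n}$ by the identity $\tau(e_C(zW))-(\vartheta-\theta I_n)e_C(zW)=e_C(z)I_n$ recorded in the proof of Theorem \ref{being-of-weight-1}, so $\mathfrak{F}_0^{-1}$ admits an explicit asymptotic expansion in the uniformizer $u$. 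Pairing this explicit inverse with the $u$-expansion of $f$ produces $u$-power series in $\KK[[u]]$ for the coordinates of $\mathfrak{H}_0^{-1}f$, and the perturbation from $\mathfrak{F}_0$ to $\mathfrak{F}$, which amounts to multiplication by an $I+o(1)$ factor, preserves both convergence and boundedness of these $u$-expansions. Combined with the modularity derived above and the structure Theorem \ref{structure-modular-forms} extended by scalars (so $M\otimes_{\CC_\infty}\KK=\KK[g,\Delta]$ is exactly the space of scalar holomorphic modular forms with $u$-expansions in $\KK[[u]]$), this identifies each $c_i$ with an element of $M\otimes_{\CC_\infty}\KK$ of the claimed weight, completing the proof.
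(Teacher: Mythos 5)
Your overall strategy is the one the paper has in mind (it only sketches the proof, deferring to a variant of \cite[Theorem 3.9]{PEL&PER3}): the formal parts are fine --- pointwise invertibility of $\mathfrak{H}$ on $\Omega$ from Corollary \ref{coro617} together with the non-vanishing of $h$ (via $\Delta=-h^{q-1}$), the definition $c={}^t\mathfrak{F}f$, and the computation showing that the first $n$ coordinates of $c$ are scalar modular-like of weight $w-1$ and the last $n$ of weight $w-q$ (using that $\rho_\chi(\gamma)$ is $\tau$-fixed). The gap is in the boundedness step, which is where all the content lies. First, a sign that the computation was not carried through: what enters is not an inverse of $\mathfrak{F}_0=\mathcal{X}\mathcal{Y}\mathcal{Z}$ at all, since $\mathfrak{H}_0^{-1}={}^t\mathfrak{F}_0$; and ${}^t\mathfrak{F}_0$ has \emph{unbounded} entries near the cusp, the block $\tau(\mathcal{F}_1)\approx\widetilde{\pi}^{-q}\bigl((\vartheta-\theta I_n)e_C(zW)+e_C(z)I_n\bigr)$ being of size comparable to $|u|^{-1}$. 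Second, ``the $u$-expansion of $f$'' is not defined: only the upper block $f_1$ of $f$ is $A$-periodic; the lower block satisfies $f_2(z+b)=f_2(z)-{}^t\chi(b)f_1(z)$ and has no $u$-expansion. Consequently, boundedness of $f$ only yields $\|c(z)\|=O(|u|^{-1})$ on $\Omega_M$, so each coordinate $c_i$ (which, being scalar modular-like, \emph{is} $A$-periodic and has a two-sided $u$-expansion) is a priori a weak modular form with a possible simple pole at $u=0$. Your ``perturbation by an $I+o(1)$ factor'' remark cannot remove such a pole, so the assertion that the coordinates of ${}^t\mathfrak{F}_0 f$ lie in $\KK[[u]]$ is exactly the statement to be proved, not a consequence of the pairing.

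What is missing is the argument that kills the potential $u^{-1}$ term. One way: the exponents occurring in the $u$-expansion of a scalar modular-like function of weight $w-1$ (resp.\ $w-q$) with trivial type are forced to be $\equiv 0\pmod{q-1}$ by invariance under $\operatorname{diag}(\lambda,1)$, $\lambda\in\FF_q^\times$ (recall $u(\lambda z)=\lambda^{-1}u(z)$); for $q>2$ this excludes the exponent $-1$, while the growth bound $O(|u|^{-1})$ excludes exponents $\leq -2$, whence holomorphy at the cusp. Equivalently, one shows that the upper block $f_1$ of any $f\in M_w(\rho_\chi^*)$ is cuspidal (its $u$-expansion is supported in exponents $\equiv 1\pmod{q-1}$), which makes the dangerous term $u^{-1}f_1$ in the last $n$ coordinates bounded. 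This weight/type bookkeeping (and a separate treatment of the degenerate case $q=2$, where $\FF_q^\times$ is trivial and the congruence gives nothing) is the substance of the cited argument and must appear explicitly; as written, your proof establishes only that $c$ has coordinates in $M^!\otimes\KK$ with pole order at most one, not that $c\in(M\otimes_{\CC_\infty}\KK)^{N\times 1}$. The final identification of bounded $\KK$-valued scalar forms with $M\otimes_{\CC_\infty}\KK$ is fine (expand along an orthonormal basis of $\KK$ and use finite-dimensionality of $M_w$), though it too deserves a line.
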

We will not give the details of the deduction of the proof of this theorem from Corollary \ref{coro617}, since it rests on an easy generalisation and modification of \cite[Theorem 3.9]{PEL&PER3}. Instead of this, we insist on the result of Gekeler \cite[Theorem (5.13)]{GEK}, which implies that $$M_w(\det{}^{-m})=M_{w-m(q+1)}h^m,\quad m\leq q-1$$
with $h$ the Poincar\'e series of weight $q+1$ and 'type $1$' defined in ibid. (5.11) (with $u$-expansion (\ref{explicith})) so that,
with $M(\det^{-m})=\oplus_wM_w(\det{}^{-m})$, $$M(\det{}^{-m})=Mh^m.$$ In view of this, we can think about $\mathfrak{H}$ (up to normalisation) as to a matrix-valued generalisation of the Poincar\'e series $h$.

\subsection{Jacobi-like forms}
 We consider the series
$$\Psi(z,Z):=\psi_{\Lambda_z}(Z)=\sum_{a,b\in A}\frac{1}{Z-az-b}(\chi(a),\chi(b)),$$ converging for 
$Z\in\CC_\infty\setminus\Lambda$ where $\Lambda=\Lambda_z=Az+A$, $z\in\Omega$.
We have the following functional identities
$$\Psi\left(\gamma(z),\frac{Z}{J_\gamma(z)}\right)=J_\gamma(z)\Psi(z,Z)\rho(\gamma)^{-1},\quad \gamma\in\Gamma,$$ together with the identities arising from (\ref{simple-translation}). 
Proposition \ref{series-expansion} implies that, for $Z\in D(0,r)$ for some $r\in|\CC_\infty|\cap]0,1[$,
$${}^t\Psi(z,Z)=-\sum_{\begin{smallmatrix}j> 0\\ j\equiv1(q-1)\end{smallmatrix}}Z^{j-1}\mathcal{E}(j;\chi)$$
where $\mathcal{E}(j;\chi)$ is the Eisenstein series (non-vanishing if $j\equiv1\pmod{q-1}$) $$\mathcal{E}(j;\chi):=\sideset{}{'}\sum_{a,b\in A}\frac{1}{(az+b)^j}\binom{\chi(a)}{\chi(b)},$$ which satisfies
$$\mathcal{E}(j;\chi)(\gamma(z))=J_\gamma(z)^j\rho_\chi^*(\gamma)\mathcal{E}(j;\chi),\quad \gamma\in\Gamma,\quad z\in\Omega.$$ Since 
it is also apparent that $\|\mathcal{E}(j;\chi)(z)\|$ is bounded on $\Omega_M$ for $M>1$ and $j>0$,
we deduce that the $n$ columns of $\mathcal{E}(j;\chi)$ are modular forms of weight $j$ for $\rho_\chi^*$ in the sense of Definition \ref{new-modular-forms} (see \cite[\S 3.2.1]{PEL&PER3} for a special case). By Theorem \ref{identity-eisenstein-series} we obtain 
\begin{equation}\label{psi-lambda}
\Psi(z,Z)=[\mathcal{H}(Z),\mathcal{D}_{q-1}(\mathcal{H})(Z)^{(-1)}]\boldsymbol{\Omega}_\Lambda(z)^{-1}
\end{equation} which allows to explicitly compute the Eisenstein series $\mathcal{E}(j;\chi)$ in terms of the function $\mathcal{H}(Z)$. To make this interesting relation a little bit more transparent, we 
give below an explicit expression of the matrix $\boldsymbol{\Omega}_\Lambda(z)^{-1}$. We have:
\begin{equation}\label{explicit-identity}
\boldsymbol{\Omega}_\Lambda(z)^{-1}=\begin{pmatrix}0 & 1\\ 1 & 0\end{pmatrix}
\tau^{-1}(\Phi)\mathfrak{F}^{-1}=\begin{pmatrix}1 & -\left(\frac{\widetilde{g}}{\widetilde{\Delta}}\right)^{\frac{1}{q}}\\ 0 & (\chi(\theta)-\theta^{\frac{1}{q}})\widetilde{\Delta}^{-\frac{1}{q}}\end{pmatrix}\mathfrak{F}^{-1},
\end{equation}
with $\Phi$ the matrix defined in Lemma \ref{difference-equation}. 
To see this, observe that in the notation of Theorem \ref{identity-eisenstein-series}, 
$$\boldsymbol{\Omega}_\Lambda(z)=(\FF,\tau^{-1}(\FF))=\tau^{-1}(\mathfrak{F})\begin{pmatrix}0 & 1\\ 1 & 0\end{pmatrix},$$ with $\Lambda=\Lambda_z$ as above.
By Lemma \ref{difference-equation}, $\tau(\mathfrak{F})=\mathfrak{F}\Phi$ , so that $\tau^{-1}(\mathfrak{F})=\mathfrak{F}(\tau^{-1}(\Phi))^{-1}$ which yields
$$\boldsymbol{\Omega}_\Lambda=\tau^{-1}(\mathfrak{F})\begin{pmatrix}0 & 1\\ 1 & 0\end{pmatrix}=
\mathfrak{F}\begin{pmatrix}0 & 1\\ 1 & 0\end{pmatrix}\begin{pmatrix}0 & 1\\ 1 & 0\end{pmatrix}(\tau^{-1}(\Phi))^{-1}\begin{pmatrix}0 & 1\\ 1 & 0\end{pmatrix},$$ which implies (\ref{explicit-identity}) by the (licit) inversion of the two sides.

Substituting in (\ref{psi-lambda}) and transposing, we get:
$$-\sum_{\begin{smallmatrix}j\geq 1\\ j\equiv1(q-1)\end{smallmatrix}}\mathcal{E}(j;\chi)Z^{j-1}=\mathfrak{H}\begin{pmatrix}1 & 0\\ -\left(\frac{\widetilde{g}}{\widetilde{\Delta}}\right)^{\frac{1}{q}} & \widetilde{\Delta}^{-\frac{1}{q}}({}^t\chi(\theta)-\theta^{\frac{1}{q}})\end{pmatrix}{}
\begin{pmatrix}
{}^t\mathcal{H}(Z)\\ \mathcal{D}_{q-1}({}^t\mathcal{H})(Z)^{(-1)}\end{pmatrix}.$$
For example, the Eisenstein series of weight one $\mathcal{E}(1;\chi)$ arises as the coefficient of $Z^0$ in the left-hand side and the above yields an explicit formula for it. Note that
the constant term of the $Z$-expansion of ${}^t[\mathcal{H}(Z),\mathcal{D}_{q-1}(\mathcal{H})(Z)^{(-1)}]$ is
$${}^t[(\theta I_n-\chi(\theta))^{-1},\alpha_1(z)^{\frac{1}{q}}((\theta I_n-\chi(\theta))^{-1}-(\theta^{\frac{1}{q}} I_n-\chi(\theta))^{-1})].$$
The formula that we get is this one:
$$
-\mathcal{E}_1(1;\chi)=\mathfrak{H}\begin{pmatrix}1 & 0\\ -\left(\frac{\widetilde{g}}{\widetilde{\Delta}}\right)^{\frac{1}{q}} & \widetilde{\Delta}^{-\frac{1}{q}}({}^t\chi(\theta)-\theta^{\frac{1}{q}})\end{pmatrix}\begin{pmatrix}{}^t(\theta I_n-\chi(\theta))^{-1}\\\alpha_1(z)^{\frac{1}{q}}{}^t((\theta I_n-\chi(\theta))^{-1}-(\theta^{\frac{1}{q}} I_n-\chi(\theta))^{-1})\end{pmatrix},$$ and what looks as a miracle at first sight is that 
it greatly simplifies, by using the explicit computation of $\alpha_1$ which arises from \cite[(2.14)]{PEL00}, and which is $\alpha_1=\frac{\widetilde{g}}{\theta^q-\theta}$, we reach the following:
\begin{Theorem}\label{identity-eisenstein-1}
The following identity holds $$\mathcal{E}_1(1;\chi)=-\mathfrak{H}\binom{{}^t(\theta I_n-\chi(\theta))^{-1}}{0_n},$$
involving $N\times n$ matrices whose columns are modular forms of weight $1$.
\end{Theorem}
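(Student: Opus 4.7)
The approach is to read off the theorem as the coefficient of $Z^0$ on both sides of the displayed matrix identity
$$-\sum_{\substack{j\geq 1\\ j\equiv 1\,(q-1)}}\mathcal{E}(j;\chi)\,Z^{j-1}
=\mathfrak{H}\,M\,
\begin{pmatrix}{}^t\mathcal{H}(Z)\\ \mathcal{D}_{q-1}({}^t\mathcal{H})(Z)^{(-1)}\end{pmatrix},\qquad
M=\begin{pmatrix}I_n & 0\\ -(\widetilde{g}/\widetilde{\Delta})^{1/q}I_n & \widetilde{\Delta}^{-1/q}({}^t\chi(\theta)-\theta^{1/q}I_n)\end{pmatrix},$$
which is given just above the statement and is itself a consequence of Theorem~\ref{identity-eisenstein-series}, Proposition~\ref{series-expansion}, the explicit relation~\eqref{explicit-identity}, and transposition.

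First I would read off the coefficient of $Z^0$ on the left, which by Proposition~\ref{series-expansion} is $-\mathcal{E}(1;\chi)$, and then compute the constant terms of the two blocks of the column vector on the right. Writing $W=(\theta I_n-\chi(\theta))^{-1}$ and using $\mathcal{H}(Z)=\exp_\Lambda(Z)^{-1}\exp_\Lambda(WZ)$, a direct expansion yields $\mathcal{H}(0)=W$; the coefficient of $Z^{q-1}$ in $\mathcal{H}(Z)$ equals $\alpha_1(z)(\tau(W)-W)$, where $\tau$ acts entrywise on $\KK$ and fixes $\chi(\theta)\in\FF_q(\underline{t})^{n\times n}$, so $\tau(W)=(\theta^qI_n-\chi(\theta))^{-1}$. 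Applying $\tau^{-1}$ yields
$$\mathcal{D}_{q-1}(\mathcal{H})(0)^{(-1)}=\alpha_1(z)^{1/q}\bigl((\theta I_n-\chi(\theta))^{-1}-(\theta^{1/q}I_n-\chi(\theta))^{-1}\bigr),$$
in agreement with the formula for the constant term recalled in the paragraph preceding the statement.

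The crux is to check that the bottom block of $M\bigl({}^t(X^{-1}),\,\alpha_1^{1/q}\,{}^t(X^{-1}-Y^{-1})\bigr)^t$, with $X:=\theta I_n-\chi(\theta)$ and $Y:=\theta^{1/q}I_n-\chi(\theta)$, vanishes. Observing that ${}^t\chi(\theta)-\theta^{1/q}I_n=-\,{}^tY$ and that $(X^{-1}-Y^{-1})Y=X^{-1}(Y-X)=-(\theta-\theta^{1/q})X^{-1}$, this bottom block simplifies to
$$-(\widetilde{g}/\widetilde{\Delta})^{1/q}\,{}^t(X^{-1})+\widetilde{\Delta}^{-1/q}\alpha_1^{1/q}(\theta-\theta^{1/q})\,{}^t(X^{-1}).$$
At this point the recursion for the $\alpha_i$ recalled in the proof of Lemma~\ref{lemmaB}, which follows from $\phi_\theta\circ\exp_\Lambda=\exp_\Lambda\circ\theta$, gives $\alpha_1=\widetilde{g}/(\theta^q-\theta)$ and hence $\alpha_1^{1/q}(\theta-\theta^{1/q})=\widetilde{g}^{1/q}$; the two terms cancel. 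The top block is trivially ${}^t(X^{-1})={}^t(\theta I_n-\chi(\theta))^{-1}$, so extracting the $Z^0$-coefficient on the right gives $\mathfrak{H}\binom{{}^t(\theta I_n-\chi(\theta))^{-1}}{0_n}$, and the theorem follows after moving the sign. The only obstacle is the matrix-algebra bookkeeping; the whole identity rests on the numerical identity $\alpha_1^{1/q}(\theta-\theta^{1/q})=\widetilde{g}^{1/q}$, which is the source of the ``miracle'' announced immediately before the statement.
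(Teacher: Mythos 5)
Your proposal is correct and follows the paper's own route: extract the $Z^0$-coefficient of the identity $-\sum_j\mathcal{E}(j;\chi)Z^{j-1}=\mathfrak{H}\,M\,\bigl({}^t\mathcal{H}(Z),\ \mathcal{D}_{q-1}({}^t\mathcal{H})(Z)^{(-1)}\bigr)^t$, compute the constant terms of the two blocks, and cancel the bottom block using $\alpha_1=\widetilde{g}/(\theta^q-\theta)$, i.e. $\alpha_1^{1/q}(\theta-\theta^{1/q})=\widetilde{g}^{1/q}$. You merely make explicit the matrix algebra that the paper summarizes as "it greatly simplifies", so the two arguments coincide in substance.
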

In fact, this is not a miracle; it is just due to the fact that the left-hand side must be bounded at the infinity; this is only possible if the second matrix entry of the column above is identically zero, because it is anyway a multiple by a constant matrix of the weak
modular form $\widetilde{g}/\widetilde{\Delta}$ (this somewhat forces $\alpha_1$ to be equal to the above multiple of $\widetilde{g}$, giving this artificial impression of miraculous simplification). 
It is easy from here to deduce \cite[Theorem 8]{PEL1} in the special case of $N=2$, $n=1$ and $\chi=\chi_t$.

\end{document}